\documentclass[11pt,reqno]{amsart}
\usepackage[a4paper,margin=25mm]{geometry}
\usepackage[T1]{fontenc}
\usepackage{lmodern,microtype}
\usepackage{amsmath,amssymb,amsthm,mathtools,booktabs,longtable,array}
\usepackage[colorlinks=true,linkcolor=blue,citecolor=blue,urlcolor=blue]{hyperref}
\allowdisplaybreaks
\newtheorem{theorem}{Theorem}[section]
\newtheorem{proposition}[theorem]{Proposition}
\newtheorem{lemma}[theorem]{Lemma}
\newtheorem{corollary}[theorem]{Corollary}
\theoremstyle{definition}
\theoremstyle{remark}\newtheorem{remark}[theorem]{Remark}
\newcommand{\R}{\mathbb R}
\newcommand{\Sph}{\mathbb S^2}
\newcommand{\Per}{\operatorname{Per}}
\newcommand{\tr}{\operatorname{tr}}
\newcommand{\cof}{\operatorname{cof}}
\newcommand{\Div}{\operatorname{div}}
\newcommand{\Hess}{\operatorname{Hess}}
\newcommand{\E}{\mathbb E}
\newcommand{\pos}[1]{\left(#1\right)_+}
\newcommand{\dd}{\,d}
\newcommand{\sig}{\,d\sigma}

\title[Relative isoperimetry in the three-dimensional cube]{Relative isoperimetry in the three-dimensional cube:\\ classification and quantitative stability}
\hypersetup{pdftitle={Relative isoperimetry in the three-dimensional cube: classification and quantitative stability}}
\author{Bin Deng}
\author{Jiahuan Li}
\author{Yilu Liu}
\date{October 6, 2026}
\subjclass[2020]{49Q20, 53A10, 52A40}
\keywords{Relative isoperimetry, cube, constant mean curvature, stability, curvature distributions}
\begin{document}
\begin{abstract}
We determine the complete relative isoperimetric profile of the
three-dimensional unit cube and classify all minimizing sets, including
the equality cases at both transition volumes. Up to cube isometries
and complementation, the minimizers are corner eighth-balls, edge
quarter-cylinders, and coordinate slabs, with transitions at $4\pi/81$
and $1/\pi$. Building on this classification, we prove that the
perimeter deficit at each fixed volume controls the squared $L^1$
distance to the full minimizing family, with optimal exponent two.
Near either transition, a uniform estimate combines the distance to
each adjacent phase with its excess perimeter and identifies the
linear degeneration, at neighboring volumes, of the best constant
for distance to the actual minimizers. As consequences, we resolve
the isoperimetric conjecture in Milman's formulation
(Conjecture~1.1) for all cuboids $(0,\beta)\times(0,1)^2$,
$0<\beta\leq1$, and the three-dimensional case of Brezis's
Open Problem~10.1.
\end{abstract}
\maketitle
\tableofcontents

\section{Introduction}\label{sec:introduction}
The relative isoperimetric problem in the three-dimensional cube asks
for the least interior boundary area enclosing a prescribed volume.
A classical conjecture, discussed by Ros~\cite[Sections 1.5--1.6]{Ros},
predicts that, up to cube isometries and complementation, the only
minimizing sets are corner eighth-balls, edge quarter-cylinders,
and coordinate slabs. Milman~\cite[Conjecture 1.1 and Theorem 1.2]{Milman}
gave a precise formulation of its existence version for the family
of cuboids with side lengths $(\beta,1,1)$, $0<\beta\leq1$, and obtained
new explicit volume ranges in which the standard models are minimizing.
His work provides an important
comparison and profile propagation framework for the present paper.
We determine the complete relative isoperimetric profile of the
three-dimensional unit cube and classify all minimizing sets,
including those at both transition volumes. This also answers the
three-dimensional case of Brezis's Open Problem~10.1~\cite{BrezisOpen}.

\subsection{The unit cube and the principal result}
Let $Q=(0,1)^3$, and let $\Per(E;Q)$ denote the relative perimeter of
a finite-perimeter set $E\subset Q$. Set
\[
 I(v)=\inf\{\Per(E;Q):|E|=v\},\qquad
 a=\frac{4\pi}{81},\qquad b=\frac1\pi,
\]
and, for $0\leq v\leq1/2$, define
\[
 m(v)=\min\{(9\pi/2)^{1/3}v^{2/3},\sqrt{\pi v},1\}.
\]
Complementation gives $I(v)=I(1-v)$. The three candidate families are
corner eighth-balls, edge quarter-cylinders and coordinate slabs.
Their volumes and relative perimeters are
\[
\begin{array}{c|cc}
 &v&P\\\hline
 \text{corner eighth-ball of radius }r&\pi r^3/6&\pi r^2/2\\
 \text{edge quarter-cylinder of radius }r&\pi r^2/4&\pi r/2\\
 \text{coordinate slab of width }v&v&1.
\end{array}
\]
The radii on the indicated branches are less than one, so the models
are admissible and $I(v)\leq m(v)$. Adjacent branches meet at $a$ and
$b$. This is the candidate profile recorded in
\cite[Conjecture 1.1 and Section 4.1]{Milman}.
The profile conjecture and the stronger assertion identifying every
minimizer are distinct statements. We prove the latter, including the
two transition volumes.

\begin{theorem}\label{thm:main}
For every finite-perimeter set $E\subset Q$, with
$w=\min(|E|,1-|E|)$,
\[
 \Per(E;Q)\geq
 \min\{(9\pi/2)^{1/3}w^{2/3},\sqrt{\pi w},1\}.
\]
For $0<v\leq1/2$, all minimizing sets, modulo null sets and cube
isometries, are as follows:
\[
\begin{array}{c|l}
 0<v<a&\text{corner eighth-balls},\\
 v=a&\text{corner eighth-balls or edge quarter-cylinders},\\
 a<v<b&\text{edge quarter-cylinders},\\
 v=b&\text{edge quarter-cylinders or coordinate slabs},\\
 b<v\leq1/2&\text{coordinate slabs}.
\end{array}
\]
For $v>1/2$, the minimizing sets are the complements of the listed sets
at volume $1-v$.
\end{theorem}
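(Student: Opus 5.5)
The plan is to combine existence and regularity theory for relative isoperimetric minimizers with a classification of stable constant mean curvature surfaces meeting $\partial Q$ orthogonally, and then compare the finitely many surviving candidates with $m$.

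First, existence. Compactness in $BV(Q)$ and lower semicontinuity of $\Per(\cdot;Q)$ give a minimizer $E$ for every $v$. The profile $I$ is continuous and symmetric under $v\mapsto 1-v$, so it suffices to treat $0<v\le 1/2$.

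Second, regularity and first and second variation. Away from a closed singular set of dimension at most $-5$ (hence empty in dimension three), the interior boundary $\Sigma=\partial E\cap Q$ is smooth. Free-boundary regularity gives smoothness up to the faces, with contact angle $\pi/2$, and along edges and vertices we would invoke the known regularity in convex polyhedral domains. Then $\Sigma$ has constant mean curvature $H$. Stability gives
\[
\int_\Sigma |\nabla\varphi|^2-|A|^2\varphi^2\,d\sigma\ \ge\ 0
\]
for all $\varphi$ with $\int_\Sigma\varphi=0$. There is no boundary term, because the faces are flat and their second fundamental form vanishes.

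Third, and this is the crux, classification of stable CMC surfaces in $Q$. The reflection group of the cube extends $\Sigma$ to a periodic CMC surface in $\R^3$ that is stable under volume-preserving variations with the corresponding symmetry. I would follow Ros's scheme for flat tori and quotients. Take the test functions $\varphi=\langle N,e_i\rangle$ and, more generally, $\varphi=\langle N,a\rangle$ for constant $a$, corrected to have mean zero, together with Hersch-type balancing via the Gauss map. Combined with the genus bound from Ritoré–Ros and Ros's index estimates, this should show that a stable $\Sigma$ either lies in a sphere (corner ball), a cylinder (edge cylinder) or a plane (slab), or has genus at most one. The genus-one candidates (periodic Schwarz-type, Lawson-type or unduloid-type pieces) are the danger. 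Here I would use Milman's comparison and profile-propagation framework \cite{Milman}: $I^{3/2}$ is concave, and the first- and second-variation inequality $I''I\le -I'^2/2$ holds in the viscosity sense. This framework forces $I=m$ on the ranges Milman already covers, and it lets us propagate the equality $I=m$ across the complementary ranges once the exotic candidates are excluded at a single volume. I expect this exclusion of genus-one and higher-genus stable surfaces near the transitions $v\approx a,b$ to be the main obstacle. My first attempt would be a quantitative area-versus-volume bound for such surfaces, showing $\Per>m(v)$ whenever $\Sigma$ is not a sphere, cylinder or plane piece.

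Finally, once the minimizer is known to be one of the three models, direct comparison of the formulas in the table finishes the proof. The eighth-ball branch $(9\pi/2)^{1/3}v^{2/3}$ is strictly smallest on $(0,a)$, the quarter-cylinder branch $\sqrt{\pi v}$ on $(a,b)$, and $1$ on $(b,1/2]$, with exactly two coinciding at $a$ and at $b$. Positions of the models other than the centered ones are ruled out because the orthogonality condition forces the centers onto a vertex or an edge, and translates or tilts have larger perimeter. Uniqueness modulo isometries then follows. Complements give the case $v>1/2$.
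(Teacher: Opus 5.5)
\textbf{Gap: the classification of stable surfaces.} The step you call the crux would fail as stated, and nearly all of the difficulty sits there. Genus one is actually harmless. A volume-preserving stable CMC torus in a flat three-torus is flat, hence a cylinder or a pair of planes (Ritor\'e--Ros; compare Lemma~\ref{g:low-genus}), so unduloid-type pieces are already unstable. The real candidates are connected boundaries of genus two or three, and stability cannot classify those. The Schwarz P surface is invariant under the cube's reflection group, has genus three in the quotient, and is volume-preserving stable by a theorem of M.~Ross. Such candidates can only be excluded through minimality. Your remedy, ``a quantitative area-versus-volume bound showing $\Per>m(v)$'', is a restatement of the theorem.

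The generic inequalities do not supply that bound. Neither the Hersch bound $h^2P\le\pi$, nor the inequality $I^2I''+I(I')^2\le-\pi$ valid for genus at least two, nor Milman's slab comparison forces $I(b)=1$; Milman's model profile lies strictly below $1$ at $b=1/\pi$. The paper carries out the exclusion only at the single volume $v=b$, for an actual minimizer with $P\le1$, including the equality case $P=1$. It needs:
\begin{itemize}
\item a coordinate-compressed minimizing downset whose genus equals its number of active faces (Lemma~\ref{g:downset});
\item exact stress identities and new height and normal stability tests;
\item the Gauss-map convex body with its complete wall measures, and cap-tail bounds;
\item a long rational case analysis (Theorem~\ref{t:coverage-theorem}).
\end{itemize}
Nothing in your outline plays this role.

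\textbf{Propagation and the transitions.} The propagation step is also misdescribed. At $\beta=1$, Milman's explicit ranges cover only the ball phase up to $v\approx0.1206<a$. His one-contact reduction \cite[Corollary 1.4]{Milman} requires $\beta\le0.919\ldots$, and \cite[Proposition 4.6]{Milman} needs \emph{both} contacts $I(a)=2\pi/9$ and $I(b)=1$. After the exclusion at $b$, the paper gets the first contact from monotonicity of the BV energy $\mathcal E=I^2(I'_+)^2+2\pi I$ across a putative gap around $a$, using the Hersch bound on that gap (Lemma~\ref{p:first-contact}).

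Proposition 4.6 then classifies minimizers only at interior volumes of each phase. So your closing comparison of formulas does not settle the transition volumes. At $a$, Hersch would force $h\le3/\sqrt2<9/4$ for a compressed minimizer of genus at least two, contradicting the slope bound $h\ge9/4$. At $b$, the paper uses the closed-endpoint exclusion together with a separate zero-multiplier argument. Lemma~\ref{p:recovery} then undoes the compressions.

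\textbf{Symmetric stability.} Stability restricted to symmetric variations is too weak for the non-symmetric test functions you intend to use, such as balanced conformal maps and volume transfer between components. The doubling argument of Lemma~\ref{g:reflection} makes the reflected set an unrestricted minimizer on the torus. This also removes your edge and vertex regularity issue.
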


In the notation of \cite[Section 10]{BrezisOpen}, Theorem~\ref{thm:main}
states that $f_3(v)=I(v)=m(\min\{v,1-v\})$. In particular, the
maximal interval on which $f_3$ equals one is exactly
$[1/\pi,1-1/\pi]$. The theorem also establishes the stronger,
all-minimizers form of \cite[Conjecture 1.1]{Milman} for the unit
cube, with both geometric families retained at each transition.

The equality classification also gives a quantitative conclusion. Let
$\mathcal M_v$ be the family of all models in Theorem~\ref{thm:main}
at volume $v$, including both families at a transition, and put
\[
 \delta(E)=\Per(E;Q)-I(v),\qquad
 \alpha_v(E)=\min_{M\in\mathcal M_v}|E\triangle M|,
 \qquad |E|=v.
\]
For every fixed $0<v<1$, there is $c_v>0$ such that
\[
 \delta(E)\geq c_v\alpha_v(E)^2
\]
for all finite-perimeter competitors of that volume. The exponent two
is optimal. Constants can be chosen uniformly on compact volume
intervals in a single open phase. Near either transition, a uniform
estimate must also include the perimeter difference between the two
competing branches. The resulting estimate and the linear degeneration
of the best constant for distance to the actual minimizers are proved
in Section~\ref{sec:stability}.

\subsection{A consequence for cuboids}
The profile formula extends to the full family of cuboids appearing
in \cite[Conjecture 1.1]{Milman}. For $0<\beta\leq1$, set
\[
 Q_\beta=(0,\beta)\times(0,1)^2,\qquad
 I_\beta(v)=\inf\left\{\frac{\Per(E;Q_\beta)}{\beta}:
                         E\subset Q_\beta,\ |E|=\beta v\right\},
\]
where the infimum is over finite-perimeter sets. Thus both volume
and perimeter are normalized by $\beta$, and $I_1=I$. For
$0\leq v\leq1/2$, put
\[
 m_\beta(v)=\min\left\{
       \left(\frac{9\pi}{2\beta}\right)^{1/3}v^{2/3},
       \sqrt{\pi v},1\right\},\qquad
 a_\beta=\frac{4\pi\beta^2}{81}.
\]

\begin{corollary}[The cuboid profile]\label{cor:cuboid-profile}
For every $0<\beta\leq1$ and $0\leq v\leq1$,
\[
 I_\beta(v)=m_\beta\bigl(\min\{v,1-v\}\bigr).
\]
For $0<v\leq1/2$, the profile is attained by a corner eighth-ball
when $v\leq a_\beta$, by a quarter-cylinder along an edge of length
$\beta$ when $a_\beta\leq v\leq b$, and by the coordinate slab
$Q_\beta\cap\{x_3<v\}$ when $b\leq v\leq1/2$.
At either transition both indicated models attain
the minimum. Complementation gives optimal sets for $v>1/2$.
\end{corollary}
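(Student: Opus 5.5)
The upper bound $I_\beta(v)\le m_\beta(\min\{v,1-v\})$ is a direct computation with the three models in $Q_\beta$. A corner eighth-ball of radius $r$ with $\pi r^3/6=\beta v$ has normalized perimeter
\[
\frac{\pi r^2}{2\beta}=\Bigl(\frac{9\pi}{2\beta}\Bigr)^{1/3}v^{2/3}.
\]
A quarter-cylinder along an edge of length $\beta$ has volume $\beta\pi r^2/4=\beta v$ and normalized perimeter $\sqrt{\pi v}$. The slab $\{x_3<v\}$ has normalized perimeter $1$. We must also check admissibility. On the eighth-ball branch ($v\le a_\beta$) one gets $r\le 2\beta/3<\beta$. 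On the cylinder branch ($v\le b$) one gets $r\le 2/\pi<1$. The branches cross at $a_\beta$ and $b$ by direct algebra. Complementation handles $v>1/2$. So the whole content is the lower bound, and the attainment statements then follow.

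For the lower bound I would argue by comparison with the unit cube via Theorem~\ref{thm:main}, using the profile propagation idea of Milman. The natural device is reflection. Since $Q_\beta$ is the box $(0,\beta)\times(0,1)^2$, I cannot reflect to get $Q$ unless $1/\beta$ is an integer. So I would instead use a scaling comparison. The map $(x_1,x_2,x_3)\mapsto(x_1/\beta,x_2,x_3)$ sends $Q_\beta$ to $Q$ and distorts perimeter anisotropically. By itself this only gives $\Per(E;Q_\beta)/\beta\ge \Per_{\text{aniso}}$, which is too weak. The better route, I expect, is symmetrization followed by reduction to the profile.

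First, Steiner symmetrize in $x_1$ toward $\{x_1=0\}$, and likewise in $x_2,x_3$ toward coordinate faces. This is monotone rearrangement, which preserves relative perimeter bounds in boxes. We may then assume $E$ is monotone (a down-set) in each coordinate.

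Next, slice in $x_1$. Write $E_t=\{(x_2,x_3):(t,x_2,x_3)\in E\}$. These are decreasing in $t$. Let $J$ be the two-dimensional relative profile of the unit square, which is known: $\min\{\sqrt{\pi s},1\}$ for $s\le1/2$. The coarea/slicing inequality gives
\[
\Per(E;Q_\beta)\ge\int_0^\beta J(|E_t|)\,dt,
\]
and this term already yields the cylinder and slab branches, since $J$ is concave in the right sense. To capture the eighth-ball branch I would combine it with the three-dimensional bound from Theorem~\ref{thm:main}. Extend $E$ by even reflection across $x_1=\beta$ only when useful. Alternatively, note that a down-set in $Q_\beta$ is also a down-set in $Q$ when viewed inside the unit cube, with $\Per(E;Q)=\Per(E;Q_\beta)$ as long as $E$ does not touch $\{x_1=\beta\}$.

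This gives a case split. If $E$ touches the face $x_1=\beta$ with positive trace, then the last slice $E_\beta$ is nonempty. Monotonicity gives $|E_t|\ge|E_\beta|$ for all $t$, so the slicing bound yields $\ge\min\{\sqrt{\pi v},1\}$ after using concavity of $J$ (Jensen applied to $\int J(|E_t|)dt/\beta$ with mean $v$). If it does not touch, Theorem~\ref{thm:main} applied in $Q$ at volume $\beta v$ gives the bound $\min\{(9\pi/2)^{1/3}(\beta v)^{2/3},\sqrt{\pi\beta v},1\}/\beta$, which dominates $m_\beta(v)$.

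The main obstacle is making this dichotomy rigorous. Jensen needs concavity of $J$ on $[0,1]$, and $J(s)=\min\{\sqrt{\pi s},1,\sqrt{\pi(1-s)}\}$ is indeed concave. The non-touching case must also handle sets that nearly touch. I would try a continuity/interpolation argument: write $E$ as the union of the part in $x_1<\beta$ and compare using Milman's propagation monotonicity in $\beta$. If that fails, I would fall back on citing Milman's Theorem~1.2 framework, which reduces the cuboid profile to the cube profile.
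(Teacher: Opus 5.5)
Your upper bound, admissibility checks and crossing points are correct. Your non-touching branch is also sound: if the trace of $E$ on $\{x_1=\beta\}$ is null, then $\Per(E;Q)=\Per(E;Q_\beta)$, and $m(\beta v)/\beta\geq m_\beta(v)$ termwise. The genuine gap is the touching branch, for three reasons.

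\emph{Jensen goes the wrong way.} For concave $J$ it gives $\frac1\beta\int_0^\beta J(|E_t|)\,dt\leq J(v)$, which is an upper bound and cannot produce $\min\{\sqrt{\pi v},1\}$ from below.

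\emph{Slicing loses the $\nu_1$-part of the normal.} That part can carry almost all of the perimeter. A slab $\{x_1<\beta v'\}$ plus a thin tube along the $x_1$-edge touches the far face, yet its slicing bound is close to zero.

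\emph{The bound you want there is false.} A corner eighth-ball of radius $r<2\beta/3$ united with a quarter-tube of radius $\epsilon$ along the edge of length $\beta$ is a downset in all three coordinates with positive trace on $\{x_1=\beta\}$. Its normalized perimeter is $(9\pi/(2\beta))^{1/3}v^{2/3}+O(\epsilon)<\sqrt{\pi v}$ for $v<a_\beta$. So touching the far face does not force the cylinder or slab bound. The touching class contains ball-branch competitors, and showing $\geq m_\beta(v)$ for it is exactly the original difficulty on $(0,a)$.

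You also dismissed the comparison the paper actually uses. Since $\beta\leq1$, the stretched integrand satisfies $\sqrt{\beta^2\nu_1^2+\nu_2^2+\nu_3^2}\leq1$. Hence $\Per(\widetilde E;Q)\leq\Per(E;Q_\beta)/\beta$, and so $I_1\leq I_\beta\leq m_\beta$. Because $m_\beta=m$ on $[a,1/2]$, with $a=4\pi/81\geq a_\beta$, Theorem~\ref{thm:main} already gives $I_\beta=m_\beta$ there, in particular $I_\beta(b)=1$.

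What is still missing is the first contact $I_\beta(a_\beta)=2\pi\beta/9$, and this needs new input. The paper uses a rectangular reduction: if $I_\beta(v)<m_\beta(v)$, there is a minimizing downset whose reflected boundary in $T_\beta$ is connected, of genus two or three, with $h>0$. This gives the Hersch bound $h^2P\leq\pi/\beta$ and the parallel-surface inequality $J^2J''+J(J')^2\leq-\pi/\beta$. It then uses the BV energy $\mathcal E_\beta=I_\beta^2h^2+(2\pi/\beta)I_\beta$. This energy is nonincreasing on a gap component around $a_\beta$, but would have to rise from at most $24\pi^2/36$ at the left end to at least $25\pi^2/36$ at the right end. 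Milman's Proposition~4.6 then propagates the two contacts through $[0,a_\beta]$, $[a_\beta,b]$ and $[b,1-b]$.

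Your fallback does not replace this. Milman's Theorem~1.2 gives explicit partial volume ranges, not a reduction to the cube profile. His one-contact Corollary~1.4 requires $\beta\leq0.919431\ldots$, which leaves the cuboids nearest the cube uncovered.
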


This proves the existence and profile assertion formulated in
\cite[Conjecture 1.1]{Milman} for every $0<\beta\leq1$. The proof in
Section~\ref{p:cuboid-profile} uses a stretching comparison to transfer
the unit-cube contact $I(b)=1$ to $I_\beta(b)=1$.
Lemma~\ref{p:cuboid-reduction} extends the geometric reduction to
the rectangular box. The BV first-contact argument then gives
contact at $a_\beta$; Milman's propagation theorem supplies
the remaining volumes. The detailed endpoint exclusion is needed
only for the unit cube. The complete equality classification and
quantitative stability asserted in this paper concern $Q_1$.

\subsection{Earlier results and the remaining obstruction}
In two dimensions, the corresponding profile of the square is
$\min\{\sqrt{\pi w},1\}$, where $w=\min(v,1-v)$.
Brezis--Bruckstein~\cite{BrezisBruckstein} prove a sharp relative
inequality for the square; related classifications on flat surfaces
are discussed by Howards--Hutchings--Morgan~\cite{HHM}.
Adding a third interval introduces a cylindrical phase, but comparing
the three model formulas does not exclude other minimizing surfaces.

Reflection in the faces identifies the cube profile with that of
$T=\R^3/(2\mathbb Z)^3$, with volume $dx/8$ and surface measure
$dA/8$. This identification and the global minimizing property of
the reflected set follow from Milman~\cite[Remark 2.4]{Milman};
the normalization is recorded in Lemma~\ref{g:reflection}.
The reflected boundary is a smooth embedded constant-mean-curvature
surface, stable under unrestricted volume-preserving variations.
Ritor\'e--Ros~\cite{RitoreRos} and
Hauswirth--P\'erez--Romon--Ros~\cite{HPRR} develop the associated
topological and periodic variational theory. Pedrosa--Ritor\'e~\cite{PR}
study the singly periodic product and its free-boundary interpretation.
We use the actual globally minimizing surface in the triply periodic
problem. In particular, the genus bound for our selected downset is
proved from its geometry; the orientation-preserving group hypothesis
of \cite[Theorem 7]{HPRR} is not applied to coordinate reflections.

Several volume ranges were known previously. Small-volume results
follow from the local isoperimetric theory of
Morgan--Johnson~\cite{MJ} and the polytope analysis of
Morgan~\cite{MorganPoly}, as explained in
\cite[Section 1.2]{Milman}. The half-volume result is due to
Hadwiger~\cite{Hadwiger}; Barthe--Maurey~\cite{BM} give a Gaussian
contraction approach. Acerbi--Fusco--Morini~\cite[Theorem 5.3]{AFM}
show that slabs are the periodic minimizers near half volume.
Glaudo~\cite{Glaudo} proves the near-half-volume inequality in
every dimension, answering that part of
\cite[Open Problem 10.1]{BrezisOpen}, and studies the high-dimensional
cube profile. These qualitative neighborhoods do not identify both
transition volumes. The present result determines the entire profile
in dimension three and the exact endpoints of its planar phase. Profile concavity, variational differential inequalities and
connectedness have a broader history; see, for example,
\cite{BR,SZ,Ros}.

There are also sharp results for restricted competitors.
Chambers--Mouill\'e~\cite{ChambersMouille} solve the relative problem
for orthogonal polyhedra in the three-dimensional cube. Their
restriction is essential and gives a different class of minimizing
models. Likewise, perimeter reduction under symmetrization and
rigidity in its equality case are separate questions. The treatment
of Steiner symmetrization in \cite{BCF} illustrates the importance of
the equality hypotheses. Our recovery argument is proved for the
actual cubical minimizers considered here.

Milman~\cite[Theorem 1.2]{Milman} obtains explicit ranges for
$(0,\beta)\times(0,1)^2$, $0<\beta\leq1$, with volume and perimeter
normalized by $\beta$. For the unit cube, his result establishes the
ball phase for $0<v\leq v^{\min}$, where
$v^{\min}\simeq0.120582$. For
$\beta\leq0.919431\ldots$, it covers the entire proposed ball phase.
He also obtains explicit cylindrical and planar ranges for smaller
aspect ratios. Those two explicit intervals in Theorem 1.2 are empty
at $\beta=1$, so they do not by themselves cover the remaining phases
of the unit cube. The numerical threshold is cited here as prior
work; it is not a premise of our analytic endpoint exclusion.

\subsection{Milman's reductions and the continuation developed here}
Our starting point is Milman's reduction of the cubical profile
conjecture to its two transition values
\cite[Theorem 1.3(2) and Proposition 4.6]{Milman}. At $\beta=1$,
the required contacts are
\begin{equation}\label{intro:contacts}
 I(a)=\frac{2\pi}{9},\qquad I(b)=1.
\end{equation}
Once these hold, Proposition 4.6 propagates the profile through each
model phase and identifies all minimizers at its interior volumes.
We use this proposition explicitly in Proposition~\ref{p:full-profile}.
Milman's Corollary 1.4 reduces the problem to the second contact
alone when $\beta\leq0.919431\ldots$. That restriction excludes the
unit cube. Here the first contact is recovered by the BV argument
of Lemma~\ref{p:first-contact} after the second has been established.
The full equality classification at $a$ and $b$ requires a further
argument beyond the profile propagation.

Milman's general slab comparison addresses a broader difficulty in
product isoperimetry. Poincar\'e and logarithmic Sobolev inequalities
admit tensorization, whereas there is no general formula for the sharp
isoperimetric profile after adding an interval factor. Product
comparisons such as \cite{MorganProducts} provide bounds without
determining all optimal shapes or transition volumes.
In the framework of \cite[Section 1.1]{Milman}, if the base has profile
$I_0$, the base-induced profile is the infimum of
\begin{equation}\label{intro:slicing}
 \frac1T\left(\int_0^T\sqrt{u'(t)^2+I_0(u(t))^2}\,dt
                    +\|D_su\|((0,T))\right),
 \qquad \frac1T\int_0^Tu(t)\,dt=v,
\end{equation}
over $u\in BV((0,T);[0,1])$.
The singular variation is part of the perimeter bound.
By \cite[Proposition 2.5, Corollary 2.7 and Proposition 2.11]{Milman},
the actual slab profile is at least this base-induced profile, which
equals the profile of a two-dimensional weighted model slab.
Symmetrization, the weighted CMC equation and stability then describe
the model in terms of generalized unduloids. This is a lower comparison;
equality with the actual product requires additional hypotheses on the
base minimizers.

For the square or flat two-torus base, Milman observes that the model
profile is strictly below the candidate value at $v=b=1/\pi$
\cite[Section 4.2]{Milman}. Thus that lower comparison cannot establish
the second contact. We continue the transition-value reduction at
the level of the actual three-dimensional minimizing surface.
Its Gauss map, curvature tensor and full wall measures provide
information that is not retained by the slice volume in
\eqref{intro:slicing}. These are the quantities used in the remaining
endpoint exclusion. The common reflection, CMC, stability and profile
differential inequalities are given precise Milman references below;
the additional downset topology, geometric exclusion and transition
rigidity are developed in this paper. In particular, the monotone
selection in \cite[Proposition 2.16]{Milman} concerns the two-dimensional
model slab; our three-coordinate compression and its topological
consequences are proved separately.

The geometric argument below treats actual three-dimensional
regions with flat orthogonal walls. Its unit-cube endpoint contact
also yields Corollary~\ref{cor:cuboid-profile}, by the separate
stretching and BV argument described above. Milman's general
framework applies to other bases, including Gaussian slabs.
Gaussian isoperimetry, as presented in \cite{Ledoux,LedouxNotes},
underlies the standard half-volume value used here through
\cite[Equation (4.1)]{Milman}. The higher-dimensional
face-neighborhood conjecture fails by \cite[Theorem 1.6]{Milman};
the complete equality and stability statements here retain their
three-dimensional unit-cube setting.

\subsection{Inherited tools and the geometric constraints developed here}
The variational foundations are established tools: reflection to the
flat torus, regularity of perimeter minimizers, the CMC equation and
volume-constrained second variation, low-genus classification,
the Hersch estimate, and supporting differential inequalities for the
profile. Lemmas~\ref{g:reflection}--\ref{g:profile} record their
precise forms and normalizations, with references to
\cite{Maggi,RitoreRos,Ros,Milman}. Milman's transition-value reduction
and Proposition 4.6 then supply the propagation from the two contacts
to the full profile. These tools leave the task of excluding an actual
nonstandard minimizer at the second contact.

Our geometric reduction starts from a minimizing set that is downward
closed in all three coordinates. For a connected reflected boundary
of genus at least two and positive multiplier,
Lemma~\ref{g:downset} identifies its genus with the number of active
coordinates.
An active coordinate is one whose upper face has positive occupied
area. General genus bounds for stable CMC surfaces are part of the
existing theory recalled in \cite[Section 4.3]{Milman}; the relation
to active faces proved here supplies the additional geometric
information needed for the exclusion. In particular, genus two
provides an inactive direction and hence a compact lift with a
globally defined height.

If $I(b)<1$, the selected minimizer has $P<1$ and positive CMC
multiplier $h=\tr B$. Low-genus classification leaves genus two or
three. The scalar necessary conditions in Lemma~\ref{g:genus-three}
are incompatible in genus three, by the polynomial argument in
Proposition~\ref{s:n3}. For genus
two, the centered tests $z\nu_z-v/P$ and
$|\nu_z|-\mathbb E|\nu_z|$ strengthen the stability restrictions to
$P^3\geq16\pi v^2/9$ and $hv/P<7/12$
(Propositions~\ref{s:height-stability} and~\ref{s:q-bound}).

The further constraints retain the curvature and boundary data of
this same actual surface. Its Gauss distribution and curvature stress
determine a convex support function (Lemma~\ref{g:support}), and the
complete wall measures connect that function to the traces on the
cube faces (Lemma~\ref{g:full-wall}). The exact stress identities,
the actual planar hole, and the projected convex body impose
simultaneous restrictions on normal moments and wall extents
(Lemma~\ref{g:stress}, \eqref{g:hole}, and
Lemma~\ref{g:body-area}). Cap-tail and angular inequalities further
constrain the same normal distribution
(Lemma~\ref{g:canonical} and Lemma~\ref{g:angular}). Upper-wall
calibration and lower-wall turning combine these quantities in the
necessary wall inequality of Lemma~\ref{g:raw-dual}.
The underlying support-function and mixed-volume formulas are
classical \cite{Schneider}; the link between these objects and the
complete physical wall data is established here. Each resulting
inequality holds for every admissible actual distribution, so their
combination does not require the different scalar extremal problems
to have a common optimizer.

These compatibility constraints reduce the endpoint problem to a
bounded range of curvature, perimeter and transverse asymmetry.
Finite positive cap tails produce affine moment bounds valid on the
whole specified domain (Lemma~\ref{s:support}). Substitution into the
necessary wall inequality gives scalar residuals that must be
nonpositive for an actual minimizer. Perimeter propagation compares
the relevant residual with its value at $P=1$
(Proposition~\ref{s:perimeter}). With the wall witness and support
planes fixed, concavity in the asymmetry and curvature estimates
reduce the remaining ranges to explicit endpoint inequalities.
The coefficient tables record finite recurrences and binomial sums;
their sign bounds follow from rational arithmetic. The proof has no
computer-generated certificate as a premise, although computation
can independently check its finite arithmetic.
Theorem~\ref{t:coverage-theorem} covers every remaining parameter
range, including common boundaries and $P=1$. The latter case is
essential for excluding additional minimizers at the transition
itself, where the perimeter deficit is zero.

After $I(b)=1$, the BV energy
$\mathcal E=I^2(I'_+)^2+2\pi I$ establishes the first contact in
\eqref{intro:contacts} (Lemma~\ref{p:first-contact}). Milman's
Proposition 4.6 gives the full profile and all open-phase minimizers.
The recovery argument of Lemma~\ref{p:recovery} then traces equality
through each coordinate compression. It retains the full physical
area over critical projections, thereby identifying every original
transition minimizer as well as the selected representative.
Finally, the stretching comparison and the normalized BV energy
$\mathcal E_\beta=I_\beta^2(I'_{\beta,+})^2+(2\pi/\beta)I_\beta$
prove the cuboid consequence in Section~\ref{p:cuboid-profile}.

\subsection{Quantitative stability and phase competition}
The complete equality classification leads to the quantitative
results in Section~\ref{sec:stability}. The Euclidean inequality of
Fusco--Maggi--Pratelli~\cite{FMP} identifies the quadratic scale;
the selection principle of Cicalese--Leonardi~\cite{CL} and the
second-variation theory of Acerbi--Fusco--Morini~\cite{AFM} provide
the established route from local geometric estimates to arbitrary
finite-perimeter competitors. In our setting, the reflected balls,
cylinders and slabs have a positive spectral gap after volume and
translation modes are removed. A Fourier calculation uses reflection
invariance to compare distance to the torus translation orbit with
distance to the finite cubical model family. The full equality
classification and BV compactness extend the local quadratic bound
to all competitors (Theorem~\ref{stab:thm:global}).

At either transition the adjacent shapes have equal perimeter but
remain separated in $L^1$, and each retains a positive local spectral
gap. Near the transition, the excess perimeter of the nonoptimal
branch must be retained. Theorem~\ref{stab:thm:multiwell} gives a
uniform estimate combining that cost with the squared distance to
each branch. Consequently, the best constant for distance only to
the actual minimizers is comparable to $|v-a|$ or $|v-b|$ at neighboring
volumes, while it is positive at the transition when both minimizing
families are included (Corollary~\ref{stab:cor:degeneration}).
This degeneration describes competition between separated shapes;
the local spectral gaps do not close. The global constants depend on
a compactness gap and are not made numerically effective.

For a first reading, Section~\ref{sec:reduction} gives the geometric
reduction, Theorem~\ref{t:coverage-theorem} states the endpoint
exclusion, and Section~\ref{sec:profile-rigidity} completes the unit-cube
profile and rigidity and proves the cuboid corollary.
Sections~\ref{sec:gauss-wall}--\ref{sec:range-exclusion} contain the
analytic exclusion estimates. Section~\ref{sec:stability} proves
the quantitative consequences.

\section{Variational reduction and topology}\label{sec:reduction}

We first pass to an unconstrained periodic problem, obtain its stability
and low-genus alternatives, and then select a coordinate-compressed
minimizer. The topology and exact stresses of this selected set supply
the geometric hypotheses used in the later exclusions.

\subsection{Reflection, stability, and low-genus models}\label{g:foundations}

Let $T=\R^3/(2\mathbb Z)^3$. We normalize its volume and
surface measures by $dv_T=dx/8$ and $d\sigma=dA/8$, respectively.
For the outward unit normal $\nu$ of a smooth boundary, set
$B=d\nu$ and $h=\tr B$. Thus the sum of the principal curvatures
of a sphere is positive. Write $P=\int_\Sigma d\sigma$.
The quadratic form for a volume-constrained stable boundary is
\begin{equation}\label{g:stability}
 Q(f,f)=\int_\Sigma\bigl(|\nabla_\Sigma f|^2-|B|^2f^2\bigr)d\sigma
 \geq0,\qquad \int_\Sigma f\,d\sigma=0.
\end{equation}

We use the compactness, lower semicontinuity, slicing and area
theorems for finite-perimeter sets, and the regularity theorem for
perimeter quasiminimizers in dimension three. The relevant versions
are in \cite[Chapters 12--18 and Theorems 26.3, 26.5, 28.1]{Maggi}
and \cite[Theorems 3.8, 3.10, 5.2--5.5, 5.15--5.16, 5.22]{EvansGariepy}.
In particular, a volume-constrained perimeter minimizer in a smooth
flat three-manifold has a smooth embedded boundary without singular
points. This theorem is used after reflection, on the manifold $T$.
It is never applied to a problem allowing only symmetric competitors.
The local CMC equation is analytic; the elementary analytic
regularity argument recorded below makes precise the consequence
we use. The other classical inputs are the strong maximum principle,
Hopf's boundary lemma, Gauss--Bonnet, Riemann--Roch, and the elementary
support-function and mixed-volume formulas of convex geometry.

\begin{lemma}[Reflection]\label{g:reflection}
The relative isoperimetric profile of the unit cube equals the
normalized isoperimetric profile of $T$. The reflection of any cube
minimizer is a global minimizer on $T$, without a symmetry restriction.
\end{lemma}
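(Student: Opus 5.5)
The plan is to prove the two inequalities between the cube profile $I$ and the normalized torus profile $I_T(v)=\inf\{\Per_T(F)/8:|F|/8=v\}$ separately, and then extract the global minimality of reflected sets from the resulting equality. The whole argument rests on the reflection group $G$ generated by the coordinate reflections $x_i\mapsto -x_i$ and the translations by $2e_i$. Its fundamental domain is $Q$, and $T$ is tiled by the $8$ images $g(\bar Q)$, $g\in G/(2\mathbb Z)^3$.

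\emph{Step 1: $I_T\le I$.} Take a finite-perimeter $E\subset Q$ and let $\tilde E=\bigcup_g g(E)$ be its even reflection into $T$. Then $|\tilde E|=8|E|$. I will compute $\Per_T(\tilde E)$ by splitting it into the parts inside the open cells and the parts on the walls $\partial g(Q)$. The interior part is $8\Per(E;Q)$. On a wall, the traces of $\tilde E$ from the two adjacent cells coincide because of the reflection symmetry (use the trace theorem for BV functions, \cite{EvansGariepy}, \cite{Maggi}). Hence $|D\chi_{\tilde E}|$ of the wall equals $\int|\mathrm{tr}^+-\mathrm{tr}^-|=0$. With the normalizations $dv_T=dx/8$ and $d\sigma=dA/8$, this gives normalized volume $|E|$ and normalized perimeter $\Per(E;Q)$, so $I_T\le I$.

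\emph{Step 2: $I\le I_T$.} This is the main step. Given $F\subset T$ with $|F|=8v$, I cannot simply restrict $F$ to one cell: restriction preserves neither volume nor the perimeter budget. The approach I would try first is compression. For each coordinate $i$, replace $F$ by its monotone rearrangement along the circles in direction $i$ (length $2$), taken symmetric about $x_i=0$. This is a Steiner-type symmetrization on $\R/2\mathbb Z$, i.e. a circular symmetrization. Each slice becomes an arc centred at $0$, the volume is preserved, and the perimeter does not increase. The last point is the standard Pólya–Szegő/Steiner inequality on circles \cite[Ch.~14]{Maggi}, applied slice by slice.

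After doing this for $i=1,2,3$, I need the result to still be symmetric in the earlier coordinates. Steiner symmetrization in $x_j$ preserves reflection symmetry in $x_i$ for $i\ne j$, because slices are mapped equivariantly. So the final set $F^*$ is invariant under all of $G$. Its intersection with $Q$ then has volume $v$, and by Step 1's wall computation in reverse,
\[
\Per(F^*\cap Q;Q)=\Per_T(F^*)/8\le \Per_T(F)/8.
\]
Hence $I\le I_T$, and the two profiles agree.

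\emph{Step 3: global minimality.} If $E$ minimizes in $Q$, then by Step 1 its reflection $\tilde E$ has normalized perimeter $I(v)=I_T(v)$. So $\tilde E$ attains the torus infimum among all competitors, with no symmetry constraint. This is exactly the sense in which the regularity theory is later applied on $T$. I will cite \cite[Remark 2.4]{Milman} for this identification and restrict myself to checking the normalization.

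The obstacle I expect is the boundary bookkeeping in Step 2. Perimeter monotonicity under circular symmetrization requires care at slices that are full circles or empty, and I will have to confirm that the factor $8$ and the wall contributions match exactly. I plan to handle this with the BV slicing formula for $\Per$ together with lower semicontinuity \cite{Maggi}, approximating by smooth sets if necessary.
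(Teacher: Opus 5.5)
Your proof is correct, and your Steps~1 and~3 match the paper's trace argument and its final deduction of unrestricted global minimality. Step~2 takes a genuinely different route.

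The paper never symmetrizes. It doubles one coordinate at a time, from $[0,1]\times M$ to a circle of length two times $M$. For a competitor $F$ in the doubled space, it translates a pair of antipodal cuts until the open halves $(s,s+1)\times M$ and $(s+1,s+2)\times M$ contain equal volume. The volume difference is continuous in $s$, being an $L^1$ slice-volume function convolved with an interval indicator, and it changes sign when $s$ increases by one. The relative perimeters of $F$ in these two disjoint open halves sum to at most its total perimeter. So the half with smaller perimeter is already a competitor in $[0,1]\times M$, with the same normalized volume and no larger normalized perimeter. Iterating over the three coordinates gives $I\le I_T$. This directly resolves the obstacle you identified, that plain restriction to one cell loses the volume and perimeter budget. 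It uses only continuity of a volume function and additivity of the perimeter measure on disjoint open sets, and the paper reuses it for the cuboid in Lemma~\ref{p:cuboid-reduction}.

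Your circular symmetrization needs more machinery. It is not literally the $\R^n$ Steiner inequality of Chapter~14 of \cite{Maggi}; you must rerun that proof on a circle of length two. The key observation is that a slice of length $0<L<2$ has at least two boundary points $y_j$, so the pointwise bound $\sum_j\sqrt{1+|\nabla y_j|^2}\ge\sqrt{4+|\nabla L|^2}$ survives. You must then treat the jump and Cantor parts of $DL$ and the full or empty slices, as you anticipate. Note also that after symmetrization you only need $\Per(F^*\cap Q;Q)\le P_T(F^*)/8$, which holds because wall contributions are nonnegative; the equality from Step~1 is not required.

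What your route buys is a stronger output: a reflection-invariant competitor that is also monotone in each coordinate. That is essentially the coordinate-compressed downset the paper constructs separately, on the cube via Minkowski content, in Lemma~\ref{g:downset}.
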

\begin{proof}
Consider one doubling, from $[0,1]\times M$ to a circle of length two
times $M$, normalizing by the volume of each ambient space. Reflection
of any competitor preserves normalized volume and perimeter and
proves one inequality between the two profiles.

For the reverse inequality, take any finite-perimeter competitor in
the doubled space and translate the cut into two semicylinders. The
difference of their volumes is continuous in the translation
parameter and changes sign when that parameter increases by one.
Continuity follows directly by writing the half-volume function as
the convolution of an $L^1$ slice-volume function with an interval
indicator. A translate therefore bisects volume. The relative
physical perimeters of the two open halves have sum at most the
original physical perimeter; any perimeter on the cut faces is
excluded from both halves. Select the half with smaller perimeter.
Its normalized volume is the original normalized volume, and its
normalized perimeter is no larger than the original one. This proves
the reverse inequality. Apply the argument successively in all three
coordinates.

Reflection introduces no additional perimeter on a mirror because
the two BV traces agree. Accordingly, a reflected cube minimizer has
the minimum normalized perimeter among all torus competitors, which
is the asserted unrestricted global minimality. This is the doubling
argument of \cite[Remark 2.4]{Milman}, with the normalization explicit.
\end{proof}

The direct method on the compact torus gives a minimizer at every
$0<v<1$; see \cite[Section 2]{Milman} and \cite{Maggi}. A local perturbation can be volume-corrected by a flow
supported on a second, disjoint boundary patch with nonzero volume
derivative. The inverse function theorem bounds the correcting
parameter by a constant times the volume error. Its perimeter cost
has the same bound. Thus a minimizer is a local perimeter
quasiminimizer of the form
\[
 \Per(E;B_r)\leq \Per(F;B_r)+\Lambda|E\triangle F|,
 \qquad E\triangle F\Subset B_r,
\]
for sufficiently small $r$. The cited regularity theorem applies; alternatively,
\cite[Proposition 2.20(1)]{Milman} applies directly on the reflected
three-dimensional torus with constant positive density.
By \cite[Proposition 2.21]{Milman}, the first variation gives a
common CMC multiplier $h$ on all boundary
components. For a smooth speed $f$, its volume and perimeter
derivatives are $\int f\,d\sigma$ and $h\int f\,d\sigma$.
Correcting a mean-zero speed to preserve volume gives
\eqref{g:stability}; the correcting flow contributes no first-order
term to $P-hv$, whose first variation is zero. Density extends the
inequality to mean-zero $H^1$ tests. This is the constant-density,
closed flat-manifold case of \cite[Equation (3.1)]{Milman}, with
$h$ identified by $P^{\prime}=hv^{\prime}$.

If the boundary has distinct components of areas $P_1,P_2>0$, and
the first is nonflat, choose $f=1$ on the first, $f=-P_1/P_2$ on the
second, and zero elsewhere. This is a mean-zero test and
$Q(f,f)\leq-\int_{\Sigma_1}|B|^2d\sigma<0$. Consequently any nonflat
minimizing boundary is connected; compare the actual-minimizer
classification summarized in \cite[Section 4.3]{Milman}.

For actual minimizers, the low-genus conclusion below is also
\cite[Proposition 4.4]{Milman}, attributed there to Ritor\'e--Ros.
We retain the proof of the stronger connected stable-CMC statement.
\begin{lemma}[Low genus]\label{g:low-genus}
A connected volume-constrained stable CMC surface of genus zero in
a flat three-torus is spherical. A surface of genus one is intrinsically flat ($K=0$),
and its Euclidean lift is a circular cylinder if $h>0$, or a plane
if $h=0$. A minimizing region of volume $v\leq1/2$ with $h\geq0$
and low-genus boundary is one of the ball, shortest-axis cylinder,
or coordinate slab models.
\end{lemma}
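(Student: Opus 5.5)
The plan is to exploit the fact that in a flat ambient space the components of the unit normal are Jacobi fields, $\Delta_\Sigma\nu_i+|B|^2\nu_i=0$, and to combine them with conformal test maps to $\Sph$ in the stability inequality \eqref{g:stability}. Since the normalization $d\sigma=dA/8$ multiplies both terms of $Q$ by the same factor, I will work with $dA$ throughout.

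\emph{Genus zero.} Uniformization gives a conformal degree-one map $\varphi:\Sigma\to\Sph\subset\R^3$. By the Hersch balancing argument I compose $\varphi$ with a Möbius transformation so that $\int_\Sigma\varphi_i\,dA=0$ for $i=1,2,3$. Each $\varphi_i$ is then an admissible mean-zero test in \eqref{g:stability}. Summing over $i$, and using $\sum\varphi_i^2=1$ together with the conformal invariance of the Dirichlet energy, $\int|\nabla\varphi|^2=2\cdot4\pi$, I get $\int_\Sigma|B|^2dA\le8\pi$. Gauss--Bonnet and $|B|^2=h^2-2K$ convert this into $h^2A\le16\pi$, that is, $\int_\Sigma(h/2)^2dA\le4\pi$.

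The Willmore inequality for a closed surface lifted to a compact surface in $\R^3$ gives the reverse bound $\int_\Sigma(h/2)^2dA\ge4\pi$. The compact lift exists because a genus-zero surface has trivial image in $\pi_1(T)$. Equality in Willmore forces a round sphere. The same inequality also rules out $h=0$, so no separate treatment of minimal spheres is needed.

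\emph{Genus one.} Gauss--Bonnet now gives $\int K=0$, and the goal is $K\equiv0$. Since the boundary encloses a region, the divergence theorem applied to a constant field gives $\int_\Sigma\nu\,dA=0$. Hence every $\nu\cdot e$ is a mean-zero Jacobi field with $Q(\nu\cdot e,\nu\cdot e)=0$. Weak stability means the Jacobi operator has at most one negative eigenvalue, and it is nonnegative on mean-zero functions. I would then follow the Ritoré--Ros argument in three steps.
\begin{enumerate}
\item Use Riemann--Roch to produce a meromorphic function of degree two on the torus, i.e.\ a conformal branched double cover of $\Sph$. Then balance it as above.
\item Combine the resulting stability inequality, whose energy is $16\pi$, with the zero-energy Jacobi fields $\nu\cdot e$. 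The aim is to show that equality must occur in a chain of inequalities.
\item Use the holomorphic, hence constant, Hopf differential $c\,dz^2$ in flat conformal coordinates. Here $c\neq0$, since a totally umbilic CMC torus does not exist. The equality case then forces $H^2-K$ to be a constant multiple of the inverse conformal factor, and from this $K\equiv0$.
\end{enumerate}
This genus-one step is the main obstacle. If the direct equality analysis does not close, my fallback is a nodal-domain count: I would show that for some $e$ the function $\nu\cdot e$ has more than two nodal domains, which would produce a second negative direction and contradict weak stability unless $K\equiv0$.

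Once $K\equiv0$, the principal curvatures satisfy $\kappa_1+\kappa_2=h$ and $\kappa_1\kappa_2=0$. The Codazzi equations then force them to be the constants $h$ and $0$. If $h>0$, the Euclidean lift is a circular cylinder of curvature $h$; if $h=0$, it is a plane.

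\emph{The minimizing statement.} This part is comparison of models. Genus zero gives a ball in $T$; it must lie in the region side because $h\ge0$ and $v\le1/2$. Genus one with $h>0$ gives a cylinder whose axis must be a lattice direction of $(2\mathbb Z)^3$, since the surface is closed in $T$. Its normalized perimeter is proportional to the period length of the axis, so minimality selects the shortest period, which is a coordinate axis. Genus one with $h=0$ gives parallel planes, which must also be rational. Minimality again selects coordinate slabs, and connectedness of nonflat boundaries is not needed in this case. Translating in $T$ and reflecting back via Lemma~\ref{g:reflection} identifies these with the corner eighth-ball, the edge quarter-cylinder and the coordinate slab.
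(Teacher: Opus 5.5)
The genuine gap is the genus-one claim $K\equiv0$. It is the substance of the lemma, and none of your steps proves it.

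Your genus-zero argument is sound: a balanced degree-one map gives $H^2A\le4\pi$ with $H=h/2$, and Willmore's inequality for the compact lift forces a round sphere. The paper instead uses Hopf's theorem, which needs no stability.

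\emph{Steps 1 and 2.} A balanced degree-two map yields only $\int_\Sigma|B|^2dA\le16\pi$, i.e.\ $H^2A\le4\pi$, since $\int_\Sigma K\,dA=0$. When the lift is noncompact, no reverse bound forces equality. Nonflat embedded CMC tori in $T$ satisfy this inequality strictly. An example is the period-two unduloids bifurcating from the cylinder of radius $1/\pi$, for which $H^2A\approx\pi^2<4\pi$. Adding the fields $\nu\cdot e$ changes nothing: they are Jacobi fields, so $Q(\nu\cdot e,f)=0$ for every $f$.

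\emph{Step 3.} This step is circular. If the metric is $\lambda|dz|^2$ and the Hopf differential is $c\,dz^2$, then $H^2-K$ is a constant multiple of $\lambda^{-2}$ on \emph{every} CMC torus. So this relation is not an equality case and does not yield $K\equiv0$.

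\emph{The nodal fallback.} It fails on the same unduloid tori. Write $e$ in cylindrical components $(\rho,\theta_0,\tau)$ about the axis. Then $\nu\cdot e=(1+r'^2)^{-1/2}\bigl(\rho\cos(\theta-\theta_0)-\tau r'\bigr)$ has connected positive and negative sets, hence exactly two nodal domains, and no second negative direction appears.

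Also, your claim $c\neq0$ is false when $h=0$, since a flat totally geodesic torus is totally umbilic. That case needs $K=-|B|^2/2\le0$ together with Gauss--Bonnet.

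The missing idea is a test function built from the conformal factor itself, as in the paper. Normalize the flat metric of the nowhere-vanishing Hopf differential so that $g=e^{2u}g_0/(4H^2)$. Gauss--Codazzi give $\Delta_0u+\sinh u\cosh u=0$ and $K=H^2(1-e^{-4u})$, and conformal invariance gives $8Q(f,f)=\int(|\nabla_0f|^2-\cosh(2u)f^2)\,dA_0$. If $u\not\equiv0$, integrating the equation shows that $u$ changes sign. Testing the equation with the truncation of $\sinh u\cosh u$ to a sign set $\Omega$ gives $8Q(\sinh u\,\mathbf 1_\Omega,\sinh u\,\mathbf 1_\Omega)=-\int_\Omega\sinh^2u\,(\sinh^2u+|\nabla_0u|^2)\,dA_0<0$. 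The two such tests have disjoint supports and means of opposite sign, so a mean-zero combination violates stability. Thus $u\equiv0$ and $K\equiv0$. Citing Ritor\'e--Ros as a black box would also be acceptable, but your sketch does not reconstruct their proof.

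In the minimizing part, two smaller points need repair. You should check that the comparison shortest-axis cylinder is embedded; it is, since its radius is $\sqrt{4v/\pi}\le\sqrt{2/\pi}<1$. You should also note that a single flat torus does not separate $T$, so flat boundaries come in parallel pairs, and more than one pair cannot minimize.
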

\begin{proof}
By Codazzi the Hopf quadratic differential is holomorphic. On a
sphere it vanishes, so the immersion is totally umbilical.
For the genus-one case, reverse orientation if necessary to assume
$h\geq0$. If $H=h/2=0$, the identity $K=-|B|^2/2$ and
Gauss--Bonnet give $B=0$, proving flatness directly.
It remains to consider $H>0$. The Hopf differential is then
nonzero: otherwise the immersion would be totally umbilical with
positive constant curvature, incompatible with a torus by
Gauss--Bonnet. A nonzero holomorphic quadratic differential on a
torus has no zeros. Normalize the flat metric $g_0$ associated
with this differential so that the induced metric is
$g=e^{2u}g_0/(4H^2)$. The Gauss--Codazzi equations and the conformal
invariance of Dirichlet energy give
\begin{equation}\label{g:sinh}
 \Delta_0u+\sinh u\cosh u=0,\qquad
 K=H^2(1-e^{-4u}),\qquad
 8Q(f,f)=\int\bigl(|\nabla_0 f|^2-\cosh(2u)f^2\bigr)dA_0.
\end{equation}
If $u$ is nonzero, integrating its equation shows that it has both
signs. On either sign set $\Omega$, extend $\sinh u$ by zero. Its
positive and negative truncations belong to $H^1$. Testing the weak
equation with the corresponding truncation of $\sinh u\cosh u$
gives
\[
 \int_\Omega\cosh(2u)|\nabla_0u|^2dA_0
 =\int_\Omega\sinh^2u\cosh^2u\,dA_0.
\]
Substitution into \eqref{g:sinh} yields
\[
 8Q(f,f)=-\int_\Omega\sinh^2u
             \bigl(\sinh^2u+|\nabla_0u|^2\bigr)dA_0<0.
\]
The two tests have disjoint supports and their induced-area means
have opposite signs. A nonzero linear combination has mean zero and
still has strictly negative quadratic form, contradicting stability.
Thus $u=0$ and $K=0$. This argument uses no regularity of the nodal
boundary.

When $H>0$, the constant principal curvatures are $0,2H$.
Codazzi makes the principal directions parallel, and integration of
the Gauss--Weingarten equations identifies the Euclidean lift as a
complete circular cylinder. A compact quotient has a primitive
lattice axis $k\in\mathbb Z^3$ of length $2|k|$. Its normalized
volume and perimeter are
\[
 v=\frac{\pi r^2|k|}{4},\qquad P=\frac{\pi r|k|}{2},\qquad
 P^2=\pi|k|v.
\]
The shortest-axis cylinder at the same volume has radius
$\sqrt{4v/\pi}\leq\sqrt{2/\pi}<1$ and is embedded. It strictly
improves perimeter if $|k|>1$. The shortest axes are coordinate axes.
Similarly, separating flat boundary components are parallel flat
tori occurring in pairs. Their shortest possible lattice normal is
a coordinate normal, and more than one pair cannot minimize.
Finally $h\geq0$ fixes the enclosed rather than the exterior side of
a sphere or cylinder. A flat torus need not itself separate, which
is why the flat case is formulated in terms of boundary pairs.
\end{proof}

\begin{lemma}[Hersch bound]\label{g:hersch}
For a connected volume-constrained stable CMC surface of genus two
or three, $h^2P\leq\pi$.
\end{lemma}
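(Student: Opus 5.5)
We would use the Hersch--Ros argument: build test functions from the Gauss map composed with conformal maps of the sphere, and combine the stability inequality \eqref{g:stability} with a bound on $\int|B|^2$ that comes from Gauss--Bonnet.

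\textbf{Step 1: the test functions.} Since $\Sigma$ is a compact Riemann surface of genus $g\in\{2,3\}$, Riemann--Roch supplies a nonconstant meromorphic function of degree at most $\lfloor (g+3)/2\rfloor$. This is $2$ for $g=2$ and $3$ for $g=3$. We do not use that function directly. Instead we use the Gauss map $\nu:\Sigma\to\Sph$. On a flat manifold $\nu$ is antiholomorphic up to a correction term, and $\int|\nabla\nu|^2=\int|B|^2$. The Hersch balancing lemma gives a conformal map $\phi$ of $\Sph$ such that each coordinate of $\phi\circ\psi$ has $\sigma$-mean zero, for a suitable branched conformal map $\psi:\Sigma\to\Sph$. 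Each coordinate $f_i$ is then an admissible test in \eqref{g:stability}. Summing over $i=1,2,3$ and using $\sum f_i^2=1$ gives
\[
\int_\Sigma|B|^2d\sigma\leq\sum_i\int_\Sigma|\nabla f_i|^2d\sigma=\frac{8\pi\deg\psi}{8},
\]
in the normalized measure $d\sigma=dA/8$. Here the Dirichlet energy is conformally invariant, and it equals $8\pi$ times the degree in unnormalized area.

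\textbf{Step 2: the curvature identity.} We have $|B|^2=h^2-2K$. Gauss--Bonnet gives $\int K\,dA=4\pi(1-g)$, so
\[
\int|B|^2d\sigma=h^2P+\frac{8\pi(g-1)}{8}.
\]
Combining this with Step 1 yields $h^2P\leq\pi\bigl(\deg\psi-(g-1)\bigr)$, up to the normalization factor that must be tracked through the $dA/8$ convention. With $\deg\psi\leq g$ the right side is at most $\pi$. The degree bound $\deg\psi\le g$ holds in genus two, since every curve there is hyperelliptic and has a degree-$2$ map. In genus three there are always degree-$3$ maps, so $\deg\psi\le 3$ as well. Together these give the claim $h^2P\leq\pi$.

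\textbf{Main obstacle.} The delicate point is the normalization. With $d\sigma=dA/8$ both sides scale identically, so the constant must come out exactly $\pi$ in these units. This has to be checked against the paper's conventions, since $P$ and $h$ are measured in normalized area and Euclidean curvature respectively. If the count produces $8\pi$ in place of $\pi$ in Euclidean area, the scaling of $P$ by $1/8$ absorbs the discrepancy. I would verify this against the sphere case, where equality-type behaviour should appear. I would also cross-check with Ros's version in \cite[Section 4.3]{Milman}.
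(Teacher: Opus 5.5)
This is the paper's argument: a balanced meromorphic map of degree $d\leq g$ (the canonical pencil in genus two, and the pencil $|K-p|$ with $l(K-p)=1+l(p)=2$ in genus three), the summed stability inequality, and Gauss--Bonnet. Your bookkeeping $h^2P\leq\pi(\deg\psi-g+1)\leq\pi$ already has the normalization right; it is the paper's $4H^2A+8\pi(g-1)\leq8\pi d$ with $A=8P$ and $H=h/2$.

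You should delete the sentence saying you use the Gauss map instead. For $h\neq0$ the map $\nu$ is not conformal, and its components are Jacobi fields with $Q(\nu_i,\nu_i)=0$, so they carry no information. The map $\psi$ must be the meromorphic function, exactly as your degree count assumes.
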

\begin{proof}
For genus two the canonical divisor has degree two and two
independent sections, and hence gives a sphere-valued meromorphic
map of degree at most two. For genus three, for any point $p$,
Riemann--Roch gives $l(K-p)=1+l(p)=2$: otherwise $l(p)>1$ would
produce a degree-one meromorphic map to the sphere. The two sections
give a map of degree at most three. Common zeros can only lower its
degree. These applications use Riemann--Roch and $\deg K=2g-2$;
see \cite[Theorem 2.23 and Corollary 2.24]{Gunning} and \cite{FarkasKra}.

Postcompose the map $\Phi$ with a sphere M\"obius transformation to
make the three coordinate means vanish. To recall the centering
argument, for $a$ in the unit ball define
\[
 T_a(n)=\frac{(1-|a|^2)n+2(1+a\cdot n)a}
                  {1+2a\cdot n+|a|^2},\qquad
 b(a)=\frac1A\int_\Sigma T_a(\Phi)\,dA.
\]
The area pushforward of $\Phi$ has no atoms, since each fibre is
finite. Thus $b$ extends continuously to the closed ball with
$b(\xi)=\xi$ on its boundary: away from $n=-\xi$, the integrand
converges to $\xi$, while uniformly small caps have uniformly small
mass for a finite nonatomic measure. If $b$ had no zero, $b/|b|$
would retract the ball onto its boundary, contrary to Brouwer's
theorem. The balanced map therefore exists.

Sum \eqref{g:stability} over its three coordinates. Its conformal
energy is $8\pi d$, and Gauss--Bonnet gives
\[
 4H^2A+8\pi(g-1)=\int_\Sigma|B|^2dA\leq8\pi d.
\]
Here $d-g+1\leq1$ in both cases, so $AH^2\leq2\pi$.
Substituting $A=8P$ and $H=h/2$ proves the assertion.
\end{proof}

\begin{lemma}[Profile supports]\label{g:profile}
The profile $I$ is continuous, concave, and satisfies
$I(v)=I(1-v)$. A minimizing surface at volume $v$ supplies a smooth
local upper support $J$ satisfying
\begin{equation}\label{g:profile-ode}
 J'(v)=h,\qquad J^2J''+J(J')^2=\frac\pi2\chi(\Sigma),\qquad
 I'_+(v)\leq h\leq I'_-(v).
\end{equation}
Moreover $I(1/2)=1$.
\end{lemma}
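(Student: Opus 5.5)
Via Lemma~\ref{g:reflection}, I will treat $I$ as the normalized isoperimetric profile of the flat torus $T$. The symmetry $I(v)=I(1-v)$ follows from complementation, which preserves relative perimeter. For continuity and concavity I will use the regularity and variational information recalled above together with the standard Bavard--Pansu/Sternberg--Zumbrun argument. Fix $v_0\in(0,1)$ and a minimizer $E$ at $v_0$, with smooth embedded CMC boundary $\Sigma$ and multiplier $h$. Flow $\Sigma$ by parallel surfaces at signed normal distance $t$. Normalized volume and area become
\[
 V(t)=v_0+\int_0^t A(s)\,ds,\qquad A(t)=\int_\Sigma (1+t\kappa_1)(1+t\kappa_2)\,d\sigma
 =P+th P+t^2\!\int_\Sigma K\,d\sigma ,
\]
valid for $|t|$ small. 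In the flat metric, $\int_\Sigma K\,d\sigma=\tfrac18\cdot 2\pi\chi(\Sigma)=\tfrac\pi4\chi(\Sigma)$. Since $V'(0)=P>0$, I can invert $t=t(V)$ near $v_0$ and set $J(V)=A(t(V))$. Then $J\geq I$ near $v_0$, with equality at $v_0$, so $J$ is a smooth local upper support.

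The ODE follows by direct computation. First, $J'=A'(t)/A(t)$, and $A'(0)=hP$ gives $J'(v_0)=h$. Next, $J''=(A''A-A'^2)/A^3$, so $J^2J''+J(J')^2=A''$. At $t=0$ this equals $2\int K\,d\sigma=\tfrac\pi2\chi(\Sigma)$. I need to check that the identity is intended at $v=v_0$. I would state it there, and note that it in fact holds for every $t$, because $A''$ is constant in $t$ and $J^2J''+JJ'^2=A''(t)$ identically. The normalization $d\sigma=dA/8$ is exactly what makes the constant $\pi/2$ rather than $4\pi$, and I expect this bookkeeping to be the only delicate point.

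The support inequalities $I'_+(v_0)\le h\le I'_-(v_0)$ come from $I\le J$ near $v_0$ with equality at $v_0$ and $J'(v_0)=h$. Comparing difference quotients on each side gives the bounds, and these one-sided derivatives exist once concavity is known. For concavity, $J''\le -J'^2/J+\tfrac{\pi}{2}\chi/J^2$ is bounded above near $v_0$, so every point has a smooth upper support with locally uniformly bounded second derivative. Hence $I$ is semiconcave and continuous on $(0,1)$; continuity at the endpoints comes from the small-ball comparison $I\le m$ together with lower semicontinuity. Global concavity is the hardest step, because $\chi$ may be negative. I would first try the standard route: in dimension three one shows $I^{3/2}$ concave via $(I^{3/2})''\le0$ in the support sense, using $\int|B|^2\ge h^2P/2$ and the test $f\equiv1$ on parallel surfaces. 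This implies concavity of $I$. Alternatively I would simply cite \cite[Section 2]{Milman} and \cite{BR,SZ}, where concavity of the profile of a flat torus with constant density is established.

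Finally, for $I(1/2)=1$ the coordinate slab gives $I(1/2)\le1$. The reverse inequality is Hadwiger's half-volume theorem \cite{Hadwiger}, or in Milman's normalization \cite[Equation (4.1)]{Milman}, which derives it from the Gaussian isoperimetric inequality by comparison with a Gaussian product. I will cite this rather than reprove it.
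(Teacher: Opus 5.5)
Your proposal is correct and follows essentially the same route as the paper. The shared steps are the parallel-surface expansion with $\int_\Sigma K\,d\sigma=\pi\chi/4$, elimination of $t$ to obtain the support identity and the one-sided slope bounds, and citations of Milman for continuity, concavity and symmetry and of Hadwiger/\cite[Equation (4.1)]{Milman} for $I(1/2)=1$.

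One small correction: negative $\chi$ does not threaten concavity. Gauss--Bonnet and $|B|^2=h^2-2K$ give $J''(v_0)=-P^{-2}\int_\Sigma|B|^2\,d\sigma\leq0$ directly, which is what the paper records, so your detour through semiconcavity or $I^{3/2}$ is unnecessary.
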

\begin{proof}
For sufficiently small exterior normal displacement $t$, the area
Jacobian is $1+ht+Kt^2$. Therefore
\begin{equation}\label{g:parallel}
 P(t)=P+hPt+\frac{\pi\chi}4t^2,\qquad
 v(t)=v+Pt+\frac{hP}2t^2+\frac{\pi\chi}{12}t^3.
\end{equation}
Since $v'(0)=P>0$, eliminating $t$ gives an upper support to the
profile. In particular
\[
 J''(v)=\frac{\pi\chi/2-h^2P}{P^2}
       =-\frac1{P^2}\int_\Sigma |B|^2d\sigma\leq0.
\]
This proves \eqref{g:profile-ode} and its one-sided slope inequalities.
The continuity, concavity and complement symmetry also follow
directly from \cite[Proposition 2.3]{Milman}: the torus has no
boundary, its Ricci curvature is zero, and its weight is constant.
The differential inequality associated with the exact support
identity \eqref{g:profile-ode} is \cite[Proposition 4.5]{Milman};
we have displayed the parallel-surface calculation to retain its
precise normalization and upper-support construction.

Finally, \cite[Equation (4.1)]{Milman} gives $I(1/2)=1$
with this normalization. This is the classical half-volume result
of \cite{Hadwiger,BM}, recalled there. A coordinate slab attains one.
\end{proof}

\subsection{Compression, topology, and stresses}\label{g:selection}

\begin{lemma}[Selection of a minimizing downset]\label{g:downset}
At each volume there is an actual minimizing set that is downward
closed in all three coordinates. If its reflected boundary is
connected, has genus at least two, and has $h>0$, then its genus is
the number of active coordinates and is at most three. A coordinate
is active when the occupied trace on its upper face has positive
area. These hypotheses hold whenever $0<v<1/2$ and $I(v)<m(v)$.
\end{lemma}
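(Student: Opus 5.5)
Take any minimizer $E$ at volume $v$ and apply the three coordinate monotone rearrangements (compressions) one after another. Compression in the $x_i$ direction replaces each line segment parallel to $e_i$ by an initial segment of the same length. It preserves volume and does not increase relative perimeter in $Q$; this is the one-dimensional Steiner-type inequality, and for a line with free ends an initial segment is optimal. So the compressed set is again a minimizer. A compression in $x_j$ keeps downward closedness in $x_i$ that was obtained earlier, because the slice lengths remain monotone. After all three compressions we have a downset $D$: if $x\in D$ and $y\leq x$ coordinatewise, then $y\in D$. Reflect $D$ to $T$ by Lemma~\ref{g:reflection}. The result is a global minimizer invariant under the reflection group, with smooth boundary $\Sigma$ and constant multiplier $h$.

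Assume $\Sigma$ is connected, has genus $g\geq2$, and $h>0$. To compute the genus, look at the fundamental piece $S=\Sigma\cap[0,1]^3$. By Hopf's lemma and the maximum principle applied to the reflected smooth surface, $\Sigma$ meets every mirror plane orthogonally. So $S$ is a free-boundary surface in the cube whose boundary arcs lie on the faces. The downset structure makes $S$ a graph over each coordinate direction wherever it is not vertical. In particular the normal satisfies $\nu_i\geq0$, and the strong maximum principle for the Jacobi field $\nu_i$ gives strict positivity except where $\nu_i\equiv0$. One shows that $S$ is a topological disk: it is the boundary of a downset, which is radially star-shaped from the corner $0$ and hence a cell. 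Its boundary circle runs through faces. The occupied trace on the lower face $\{x_i=0\}$ always meets $S$. The upper face $\{x_i=1\}$ carries an arc exactly when coordinate $i$ is active.

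Now compute $\chi(\Sigma)$ from the orbifold decomposition. $\Sigma$ consists of $8$ copies of $S$ glued along face arcs, and $\chi(\Sigma)=8\chi_{\rm orb}$. The contributions of edges and corners, meaning points of $\partial S$ on cube edges, are counted by the dihedral right angles. The disk $S$ meets the three lower faces, and meets each active upper face along one arc. Corner points on edges between consecutive faces carry weight $1/4$ in the orbifold Euler characteristic, since the local group is $\mathbb Z_2^2$. Counting the faces $k$ visited in cyclic order (three lower faces plus the number of active upper faces $n$) gives $\chi_{\rm orb}=1-\tfrac k2+\tfrac k4$ with $k=3+n$. This yields $\chi(\Sigma)=8-2k=2-2n$, so $g=n\leq3$. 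I expect this step to be the main obstacle. The difficulty is to rule out several arcs on one face, arcs passing through cube vertices, and $S$ being non-disk. Connectedness and $g\geq2$ are used to exclude the degenerate configurations: with no active coordinate we would have $g=0$, with one we would have a cylinder.

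Finally, the hypotheses hold when $0<v<1/2$ and $I(v)<m(v)$. A nonflat minimizing boundary is connected, by the mean-zero two-component test. A flat boundary would be a slab, with $P=1\geq m$. If $h<0$, the complementary volume and the concavity and symmetry of Lemma~\ref{g:profile} give $I'_+(v)\geq0$ on $(0,1/2)$; strictly, $I'_-\ge I'_+\geq$ the slope toward $1/2$, which is positive unless $I$ is constant there, and $I\equiv1\geq m$ is excluded. So $h\geq0$ by $I'_+\leq h$, and $h=0$ forces $I$ to be maximal, again contradicting $I<m\leq1$. Genus $0$ or $1$ gives a model by Lemma~\ref{g:low-genus}, with perimeter $\geq m(v)$, which is a contradiction. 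Hence $g\geq2$.
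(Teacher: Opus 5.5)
Your compression step (via the one-sided Steiner inequality instead of the paper's Minkowski-content comparison) and your closing paragraph on why the hypotheses hold are fine. The orbifold bookkeeping $\chi(\Sigma)=8(1-k/4)$ is also a correct way to count.

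The gap is the structural input to that count. You flag it yourself, but then close it circularly, using $g=n$ to exclude the degenerate configurations. The claims that $S$ is a disk, that each lower face and exactly the active upper faces each carry one boundary arc, and that $k=3+n$, are not properties of downsets in general. For the slab $\{x_3<c\}$, two coordinates are active, yet the lower face $\{x_3=0\}$ is completely occupied and carries no arc. So $k=4$ and $\chi=0$, not $-2$; your assertion that every lower trace meets $S$ fails there.

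What goes wrong is that the region reaches an edge where two upper faces meet. Excluding this is exactly where $h>0$ enters, and your genus argument never uses $h>0$. The paper's argument runs as follows:
\begin{itemize}
\item At a free-boundary point on two upper walls, invariance under both reflections and downward order force $\nu=e_k$.
\item Writing the surface as $x_k=f(x_i,x_j)$, symmetry and order give $f_i=f_j=0$ and $f_{ii},f_{jj}\geq0$, hence $h=-f_{ii}-f_{jj}\leq0$.
\item An occupied edge interval would therefore have to exit at such a forbidden point, fill the cube by downward order, or end at a three-mirror fixed point, where no invariant unit normal exists.
\end{itemize}

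You are also missing the strict transversality needed to make ``star-shaped, hence a cell'' rigorous. Star-shapedness from the corner, which is a boundary point, gives neither a continuous exit radius nor disk-shaped face traces. The paper proceeds in three steps:
\begin{itemize}
\item It proves $\nu_i>0$ on each open strip. The strong minimum principle for the Jacobi field leaves only the alternative $\nu_i\equiv0$. That is excluded because $e_i$ would then be a nowhere-vanishing tangent field on a surface with $\chi\neq0$. This, not your remark about low-genus cases, is where $g\geq2$ is used.
\item Hopf's lemma then gives the strict wall-curvature signs. Orthogonality to the mirrors comes from symmetry, not from Hopf.
\item After reflecting to $[-1,1]^3$, so that the origin is interior, these signs give $x\cdot\nu>0$ at every exit and a continuous exit radius. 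At a seam it is the minimum of two continuous radii. Hence the unglued region is a ball, each active cut face is a disk, and the disks are disjoint by the edge exclusion.
\end{itemize}
Once these facts are established, your orbifold computation and the paper's count (a sphere with $2n$ disks removed and glued in pairs) both give $\chi=2-2n$.
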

\begin{proof}
For a compact set replace every nonempty closed coordinate fibre
$C\subset[0,1]$ by $[0,|C|]$, retaining empty fibres as empty.
Denote this compression by $S$. Fibre length is upper
semicontinuous, because for $x_j\to x$ compactness implies
$C_{x_j}\subset(C_x)_\epsilon$ eventually. Thus $SE$ is compact.
For a nonempty closed fibre its relative $r$-neighbourhood satisfies
$|C_r|\geq\min(1,|C|+r)$: before the interval is full at least one
gap shrinks at speed at least one, and integration gives the bound.
Applying it with the residual radius
$\sqrt{r^2-|x-y|^2}$ proves $(SE)_r\subset S(E_r)$.
Hence relative exterior Minkowski perimeter does not increase.
Distance cutoffs and BV lower semicontinuity give
$P_{BV}(A)\leq P_{Mink}(A)$. A smooth reflected minimizer has
orthogonal mirror contacts, and its tubular formula gives
$P_{Mink}(E)=P_{BV}(E)=I(v)$. Consequently
\[
 I(v)\leq P_{BV}(SE)\leq P_{Mink}(SE)\leq I(v).
\]
After each compression choose the regular closed minimizing
representative. Previous coordinate order is preserved by nested
fibres; the almost-everywhere order passes to the representative by
translation of interior balls and closure. Three compressions give
the claimed minimizing downset.

For a bad profile value, the low-genus classification excludes low
genus, and concavity gives
$h\geq I'_+(v)\geq(1-P)/(1/2-v)>0$. We prove the remaining statement
under only the hypotheses in the lemma. Downward order gives
$\nu_i\geq0$. Reflecting in the other coordinates, this is true
on the entire strip $0<x_i<1$. There
$(\Delta_\Sigma+|B|^2)\nu_i=0$, the constant-density case of
\cite[Equation (3.2)]{Milman}. If $\nu_i$ vanished inside a strip
component, the strong minimum principle applied to
$\Delta_\Sigma\nu_i=-|B|^2\nu_i\leq0$ would make it zero there.
Analytic continuation would then give $\nu_i\equiv0$ on the
connected surface. The constant vector $e_i$ would be an everywhere
nonzero tangent field, forcing Euler characteristic zero. Thus
$\nu_i>0$ in the strip. At a mirror $\nu_i=0$, and mirror symmetry
makes $e_i$ a principal tangent direction. Hopf's lemma yields
\begin{equation}\label{g:hopf}
 B_{ii}>0\text{ on lower walls},\qquad
 B_{ii}<0\text{ on upper walls},\qquad
 \kappa_i^-<h<\kappa_i^+.
\end{equation}
Other mirrors have already been reflected, so Hopf is applied at
ordinary smooth boundary points.

A free-boundary point on two upper walls would have normal $e_k$.
Write it as $x_k=f(x_i,x_j)$. Reflection and order give
$f_i=f_j=0$ and $f_{ii},f_{jj}\geq0$, hence
$h=-f_{ii}-f_{jj}\leq0$, a contradiction. The region itself avoids
such an edge: an occupied edge interval would either exit at a
forbidden free-boundary point, fill the cube by downward order, or
terminate at a three-mirror fixed point, where no invariant unit
normal exists.

Before identifying opposite faces, reflect to $[-1,1]^3$. The
origin is interior, and radial fibres are initial intervals. At a
free-boundary exit $x\cdot\nu>0$, using the strict normal
inequalities in the nonzero coordinates. At a cut face the radial
derivative is also positive. The exit radius is continuous by the
implicit function theorem; at a seam it is the minimum of the two
continuous exit radii. Thus the unglued region is a three-ball.
Each active cut face has its centre interior and is, by the same
planar radial argument, a disk. The disks are disjoint because the
region avoids intersections of two cut faces. Gluing $n$ opposite
pairs turns the free boundary into a sphere with $2n$ disks removed
and their boundary circles paired. Its Euler characteristic is
$2-2n$, so its genus is $n\leq3$.

For the regular representative, meeting an upper mirror is
equivalent to positive occupied area there: at a boundary point,
the tangent normal lies in the mirror and the region occupies an
open half-neighbourhood in it. Accordingly, an inactive direction
has positive distance from its upper face by compactness.
\end{proof}

Set
\[
 M_i=\int_\Sigma\nu_i^2d\sigma,\qquad
 N_i=\int_\Sigma|\nu_i|d\sigma,\qquad m_i=M_i/P.
\]
Let $A_i^-,a_i$ be the occupied lower and upper face areas and
$L_i^-,L_i^+$ their free trace lengths. Integrals on the original
cube free patch use physical area; eight reflection copies cancel
the normalization $1/8$. Thus they give exactly $P,M_i,N_i$.

\begin{lemma}[Stress identities]\label{g:stress}
For each coordinate of a minimizing downset,
\begin{equation}\label{g:stress-eq}
 A_i^--a_i=N_i,\qquad L_i^--L_i^+=hN_i,\qquad
 L_i^+-ha_i=P-M_i-hv=L_i^--hA_i^-.
\end{equation}
\end{lemma}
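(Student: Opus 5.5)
The plan is to obtain all three identities from the divergence theorem, applied once in the region and once on the free surface, using the coordinate field $e_i$ and the position-type field $x_ie_i$. I will work on the original cube patch $\Sigma\cap Q$ with physical area, which, as noted before the statement, gives exactly $P,M_i,N_i$.

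\textbf{Regularity input.} By Lemma~\ref{g:reflection} and the regularity discussion in Section~\ref{g:foundations}, the reflected boundary is smooth. Hence $\Sigma\cap\overline Q$ is a smooth surface meeting each wall orthogonally. At a boundary point on the face $\{x_j=0\}$ the outward conormal of $\Sigma$ is therefore $-e_j$, and on $\{x_j=1\}$ it is $+e_j$. On the free patch $\nu_i\geq0$ by the downward order used in Lemma~\ref{g:downset}, so $\int\nu_i\,d\sigma=N_i$.

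\textbf{First identity.} Apply the divergence theorem to the constant field $e_i$ on $E$. The outward normal of $E$ is $\nu$ on $\Sigma$, $-e_i$ on the occupied lower face and $+e_i$ on the occupied upper face; the remaining faces are orthogonal to $e_i$. This gives $0=N_i+a_i-A_i^-$, which is the first identity.

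\textbf{Second identity.} Use the tangential divergence theorem $\int_\Sigma\Div_\Sigma X^T=\int_{\partial\Sigma}X\cdot\eta$ with $X=e_i$. Since $\Div_\Sigma(X\cdot\nu\,\nu)=(X\cdot\nu)h$ and $\Div_\Sigma e_i=0$, we get $\Div_\Sigma e_i^T=-h\nu_i$. The boundary term contributes $-1$ per unit length on the lower $i$-wall, $+1$ on the upper $i$-wall, and $0$ on the other walls, since $e_i\cdot e_j=0$ there. Therefore $-hN_i=L_i^+-L_i^-$.

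\textbf{Third identity.} Take $X=x_ie_i$.
\begin{itemize}
\item In the region, $\Div X=1$. The lower face contributes nothing because $x_i=0$ there, and the upper face contributes $a_i$. Hence $\int_\Sigma x_i\nu_i\,d\sigma=v-a_i$.
\item On the surface, $\Div_\Sigma X=e_i\cdot\nabla_\Sigma x_i=1-\nu_i^2$, so $\Div_\Sigma X^T=1-\nu_i^2-hx_i\nu_i$. Integrating gives $P-M_i-h(v-a_i)$.
\item The boundary term vanishes on the lower $i$-wall (where $x_i=0$) and on the other walls (orthogonality). On the upper $i$-wall it equals $x_i=1$ times length, i.e.\ $L_i^+$.
\end{itemize}
This proves $L_i^+-ha_i=P-M_i-hv$. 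The remaining equality then follows algebraically from the first two identities:
\[
 L_i^--hA_i^-=(L_i^++hN_i)-h(a_i+N_i)=L_i^+-ha_i.
\]

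\textbf{Main obstacle.} The only delicate point is justifying the boundary terms. I expect to handle it via the alternative of working on the closed reflected surface in $T$ and restricting to a fundamental cell, so that conormals and traces are classical.
\begin{itemize}
\item The smooth orthogonal contact supplies the conormals $\pm e_j$.
\item Trace lengths are one-dimensional Hausdorff measures of smooth curves.
\item The absence of the forbidden double-wall points is shown in Lemma~\ref{g:downset}, so corner contributions are null.
\item The occupied face sets are the finite-perimeter traces of the regular representative, so the region divergence theorem applies in BV form.
\end{itemize}
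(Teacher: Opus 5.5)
Your proposal is correct and follows the paper's proof essentially step for step. The paper also applies the volume divergence theorem to $e_i$ and $x_ie_i$, uses the surface identities $\Div_\Sigma e_i^T=-h\nu_i$ and $\Div_\Sigma(x_ie_i^T)=1-\nu_i^2-hx_i\nu_i$, and uses the orthogonal-contact conormals $\pm e_i$ to get the boundary fluxes $L_i^+-L_i^-$ and $L_i^+$.

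One small correction: corner points of the trace curves are negligible for a simpler reason than the one you give, not because of the upper-wall exclusion in Lemma~\ref{g:downset}. Invariance under two mirror reflections forces the normal at any edge point to be $\pm e_k$, so the surface crosses each edge transversally at isolated points.
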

\begin{proof}
The occupied upper and lower faces have outward normals $+e_i$ and
$-e_i$. Applying volume divergence to $e_i$ and $x_ie_i$ gives
$\int\nu_i=A_i^--a_i$ and $\int x_i\nu_i=v-a_i$ on the free patch.
Its upper and lower outward conormals are likewise $+e_i,-e_i$,
while orthogonality gives $\nu_i=0$ there. The tangent fields satisfy
\[
 \Div_\Sigma e_i^T=-h\nu_i,\qquad
 \Div_\Sigma(x_ie_i^T)=1-\nu_i^2-hx_i\nu_i.
\]
Their boundary fluxes are $L_i^+-L_i^-$ and $L_i^+$.
Insert the volume fluxes to obtain all three equalities. In
particular the upper occupied-area term $ha_i$ is retained.
\end{proof}

\begin{lemma}[Necessary genus-three data]\label{g:genus-three}
Let $0<v\leq1/2$, and let an actual selected minimizing downset
have reflected boundary of genus three, positive $h$, and $P\leq1$.
Its data produce numbers $u,k,\ell$ satisfying
\begin{equation}\label{g:genus-three-eq}
 \begin{gathered}
 0<u\leq1/\sqrt3,\qquad 0\leq k\leq\sqrt\pi,
       \qquad 0\leq\ell\leq1-u^2,\\
 \ell\leq1-u^2-k/\pi+(k/\pi)\ell^2,
       \qquad (\ell+ku)^2+\ell^2+\pi u\geq\pi.
 \end{gathered}
\end{equation}
\end{lemma}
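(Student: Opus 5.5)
The plan is to fix one coordinate, build $u,k,\ell$ from its normal moments, the CMC multiplier and its free trace lengths, and obtain each line of \eqref{g:genus-three-eq} from a different earlier ingredient: Hersch, the stress identities, a stability test, and planar isoperimetry on the faces. The exact normalizations of $u,k,\ell$ are guesses, to be adjusted so that the two nonlinear inequalities close.

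\emph{Setup and choice of variables.} By Lemma~\ref{g:downset}, genus three means all three coordinates are active: $a_i>0$ for $i=1,2,3$. Since $|\nu|=1$, we have $M_1+M_2+M_3=P$. Choose the coordinate $i$ with the smallest moment, so $m_i\leq 1/3$. I would take
\[
u=\sqrt{m_i},\qquad k=h\sqrt P,\qquad \ell=\frac{L_i^+-ha_i}{P}=1-m_i-\frac{hv}{P}.
\]
Here $\ell$ is the common normalized value in the third identity of \eqref{g:stress-eq}. If the last inequality does not close with these choices, I would rescale $\ell$ by a power of $P$, using $P\leq1$.

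\emph{The elementary bounds.} With these choices, $0<u\leq1/\sqrt3$ follows from minimality of $m_i$ together with $\nu_i>0$ on the strip. The bound $0\leq k\leq\sqrt\pi$ is Lemma~\ref{g:hersch}. The bound $\ell\leq1-u^2$ is $hv\geq0$. For $\ell\geq0$, I would use the stress identity $\ell P=L_i^+-ha_i$ and show that the free trace on an active upper face is long enough. The route is to integrate the geodesic curvature of the upper trace: by \eqref{g:hopf} it is controlled by $h$ through $B_{ii}<0$, and Gauss--Bonnet on the occupied face region then gives $L_i^+\geq h a_i$.

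\emph{The quadratic inequality} $\ell\leq1-u^2-k/\pi+(k/\pi)\ell^2$. I would derive this from the volume-constrained stability \eqref{g:stability}, tested with a centered function built from $\nu_i$ and the constant $1$. The Jacobi relation $(\Delta_\Sigma+|B|^2)\nu_i=0$ gives $Q(\nu_i,\nu_i)=0$, so centering produces a cross term. That cross term is computed from $\int\nu_i=N_i$ and from $\int|B|^2=h^2P-\pi\chi/2$, with $\chi=-4$ in genus three. The quantities $N_i$ and $hv$ then have to be re-expressed through $\ell$ and $u$ using \eqref{g:stress-eq}.

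\emph{The last inequality} $(\ell+ku)^2+\ell^2+\pi u\geq\pi$. This should come from isoperimetry of the face traces in the unit square: the face region and its free trace satisfy a quarter-disk bound of the form $L^2\geq\pi A$. The relation $L_i^--L_i^+=hN_i$ links the lower and upper traces, which is why a shifted term $\ell+ku$ (with $N_i\lesssim P\,u$ by Cauchy--Schwarz) sits next to an unshifted $\ell^2$. Closing this with $P\leq1$ and $A_i^-=a_i+N_i$ is where the normalizations of $\ell$ and $u$ are decided. I expect this last inequality, together with the sign $\ell\geq0$, to be the main obstacle. My first attempt would be to apply the planar relative isoperimetric inequality separately to the lower and upper occupied faces and sum the two bounds.
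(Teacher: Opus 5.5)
There is a genuine gap, and it cannot be fixed by adjusting powers of $P$. First, the stability route to the quadratic inequality is vacuous. On the reflected surface $\nu_i$ is odd under the $x_i$-mirror, so $\int_\Sigma\nu_i\,d\sigma=0$. The function whose integral is $N_i$ is its even extension $|\nu_i|$. For it, $Q(|\nu_i|,|\nu_i|)=0$, $Q(1,|\nu_i|)=-h^2N_i-\pi$ (the Gauss map has degree $-2$ in genus three), and $Q(1,1)=-h^2P-2\pi$. So with $\alpha=N_i/P\in(0,1]$,
\[
 Q(|\nu_i|-\alpha,|\nu_i|-\alpha)=\alpha^2h^2P+2\pi\alpha(1-\alpha)\geq0
\]
identically, and no constraint on the data results.

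In the paper the quadratic inequality is not a stability statement. It is the third stress identity written as $L_i^+/P=1-m_i-hv/P+ha_i/P$, combined with two further inputs. One is the upper quarter-disk bound $a_i\le(L_i^+)^2/\pi$, obtained by reflecting the occupied upper region four times. The other is the minimality comparison with the embedded quarter-cylinder, $P\le\sqrt{\pi v}$, i.e.\ $v\ge P^2/\pi$. This dictates $\ell=L_i^+/P$ and $k=hP$, so that $hv/P\ge k/\pi$ and $ha_i/P\le(k/\pi)\ell^2$. The remaining bounds are then immediate: $\ell\ge0$ is trivial, $a_i\le v$ (downward closure) gives $\ell\le1-u^2$, and Hersch with $P\le1$ gives $k^2=h^2P^2\le\pi P\le\pi$. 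With your $k=h\sqrt P$ the same argument would need $\pi v\ge P^{3/2}(1-\ell^2)$, which minimality does not give since $P^{3/2}\ge P^2$. With your $\ell=(L_i^+-ha_i)/P$ you have removed exactly the term $ha_i/P$ that the $\ell^2$ on the right is there to absorb.

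Second, the last inequality comes from chaining two bounds on $L_i^-$, not from summing two face inequalities. Isoperimetry is applied to the \emph{unoccupied} lower hole, of area $1-A_i^-=1-a_i-N_i$, reflected four times across the two upper mirrors: $(L_i^-)^2\ge\pi(1-a_i-N_i)$. When all coordinates are active, the occupied lower face reaches its upper edges, so no quarter-disk bound is available for it. In the other direction, $L_i^-=L_i^++hN_i$ and $N_i\le\sqrt{PM_i}=Pu$ give $L_i^-\le P\ell+ku$. This again holds only with $\ell=L_i^+/P$ and $k=hP$; your $\ell$ would leave an extra $ha_i$. Inserting $a_i\le P^2\ell^2/\pi$ and $N_i\le Pu$ yields $(P\ell+ku)^2+P^2\ell^2+\pi Pu\ge\pi$, and replacing $P\le1$ by $1$ only increases the left side.

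Finally, your argument for $\ell\ge0$ is incomplete as stated. Gauss--Bonnet with $\kappa>h$ only gives the upper bound $hL_i^+<\pi/2$, so isoperimetry would also be needed to reach $L_i^+\ge ha_i$. With the correct $\ell$ that step disappears.
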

\begin{proof}
Choose $M_i\leq P/3$ and put
$u=\sqrt{M_i/P}$, $k=hP$, $\ell=L_i^+/P$. Reflecting the upper
occupied disk and the different lower planar hole four times gives
the planar inequalities
\[
 a_i\leq(L_i^+)^2/\pi,\qquad
 (L_i^-)^2\geq\pi(1-a_i-N_i).
\]
The quarter-cylinder competitor is embedded for every $v\leq1/2$,
so $P\leq\sqrt{\pi v}$ and $v\geq P^2/\pi$. Also $a_i\leq v$
and $N_i\leq Pu$. Stress now gives
$\ell=1-u^2-hv/P+ha_i/P\leq1-u^2$ and its quadratic refinement
in \eqref{g:genus-three-eq}. Moreover
\[
 L_i^-\leq P\ell+ku,\qquad
 1-a_i-N_i\geq1-P^2\ell^2/\pi-Pu.
\]
Combining these inequalities first gives
\[
 (P\ell+ku)^2+P^2\ell^2+\pi Pu\geq\pi.
\]
Since all variables are nonnegative, replacing $P\leq1$ by one
increases the left side, giving the final necessary condition in
\eqref{g:genus-three-eq}.
Finally $k^2\leq\pi P\leq\pi$ by Lemma~\ref{g:hersch}.
\end{proof}

For genus two there are exactly two active directions, which we
call $x,y$. The inactive $z$ cut has positive separation from the
region, and the region lifts to $T^2\times\R$. Its positive boundary
is a compact height graph $z=u(x,y)$ over a footprint $D$ with one
upper-corner hole. No minimizing property in this larger ambient
space is used. The two lower axes are fully occupied. Projection
and \eqref{g:stress-eq} give
\begin{equation}\label{g:hole}
 M_z=P-hv,\qquad M_x+M_y=hv,\qquad
 N_z^2\leq P(P-hv),\qquad 1-N_z\leq h^2N_z^2/\pi.
\end{equation}
Indeed the actual reflected planar hole has area $4(1-N_z)$ and
perimeter $4hN_z$, giving the last inequality by planar isoperimetry.

\section{Curvature distributions and wall inequalities}\label{sec:gauss-wall}

We develop the necessary inequalities for the genus-two alternative.
The spherical stress, convex body, cap tails, and wall measures all come
from one actual minimizing surface. Keeping this common origin is
essential when the inequalities are combined.

\subsection{The Gauss distribution and its convex body}\label{g:gauss}

Fix a selected genus-two surface with $h>0$. Its Gauss map has
degree $-1$. The CMC surface is analytic, and $K$ is not identically
zero by Gauss--Bonnet. Hence the zero set of $K$ has surface area
zero. Away from it the Gauss map is locally invertible, and the area
formula gives integrable densities
\begin{equation}\label{g:gauss-push}
 \nu_*(K\,d\sigma)=-\frac18d\omega,\qquad
 \nu_*(d\sigma)=r\,d\omega,\qquad
 \nu_*(B\,d\sigma)=T_0\,d\omega.
\end{equation}
Tensor values at a common normal are interpreted in its common
tangent plane. Integrating surface divergence of tangent sphere
fields evaluated at $\nu$ gives $\Div T_0=0$, while $\tr T_0=hr$.
Disintegrating $d\sigma$ over the Gauss map and applying Jensen to
$Bw$, for each tangent $w$, gives
\[
 T_0^2\leq r(hT_0+I/8),\qquad \det T_0\geq-r/8.
\]
Here we used $B^2-hB=-KI$ and signed Gauss degree; in dimension two
$T_0^2-hrT_0+\det T_0 I=0$. At $r=0$ set $T_0=0$.
Consequently
\begin{equation}\label{g:C}
 C=I+8h\cof T_0>0,\qquad \cof C=I+8hT_0,\qquad
 \tr C=2+8h^2r,\qquad \det C\geq1.
\end{equation}
To verify the positive branch, an eigenvalue $t\leq-1/(8h)$ would
give $t(hr-t)\leq-r/8-1/(64h^2)$, contrary to the determinant bound.
The determinant identity itself follows from the linearity of the
two-dimensional cofactor:
$\det C=1+8h^2r+64h^2\det T_0\geq1$.

\begin{lemma}[Support representation]\label{g:support}
There is a continuous, even, coordinate-reflection-invariant
support function $H$ of a centred convex body $\mathcal K$, with
nonempty interior, such that in distributions
\begin{equation}\label{g:support-eq}
 C=\Hess_{\Sph}H+HI.
\end{equation}
For smooth sphere functions $g,k$ one has
\begin{equation}\label{g:mixed}
 Q(g\circ\nu,k\circ\nu)
 =\frac18\int_{\Sph}\bigl[\cof C(\nabla g,\nabla k)-\tr C\,gk\bigr]d\omega
 =-\frac34 V(g,k,H).
\end{equation}
The identity extends to segment and planar-body support functions.
\end{lemma}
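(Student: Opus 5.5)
Assume, as the condition $\Div T_0=0$ suggests, that $T_0$ is symmetric as a tangent tensor. The first step is to read \eqref{g:support-eq} as the equation for a symmetric Codazzi-type tensor on the sphere. A symmetric $2$-tensor field $C$ on $\Sph$ has the form $\Hess_{\Sph}H+HI$ exactly when $\cof C$ is divergence free. Indeed, in dimension two, $\Div(\cof(\Hess H+HI))=0$ holds identically. This is the classical identity behind Minkowski's problem: the cofactor of the radii-of-curvature tensor is divergence free.

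Here $\cof C=I+8hT_0$ and $\Div T_0=0$, and $\Div I=0$ on the sphere. So $\cof C$ is divergence free in distributions, with $T_0\in L^1$ by \eqref{g:gauss-push}.

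I would construct $H$ by Hodge decomposition. Solve the linear elliptic equation $\Delta_{\Sph}H+2H=\tr C=2+8h^2 r$ in distributions. The kernel of $\Delta+2$ consists of the restrictions of linear functions. The right side is even, because $r$ is even under $\nu\mapsto-\nu$ after the reflection symmetries, and $T_0$ is reflection-invariant. Hence the right side is orthogonal to that kernel, and there is a unique even solution $H$. Coordinate-reflection invariance of $H$ then follows from uniqueness.

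Next I would show that the trace-free part of $C-(\Hess H+HI)$ vanishes. Set $S=C-\Hess H-HI$. Then $S$ is trace free and $\Div S=\Div C-\Div(\Hess H+HI)$. Using the cofactor relation $\cof C=(\tr C)I-C$, we get $\Div C=d(\tr C)-\Div\cof C=d(\tr C)$. Similarly $\Div(\Hess H+HI)=d(\Delta H+2H)=d(\tr C)$. So $\Div S=0$ with $S$ trace free, which makes $S$ a holomorphic quadratic differential on $\Sph$, hence zero by Weyl's lemma in distributions. This gives \eqref{g:support-eq}.

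Continuity of $H$ would come from elliptic regularity. The right side $8h^2r$ is in $L^1$, which only gives $W^{1,p}$ for $p<2$. I would instead use that $\Hess H+HI=C$ is a positive measure-valued matrix. The weak positivity $C>0$ from \eqref{g:C} then makes the $1$-homogeneous extension of $H$ convex, and a convex function is continuous. So $H$ is the support function of a convex body $\mathcal K$, centred because $H$ is even. Nonempty interior follows from $\det C\geq1$: a lower-dimensional body would have degenerate curvature measure.

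For \eqref{g:mixed}, I would write $g\circ\nu$ and $k\circ\nu$ on $\Sigma$. Using $\nabla_\Sigma(g\circ\nu)=B\,\nabla g$, and $|B|^2=h^2-2K$ together with $B^2=hB-KI$, I would push $|\nabla_\Sigma f|^2-|B|^2 f^2$ forward through \eqref{g:gauss-push}. The terms collect as $\frac18[\cof C(\nabla g,\nabla k)-\tr C\,gk]$, using $\nu_*(B^2d\sigma)=hT_0+\frac18 I$ from the degree $-1$ computation.

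The mixed-volume equality then comes from the classical formula $V(g,k,H)=\frac13\int g\,\mathcal D(k,H)$, where $\mathcal D$ is the mixed discriminant of $\Hess+I$. Integrating by parts with $\Div\cof C=0$ turns $\int g\,\mathcal D(k,H)$ into $-\frac12\int[\cof C(\nabla g,\nabla k)-\tr C\,gk]$, up to the normalizing factor. That factor must be checked to give $-\frac34$.

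The extension to segment and planar support functions would follow by approximation: smooth support functions converge uniformly, the mixed volume is continuous, and both sides are linear in $g$ and $k$.

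I expect the main obstacle to be the low regularity. The tensor $T_0$ is only integrable, so both the divergence-free identities and the vanishing of the trace-free part $S$ must be handled in distributions. I would first regularize $C$ by spherical heat-kernel convolution, which preserves positivity and the divergence condition up to equivariance, and then pass to the limit.
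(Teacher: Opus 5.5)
Your proposal is correct and follows the paper's proof step for step. It solves $(\Delta+2)H=\tr C$ using the symmetry for compatibility, shows that $C-(\Hess_{\Sph}H+HI)$ is trace free and divergence free and hence a holomorphic quadratic differential that vanishes on $\Sph$, gets a convex support function from $C>0$ and nonempty interior from $\det C\geq1$, then pushes $Q$ forward and polarizes the mixed-volume formula.

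The paper differs only in writing the $L^1$ solution explicitly through the kernel $t\operatorname{atanh}t$ (reused in Lemma~\ref{g:green}) rather than citing abstract Fredholm theory. It also spells out two points you leave implicit. One is convexity along lines through the origin, which comes from $H\geq0$ via $p(0)+p(\pi)=\int_0^\pi\sin\theta\,d(p''+p)\geq0$. The other is the nonsmooth-test extension, which needs a.e.\ gradient convergence with dominated convergence using $r,\cof C\in L^1$, not just uniform convergence of the supports.
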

\begin{proof}
Solve $(\Delta+2)H=2+8h^2r$ with degree-one spherical harmonics
zero. Coordinate reflection symmetry supplies its Fredholm
compatibility. An $L^1$ solution can be exhibited without any
convexity assumption: the even kernel $G(t)=t\operatorname{atanh}t$
is integrable on the sphere and
$(\Delta+2)(G(n\cdot m)-1)=-2\pi(\delta_m+\delta_{-m})$.
For example its positive partial sums
$G_N(t)=\sum_{j=1}^Nt^{2j}/(2j-1)$ satisfy
$(\Delta+2)G_N=2-(2N+2)t^{2N}$, and the last term tends to a mass
$2\pi$ at each pole. Tonelli therefore gives an $L^1$ potential
$F(n)=\int G(n\cdot\nu)d\sigma$, with
$(\Delta+2)F=2P-4\pi r$, and the required solution is
$H=1+2h^2P/\pi-2h^2F/\pi$.

The difference $D=C-(\Hess H+HI)$ has zero trace and zero
divergence, because
\[
 \Div(\Hess H+HI)=\nabla(\Delta H+2H),\qquad
 \Div C=8h\nabla\tr T_0=8h^2\nabla r.
\]
In conformal coordinates its trace-free covariant coefficients have
the form $D_{11}=a,D_{22}=-a,D_{12}=b$. The divergence equations
are $a_x+b_y=0$ and $b_x-a_y=0$ in distributions. Thus
$(a-ib)dz^2$ is a holomorphic quadratic differential by Weyl's
lemma. These local differentials agree on overlaps; the sphere has
no nonzero global holomorphic quadratic differential. Hence $D=0$.

The homogeneous extension of $H$ has nonnegative distributional
Hessian away from the origin. Local convolution and compactness of
convex functions supply a locally convex continuous representative.
On a great circle its restriction $p$ has positive curvature measure
$p''+p$, and
$p(0)+p(\pi)=\int_0^\pi\sin\theta\,d(p''+p)\geq0$.
Evenness gives $H\geq0$, so lines through the origin are convex as
well. This proves that $H$ is a support function. If its body lay
in a proper subspace, its homogeneous Hessian would have an
additional independent kernel almost everywhere, contradicting
$\det C\geq1$. Thus its interior is nonempty.

For smooth $g,k$, push forward $B^2$ and $|B|^2$ in the definition
of $Q$ to obtain the first identity of \eqref{g:mixed}. Polarizing
the support-function volume formula and integrating by parts gives
\[
 6V(g,k,H)=\int_{\Sph}
          [\tr C\,gk-\cof C(\nabla g,\nabla k)]d\omega,
\]
which proves the second identity. Rotational smoothing of convex
supports converges uniformly and has uniformly bounded gradients
converging almost everywhere. Since $r,\cof C\in L^1$, dominated
convergence extends the identity to the asserted nonsmooth tests.
The two-dimensional cofactor is linear, so smoothing $H$ also
converges in the tensor $L^1$ norm needed here. The support and
mixed-volume continuity facts used in this passage are
\cite[Theorems 1.8.20, 4.2.1, 5.1.7]{Schneider}.
\end{proof}

\begin{lemma}[Complete wall measure]\label{g:full-wall}
For each coordinate, the planar curvature measure of the
projection of $\mathcal K$ satisfies
\begin{equation}\label{g:full-wall-eq}
 dS_{\mathcal K|e_i^\perp}
       =\nu_*(|B_{ii}|\,ds_{\{\nu_i=0\}}),
\end{equation}
where the right side uses every physical wall curve of the torus.
In particular it retains any angular atoms.
\end{lemma}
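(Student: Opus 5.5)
We prove \eqref{g:full-wall-eq} by computing the surface area measure of the projected body $\mathcal K|e_i^\perp$ from the representation \eqref{g:support-eq}, and then identifying the result with the pushforward of wall curvature by a coarea argument on the Gauss map.

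The first step is the convex-geometric side. The support function of $\mathcal K|e_i^\perp$ is the restriction $p=H|_{S_i}$ to the great circle $S_i=\{n\cdot e_i=0\}$. Its planar curvature measure is $dS=p''+p$ in distributions along $S_i$. For smooth $H$, the restriction of $\Hess_{\Sph}H+HI$ to the tangent direction of $S_i$ equals $p''+p$. This holds because $S_i$ is a geodesic, so the second covariant derivative along it is the ordinary second derivative. By \eqref{g:support-eq}, $dS$ is therefore the tangential component $C(\tau,\tau)$ of $C$ restricted to $S_i$, with $\tau$ the unit tangent of the circle.

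Since $C$ is only an $L^1$ tensor, we do not evaluate this trace directly. We test instead against functions $g$ on $\Sph$ that are even under the reflection in $e_i^\perp$ and concentrate near $S_i$. Using \eqref{g:mixed} with $k=1$, or more directly the weak form of \eqref{g:support-eq}, the measure $dS$ appears as the limit of the $C$-mass in thin bands $\{|n_i|<\epsilon\}$, paired with $\epsilon^{-1}$-scaled test functions. The approximation is done in the manner of the rotational smoothing in Lemma~\ref{g:support}, whose uniform convergence controls the limit.

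The second step is the surface side. By \eqref{g:C}, $C=I+8h\cof T_0$, and by \eqref{g:gauss-push}, $T_0\,d\omega$ is the pushforward of $B\,d\sigma$. The $I$ part contributes nothing to the band limit, since the identity has zero mass on a band of width $\epsilon$. The cofactor part gives $8h\,\nu_*(\cof B\,d\sigma)$. Its tangential component along $\tau$ is the $B$-component in the normal direction to $\tau$ inside the tangent plane. Near $S_i$ the surface region $\{|\nu_i|<\epsilon\}$ is a tubular neighbourhood of the wall curves $\{\nu_i=0\}$. These wall curves are the mirror contacts together with any interior zero curves; by Lemma~\ref{g:downset} and \eqref{g:hopf}, all of them are mirror walls with $B_{ii}\neq0$.

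We then apply the coarea formula with the function $\nu_i$ on $\Sigma$, using $|\nabla\nu_i|=|B e_i^T|$. On a wall, $e_i$ is a principal direction, so $|\nabla\nu_i|=|B_{ii}|$. The band mass then reduces to $\int_{-\epsilon}^{\epsilon}\int_{\{\nu_i=t\}}(\cdots)/|B_{ii}|\,ds\,dt$. The numerator $(\cof B)(\tau,\tau)$ pairs with the Jacobian. After dividing by $2\epsilon$, and including the factor $8$ from the normalization $d\sigma=dA/8$ and the weighting by $h$, the limit becomes $\nu_*(|B_{ii}|\,ds)$ on $S_i$. We track the factor of $h$ carefully: it should cancel through $\tr C$, as in the proof of \eqref{g:C}. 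Every wall curve of the torus enters, including lower and upper walls, and absolute values appear because of the sign change in \eqref{g:hopf}. Degenerate portions of the wall curves, where $\nu$ is constant along an arc, push forward to point masses; these are the angular atoms, and they are retained automatically because we never pass to densities.

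The main obstacle is justifying the band limit on the nonsmooth side. $C$ is only an $L^1$ tensor, and the Gauss map fails to be a local diffeomorphism where $K=0$, possibly along the walls. I would handle this by working entirely on $\Sigma$ before pushing forward. The test integrals $\int g(\nu)\,(\cdots)\,d\sigma$ are smooth surface integrals. The coarea formula is applied on $\Sigma$, where $\nu_i$ is analytic with nonvanishing gradient along the walls by Hopf. The distributional identity \eqref{g:support-eq} is used only against smooth $g$, which avoids inverting the Gauss map entirely.
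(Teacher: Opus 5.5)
Your proposal has a genuine gap: carried out correctly, the band limit you describe does not produce $|B_{ii}|\,ds$. There are two errors.

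\begin{itemize}
\item The identity part of $C=I+8h\cof T_0$ does not drop out. With your $\epsilon^{-1}$ normalization, $\frac1{2\epsilon}\int_{\{|n_i|<\epsilon\}}g\,d\omega\to\int_{S_i}g\,d\theta$. So $I$ contributes the full arc-length measure $d\theta$; this is the unit-disk summand visible in Lemma~\ref{g:body-area}.
\item In the cofactor part, at a wall point $\tau$ and $e_i$ span the tangent plane, so $\cof B(\tau,\tau)=B_{ii}$. The coarea Jacobian is $|\nabla_\Sigma\nu_i|=|B_{ii}|$. Their ratio is $\operatorname{sign}(B_{ii})$, and $h$ does not cancel through $\tr C$ or anything else.
\end{itemize}

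The honest output of your method is therefore
\[
 dS_{\mathcal K|e_i^\perp}=d\theta+\nu_*\bigl(h\operatorname{sign}(B_{ii})\,ds_{\{\nu_i=0\}}\bigr).
\]
On the walls $B_{ii}+B_{\tau\tau}=h$, and $B_{\tau\tau}\,ds$ is the signed turning of the wall curve in its mirror. Matching this with \eqref{g:full-wall-eq} requires the separate global identity $\nu_*\bigl(\operatorname{sign}(B_{ii})B_{\tau\tau}\,ds\bigr)=-d\theta$ as measures. That is a turning-number statement depending on the downset geometry. Lower walls begin and end at the same angle, so their signed turn pushes forward to zero by integrating a primitive. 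The closed upper walls, or the fold for the inactive direction, must turn exactly once with the right orientation. Nothing in your outline supplies this step.

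The convex side of your argument is fine, though for a different reason than the one you cite. $H$ is continuous and Lipschitz, so its restrictions to latitude circles converge uniformly to $p$, and the Christoffel term $\tan\phi\,\partial_\phi H$ is $O(\epsilon)$. Hence the band average of $C(\tau,\tau)$ does converge to $p''+p$.

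The paper avoids restricting $C$ to a great circle, and the turning bookkeeping, altogether. It uses the kink of the Jacobi field $\psi_i=|\nu_i|$. Since $|\nabla_\Sigma\nu_i|=|B_{ii}|$ on the regular nodal curves, one has $(\Delta_\Sigma+|B|^2)\psi_i=2|B_{ii}|\,ds_{\{\nu_i=0\}}$ in distributions. Hence $Q(\psi_i,g\circ\nu)=-\tfrac14\int g(\nu)|B_{ii}|\,ds$.

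On the convex side, $\psi_i=|n_i|\circ\nu$ is the support function of $S_i=[-e_i,e_i]$ composed with the Gauss map. So \eqref{g:mixed}, extended to segment and planar supports, applies. Combined with $V_3(S_i,G,\mathcal K)=\tfrac23V_2(G,\mathcal K|e_i^\perp)$ and $2V_2(G,K)=\int\varphi\,dS_K$, it gives $Q(\psi_i,g\circ\nu)=-\tfrac14\int\varphi\,dS_{\mathcal K|e_i^\perp}$.

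In that route the absolute value comes directly from the kink of $|\nu_i|$. If you want to keep the band approach, you must add the turning identity above and do the orientation bookkeeping for lower and upper walls in each coordinate.
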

\begin{proof}
Put $\psi_i=|\nu_i|$. Strict normals and Hopf show that its nodal
curves are regular, and its distributional Jacobi equation is
$(\Delta_\Sigma+|B|^2)\psi_i=2|B_{ii}|ds_{\{\nu_i=0\}}$.
Thus normalization gives
\[
 Q(\psi_i,g\circ\nu)
       =-\frac14\int_{\{\nu_i=0\}}g(\nu)|B_{ii}|\,ds.
\]
Let $S_i=[-e_i,e_i]$, and let $G$ be a planar convex body
perpendicular to $e_i$ with circle support $\varphi$.
The coefficient of $st$ in
$\operatorname{Vol}(\mathcal K+sG+tS_i)$ gives
\[
 V_3(S_i,G,\mathcal K)=\frac23V_2(G,\mathcal K|e_i^\perp),\qquad
 2V_2(G,K)=\int\varphi\,dS_K.
\]
Together with \eqref{g:mixed}, this identifies the two measures.
Every original quarter curve has four reflection copies, cancelling
the factor $1/4$ if its flux is computed on that quarter.
Any smooth circle test is a difference of convex supports, by
adding a constant large enough to make $p''+p>0$; uniform
approximation then gives the equality for continuous tests.
This proves an equality of finite measures, not an attempted
restriction of the almost-everywhere tensor $T_0$ to a great circle.
\end{proof}

\begin{lemma}[A single planar projection]\label{g:body-area}
Writing $\mathcal K_{xy}=\mathcal K|e_z^\perp$, its area and its
axis supports $X=H(e_x),Y=H(e_y)$ satisfy
\begin{align}
 \operatorname{Area}(\mathcal K_{xy})&\geq\pi+4h^2,
       \label{g:body-lower}\\
 \operatorname{Area}(\mathcal K_{xy})&\leq
 \pi\rho^2-(X-Y)^2-\frac\pi{4-\pi}(X+Y-2\rho)^2,
 \qquad\rho=1+2h^2N_z/\pi.\label{g:axis-area}
\end{align}
\end{lemma}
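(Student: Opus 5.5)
The two inequalities concern the same planar body, and I would treat them separately. \eqref{g:axis-area} is a Fourier estimate on the circle support function. \eqref{g:body-lower} needs an integral formula for the area together with the pointwise tensor information on $T_0$. Throughout, let $p(\theta)=H(\cos\theta\,e_x+\sin\theta\,e_y)$ be the support function of $\mathcal K_{xy}$ on the equator. Since $H$ is the support function of a convex body (Lemma~\ref{g:support}), $p$ is Lipschitz, $p''+p=dS_{\mathcal K_{xy}}\geq0$, and
\[
 \operatorname{Area}(\mathcal K_{xy})=\tfrac12\int_0^{2\pi}(p^2-p'^2)\,d\theta .
\]

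\emph{Upper bound.} First I identify the perimeter. By Lemma~\ref{g:full-wall}, $\int p''+p=\int p\,d\theta$ equals the total mass of $\nu_*(|B_{zz}|\,ds)$ over the $z$-wall curves. In the genus-two lift, the only $\{\nu_z=0\}$ curve is the boundary of the reflected planar hole in the lower $z$-mirror. There $e_z$ is a principal direction, so $|B_{zz}|=h-\kappa$, where $\kappa$ is the planar curvature of the hole boundary (the sign is fixed by \eqref{g:hopf}). Integrating over the reflected hole boundary, whose length is $4hN_z$ by the remark after \eqref{g:hole} and whose total turning is $-2\pi$, gives mass $2\pi+4h^2N_z$. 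Hence $\int p\,d\theta=2\pi\rho$ with $\rho=1+2h^2N_z/\pi$, and the constant Fourier coefficient of $p$ is $\rho$.

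Evenness and reflection invariance under both $x\mapsto-x$ and $y\mapsto-y$ leave only the modes $\cos n\theta$ with $n$ even. Writing $p=\rho+\sum_{n\geq2\ \mathrm{even}}a_n\cos n\theta$ gives
\[
 \operatorname{Area}=\pi\rho^2-\frac\pi2\sum(n^2-1)a_n^2 ,
\]
and
\[
 X-Y=2\sum_{n\equiv2\ (4)}a_n,\qquad X+Y-2\rho=2\sum_{n\equiv0\ (4)}a_n .
\]
Cauchy--Schwarz on each residue class gives the lower bound $(\pi/8)(X-Y)^2/S_1+(\pi/8)(X+Y-2\rho)^2/S_2$ for the deficit, where $S_1=\sum_{n\equiv2(4)}1/(n^2-1)$ and $S_2=\sum_{n\equiv0(4)}1/(n^2-1)$. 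Using $\sum_{n\ \mathrm{even}}1/(n^2-1)=1/2$ and $\sum_{k\geq1}(-1)^{k+1}/(4k^2-1)=\pi/4-1/2$, one gets $S_1=\pi/8$ and $S_2=(4-\pi)/8$. These are exactly the coefficients $1$ and $\pi/(4-\pi)$ in \eqref{g:axis-area}. The Fourier expansion is legitimate because $p\in H^1$.

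\emph{Lower bound.} Here I would use the projection formula $\operatorname{Vol}(\mathcal K+tS_z)=\operatorname{Vol}\mathcal K+2t\operatorname{Area}(\mathcal K_{xy})$, so that $\operatorname{Area}=\tfrac32V(S_z,\mathcal K,\mathcal K)$. Combined with $dS_{\mathcal K}=\det C\,d\omega$, valid in the distributional sense of \eqref{g:support-eq}, this gives
\[
 \operatorname{Area}(\mathcal K_{xy})=\tfrac12\int_{\Sph}|n_z|\det C\,d\omega ,
\]
where the nonsmooth justification is the smoothing argument of Lemma~\ref{g:support}. Expanding with \eqref{g:C}, the $1$ term contributes $\pi$. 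Since $\int|n_z|\,r\,d\omega=N_z$, it then suffices to prove
\[
 \int_{\Sph}|n_z|\,\bigl(r+8\det T_0\bigr)\,d\omega\geq1 .
\]
The bound $\det T_0\geq-r/8$ alone only gives $\geq0$, so this is the main obstacle.

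My first attempt is a sharper fibrewise argument than the one producing $\det T_0\geq-r/8$. On each Gauss fibre, $T_0=\frac18\sum_jB_j/|K_j|$ and $r=\frac18\sum_j1/|K_j|$, while the signed count $\sum_j\operatorname{sign}K_j$ equals the degree $-1$. For $2\times2$ symmetric matrices, $\det$ restricted to a fixed trace is concave, since $\det A=\tfrac12((\tr A)^2-|A|^2)$. Combining this concavity with the signed degree should give a pointwise inequality stronger than $\det T_0\geq-r/8$. The target is one whose $|n_z|$-weighted integral is controlled by the total negative Gauss curvature mass, i.e.\ by Gauss--Bonnet.

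If the pointwise route falls short, the fallback is to bound $\operatorname{Area}=\tfrac12\int p\,dS_{\mathcal K_{xy}}$ directly via Lemma~\ref{g:full-wall}. That would need a lower bound for $p$ on the wall normals, together with the hole inequality $1-N_z\leq h^2N_z^2/\pi$ from \eqref{g:hole} to trade $N_z$ for the constant $4h^2$.
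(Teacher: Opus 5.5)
Your proof of \eqref{g:axis-area} is correct and essentially matches the paper's. It has the same three ingredients: the perimeter $2\pi\rho=2\pi+4h^2N_z$ from the full wall measure on the inactive fold, the even cosine expansion, and weighted Cauchy--Schwarz with weight sums $\pi/8$ and $(4-\pi)/8$.

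The lower bound \eqref{g:body-lower}, however, is left unproved, and neither of your suggested routes can close it.

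\emph{The fibrewise route.} Consider a fibre with a single Gauss preimage; that preimage necessarily has $K<0$, by degree $-1$. There $C=I-hB^{-1}$ and $\det C=\det(B-hI)/\det B=1$. Hence $r+8\det T_0=(\det C-1)/(8h^2)$ vanishes identically on such fibres. The required mass would therefore have to come entirely from multiply covered normals, whose excess multiplicity is $\int(m-1)\,d\omega=16\int K^+\,d\sigma$. Gauss--Bonnet fixes only $\int K\,d\sigma$ and gives no lower bound on this quantity, certainly none producing $4h^2$. The constant $4h^2$ comes from the physical hole data (area $4(1-N_z)$, perimeter $4hN_z$), which no pointwise inequality on Gauss fibres sees.

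\emph{The fallback.} It points the wrong way. The relation $1-N_z\leq h^2N_z^2/\pi$ is the isoperimetric inequality, an \emph{upper} bound on the hole area, whereas you need a \emph{lower} bound on an area term.

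\emph{A side issue.} The identity $\operatorname{Area}=\frac12\int|n_z|\det C\,d\omega$ is not supplied by the smoothing in Lemma~\ref{g:support}. That smoothing works because the planar cofactor is linear in $C$; $\det C$ is quadratic, and in general only $dS_{\mathcal K}\geq\det C\,d\omega$ is available.

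\emph{The missing idea: split the wall measure instead of passing through $\det C$.*}
\begin{itemize}
\item On the fold $|B_{zz}|=h-\kappa_{\rm fold}$, so Lemma~\ref{g:full-wall} writes the curvature measure of $\mathcal K_{xy}$ as $-\nu_*(\kappa\,ds)+h\,\nu_*(ds)$.
\item The first term equals $d\theta$, the curvature measure of the unit disk. This holds because the net signed turning pushes forward to $-d\theta$, which follows by integrating a primitive of a test function and needs no monotonicity.
\item The second term is the length measure of a centred body $L_z$ with $\Per(L_z)=4h^2N_z$. Hence $\mathcal K_{xy}=\mathbb D+L_z$.
\item $L_z/h$ has the same angle--length measure as the reflected hole boundary. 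Sorting the tangent angles of a quarter hole replaces $\iint_{s<t}\alpha(s)\beta(t)$ by the symmetrized maximum, which does not decrease area. This gives $\operatorname{Area}(L_z)\geq4h^2(1-N_z)$.
\item Steiner's formula $\operatorname{Area}(\mathbb D+L_z)=\pi+\Per(L_z)+\operatorname{Area}(L_z)$ then yields $\pi+4h^2$, with the $N_z$ terms cancelling exactly.
\end{itemize}
Only the linear wall identity is used here; $\det C$ never enters.
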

\begin{proof}
On the inactive fold the net signed turn pushes forward to
$-d\theta$. This follows by integrating a primitive of an arbitrary
angle test and does not require monotone turning. Since
$|B_{zz}|=h-\kappa_{\mathrm{fold}}$, \eqref{g:full-wall-eq} gives
$\mathcal K_{xy}=\mathbb D+L_z$, where $\mathbb D$ is the unit disk
and $\Per(L_z)=4h^2N_z$.

Write the actual quarter-hole tangent as $(\alpha(s),-\beta(s))$.
Its area is $\int_{s<t}\alpha(s)\beta(t)\,dsdt$. Sort its tangent
angles, preserving the complete angle--length measure. The resulting
area equals
\[
 \frac12\iint\max\{\alpha(s)\beta(t),\alpha(t)\beta(s)\}\,dsdt
\]
and is no smaller. Atoms give straight segments and are preserved.
The four reflected sorted arcs have the curvature measure of
$L_z/h$; coordinate symmetry fixes the translation. Therefore
$\operatorname{Area}(L_z)\geq4h^2(1-N_z)$. Steiner's formula gives
$\operatorname{Area}(\mathbb D+L_z)
 =\pi+\Per(L_z)+\operatorname{Area}(L_z)\geq\pi+4h^2$.

The unconditional planar support has Fourier expansion
$p(\theta)=\rho+\sum_{j\geq1}a_j\cos2j\theta$.
It is Lipschitz. Smoothing and area continuity yield
\[
 \operatorname{Area}(p)=\pi\rho^2
           -\frac\pi2\sum_{j\geq1}(4j^2-1)a_j^2.
\]
The reciprocal-weight sums over odd and even $j$ are $\pi/8$ and
$(4-\pi)/8$. Indeed
$(4j^2-1)^{-1}=\frac12[(2j-1)^{-1}-(2j+1)^{-1}]$ gives their sum
$1/2$, and the arctangent series gives their difference
$\pi/4-1/2$. Weighted Cauchy--Schwarz, using
$X-Y=2\sum_{j\text{ odd}}a_j$ and
$X+Y-2\rho=2\sum_{j\text{ even}}a_j$, proves \eqref{g:axis-area}.
The same square summability gives absolute convergence at the axes.
Finally $2\pi\rho=\Per(\mathcal K_{xy})=2\pi+4h^2N_z$.
Both bounds thus concern precisely the same projection.
\end{proof}

\begin{lemma}[Literal logarithmic values]\label{g:green}
At every $n\in\Sph$,
\begin{equation}\label{g:green-eq}
 H(n)=1+\frac{2h^2P}\pi
                -\frac{2h^2}\pi\int_\Sigma
                      (n\cdot\nu)\operatorname{atanh}(n\cdot\nu)d\sigma.
\end{equation}
In particular each axis integral is finite.
\end{lemma}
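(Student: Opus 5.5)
In the proof of Lemma~\ref{g:support} the solution was built as $H=1+2h^2P/\pi-2h^2F/\pi$ with $F(n)=\int_\Sigma G(n\cdot\nu)\,d\sigma$ and $G(t)=t\operatorname{atanh}t$. So \eqref{g:green-eq} is that same formula, and the task is to show it holds \emph{pointwise at every} $n$, not just almost everywhere. Here $H$ means the continuous representative identified there as a support function. The plan has three steps.

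\textbf{Step 1: the $L^1$ solution coincides with $H$ almost everywhere.} $F$ was obtained by Tonelli as an $L^1$ function solving $(\Delta+2)F=2P-4\pi r$. The continuous support function $H$ solves the same equation with degree-one harmonics removed. Coordinate-reflection invariance forces the degree-one part of $F$ to vanish, since $\nu$ is invariant under the reflection group. The kernel of $\Delta+2$ consists of degree-one harmonics, so the formula agrees with $H$ almost everywhere.

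\textbf{Step 2: the right side is finite and lower semicontinuous at every $n$.} Since $G\geq0$ is continuous from $[-1,1]$ to $[0,\infty]$, Fatou gives that $F(n)=\int G(n\cdot\nu)\,d\sigma$ is lower semicontinuous with values in $[0,\infty]$. Hence the right side of \eqref{g:green-eq}, call it $R(n)$, is upper semicontinuous.

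To get two-sided control I would use the monotone partial sums $G_N\uparrow G$. The functions $F_N(n)=\int G_N(n\cdot\nu)\,d\sigma$ are continuous, and $F=\sup_N F_N$ pointwise.

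Finiteness at each $n$ then comes from comparing $H$ with the averages. Average \eqref{g:green-eq} over small spherical caps $B_\epsilon(n)$, where it holds almost everywhere. The left side tends to $H(n)$ by continuity. The cap average of $F$ is $\int \bar G_\epsilon(n,\nu)\,d\sigma$, with $\bar G_\epsilon$ the cap average of the kernel, and it converges to $F(n)$ as $\epsilon\to0$.

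\textbf{Step 3: the main obstacle, convergence of the cap averages.} This convergence is exactly what must be justified. The kernel is only logarithmically singular: $G(t)\sim\frac12\log\frac{2}{1-t^2}$ near $t=\pm1$.

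\begin{itemize}
\item Fatou already gives $\liminf_{\epsilon\to0}\int\bar G_\epsilon\geq F(n)$.
\item For the reverse inequality I would use a sub-mean-value property. The function $G(n\cdot m)-1$ is a fundamental solution of $\Delta+2$ with positive point masses, so $(\Delta+2)\bigl(-G(n\cdot m)\bigr)\geq-2$ away from the poles. Hence $-G+c$ is subharmonic up to a lower-order term, and cap averages of $G$ around a point are at most $G$ at that point plus $O(\epsilon^2)$.
\item Then $\int\bar G_\epsilon\,d\sigma\leq F(n)+O(\epsilon^2)P$.
\end{itemize}

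Combining both inequalities gives $H(n)=R(n)$ for every $n$. Since $H$ is finite, $F(n)<\infty$. In particular each axis integral $\int_\Sigma|\nu_i|\operatorname{atanh}|\nu_i|\,d\sigma$ is finite. The point I expect to need the most care is making this sub-mean-value estimate for the kernel on $\Sph$ precise and uniform in the second argument $\nu$.
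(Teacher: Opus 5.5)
Your proposal is correct and is essentially the paper's argument. The Fatou (lower-semicontinuity) step bounds $F(n)$ above by the value determined by $H(n)$, and the super-mean-value inequality coming from $\Delta G\le 2$ gives the reverse bound. The paper handles the step you flagged as you suggest, by applying the circular-average estimate $M_N(r)\le F_N(n)-4P\log\cos(r/2)$ to the smooth partial sums $F_N$, which satisfy $\Delta F_N\le 2P$, and letting $N\to\infty$ by monotone convergence before $r\downarrow0$.
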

\begin{proof}
The distributional identity was proved in Lemma~\ref{g:support}. Let
$F$ denote its integral and $F_0$ its continuous representative
specified by \eqref{g:green-eq}. The positive partial sums $F_N$
increase to $F$, so $F$ is lower semicontinuous. Since $F=F_0$
almost everywhere, approximation through the full-measure equality
set gives $F\leq F_0$.

On the other hand, $\Delta F_N\leq2P$. If $M_N(r)$ is its
circular average about $n$, then
$(\sin r\,M_N')'\leq2P\sin r$. Integration gives
$M_N(r)\leq F_N(n)-4P\log\cos(r/2)$, and therefore the weaker
disk-average inequality
\[
 F_N(n)\geq\frac1{|B_r(n)|}\int_{B_r(n)}F_N\,d\omega
                         +4P\log\cos(r/2).
\]
First take $N\to\infty$ by monotone convergence, then $r\downarrow0$.
The disk average converges to $F_0(n)$ because $F=F_0$ almost
everywhere and $F_0$ is continuous. Hence $F\geq F_0$ as well.
This proves literal equality, including at the axes.
\end{proof}

For an active upper wall let $\kappa>h$ be its positive planar
curvature. Parametrize by angle $\theta\in[0,\pi/2]$, and write
\begin{equation}\label{g:wall-variables}
 \lambda=h/\kappa\in(0,1),\quad
 \ell=\int\lambda,\quad B=\int\cos\theta\lambda,\quad
 R=\int\sin\theta\lambda,\quad J=\ell-A(\lambda),
\end{equation}
where
$A(\lambda)=\int_{\theta<\varphi}\cos\theta\sin\varphi
\lambda(\theta)\lambda(\varphi)\,d\theta d\varphi$.
Thus $\ell=hL^+$ and $A=h^2a$.
On the upper $x$ wall the angle convention is
$\nu_y=\cos\theta$, $\nu_z=\sin\theta$.
Consequently $B_x$ is the upper-wall vertical height multiplied by
$h$, while $R_x$ is its horizontal occupied extent multiplied by
$h$. Interchange $x,y$ for the other wall. The lower-wall angle
used in the flux computation below is a separate convention.

\begin{lemma}[Upper-wall pressure]\label{g:pressure}
For the quantities in \eqref{g:wall-variables},
\begin{equation}\label{g:pressure-eq}
 J\geq F(R)\geq\pi R/4,\qquad
 F(t)=\frac{\arcsin t+t\sqrt{1-t^2}}2.
\end{equation}
\end{lemma}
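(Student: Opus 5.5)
The plan is to reduce \eqref{g:pressure-eq} to a one-dimensional extremal problem for $\lambda$ and identify the extremal profile. The second inequality is elementary. The function $F$ is concave on $[0,1]$, since $F'(t)=\sqrt{1-t^2}$, and it satisfies $F(0)=0$ and $F(1)=\pi/4$. Moreover $0\leq R\leq\int_0^{\pi/2}\sin\theta\,d\theta=1$ because $0<\lambda<1$. Concavity then gives $F(R)\geq\pi R/4$. The substance is therefore $J\geq F(R)$, and I would prove it in the form
\[
 \inf\{\ell-A(\lambda):\ 0\leq\lambda\leq1,\ \textstyle\int\sin\theta\,\lambda=R\}=F(R).
\]
Here I relax to $0\leq\lambda\leq1$; this is harmless, since the infimum over the closed constraint set bounds the actual wall from below.

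First I would compute the candidate extremizer: $\lambda=\mathbf 1_{[\pi/2-\alpha,\pi/2]}$ with $\sin\alpha=R$. For it $R=\sin\alpha$, $\ell=\alpha$, and
\[
 A=\int_{\pi/2-\alpha}^{\pi/2}\sin\varphi(\sin\varphi-\cos\alpha)\,d\varphi=\tfrac\alpha2-\tfrac12\sin\alpha\cos\alpha .
\]
Hence $J=\tfrac12(\alpha+\sin\alpha\cos\alpha)=F(R)$. Geometrically, $\lambda$ is the radius of curvature (scaled by $h$) of the planar wall arc at tangent angle $\theta$. Then $\ell$ is its length, $R$ its horizontal extent, and $A$ the area of the region cut off by the arc. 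The extremal arc is a unit-radius circular arc placed at the steep end $\theta$ near $\pi/2$. The claim is thus a constrained isoperimetric statement: among arcs with curvature at least one and fixed horizontal extent, length minus enclosed area is minimized by the circle arc.

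To prove minimality I would use a Lagrange/bathtub argument. The functional $\lambda\mapsto\ell-A(\lambda)$ is quadratic. Its first variation at $\lambda$ is
\[
 g_\lambda(\theta)=1-\cos\theta\int_{\varphi>\theta}\sin\varphi\,\lambda(\varphi)\,d\varphi-\sin\theta\int_{\varphi<\theta}\cos\varphi\,\lambda(\varphi)\,d\varphi .
\]
For the candidate one computes $g(\theta)=\cos(\theta-\pi/2+\alpha)\cdot(\ldots)$. I would check that $g-\mu\sin\theta$, with $\mu=F'(R)/\ldots$ chosen appropriately, is nonpositive on the support $[\pi/2-\alpha,\pi/2]$ and nonnegative off it. This gives the KKT conditions. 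Since $A$ is not concave in general, KKT alone does not give a global minimum. So I would supplement it with a rearrangement step: the quadratic form $A(\lambda)$ is increased by moving mass of $\lambda$ toward larger $\theta$ while keeping $R$ fixed. Equivalently, the angle-sorting argument already used in Lemma~\ref{g:body-area} shows that, among arcs with the same angle–length measure, the convex sorted arc maximizes area. Then, for arcs with curvature at least one, replacing the arc by a unit circle arc of the same horizontal extent decreases $\ell-A$. I would verify this by a direct comparison: the difference in length exceeds the difference in enclosed area, because the difference region is swept by segments of length at most one (radius of curvature $\lambda\leq1$). Concretely, $d(\ell-A)/d(\text{extension})\geq1-\lambda\geq0$.

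The main obstacle is this global step, the non-concavity of $-A$. The plan is to establish it by a monotone mass-transport argument. Moving an infinitesimal amount of $\lambda$ from angle $\theta$ to a larger angle $\theta'$, while rescaling to preserve $R$, changes $\ell-A$ by a quantity whose sign is controlled by $\lambda\leq1$. Iterating this pushes any admissible $\lambda$ to the saturated top-interval profile without increasing $J$. If a clean pointwise sign for this transfer fails, I would fall back on writing $J-F(R)$ as an explicit nonnegative double integral after substituting $R=\sin\alpha$.
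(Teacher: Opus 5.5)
The second inequality, the extremal profile $\lambda_*=\mathbf 1_{[r,\pi/2]}$ (your $r=\pi/2-\alpha$), the value $J(\lambda_*)=F(R)$, and the form of the KKT conditions are all correct. The gap is the global step, and it rests on a wrong diagnosis.

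You abandon KKT because $A$ is not concave on all densities. But only perturbations $f=\lambda-\lambda_*$ with $\int\sin\theta\,f=0$ matter, and on those $A$ is nonpositive. Write $k(\theta,\varphi)=\cos(\min\{\theta,\varphi\})\sin(\max\{\theta,\varphi\})$ and $g_f=\int k f$. Then $g_f''+g_f=f$, with Neumann conditions at $0$ and $\pi/2$, and $g_f(0)=\int\sin\theta\,f=0$. Hence
\[
 A(f)=\tfrac12\int f g_f=\tfrac12\int\bigl(g_f^2-g_f'^2\bigr)\le0,
\]
because the first eigenvalue on $[0,\pi/2]$ with a Dirichlet condition at $0$ and a Neumann condition at $\pi/2$ equals $1$. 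So $J$ is convex on the fixed-$R$ slice, and your KKT check already finishes the proof. The multiplier is $\mu=\sin r=\sqrt{1-R^2}=F'(R)$. The reduced first variation vanishes on $[r,\pi/2]$ and equals $1-\cos(\theta-r)$ on $[0,r]$. Therefore
\[
 J(\lambda)-F(R)=\int_0^r\bigl[1-\cos(\theta-r)\bigr]\lambda\,d\theta-A(f)\ge0 .
\]
This is exactly the paper's argument.

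The substitutes you propose do not work as stated.

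\emph{Angle sorting.} This step is vacuous here. Since $\kappa>h>0$, the upper wall is already angle-monotone, so $A(\lambda)$ already is the sorted area.

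\emph{Monotonicity of $A$.} The claim that $A$ increases when mass moves to larger $\theta$ at fixed $R$ is false. For example, $\lambda\equiv\tfrac12$ has $R=\tfrac12$ and $A=\pi/16$. The extremizer with the same $R$ has the smaller value $A=\pi/12-\sqrt3/8$. What makes $J$ decrease is that $\ell$ drops faster than $A$.

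\emph{Unit-circle replacement.} The comparison with ``$d(\ell-A)/d(\mathrm{extension})\ge1-\lambda$'' is not a defined computation.

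Your transport intuition can be made rigorous, but it needs an identity you do not state. For $0\le\mu\le1$,
\[
 \frac{d}{d\theta}\,\frac{1-g_\mu(\theta)}{\sin\theta}
 =\frac{\int_\theta^{\pi/2}\sin\varphi\,\mu(\varphi)\,d\varphi-\cos\theta}{\sin^2\theta}\le0 .
\]
The function $\sin\theta\,(\lambda_*-\lambda)$ is $\le0$ on $[0,r)$, is $\ge0$ on $[r,\pi/2]$, and has zero integral. Consequently, along $\lambda_t=(1-t)\lambda+t\lambda_*$,
\[
 \frac{d}{dt}J(\lambda_t)=\int\bigl(1-g_{\lambda_t}\bigr)(\lambda_*-\lambda)\,d\theta\le0 .
\]
This path argument would be a genuine alternative to the paper's restricted concavity. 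As written, however, your proposal establishes neither.
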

\begin{proof}
Fix $R$ and put $r=\arccos R$,
$\lambda_*=\mathbf1_{[r,\pi/2]}$, and $f=\lambda-\lambda_*$.
The symmetric area kernel is
$k(\theta,\varphi)=\cos(\min\{\theta,\varphi\})
\sin(\max\{\theta,\varphi\})$.
It has Neumann endpoint conditions and derivative jump one.
Consequently $g_f=\int kf$ solves $g_f''+g_f=f$, and
$A(f)=\frac12\int fg_f$.
The fixed-$R$ constraint gives $g_f(0)=0$. Integration by parts,
with zero endpoint terms, and the first Dirichlet--Neumann
eigenvalue one on $[0,\pi/2]$ give
$A(f)=\frac12\int(g_f^2-g_f'^2)\leq0$.
Direct integration yields $J(\lambda_*)=F(R)$.
The first variation, after subtracting the fixed-$R$ multiplier
$\sin r\sin\theta$, is zero on $[r,\pi/2]$ and
$1-\cos(\theta-r)$ on $[0,r]$. The full quadratic expansion is
\[
 J-F(R)=\int_0^r[1-\cos(\theta-r)]\lambda(\theta)d\theta-A(f)\geq0.
\]
Finally $F''(t)=-t/\sqrt{1-t^2}\leq0$, $F(0)=0$ and
$F(1)=\pi/4$, proving its chord lower bound.
\end{proof}

Define $L_i=\E[|\nu_i|\operatorname{atanh}|\nu_i|]$, where
expectation is for the actual area probability measure $d\sigma/P$.
Flux tests with perpendicular segment supports and
\eqref{g:green-eq} give
\begin{equation}\label{g:source}
 H_x=1+h-R_y,\quad H_y=1+h-R_x,\quad
 R_x=h+\frac{2h^2P}\pi(L_y-1),\quad
 R_y=h+\frac{2h^2P}\pi(L_x-1).
\end{equation}
Here the opposite indices are essential. To verify them directly,
on the original lower $x$ wall $\nu_z=\cos\theta$ and
$B_{xx}=h-\kappa^-$. Its signed angle begins and ends at zero, so
\[
 \int_{\mathrm{lower}}\nu_z|B_{xx}|ds
          =h\int\cos\theta\,ds-\int\cos\theta\,d\theta=h.
\]
On the upper wall the angle runs from zero to $\pi/2$, and the
corresponding flux is $1-R_x$. Four reflection copies cancel the
nodal factor $1/4$, giving $Q(\psi_x,\psi_z)=-(1+h-R_x)=-H_y$.
The last equality uses $6V(S_x,S_z,H)=8H_y$.
For the other component use $\nu_y=\sin\theta$ on the lower wall
and $\nu_y=\cos\theta$ on the upper wall, with their respective
angle conventions. The lower and upper fluxes are $S-B_y$ and
$1-B_x$, with $S=hu(0,0)$, and hence
$Q(\psi_x,\psi_y)=-(1+S-B_x-B_y)=-H_z$.
Interchange $x,y$ and compare their axis values with
\eqref{g:green-eq} to obtain \eqref{g:source}.

At $v=b=1/\pi$ and $P\leq1$, stress gives
$J_x+J_y=2hP-3h^2/\pi$. Pressure and \eqref{g:source} therefore give
\[
 X+Y\geq2+2h-8hP/\pi+12h^2/\pi^2.
\]
For $\rho\leq1+2h^2/\pi$ this implies
\[
 2(X+Y)-\pi\rho\geq
 (4-\pi)+(4-16/\pi)h+(24/\pi^2-2)h^2>0.
\]
Its discriminant is $-8g(\pi)/\pi^2$, where
$g(p)=p^3-6p^2+4p+16$. On $3<p\leq22/7$, $g'<0$ and
$g(22/7)=120/343>0$, proving strict positivity. Thus the upper
bound in \eqref{g:axis-area} increases with $\rho$, holding the
actual axes fixed. Any honest bound $N_z\leq N_z^U\leq1$ may
therefore be inserted. In particular, defining
\begin{equation}\label{g:body-gates}
 \rho_U=1+2h^2N_z^U/\pi,\qquad
 e=\rho_U^2-1-4h^2/\pi,\qquad
 r_b=\pos{1+h-\rho_U-\sqrt e},
\end{equation}
necessary conditions are
$e\geq0$, $r_b\leq1$, $R_x,R_y\geq r_b$, and
$m_i\leq1-h/(\pi P)-F(r_b)/(hP)$.
For the axis step, maximizing \eqref{g:axis-area} in the other
axis bounds it by $\pi X(2\rho_U-X)$, and likewise for $Y$.
Combine \eqref{g:body-lower} with these two bounds and then use
\eqref{g:source}. The monotone insertion was proved only for the
actual pressure domain and is used only there.

\subsection{Cap-tail and angular constraints}\label{g:probability}\label{g:angular-section}

Fix a coordinate and put $X=|\nu_i|$. Its probability law has an
integrable density $\varrho$ by \eqref{g:gauss-push} and sphere
coarea. Set
$\alpha=\E X$, $\mu=\E X^2$, $a(t)=\E[X\mathbf1_{X>t}]$,
$z=1-t^2$, and $c=\pi/(2h^2P)$.
Testing $\Div T_0=0$ with
$\psi(|n_i|)\operatorname{sign}(n_i)(e_i-n_in)$ gives
$(z\beta)'=-hPt\varrho$, where
$\beta=P\varrho\E(b_{11}\mid X=t)$ and $b_{11}=B(e,e)$ with
$e=(e_i-n_in)/\sqrt{1-n_i^2}$. The integration constant at one
is zero, since otherwise $\beta$ would be nonintegrable like $1/z$.
The signed Gauss pushforward along $|n_i|$ is $-(\pi/2)dt$.
Using $B^2-hB=-KI$ gives the two conditional identities
\[
 \varrho\E(b_{11}/h\mid X=t)=a/z,\qquad
 \varrho\E((b_{11}^2+b_{12}^2)/h^2\mid X=t)=a/z+c.
\]
Conditional Jensen and $a'=-t\varrho$ now give
\begin{equation}\label{g:cap}
 e_t:=\varrho(a/z+c)-(a/z)^2\geq0,\qquad
 \left(\frac{a^2}{1-t^2}+2ca\right)'=-2te_t\leq0,
\end{equation}
and hence the exact tail envelope
\begin{equation}\label{g:tail-envelope}
 a(t)\leq U(\alpha,c;t)
       :=\sqrt{c^2z^2+\alpha(\alpha+2c)z}-cz.
\end{equation}
All identities are almost-everywhere statements in $(0,1)$; the
tail is assigned its monotone representative. Integrating
\eqref{g:cap} from zero, where $a(0)=\alpha$, proves
\eqref{g:tail-envelope} by the nonnegative quadratic root.

\begin{lemma}[Exact probability extremum]\label{g:canonical}
For $c>0$ consider all probability laws on $[0,1]$ whose weighted
tail obeys \eqref{g:tail-envelope}, with their own mean $\alpha$.
At every feasible pair $(\mu,c)$, the largest mean is attained by
a mass-one truncated saturated law. The same law minimizes each
linear tail cost $\kappa+\int_0^1w(t)a(t)dt$ with nondecreasing
kernel $w$ bounded below, with positive singularities interpreted
by monotone truncation. The largest mean is jointly concave in
$(\mu,c)$, and each finite minimum cost is jointly convex.
\end{lemma}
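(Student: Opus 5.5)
We regard a law on $[0,1]$ through its weighted tail $a(t)=\E[X\mathbf1_{X>t}]$, a nonincreasing function with $a(0)=\alpha$, $a(1^-)\geq0$. Two identities recover the moments from $a$: from $a'=-t\varrho$ and integration by parts,
\[
 \mu=\E X^2=\int_0^1 2\,\cdot\tfrac12\,\bigl(-t\,da(t)\bigr)=\alpha\cdot0+\int_0^1 a(t)\,dt\quad\text{(up to the atom at zero)},
\]
more precisely $\E X^2=\int_0^1 a(t)\,dt$ and $1-\mathbb P(X=0)=\int_0^1 a\,d(-1/t)$. Thus the problem is linear in $a$. The constraints are: $a$ is nonincreasing, $t\mapsto a(t)/t$ gives a total mass $\int_{0^+}^1(-da)/t\leq1$ (the mass-one condition, with any deficit placed at $X=0$), and $a\leq U(\alpha,c;\cdot)$. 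We reparametrize the family so that $\alpha$ is determined by the tail: the tail of the law is $a$, and its mean is $a(0^+)$.

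The first step is the construction of the \emph{truncated saturated law}. For parameters $(\alpha,s)$ we set $a_{\alpha,s}(t)=U(\alpha,c;t)$ for $t<s$ and $a=0$ for $t\geq s$. Equivalently, the law has density $-U'(t)/t$ on $(0,s)$, an atom of mass $U(s)/s$ at $s$, and the residual mass at $0$. We choose $s$ so that the total mass is exactly one. Since $U$ is concave in $z$ and increasing in $\alpha$, the mass used is monotone in $s$ and in $\alpha$. At a given $(\mu,c)$ we then take the largest $\alpha$ for which the saturated law truncated to mass one still has second moment $\mu$. Feasibility of $(\mu,c)$ means that some admissible law exists.

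The second step is a pointwise comparison. Let $a$ be any admissible tail with mean $\alpha'$ and second moment $\mu$. The envelope \eqref{g:tail-envelope} gives $a\leq U(\alpha',c;\cdot)$. We must show that $\alpha'\leq\alpha_*(\mu,c)$. Suppose instead that $\alpha'>\alpha_*$. Then $U(\alpha',\cdot)>U(\alpha_*,\cdot)$, and a rearrangement (bathtub) argument shows that, among tails with given mass and given $\int a$, concentrating mass as high as the envelope permits maximizes $a(0)$. Hence the saturated law would dominate, which is a contradiction.

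The bathtub argument is the main obstacle. We would carry it out by an exchange argument that moves mass from $[0,t_0]$ upward until $a$ touches $U$, checking that this exchange preserves $\mu$ and the mass while increasing the mean. The same exchange gives the cost statement. For $w$ nondecreasing, $\int w\,a$ is minimized by pushing $a$ to be small where $w$ is large, that is, by truncation at the right endpoint. This is exactly the saturated law. Positive singularities of $w$ are handled by first truncating $w$ at level $n$ and then using monotone convergence.

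Concavity and convexity follow from linearity. The feasible set of pairs (law, $c$) is not convex, because $U$ depends nonlinearly on $c$. However, $U(\alpha,c;t)$ is jointly concave in $(\alpha,c)$: it is a root of the quadratic $a^2/z+2ca=\alpha^2+2c\alpha$, and $\sqrt{c^2z^2+\alpha(\alpha+2c)z}$ is a norm-type concave expression minus a linear term. Consequently a convex combination of feasible triples $(a_j,\mu_j,c_j)$ is feasible for the combined $(\mu,c)$. The mean is linear in $a$, so the supremum is concave. Likewise, the minimal linear cost is convex, as the infimum of a linear functional over a jointly convex feasible set.
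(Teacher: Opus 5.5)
Your mixing argument for joint concavity of the largest mean and convexity of the minimal cost is the same as the paper's and is fine. For the concavity of $U$, the cleanest justification is to write $U=\sqrt z\,\sqrt{(\alpha+c)^2-c^2t^2}-cz$, which is a Lorentzian (reverse-triangle) concave expression; the paper instead computes the Hessian. A Euclidean norm would be convex, so "norm-type" is not the right reason.

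\textbf{The gap.} The extremality itself is missing. You explicitly leave it as an unproved ``bathtub'' principle, and the exchange you sketch is not the mechanism that works. Supposing $\alpha'>\alpha_*$ and noting $U(\alpha',\cdot)>U(\alpha_*,\cdot)$ gives no contradiction by itself, because a larger envelope only relaxes the constraint. The contradiction has to come from the mass budget $\alpha+\int_0^1(\alpha-a)/t^2\,dt=\Pr(X>0)\le1$, and your argument never uses it. Nor can one ``move mass upward'' while preserving both $\mu$ and the mass and increasing the mean in any evident way. Moving probability upward raises $a$ everywhere below the new position, so it raises $\mu=\int_0^1a$ as well.

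\textbf{The missing idea.} Compare at the competitor's own fixed mean. Given a feasible law with mean $\alpha$ and moment $\mu$, choose $R\le1$ with $\int_0^RU(\alpha,c;t)\,dt=\mu$ (possible since $a\le U$), and put $a_R=U\mathbf 1_{[0,R)}$.
\begin{itemize}
\item The difference $a_R-a$ is $\ge0$ on $[0,R)$ and $\le0$ on $[R,1]$, with zero integral. Pairing this single crossing with the decreasing weight $t^{-2}$ gives $M(\alpha,c,R)\le\Pr(X>0)\le1$, where $M(\alpha,c,R)=\alpha/R+\int_0^R(\alpha-U)/t^2\,dt$.
\item Next use $M_R=-U(R)/R^2<0$ and $M_\alpha=1/R+\int_0^R(1-U_\alpha)/t^2\,dt>0$. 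The latter needs $U_\alpha\le1$, not merely that $U$ increases in $\alpha$. These give $R\ge R_c(\alpha)$, hence $\mu\ge\mu_c(\alpha)$. Since $\mu_c$ is strictly increasing, $\alpha\le\mu_c^{-1}(\mu)$.
\item The same inequality with $R\le1$ gives $M(\alpha,c,1)\le1$, so $\alpha\le\alpha_{\rm crit}(c)$ and $\mu\le\mu_{\rm crit}$. This is what guarantees that the mass-one saturated law with moment $\mu$ exists at every feasible $(\mu,c)$. Your proposal does not address this existence question.
\end{itemize}

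\textbf{The cost claim.} ``Truncation at the right endpoint'' only produces a saturated tail at the competitor's own mean. You still need a second single-crossing comparison between saturated tails with the same $\mu$ and different means. A larger mean gives a larger envelope and a smaller cutoff, so an increasing kernel is minimized at the largest mean. Only then is the cost minimizer the same law as the mean maximizer, as the statement asserts.
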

\begin{proof}
Tonelli gives the exact identities
\begin{equation}\label{g:mass}
 \int_0^1a(t)dt=\mu,\qquad
 \alpha+\int_0^1\frac{\alpha-a(t)}{t^2}dt
                       =\Pr(X>0)\leq1.
\end{equation}
For positive $\alpha,c$, differentiating the identity
$U^2/z+2cU=\alpha^2+2\alpha c$ gives $U_t<0$,
$0<U_\alpha\leq1$, and $U_c\geq0$.
Moreover $\alpha\sqrt z\leq U\leq(\alpha+c)\sqrt z$,
$\alpha-U=O(t^2)$, $1-U_\alpha=O(t^2)$, and $U_c=O(t^2)$
near zero, locally uniformly in positive parameters.

The saturated tail $a_R=U\mathbf1_{[0,R)}$ is the weighted tail
of a positive measure with density $-U_t/t$ on $(0,R)$ and atom
$U(R)/R$ at $R$. Its mass and second moment are
\[
 M(\alpha,c,R)=\frac\alpha R+
             \int_0^R\frac{\alpha-U}{t^2}dt,\qquad
 \mu_R=\int_0^RUdt.
\]
When $M\leq1$, complete it with an atom at zero. For any feasible
law choose $R$ so that $\int_0^RU=\mu$; existence follows from
$a\leq U$. The difference $a_R-a$ is nonnegative before $R$,
nonpositive after $R$, and has integral zero. An increasing kernel
therefore gives $\int wa\geq\int wa_R$. The same crossing with
the decreasing weight $t^{-2}$ gives
$M(\alpha,c,R)\leq\Pr(X>0)$. Integrability at zero follows from
$0\leq U-a\leq\alpha-a$ and \eqref{g:mass}.

For fixed $c$, $M(\alpha,c,1)$ is strictly increasing in $\alpha$,
tends to zero as $\alpha\downarrow0$ (indeed $M\leq\pi\alpha/2$),
and exceeds one at $\alpha=1$. There is a unique
$\alpha_{\mathrm{crit}}(c)\in(0,1)$ with $M(\alpha,c,1)=1$.
For $0<\alpha\leq\alpha_{\mathrm{crit}}$, the equation $M=1$
has a unique cutoff $R_c(\alpha)$, because
$M_R=-U(R)/R^2<0$ for $R<1$ and $M_\alpha>0$.
Both $R_c(\alpha)$ and
$\mu_c(\alpha)=\int_0^{R_c(\alpha)}Udt$ increase strictly.
Their moment range is $(0,\mu_{\mathrm{crit}}]$, where
$\mu_{\mathrm{crit}}=\int_0^1U(\alpha_{\mathrm{crit}},c;t)dt$.
For any feasible law the preceding mass comparison implies
$R\geq R_c(\alpha)$ and hence $\mu\geq\mu_c(\alpha)$.
It follows that
$\alpha\leq A_{\mathrm{can}}(\mu,c):=\mu_c^{-1}(\mu)$,
with equality attained by the mass-one saturated law.
No larger moment is feasible, since
$\alpha\leq\alpha_{\mathrm{crit}}$ and
$\mu\leq\int_0^1U(\alpha,c;t)dt\leq\mu_{\mathrm{crit}}$.

At a fixed moment, increasing $\alpha$ increases $U$ and decreases
the saturated cutoff. Two such tails again have one crossing and
the same integral. Increasing kernels have their minimum at the
largest mean. At zero the canonical law is the zero atom; at the
largest moment the tail is fully saturated. Positive unbounded
kernels are handled by monotone truncation. The logarithmic cost
is finite also at cutoff one, by $U\leq(\alpha+c)\sqrt z$.

Finally direct differentiation gives
\[
 \Hess_{\alpha,c}U
 =-\frac{t^2\sqrt z}{(\alpha^2+2\alpha c+c^2z)^{3/2}}
   \begin{pmatrix}c^2&-\alpha c\\-\alpha c&\alpha^2\end{pmatrix}
 \leq0.
\]
Mixtures of feasible laws, with simultaneous mixture of $c$, remain
feasible by this concavity. Their moments mix linearly. Mixing
maximizers proves joint concavity of $A_{\mathrm{can}}$; mixing
minimizers proves joint convexity of each linear cost.
\end{proof}

For a smooth cost, the correct tail formula is
\begin{equation}\label{g:cost}
 \E\phi(X)=\phi(0)+\alpha\phi'(0)+\int_0^1
 \frac{t\phi'(t)-\phi(t)+\phi(0)}{t^2}a(t)dt.
\end{equation}
It follows by differentiating $(\phi(t)-\phi(0))/t$ and Tonelli.
The lemma applies to $\E\phi(X)$ without a mean-dependent linear
term only when $\phi'(0)=0$. The three costs needed here are
$X\operatorname{atanh}X$, $1-\sqrt{1-X^2}$, and
\[
 \phi_8(X)=2\sum_{n=1}^8\frac{((1-X^2)/2)^n}{2n-1}.
\]
Their kernels are $1/z$, $1/(\sqrt z(1+\sqrt z))$, and an
increasing negative polynomial, respectively. To verify the last
assertion, for $F_N(s)=\sum_{n=1}^Ns^n/(2n-1)$,
\[
 3F_N''-(1-2s)F_N'''=
 \sum_{k=0}^{N-3}\frac{(k+2)^2(k+1)}{2k+5}s^k
       +N(N-1)s^{N-2}>0
\]
for $N\geq2$. Substituting $s=(1-t^2)/2$ proves monotonicity.

We next retain angular information on the same normal distribution.
This will strengthen the cap-tail estimates when the two horizontal
directions have different moments.

Relabel $x,y$ so that
$\delta=\E\nu_y^2-\E\nu_x^2\geq0$. Put
$\mu=\E\nu_z^2$, $D=\E(\nu_x^2-\nu_y^2)^2$, and
$Z=h^2P/2+\pi$. For
$\phi=\nu_z^2+(\nu_x^2-\nu_y^2)^2/4$, sphere differentiation gives
\[
 \Delta\phi=4-8\nu_z^2-5(\nu_x^2-\nu_y^2)^2,\qquad
 |\Hess^\circ\phi|^2
 =9(\nu_x^2-\nu_y^2)^2
         [2\nu_z^2+(\nu_x^2-\nu_y^2)^2/2].
\]
The stress equation $\Div T_0=0$ gives
\[
 \int_\Sigma B^\circ:\Hess^\circ\phi\,d\sigma
       =hP(4\mu-2+5D/2),\qquad
 \int_\Sigma|B^\circ|^2d\sigma=Z,
\]
where $B^\circ=B-hI/2$ and the second equality is Gauss--Bonnet.
The pointwise bracket is at most two, so Cauchy--Schwarz and Jensen
give
\begin{equation}\label{g:D0}
 D\geq D_0:=\max\{\delta^2,
             h^2P\pos{4\mu-2}^2/(18Z)\}.
\end{equation}

\begin{lemma}[Sharp angular loss]\label{g:angular}
With
$\ell=\sqrt{1-\nu_z^2}-(|\nu_x|+|\nu_y|)/\sqrt2$,
\begin{equation}\label{g:angular-pointwise}
 (\nu_x^2-\nu_y^2)^2[2\nu_z^2+(\nu_x^2-\nu_y^2)^2/2]
       \leq K_*\ell,\qquad K_*=(32/5)(3/5)^{3/2}.
\end{equation}
Consequently
\begin{equation}\label{g:angular-bounds}
 \E\ell\geq\Gamma:=%
 \frac{h^2P\pos{4\mu-2+5D_0/2}^2}{9ZK_*},\qquad
 L_x+L_y\geq\E\phi_8(|\nu_z|)+D_0/6.
\end{equation}
\end{lemma}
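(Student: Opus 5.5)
The plan is to prove the pointwise inequality \eqref{g:angular-pointwise} by an explicit parametrization of the sphere. The two consequences in \eqref{g:angular-bounds} then follow from Cauchy--Schwarz together with the stress identity, and from a power-series comparison, respectively.

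\emph{Pointwise bound.} By symmetry assume $\nu_x,\nu_y\geq0$. Write $w=1-\nu_z^2=\nu_x^2+\nu_y^2$ and $\nu_x=\sqrt w\cos(\pi/4+\psi)$, $\nu_y=\sqrt w\sin(\pi/4+\psi)$ with $|\psi|\leq\pi/4$. Then
\[
 \nu_x^2-\nu_y^2=-w\sin2\psi,\qquad
 \ell=\sqrt w\,(1-\cos\psi).
\]
Put $c=\cos\psi\in[1/\sqrt2,1]$. Since $\sin^22\psi=4c^2(1-c^2)$ and $1-c^2=(1-c)(1+c)$, the left side of \eqref{g:angular-pointwise} divided by $\ell$ equals
\[
 g(w,c)=4c^2(1+c)\,w^{3/2}\bigl[2(1-w)+2w^2c^2(1-c^2)\bigr].
\]
At $c=1$ this reduces to $16w^{3/2}(1-w)$. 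Its maximum over $w\in[0,1]$ is attained at $w=3/5$ and equals $K_*$; this is where the constant comes from. It therefore remains to show $g(w,c)\leq K_*$ on the whole rectangle $[0,1]\times[1/\sqrt2,1]$. This two-variable elementary inequality is the step I expect to be the main obstacle.

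I would first try to show that $\partial_cg\geq0$ wherever $2(1-w)$ dominates the bracket. Where $w$ is near one, the bracket is small: crude bounds such as $4c^2(1+c)\cdot2c^2(1-c^2)\leq$ its maximum, which is about $2.2$, already lie below $K_*\approx2.97$. Splitting $w$ at a rational threshold, and using monotonicity in each piece, should close the inequality with rational arithmetic.

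\emph{Bound on $\E\ell$.} Apply Cauchy--Schwarz to the stress identity displayed before \eqref{g:D0}:
\[
 h^2P^2\pos{4\mu-2+5D/2}^2
 \leq\Bigl(\int_\Sigma|B^\circ|^2d\sigma\Bigr)
     \Bigl(\int_\Sigma|\Hess^\circ\phi|^2d\sigma\Bigr)
 =Z\int_\Sigma|\Hess^\circ\phi|^2d\sigma.
\]
By the displayed formula for $|\Hess^\circ\phi|^2$ and by \eqref{g:angular-pointwise}, the last integral is at most $9PK_*\E\ell$. Dividing by $9ZK_*P$ gives the bound with $D$ in place of $D_0$. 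Monotonicity of $x\mapsto\pos{x}^2$ and $D\geq D_0$ from \eqref{g:D0} then yield $\Gamma$. Some care is needed here: the Cauchy--Schwarz step uses the Frobenius pairing of trace-free tensors, consistent with the normalization of $|\Hess^\circ\phi|^2$.

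\emph{Logarithmic bound.} Use the series $t\operatorname{atanh}t=\sum_{n\geq1}t^{2n}/(2n-1)$, whose terms are all nonnegative. Set $p=\nu_x^2$, $q=\nu_y^2$, and $s=(p+q)/2=(1-\nu_z^2)/2$. Convexity gives $p^n+q^n\geq2s^n$ for every $n$. For $n=2$ we have exactly
\[
 \frac{p^2+q^2}3=\frac{2s^2}3+\frac{(p-q)^2}6.
\]
Summing over $n\leq8$ and discarding the nonnegative tail gives, pointwise,
\[
 |\nu_x|\operatorname{atanh}|\nu_x|+|\nu_y|\operatorname{atanh}|\nu_y|
 \geq\phi_8(|\nu_z|)+\frac{(\nu_x^2-\nu_y^2)^2}6.
\]
Taking expectations and using $D\geq D_0$ gives the second inequality in \eqref{g:angular-bounds}. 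The integrability needed for this step is provided by Lemma~\ref{g:green}.
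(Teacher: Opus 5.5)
Your change of variables is exact. Your derivations of the two consequences in \eqref{g:angular-bounds} coincide with the paper's: Cauchy--Schwarz on the trace-free stress identity followed by $D\ge D_0$, and the $n=2$ convexity defect $(p-q)^2/6$ in the $\operatorname{atanh}$ series.

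\textbf{The gap is the pointwise bound $g(w,c)\le K_*$ itself.} This is the substance of the lemma, and you leave it as a plan. Moreover, the two devices you name do not cover the rectangle.
\begin{itemize}
\item \emph{Monotonicity in $c$.} One computes $\partial_c g(w,1)=8w^{3/2}(5-5w-4w^2)$, which is negative once $w>r_1:=(\sqrt{105}-5)/8\approx0.656$. There the maximum in $c$ is interior; for example $g(0.7,0.97)\approx2.842>g(0.7,1)\approx2.811$. So monotonicity in $c$ is available only for $w\le r_1$.
\item \emph{The crude estimate.} It gives $g\le16(1-w)+\max_c 8c^4(1+c)(1-c^2)\approx16(1-w)+2.162$. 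This is below $K_*\approx2.9745$ only for $w\gtrsim0.949$.
\end{itemize}
On the band $r_1<w<0.949$ you have no argument. Freezing the $w$-dependent factors at the ends of subintervals could in principle be pushed through. However, the margin is only a few hundredths near $w=r_1$, and each piece would need its own certified maximization in $c$. None of this is supplied.

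\textbf{The missing idea.} The paper removes the two-variable difficulty by trading the factor $4c^2(1+c)$ for a function of $\eta:=4c^2(1-c^2)=\sin^2 2\psi\in[0,1]$. The paper's $b_0$ is your $c$. Since $4c^2(1+c)=\eta/(1-c)$ and
\[
 8-\tfrac52\eta-\frac{\eta}{1-c}=(1-c)^2(10c^2+16c+8)\ge0,
\]
one gets, with a loss only of second order at $c=1$,
\[
 g(w,c)\le w^{3/2}\bigl[16(1-w)+(4w^2+5w-5)\eta-\tfrac54w^2\eta^2\bigr].
\]
This is a concave quadratic in $\eta\in[0,1]$, and its maximization splits into three regimes:
\begin{itemize}
\item For $w\le r_1$ the maximizer is $\eta=0$, giving $16w^{3/2}(1-w)\le K_*$.
\item For $w\ge r_2:=(\sqrt{55}-5)/3$ the maximizer is $\eta=1$, giving $\tfrac{11}{4}w^{3/2}(2-w)^2\le K_*$, with maximum at $w=6/7$.
\item In between the maximizer is interior, with value $Q(w)/(5\sqrt w)$, where $Q=16w^4-40w^3+65w^2-50w+25$. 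Its derivative has the sign of $112w^4-200w^3+195w^2-50w-25$, which is increasing on $[r_1,r_2]$. So the maximum is at an endpoint, where it matches the adjacent regimes.
\end{itemize}
The breakpoint $r_1$ is exactly where your $c$-monotonicity fails. This identity is precisely what handles your uncovered band.
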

\begin{proof}
For the pointwise inequality put $s=1-\nu_z^2$,
$\eta=(\nu_x^2-\nu_y^2)^2/s^2$, and
$b_0=\sqrt{(1+\sqrt{1-\eta})/2}$; use continuous limits when
$s\eta=0$. The identity
\[
 8-5\eta/2-\eta/(1-b_0)
          =(1-b_0)^2(10b_0^2+16b_0+8)\geq0
\]
reduces the desired ratio bound to
\[
 s^{3/2}[16(1-s)+(4s^2+5s-5)\eta-(5/4)s^2\eta^2]\leq K_*.
\]
This is a concave quadratic in $\eta\in[0,1]$. Its maximizing
value is respectively zero, interior, or one, with dividing points
$r_1=(\sqrt{105}-5)/8$ and $r_2=(\sqrt{55}-5)/3$.
On the first interval $16s^{3/2}(1-s)$ has maximum $K_*$ at
$s=3/5$. On the third interval the expression is
$(11/4)s^{3/2}(2-s)^2$, with maximum at $s=6/7$; its squared
maximum is $6690816/823543<27648/3125=K_*^2$.
On the middle interval the maximum is $Q(s)/(5\sqrt s)$, where
$Q(s)=16s^4-40s^3+65s^2-50s+25$.
Its derivative has the sign of
$D(s)=112s^4-200s^3+195s^2-50s-25$.
Since $D''(s)=1344s^2-1200s+390>0$,
$D'(13/20)>0$ and $r_1>13/20$, this derivative can only change
from negative to positive on the middle interval. Its maximum is
therefore at an endpoint, already bounded by the adjacent intervals.
This proves \eqref{g:angular-pointwise} by finite exact algebra.

Apply Cauchy--Schwarz to the preceding trace-free stress identity
using \eqref{g:angular-pointwise}. It yields
$h^2P^2\pos{4\mu-2+5D/2}^2\leq9ZPK_*\E\ell$.
The positive part is increasing in $D$, allowing $D_0$ to replace
it. This proves the first inequality of \eqref{g:angular-bounds}.
For the second put $a=\nu_x^2,b=\nu_y^2$. The positive logarithmic
series is $\sum_{n\geq1}(a^n+b^n)/(2n-1)$.
Its $n=2$ term exceeds the balanced value by $(a-b)^2/6$;
all other balanced defects are nonnegative by convexity of $t^n$.
Retain this defect and truncate the balanced sum at eight to obtain
the second bound. Every expectation used here belongs to the same
actual Gauss distribution.
\end{proof}

\subsection{The global wall inequality}\label{g:walls}

In this subsection $v=b$, $P\leq1$, and $h>1$. Put
$S=hu(0,0)$, $p_x=S-B_y$, $p_y=S-B_x$, both nonnegative.
The exact stresses in scaled upper-wall variables are
\begin{equation}\label{g:wall-stress}
 J_i=hP(1-m_i)-h^2/\pi,\qquad
 hL_i^-=\ell_i+h^2N_i.
\end{equation}
For $0<t<\pi/4$ define
\begin{equation}\label{g:wall-constants}
 s_t=\sin t,\qquad A_t=\pi/2+2t,\qquad
 d_t=4\sqrt{\pi/16+3t/4+(1-\sin2t-\cos2t)/8}.
\end{equation}

\begin{lemma}[Upper-wall calibration]\label{g:upper-calibration}
Writing $\sigma^2=\pi/4-J$ for any actual upper wall, one has
\begin{equation}\label{g:upper-bound}
 A_t-s_t(B+R)-d_t\sigma-\ell\geq0.
\end{equation}
\end{lemma}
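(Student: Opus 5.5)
The plan is to rewrite the calibration inequality \eqref{g:upper-bound} as a linear-versus-quadratic comparison in the deficit $f=1-\lambda\in[0,1]$. That turns it into a bang--bang extremal problem on $[0,\pi/2]$, which uses the same kernel calculus as in the proof of Lemma~\ref{g:pressure}.

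\emph{Reduction to the deficit.} Recall the symmetric kernel
$k(\theta,\varphi)=\cos(\min\{\theta,\varphi\})\sin(\max\{\theta,\varphi\})$, for which $A(\lambda)=\frac12\iint k\lambda\lambda$ and $g_f=\int kf$ solves $g_f''+g_f=f$ with Neumann endpoint conditions. The constant $1$ solves this problem with right side $1$, so $\int k(\theta,\varphi)\,d\varphi=1$. Hence
\[
 A(1-f)=\frac\pi4-\int f+A(f),\qquad
 \ell=\frac\pi2-\int f,\qquad
 \sigma^2=\frac\pi4-J=A(f)=\frac12\int fg_f .
\]
In particular the hypothesis $J\leq\pi/4$ implicit in the definition of $\sigma$ is equivalent to $A(f)\geq0$. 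I expect this to follow from $\lambda<1$, via the positivity of $k$ on the positive cone, since $k\geq0$ and $f\geq0$.

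With the weight $w_t(\theta)=1+s_t(\cos\theta+\sin\theta)$ one has $\ell+s_t(B+R)=\pi/2+2s_t-\int w_tf$. Then \eqref{g:upper-bound} becomes
\[
 \int_0^{\pi/2}w_t f\,d\theta+2(t-s_t)\;\geq\;d_t\sqrt{A(f)},\qquad 0\leq f\leq1 .
\]
At $f=0$ both sides are trivial, since $t\geq s_t$. This confirms that $A_t$ is chosen to absorb the full-wall value $\lambda\equiv1$.

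\emph{Linearization.} For each $\mu>0$, use $d_t\sqrt{A}\leq\mu A+d_t^2/(4\mu)$. It then suffices to show that
\[
 \Phi_\mu(f)=\int w_tf-\frac\mu2\int fg_f
\]
satisfies $\Phi_\mu(f)\geq d_t^2/(4\mu)-2(t-s_t)$ over the box $0\leq f\leq1$, for a suitable $\mu$ possibly depending on $f$ through its mass. The quadratic part is positive semidefinite on nonnegative data, so $\Phi_\mu$ is concave on the box. Its minimum is therefore attained at extreme points, namely indicators $f=\mathbf 1_E$.

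The weight $w_t$ is smallest at the endpoints $\theta=0,\pi/2$, where it equals $1+s_t$, and largest at $\pi/4$. A rearrangement argument should therefore show that the minimizing set is a union of two end arcs $[0,\tau_1]\cup[\pi/2-\tau_2,\pi/2]$. For this I would use the one-crossing comparison as in Lemma~\ref{g:canonical}, together with the explicit form of $g_{\mathbf 1_E}$ from $g''+g=\mathbf 1_E$. By the reflection $\theta\mapsto\pi/2-\theta$, which preserves $k$ and $w_t$, the two arcs should then be balanced, $\tau_1=\tau_2=\tau$.

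\emph{Explicit endpoint computation.} For $E=[0,\tau]\cup[\pi/2-\tau,\pi/2]$, I would compute $\int w_t\mathbf 1_E$ and $A(\mathbf 1_E)$ in closed form; these are trigonometric polynomials in $\tau$. The shape of $d_t^2/16=\pi/16+3t/4+(1-\sin2t-\cos2t)/8$ strongly suggests that the extremal arcs have length comparable to $t$. The target is to check that, after optimizing $\mu$, the one-variable inequality in $\tau$ holds with equality exactly at the calibrated point. It is then verified by elementary calculus, for example a sign check of a derivative in $\tau$, on $0<t<\pi/4$.

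\emph{Main obstacle.} The hardest step is the reduction of the concave minimization to two symmetric end arcs. The kernel term couples distant angles, so a pointwise bathtub argument does not apply directly. I would first try to prove that among indicators of fixed $\int w_tf$ the quadratic $A(\mathbf 1_E)$ is maximized by end arcs, using the representation $A(f)=\frac12\int(g_f^2-g_f'^2)+\text{boundary terms}$ and a Dirichlet--Neumann eigenvalue comparison as in Lemma~\ref{g:pressure}. If that fails, the fallback is a direct Lagrange-multiplier analysis of the Euler--Lagrange condition $w_t-\mu g_f=\text{const}$ on $\partial E$. Because $g_f''+g_f=\mathbf 1_E$, that condition forces $E$ to have at most two components.
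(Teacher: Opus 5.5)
\textbf{There is a genuine gap: the concavity of $\Phi_\mu$, and hence the reduction to indicators of two end arcs, is false.} Your reduction itself is correct. Since $\int_0^{\pi/2}k(\theta,\varphi)\,d\varphi=1$, one has $\sigma^2=A(f)$ for $f=1-\lambda$. Inequality \eqref{g:upper-bound} is then equivalent to $\int\bigl(1+s_t(\cos\theta+\sin\theta)\bigr)f\,d\theta+2(t-s_t)\ge d_t\sqrt{A(f)}$.

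The proof breaks at the next step. From $A(f)\ge0$ for $f\ge0$ you conclude that $\Phi_\mu=\int w_tf-\mu A(f)$ is concave on the box. But nonnegativity on the positive cone is copositivity, not positive semidefiniteness, and the area form is indefinite. The Neumann problem $g''+g=f$ on $[0,\pi/2]$ has eigenvalue $1$ on constants and $1/(1-4j^2)<0$ on $\cos2j\theta$. This is exactly the expansion $A(\lambda)=\ell^2/\pi-\frac\pi8\sum_{j\ge1}a_j^2/(4j^2-1)$, and it is the mechanism behind $A(f)\le0$ in Lemma~\ref{g:pressure}. So $\Phi_\mu$ is strictly convex along every mean-zero direction, even if $\mu$ depends on the mass, and its minimum over $0\le f\le1$ need not sit at an indicator. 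In fact it does not: the unique equality case of \eqref{g:upper-bound} is $\lambda_t=\frac12\mathbf 1_{[t,\pi/2-t]}$, i.e.\ $f=1$ on the end arcs and $f=1/2$ on the middle arc. No two-arc indicator is extremal. Checking the inequality on indicators, with or without the symmetrization and the Euler--Lagrange analysis on $\partial E$, therefore says nothing about general $f$.

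\textbf{The missing idea is that the indefiniteness works in your favour, provided you do not replace $\sqrt A$ by the quadratic majorant $\mu A+d_t^2/(4\mu)$.} The form $A$ has exactly one positive direction, and every $0\le f\le1$ satisfies $A(f)\ge0$ and $\int f\ge0$. Hence the reverse Cauchy--Schwarz inequality $\sqrt{A(f)A(f_t)}\le A(f,f_t)$ holds on the box. In particular $\sqrt A$ is concave there, so your objective is convex, the opposite of what you assumed.

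Take $f_t=1-\lambda_t$, for which $d_t=4\sqrt{A(f_t)}$. This gives $d_t\sigma\le2\int fg_{f_t}$, and the inequality becomes linear in $f$. Using $g_{f_t}=1-g_{\lambda_t}$, it reduces to
$\int_0^{\pi/2}\bigl(1-2g_{\lambda_t}-s_t(\cos\theta+\sin\theta)\bigr)\lambda\,d\theta\ge0$.
The weight has the following properties:
\begin{itemize}
\item it vanishes on $[t,\pi/2-t]$;
\item it equals $1-\cos(\theta-t)$ on $[0,t]$ and $1-\cos(\theta-\pi/2+t)$ on $[\pi/2-t,\pi/2]$;
\item its total integral is $2(t-s_t)$, which is what makes the constants $A_t$ and $d_t$ match.
\end{itemize}
This is the paper's proof. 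It writes the reverse Cauchy--Schwarz inequality as the ordinary one for $z=(\sqrt\pi r,\sqrt\pi\sigma)$, which lies on the sphere of radius $\pi/2-\ell$.
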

\begin{proof}
Since $1-\lambda$ is a positive curvature measure, its centred
convex body has area $\pi-4J>0$, so $\sigma$ is real.
Write $\lambda=2\ell/\pi+\sum_{j\geq1}a_j\cos2j\theta$.
Neumann inversion of the area kernel gives
\[
 A(\lambda)=\ell^2/\pi
          -\frac\pi8\sum_{j\geq1}\frac{a_j^2}{4j^2-1}.
\]
If $r_j=\sqrt{\pi/8}\,a_j/\sqrt{4j^2-1}$,
$x=\pi/2-\ell$ and $z=(\sqrt\pi r,\sqrt\pi\sigma)$, then
$\|z\|^2=(\pi/2-\ell)^2=x^2$.
Use $\lambda_t=\frac12\mathbf1_{[t,\pi/2-t]}$ as a calibration,
and denote its corresponding quantities by a subscript $t$.
Direct integration gives
\[
 \ell_t=\pi/4-t,\quad B_t+R_t=\cos t-\sin t,\quad
 x_t=\pi/4+t,\quad
 \sigma_t^2=\pi/16+3t/4+(1-\sin2t-\cos2t)/8.
\]
Let $\gamma=4x_t/\pi$.
The first area derivative $g_t$ of this calibration gives
$w_t=1-2g_t-s_t(\cos\theta+\sin\theta)$,
which vanishes on the middle interval and equals
$1-\cos(\theta-\text{adjacent endpoint})$ on each end interval.
Expansion of the complete quadratic area is the identity
\[
 A_t-s_t(B+R)-d_t\sigma-\ell
 =\frac\gamma{2x}\|z-(x/x_t)z_t\|^2+\int_0^{\pi/2}w_t\lambda\,d\theta
 \geq0.
\]
Here $x>0$ for the actual $\lambda<1$. If a relaxed limiting
case has $x=0$, the original inequality is checked directly.
The displayed square summability and $L^2$ approximation of bounded
densities justify this identity for every actual density. There is
no Fourier truncation.
\end{proof}

\begin{lemma}[Lower-wall turning]\label{g:turn}
For a lower wall with scaled height loss $p\geq0$,
\begin{equation}\label{g:turn-eq}
 hL^-\geq L_h(p):=\sup_{0\leq\theta<\pi/2}
         [h\cos\theta+(p-1)\sin\theta+\theta].
\end{equation}
In particular $L_h(p)-rp\geq C_h(r)$, where
\[
 C_h(r)=
 \begin{cases}
 h\sqrt{1-r^2}+\arcsin r-r,&0\leq r<1,\\
 h,&r\leq0.
 \end{cases}
\]
\end{lemma}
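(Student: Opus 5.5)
The inequality is a statement about the physical lower-wall curve, so the plan is to parametrize one lower wall (say the lower $x$ wall, $x_x=0$) by arclength and use the flux identities already in hand. On the lower wall the signed tangent angle $\theta(s)$ begins and ends at zero; this was used in verifying \eqref{g:source}. The planar curvature of the wall curve is $\kappa^-$, and $B_{xx}=h-\kappa^-$ there. Here $\nu_z=\cos\theta$ and $\nu_y=\sin\theta$. The trace runs from the point on the $z$-axis wall, at height $u(0,0)$, to the mirror where it meets the fold. The quantity $p$ is the scaled loss of height, $p=S-B_y$. With the conventions of \eqref{g:source}, $hL^-$ is then the scaled length, and the scaled vertical drop along the wall is $S-p$.

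The key step is a one-parameter comparison. Fix any $\theta_0\in[0,\pi/2)$ and test the length against the unit vector $\tau_0=(\cos\theta_0,\sin\theta_0)$ in the $(y,z)$ wall plane, taken with the appropriate signs. Since the wall tangent $\tau(s)$ is a unit vector, $hL^-=h\int ds\geq h\int\tau\cdot\tau_0\,ds$ is not quite enough on its own. So I will instead use $1\geq\cos(\theta(s)-\theta_0)$ together with $h\,ds=\kappa^-ds+|B_{xx}|ds$. Splitting $\int h\cos(\theta-\theta_0)ds$ gives two pieces. The turning part $\int\cos(\theta-\theta_0)\,d\theta$ can be integrated explicitly from the endpoint angles; its nonmonotone portions are controlled because $\int\cos(\theta-\theta_0)d\theta=\sin(\theta-\theta_0)$ evaluated at the ends. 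The other piece is the horizontal and vertical displacements, which are $h$ times the occupied extent (normalized to $1$ after scaling, giving the $h\cos\theta$ term) and $h$ times the height change (giving $(p-1)\sin\theta$ after the lower flux identity $\int\nu_z|B_{xx}|ds=h$). Collecting terms should yield $hL^-\geq h\cos\theta_0+(p-1)\sin\theta_0+\theta_0$. Here the $+\theta_0$ comes from the excess of arclength over chord, measured through the turning $\int(1-\cos(\theta-\theta_0))$ bounded below by the total rotation the curve must perform to return its tangent to angle zero. Taking the supremum over $\theta_0$ gives \eqref{g:turn-eq}.

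The main obstacle is getting the exact constant $\theta_0$: one must show that a curve whose tangent angle starts and ends at $0$ while achieving the prescribed displacements gains at least $\theta_0$ in scaled length beyond the projection. I would first try the extremal configuration, a straight segment at angle $\theta_0$ joined to the walls by arcs of curvature exactly $h$ (that is, $\lambda=1$). This suggests writing $hL^--[h\cos\theta_0+(p-1)\sin\theta_0+\theta_0]$ as an integral of $(1-\cos(\theta-\theta_0))$ against the nonnegative measure $|B_{xx}|ds$, plus a nonnegative remainder. By \eqref{g:hopf} that measure is nonnegative, and it is exactly what makes the bound honest even with angular atoms.

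The second claim is elementary. For $r\leq0$, take $\theta=0$ and use $p\geq0$: then $L_h(p)-rp\geq h-rp\geq h$. For $0\leq r<1$, first bound $L_h(p)-rp\geq h\cos\theta+(p-1)\sin\theta+\theta-rp$ at $\theta=\arcsin r$. This equals $h\sqrt{1-r^2}-r+\arcsin r+p(r-r)=C_h(r)$, since the $p$ terms cancel exactly at $\sin\theta=r$.
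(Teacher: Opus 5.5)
Your derivation of $L_h(p)-rp\ge C_h(r)$ from the main inequality is correct and matches the paper. The main inequality itself is not established, and the mechanism you describe cannot establish it.

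Since the lower wall is fully occupied horizontally and drops by $p/h$, one has $\int\cos\theta\,ds=1$ and $\int\sin\theta\,ds=p/h$. Note that the scaled drop is $p$; the quantity $S-p=B_y$ is the scaled height at the far end. Because $\theta$ starts and ends at $0$, the turning integral $\int(1-\cos(\theta-\theta_0))\,d\theta$ over the whole wall vanishes. Hence, exactly,
\[
 hL^{-}-h\cos\theta_0-p\sin\theta_0
 =h\int(1-\cos(\theta-\theta_0))\,ds
 =\int(1-\cos(\theta-\theta_0))\,|B_{xx}|\,ds .
\]
So the displacement and flux terms produce $h\cos\theta_0+p\sin\theta_0$, not $h\cos\theta_0+(p-1)\sin\theta_0$. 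The entire amount $\theta_0-\sin\theta_0$ must therefore be extracted from the last integral. Your proposed decomposition writes the deficit as this integral plus a nonnegative remainder. That would force the remainder to equal $-(\theta_0-\sin\theta_0)<0$. Nonnegativity of $|B_{xx}|\,ds$ over the whole wall only returns the projection bound.

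The missing idea is to use $d\theta\le h\,ds$ only up to the first arclength at which the angle reaches a level $m$. For a nonnegative weight $w$, a primitive of $w$ gives $h\int w\,ds\ge\int_0^m w(\phi)\,d\phi$, even with backtracking; the rest of the wall is discarded because $w\ge0$. With $w=1-\cos(\theta-\theta_0)$ and $m=\theta_0$ this yields exactly $\theta_0-\sin\theta_0$. It works only when the curve actually attains the angle $\theta_0$. For $\theta_0$ above the maximal angle $M$ the turning argument falls short; for instance, if $p=0$ the wall is flat and never turns.

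So you must also show that the curve reaches the relevant angle. The weight $\tan M\cos\theta-\sin\theta\ge0$ gives $h\tan M-p\ge\sec M-1$. Since $h>1$ makes $h\tan\theta-\sec\theta+1$ increase from $0$ to $\infty$, it follows that $M\ge\theta_p$, where $\theta_p$ is the root of that function at value $p$.

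Finally, the derivative $\cos\theta\,[p-(h\tan\theta-\sec\theta+1)]$ shows that $\theta_p$ maximizes $h\cos\theta+(p-1)\sin\theta+\theta$. The single estimate at $\theta_p$ therefore already equals $L_h(p)$; you do not, and cannot directly, prove the bound separately for every $\theta_0$. Your sketch never uses $h>1$, which is exactly what guarantees that $\theta_p$ exists.
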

\begin{proof}
For the lower $x$ wall put $f(y)=u(0,0)-u(0,y)$.
Then $f\geq0$, $f'\geq0$, $f(0)=0$, and $f(1)=p/h$.
Its angle $\theta=\arctan f'$ begins and ends at zero and obeys
$d\theta/ds\leq h$ by \eqref{g:hopf}. For a nonnegative angle
weight $w$, integration to the first visit to angle $m$ gives
$h\int w\,ds\geq\int_0^m w\,d\theta$. A primitive proves this
even with signed curvature backtracking.
At the maximum angle $M$, using
$w=\tan M\cos\theta-\sin\theta\geq0$ gives
$h\tan M-p\geq\sec M-1$.
Since $h>1$, the function
$h\tan\theta-\sec\theta+1$ is strictly increasing from zero to
infinity, with derivative $(h-\sin\theta)/\cos^2\theta>0$.
Its root $\theta_p$ at value $p$ is therefore reached by the curve.
Use $w=1-\cos(\theta-\theta_p)$ up to the first visit, and its
nonnegativity on the remainder. As
$\int\cos\theta\,ds=1$ and $\int\sin\theta\,ds=p/h$, this gives
\[
 hL^-\geq h\cos\theta_p+(p-1)\sin\theta_p+\theta_p.
\]
The derivative of the right-hand expression with variable $\theta$
is $\cos\theta[p-(h\tan\theta-\sec\theta+1)]$, so this is
its supremum. Finally evaluate the supremum at $\arcsin r$ for
$0\leq r<1$, or at zero for $r\leq0$, to obtain $C_h$.
\end{proof}

\begin{lemma}[Height and combined wall inequality]\label{g:raw-dual}
For every $\eta\in[0,1]$,
\begin{equation}\label{g:height}
 S\geq1-P+\frac{3h}{2\pi}
      +\frac{\eta(p_x+p_y)/2-(\eta-F(\eta))}{h}.
\end{equation}
For arbitrary $t_x,t_y\in(0,\pi/4)$, put
$s=s_x+s_y$, $r_1=s_y-s\eta/(2h)$ and
$r_2=s_x-s\eta/(2h)$, with constants from
\eqref{g:wall-constants}. Then
\begin{equation}\label{g:raw-dual-eq}
 \begin{split}
 C_h(r_1)+C_h(r_2)
 &+s[1-P+3h/(2\pi)-(\eta-F(\eta))/h]\\
 &+s_xR_x+s_yR_y+d_x\sigma_x+d_y\sigma_y
       \leq A_x+A_y+h^2(N_x+N_y).
 \end{split}
\end{equation}
\end{lemma}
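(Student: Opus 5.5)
We prove the height bound \eqref{g:height} first and then obtain \eqref{g:raw-dual-eq} by adding it to the two lower-wall turning bounds and the two upper-wall calibrations.

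\emph{Height bound.} We write $S=hu(0,0)$ through the two lower walls. Along the lower $x$ wall the height drops from $u(0,0)$ to $u(0,1)$, and $p_x=S-B_y$ measures this loss. The endpoint $u(0,1)$ lies on the upper $y$ wall, whose scaled vertical height is $B_y$. Similarly $S=p_y+B_x$. Averaging gives
\[
 S=\tfrac12(p_x+p_y)+\tfrac12(B_x+B_y).
\]
The averaging is convenient because only the symmetric combination $(p_x+p_y)/2$ appears in \eqref{g:height}.

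To bound $B_x+B_y$ from below, we use the stress identities \eqref{g:wall-stress}. At $v=1/\pi$, summing and using $m_x+m_y+m_z=1$ with $M_z=P-hv$ gives
\[
 J_x+J_y=2hP-3h^2/\pi.
\]
Next we bound each upper-wall height in the style of Lemma~\ref{g:pressure}. We calibrate $J$ against $B$ with the one-parameter family $\lambda_*=\mathbf1_{[0,\arcsin\eta]}$ (the mirror of the $R$-calibration), obtaining $J\leq B\,\eta^{-1}\cdot(\cdots)$. An affine supporting line of the concave $F$ at $\eta$ should then give
\[
 J\le \eta B+(\eta-F(\eta))\cdot(\text{normalising factor}).
\]
Inserting the summed stress and dividing by $h$ produces $1-P+3h/(2\pi)$ and the correction $-(\eta-F(\eta))/h$.

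I expect the exact bookkeeping to be the main obstacle. The issue is which combination, $\eta\cdot\tfrac12(p_x+p_y)$ versus $\tfrac12(p_x+p_y)$, survives once the calibration mixes $S$ with the $B$'s. The intended route is the convex combination
\[
 S=\eta\bigl[\tfrac12(p_x+p_y)+\tfrac12(B_x+B_y)\bigr]+(1-\eta)S,
\]
followed by using $S\geq B_x,B_y$ (equivalently $p\geq0$) on the second part. The $\eta$-calibration of $J$ by $B$, with its first variation nonnegative as in the proof of Lemma~\ref{g:pressure}, supplies the $F(\eta)$ term. The case $\eta=0$ reduces to $S\ge 1-P+3h/(2\pi)$, which I would check first as a sanity test.

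\emph{Combined inequality.} Apply Lemma~\ref{g:turn} on each lower wall, with $r=r_1$ on the $x$ wall ($p=p_x$) and $r_2$ on the $y$ wall:
\[
 hL_x^-\geq C_h(r_1)+r_1p_x,\qquad hL_y^-\geq C_h(r_2)+r_2p_y.
\]
Then use $hL_i^-=\ell_i+h^2N_i$ from \eqref{g:wall-stress}, and bound each $\ell_i$ by Lemma~\ref{g:upper-calibration}:
\[
 \ell_i\leq A_i-s_i(B_i+R_i)-d_i\sigma_i.
\]
The remaining terms are linear in $p_x,p_y,B_x,B_y$.

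Substitute $B_y=S-p_x$ and $B_x=S-p_y$. This turns $-s_yB_y-s_xB_x$ into
\[
 -sS+s_yp_x+s_xp_y,
\]
which combines with $r_1p_x+r_2p_y$. Since $r_1=s_y-s\eta/(2h)$ and $r_2=s_x-s\eta/(2h)$, what remains is exactly $-sS+s\eta(p_x+p_y)/(2h)$. Replacing $S$ by the lower bound \eqref{g:height}, which is allowed because $s>0$, cancels the $p$-terms and leaves \eqref{g:raw-dual-eq}.
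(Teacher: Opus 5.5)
Your derivation of \eqref{g:raw-dual-eq} from \eqref{g:height} is correct and matches the paper's bookkeeping. You use the turning bounds with $r_1,r_2$, then $hL_i^-=\ell_i+h^2N_i$ and the calibrations \eqref{g:upper-bound}, then $B_x=S-p_y$, $B_y=S-p_x$, and finally $s>0$.

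The gap is the height bound \eqref{g:height} itself. It is structural, not a bookkeeping issue: the upper walls cannot produce it.
\begin{itemize}
\item The upper-wall stress is $J_x+J_y=2hP-3h^2/\pi$, which increases in $P$ and decreases in $h$. Any estimate $J\le\phi(B)$ with $\phi$ increasing therefore gives a lower bound for $B_x+B_y$ that grows with $P$. It can never give the term $1-P+3h/(2\pi)$.
\item The estimate you need points the wrong way. Reflecting $\theta\mapsto\pi/2-\theta$ in Lemma~\ref{g:pressure} gives $J\ge F(B)$, and the extremal law there is exactly your $\mathbf 1_{[0,\arcsin B]}$. This is a lower bound for $J$, hence only an upper bound for $B$.
\item With unit normalizing factor, $J\le\eta B+\eta-F(\eta)$ already fails at $\eta=1$. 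Take $\lambda=\mathbf 1_{[r,\pi/2]}$ with $\sin r=9/10$: then $B=1/10$ but $J=F(\sqrt{19}/10)>0.42>B+1-\pi/4$.
\item Your convex-combination step produces the $p$-term with coefficient $\eta/2$, not the $\eta/(2h)$ of \eqref{g:height}. That factor $1/h$ points to where the term actually comes from.
\end{itemize}

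The missing idea is to use the \emph{lower} walls, whose stress contains $S$ with the opposite sign to $P$. Put $f(y)=u(0,0)-u(0,y)$. Then $hA_x^-=S-h\int_0^1f$, so the identity $L_x^--hA_x^-=P-M_x-hv$ at $v=1/\pi$ reads
\[
 P(1-m_x)-\frac h\pi+S=\int_0^1\bigl[\sqrt{1+f'^2}+hf\bigr]\,dy.
\]
Apply Fenchel's inequality $\sqrt{1+f'^2}\ge wf'+\sqrt{1-w^2}$ with $w(y)=\max\{0,\eta-h(1-y)\}$. This test satisfies $0\le w\le\eta\le1$, $w(1)=\eta$ and $w'\le h$, and since $h>1$ it vanishes on $[0,1-\eta/h]$.

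Integrating by parts with $f(0)=0$ and $f\ge0$ gives $\int_0^1(wf'+hf)\,dy\ge\eta f(1)=\eta p_x/h$. Direct integration gives $\int_0^1\sqrt{1-w^2}\,dy=1-(\eta-F(\eta))/h$.

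Add the $x$ and $y$ versions and use $P(m_x+m_y)=h/\pi$; this yields \eqref{g:height}. Your sanity case $\eta=0$ is then just $L_i^-\ge1$ together with $f\ge0$.
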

\begin{proof}
The full lower-wall stress, including the endpoint, is
\[
 P(1-m_x)-h/\pi+S
       =\int_0^1[\sqrt{1+f'^2}+hf]dy.
\]
For $w(y)=\max(0,\eta-h(1-y))$, Fenchel's inequality gives
$\sqrt{1+f'^2}\geq wf'+\sqrt{1-w^2}$.
Integration by parts retains the endpoint:
\[
 \int_0^1(wf'+hf)dy
     =\eta f(1)+\int_0^1(h-w')fdy\geq\eta f(1),\qquad
 \int_0^1\sqrt{1-w^2}dy=1-\frac{\eta-F(\eta)}h.
\]
Here $h-w'\geq0$. Add the two wall identities and use
$m_x+m_y=h/(\pi P)$ to get \eqref{g:height}.

Combine the two upper-wall calibrations, the two turning bounds,
and \eqref{g:wall-stress} on this same surface. Substitute
$B_x=S-p_y$, $B_y=S-p_x$, to obtain
\[
 \begin{split}
 L_h(p_x)+L_h(p_y)+sS-s_xp_y-s_yp_x
 &+s_xR_x+s_yR_y+d_x\sigma_x+d_y\sigma_y\\
 &\leq A_x+A_y+h^2(N_x+N_y).
 \end{split}
\]
Insert \eqref{g:height}. The coefficients of $p_x,p_y$ become
$-r_1,-r_2$. Apply $L_h(p)-rp\geq C_h(r)$ separately to obtain
\eqref{g:raw-dual-eq}. All substitutions lower the left side or
upper-bound the right side of a necessary inequality for one
actual surface.
\end{proof}

For completeness, the CMC analyticity used above can be obtained by
an analytic-parameter argument from the linear Schauder and
Dirichlet theorems \cite[Theorems 6.14, 6.17, 8.32]{GilbargTrudinger}.
A $C^{1,\beta}$ graph satisfies the weak equation
$\Div(Du/\sqrt{1+|Du|^2})=\lambda$.
Its difference quotients solve uniformly elliptic divergence-form
equations with uniformly $C^{0,\beta}$ coefficients. The interior
Schauder estimate gives a bound independent of the quotient step,
so $u\in C^{2,\beta}$. Bootstrap the nondivergence form
\[
 F(u)=\left(\delta_{ij}-\frac{u_iu_j}{1+|Du|^2}\right)u_{ij}
                          -\lambda\sqrt{1+|Du|^2}=0
\]
to get smoothness.

Fix a ball $B$ and $x_0\in B$, and choose a smooth compactly
supported cutoff $\chi$ equal to one near $x_0$.
For small $a$ set $\theta_a(x)=x+\chi(x)a$. It is a
diffeomorphism of $B$, equal to the identity near its boundary.
Writing $v=w\circ\theta_a$, $J=D\theta_a$,
$p=J^{-T}Dv$, and
$M=J^{-T}(D^2v-\sum_kp_kD^2\theta_a^k)J^{-1}$, the pulled-back
equation is
\[
 F_a(v)=(\delta_{ij}-p_ip_j/(1+p\cdot p))M_{ij}
                              -\lambda\sqrt{1+p\cdot p}=0.
\]
It is analytic as a map from $(a,v)$ into $C^{0,\alpha}$,
with $v$ near $u$ in $C^{2,\alpha}$. The smooth cutoff need not
be analytic in $x$; parameter dependence is polynomial before
matrix inversion, and inversion and the square root are analytic
in the complexified H\"older algebra near the real base point.
On the subspace with zero Dirichlet trace, the linearization
$L=D_vF_0(u)$ is invertible by the cited Dirichlet theorem: it is
uniformly elliptic and has zero zeroth-order coefficient.
The iteration $z_{j+1}=z_j-L^{-1}F_a(u+z_j)$ is, on small complex
parameter and function balls, a uniform contraction. Its iterates
are holomorphic in $a$, hence so is their limit $z(a)$.
For real $a$, the actual function $u\circ\theta_a-u$ has the
same zero boundary trace and solves the same equation; local
uniqueness makes it equal to $z(a)$. Evaluating at $x_0$ gives
$u(x_0+a)=u(x_0)+z(a)(x_0)$, proving real analyticity.

Finally, a nonzero real-analytic function has area-zero zero set
on each graph patch. One elementary justification partitions its
zero set by the order of its first nonzero derivative. Each stratum
locally lies in the regular zero hypersurface of a derivative of
one lower order; countable covering proves the assertion. This is
used only on smooth torus patches, including reflected walls.

\section{Stability bounds and preliminary exclusions}\label{sec:stability-exclusions}

The inactive direction permits additional global stability tests.
We combine their bounds with finite cap-tail estimates, exclude the
low-curvature genus-two range, and dispose of genus three.

\subsection{Global height tests}
\label{s:height}

Throughout this subsection, $\Sigma=\partial E\subset T^2\times\R$ is a smooth, connected, two-sided, compact surface of genus two, $E$ is compact, and $h=\tr B>0$ is constant. Assume that $\Sigma$ is volume-constrained stable. We use $\sig=\dd A/8$, $P=\int_\Sigma\sig$, and $v=|E|/8$. The second variation is
\[
 Q(f,g)=\int_\Sigma(\nabla f\cdot\nabla g-|B|^2fg)\,\sig.
\]
Volume-preserving stability means that $Q(f,f)\ge0$ whenever $\int_\Sigma f\,\sig=0$. By density, the same assertion holds for $H^1$ functions. The compact lift, rather than a locally defined height, is essential in the following argument.

\begin{proposition}[Height stability]\label{s:height-stability}
Set $q=hv/P$, $\mu=1-q$, and $m_4=P^{-1}\int_\Sigma\nu_z^4\,\sig$. Then
\[
 m_4\le\mu+2q\mu-q^2-\frac{\pi v^2}{P^3},
 \qquad 3q-4q^2\ge\frac{\pi v^2}{P^3}.
\]
In particular, $P^3\ge16\pi v^2/9$.
\end{proposition}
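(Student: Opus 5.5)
The plan is to use the test function $f=z\nu_z-v/P$, where $z$ is the global height on the compact lift in $T^2\times\R$. It has mean zero because $\int_\Sigma z\nu_z\sig=v$: this is the divergence theorem for the field $z e_z$ on the compact region $E$, and it is exactly where the compact lift, rather than a local height, is needed. We then expand $Q(f,f)\geq0$.

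Writing $g=z\nu_z$ and $c=v/P$, bilinearity gives
\[
 Q(g-c,g-c)=Q(g,g)+2c\int_\Sigma|B|^2g\,\sig-c^2\int_\Sigma|B|^2\sig .
\]
Genus two gives $\int K\sig=-\pi/2$, and hence $\int|B|^2\sig=h^2P+\pi$. The $\pi$ in this identity produces the term $\pi v^2/P^3$.

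The other integrals come from three surface identities (signs to be checked against the convention $B=d\nu$):
\[
 \nabla\nu_z=Be_z^T,\qquad \Delta_\Sigma\nu_z=-|B|^2\nu_z,\qquad \Delta_\Sigma z=-h\nu_z .
\]
The second is the Jacobi identity, which follows from Codazzi and constant $h$.
\begin{itemize}
\item From $\Div_\Sigma(Be_z^T)=-|B|^2\nu_z$, integration by parts gives
\[
 \int|B|^2 z\nu_z\sig=\int B(e_z^T,e_z^T)\sig=\int\nabla z\cdot\nabla\nu_z\sig=-\int\nu_z\Delta z\sig=hM_z .
\]
\item For $Q(g,g)=-\int g(\Delta g+|B|^2g)\sig$, expand $\Delta g=z\Delta\nu_z+\nu_z\Delta z+2\nabla z\cdot\nabla\nu_z$. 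The $|B|^2$ terms cancel, leaving
\[
 Q(g,g)=\int\bigl(hz\nu_z^3-2z\nu_z B(e_z^T,e_z^T)\bigr)\sig .
\]
\item Write $2z\nu_zB(e_z^T,e_z^T)=z\nabla z\cdot\nabla(\nu_z^2)$. Integrating by parts moves the derivative onto $z\nabla z$, using $|\nabla z|^2=1-\nu_z^2$ and $\Delta z=-h\nu_z$. The quartic moment $\int\nu_z^4$ then appears, and the remaining $z$-weighted terms should be removed with $\Div_\Sigma(z^2e_z^T)$ and $\Div_\Sigma(ze_z^T)=1-\nu_z^2-hz\nu_z$.
\end{itemize}
After these reductions, every term should be expressible through $P$, $v$, $M_z$, $\int\nu_z^4$ and $h$. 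By \eqref{g:hole}, $M_z=P-hv$, so $m_z=\mu$. Dividing by $P$ should give precisely the first inequality of the proposition, with $q=hv/P$.

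The main obstacle is this bookkeeping. A term such as $\int hz^2\nu_z^2(\ldots)$ may survive, and it must either cancel exactly or be bounded with the correct sign. I would first try to arrange the integrations by parts so that every $z$-weighted term reduces to $\int z\nu_z=v$ or to $M_z$. If an uncancelled $z^2$ term remains, the fallback is to note that the boundary is a graph $z=u(x,y)$ over the footprint, since $\nu_z>0$ on the upper sheet by Lemma~\ref{g:downset}, and to sign the remainder there.

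The second inequality follows from Jensen, $m_4\geq m_z^2=\mu^2$, inserted into the first:
\[
 (1-q)^2\leq(1-q)+2q(1-q)-q^2-\frac{\pi v^2}{P^3},
\]
which simplifies to $\pi v^2/P^3\leq3q-4q^2$. Finally, $3q-4q^2\leq9/16$ (its maximum, at $q=3/8$) gives $P^3\geq16\pi v^2/9$.
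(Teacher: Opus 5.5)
Your proposal is correct and is the paper's proof. It uses the same mean-zero test $z\nu_z-v/P$ on the compact lift, the same Gauss--Bonnet contribution, $\int|B|^2z\nu_z\,d\sigma=hM_z$, $M_z=P-hv$ and the same Jensen step; note that $M_z=P-hv$ follows directly from $\Div_\Sigma(ze_z^T)=1-\nu_z^2-hz\nu_z$, so you need not cite \eqref{g:hole}.

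The bookkeeping you flagged closes at once. Since $-\int z\nabla z\cdot\nabla(\nu_z^2)\,d\sigma=\int\nu_z^2(1-\nu_z^2-hz\nu_z)\,d\sigma$, its $-h\int z\nu_z^3\,d\sigma$ cancels your $+h\int z\nu_z^3\,d\sigma$ exactly. This leaves $Q(z\nu_z,z\nu_z)=M_z-\int\nu_z^4\,d\sigma$ with no $z^2$ remainder, so the graph fallback is unnecessary. The paper gets the same value in one line from the Jacobi-field identity $Q(a\nu_z,a\nu_z)=\int\nu_z^2|\nabla a|^2\,d\sigma$ with $a=z$.
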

\begin{proof}
Write $w=\nu_z$ and $t=e_z^T=\nabla_\Sigma z$. The geometric and Jacobi identities are
\[
 \Delta_\Sigma z=-hw,\qquad |\nabla_\Sigma z|^2=1-w^2,
 \qquad (\Delta_\Sigma+|B|^2)w=0.
\]
The divergence theorem in the compact region and on the closed surface gives
\[
 \int_\Sigma zw\,\sig=v,\qquad
 0=\int_\Sigma\Div_\Sigma(zt)\,\sig
   =P-\int_\Sigma w^2\,\sig-hv.
\]
Consequently $M_z:=\int_\Sigma w^2\,\sig=P\mu$. Testing the Jacobi equation with $a^2w$ and integrating by parts yields
\[
 Q(aw,aw)=\int_\Sigma w^2|\nabla a|^2\,\sig.
\]
Thus, for $g=zw$,
\[
 Q(g,g)=M_z-\int_\Sigma w^4\,\sig,
 \qquad Q(g,1)=-hM_z.
\]
The second equality follows from
\[
 \int_\Sigma|B|^2zw\,\sig
 =-\int_\Sigma z\Delta_\Sigma w\,\sig
 =-\int_\Sigma w\Delta_\Sigma z\,\sig=hM_z.
\]
Since the Euler characteristic is $-2$, Gauss--Bonnet and $|B|^2=h^2-2K$ give
\[
 \int_\Sigma K\,\sig=-\frac\pi2,\qquad
 \int_\Sigma|B|^2\,\sig=h^2P+\pi,
 \qquad Q(1,1)=-h^2P-\pi.
\]
Now $f=g-v/P$ has zero mean, and stability gives
\[
 0\le\frac{Q(f,f)}P
 =\mu-m_4+2q\mu-q^2-\frac{\pi v^2}{P^3}.
\]
Jensen's inequality gives $m_4\ge\mu^2$. Substitution of $\mu=1-q$ therefore gives
\[
 \frac{\pi v^2}{P^3}\le\mu-\mu^2+2q\mu-q^2
 =3q-4q^2\le\frac9{16}.
\]
\end{proof}

\begin{proposition}[Joint height and normal stability]\label{s:q-bound}
If, in addition, $h^2P\le\pi$, then $q<7/12$. At $v=1/\pi$ this implies
\[
 P>\frac{12h}{7\pi},\qquad h^3<\frac{7\pi^2}{12}.
\]
\end{proposition}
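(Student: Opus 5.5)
The plan is to combine the height test of Proposition~\ref{s:height-stability} with a second test built from $\psi=|\nu_z|$, as announced in the introduction. Stability then forces the $2\times2$ Gram matrix of $Q$ on the centred tests
\[
 f_1=z\nu_z-v/P,\qquad f_2=\psi-\alpha,\qquad \alpha=N_z/P,
\]
to be positive semidefinite. The proof reduces to computing its entries exactly in terms of $P,h,v,N_z$, a single wall flux, and $m_4$, and then eliminating.

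\textbf{Step 1 (entries).} From the proof of Proposition~\ref{s:height-stability} we already have
\[
 Q(g,g)=P\mu-Pm_4,\qquad Q(g,1)=-hP\mu,\qquad Q(1,1)=-h^2P-\pi,
\]
where $g=z\nu_z$. For $\psi$, the distributional Jacobi equation $(\Delta_\Sigma+|B|^2)\psi=2|\partial\nu_z|\,ds_{\{\nu_z=0\}}$, together with the fact that $\psi$ vanishes on the nodal set, gives $Q(\psi,\psi)=0$. The same equation gives
\[
 Q(\psi,1)=-\int_\Sigma|B|^2\psi\,\sig=-\Phi,
\]
where $\Phi$ is the total nodal flux. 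By Hopf, $|\nabla\nu_z|=|B_{zz}|$ on the fold. By Lemma~\ref{g:full-wall} and Lemma~\ref{g:body-area}, this flux is the perimeter of $\mathcal K_{xy}$ up to the fixed normalization, so $\Phi$ is an explicit affine function of $h^2N_z$ (the analogue of $2\pi+4h^2N_z$). For the cross term, integrate by parts on each sign region of $\nu_z$. The boundary terms vanish because $\nu_z=0$ on the nodal curves, and the Jacobi equation then shows $Q(z\nu_z,\psi)=0$. Bilinearity now gives
\[
 Q(f_1,f_2)=\alpha hP\mu+\frac vP\Phi-\frac vP\alpha(h^2P+\pi),\qquad
 Q(f_2,f_2)=2\alpha\Phi-\alpha^2(h^2P+\pi).
\]

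\textbf{Step 2 (elimination).} The determinant condition $Q(f_1,f_1)Q(f_2,f_2)\geq Q(f_1,f_2)^2$ is then a polynomial inequality. I would bound its ingredients as follows:
\begin{itemize}
 \item $m_4\geq\mu^2$, by Jensen, as before;
 \item $N_z^2\leq P(P-hv)$, from \eqref{g:hole};
 \item the hole inequality $1-N_z\leq h^2N_z^2/\pi$;
 \item $h^2P\leq\pi$, which is the new hypothesis.
\end{itemize}
Dividing by suitable powers of $P$ and writing everything in $q$, $\alpha$ and $k=h^2P\in(0,\pi]$, I expect the determinant inequality to become a condition of the form $\Psi(q,\alpha,k)\geq0$. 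Maximizing over $\alpha$, subject to $\alpha^2\leq 1-q$, and over $k\leq\pi$ should leave a one-variable polynomial in $q$ that is negative for $q\geq7/12$.

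\textbf{Step 3 (consequences at $v=1/\pi$).} Here $q=h/(\pi P)<7/12$ gives $P>12h/(7\pi)$ directly. Inserting this into $h^2P\leq\pi$ gives $h^3\cdot 12/(7\pi)<\pi$, which is the second displayed bound $h^3<7\pi^2/12$.

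\textbf{Main obstacle.} I expect the difficulty to lie in Step 2. First, the flux $\Phi$ must be identified with exactly the right constant; the factor-of-four and $1/8$ reflection normalizations from Lemma~\ref{g:full-wall} must be tracked carefully. Second, the resulting inequality must be shown to give precisely the threshold $7/12$ and not something weaker. If the plain determinant is not enough, I would first try adding a multiple of the constant function, i.e.\ optimizing the centring constants rather than using the means. Failing that, I would keep $m_4$ and use the first inequality of Proposition~\ref{s:height-stability} in place of Jensen.
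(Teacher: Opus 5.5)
Your setup is the paper's own proof: the same centred tests $z\nu_z-v/P$ and $|\nu_z|-\alpha$, and the same identities $Q(|\nu_z|,|\nu_z|)=Q(z\nu_z,|\nu_z|)=0$. Once the flux is known, your bilinear expansions reproduce the paper's matrix entries exactly. However, the proposal stops where the actual content of the proof begins, so as written it does not establish $q<7/12$. Two things are missing.

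\textbf{The flux $\Phi$.} It must be pinned down as
\[
 \Phi=\int_\Sigma|B|^2|\nu_z|\,d\sigma=h^2N_z+\pi/2 .
\]
The direct route uses $|B|^2=h^2-2K$ and the degree $-1$ Gauss map, which give $\int_\Sigma K|\nu_z|\,d\sigma=-\frac18\int_{\Sph}|n_z|\,d\omega=-\pi/4$. Your route through Lemmas~\ref{g:full-wall} and~\ref{g:body-area} yields the same value $\frac14\Per(\mathcal K_{xy})$. But it needs the mirror and fold structure of the selected downset, which is not among the standing hypotheses of this subsection. Similarly, expanding $Q(\nu_z,z|\nu_z|)=0$ gives the cross term without integrating by parts on nodal domains.

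Set $\omega=\pi/(h^2P)\geq1$, $A=Q(f_2,f_2)/P$ and $C=Q(f_1,f_2)/P$. Using $m_4\geq\mu^2$, the determinant condition becomes
\[
 3q-(4+\omega)q^2\geq\frac{C^2}{A},\qquad
 \frac Ch=\alpha\mu+\omega q\Bigl(\frac12-\alpha\Bigr),\qquad
 \frac A{h^2}=\alpha^2+\omega\alpha(1-\alpha),
\]
with $0<\alpha\leq s:=\sqrt\mu$.

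\textbf{The elimination.} This step has to be carried out, and it is tight. At $\alpha=s$, $\omega=1$ the inequality fails only for $q$ above about $0.582$. So $7/12\approx0.5833$ leaves almost no slack, and the threshold cannot be obtained from an expectation or crude bounds. The paper argues in four steps:
\begin{enumerate}
 \item Dropping the right side gives $q\leq3/(4+\omega)\leq3/5$. Hence $q\geq7/12$ forces $\omega\leq3/q-4\leq8/7$.
 \item On that range, $C/h$ is positive and decreasing in $\alpha$, while $A/h^2$ is increasing. So $\alpha=s$ is the worst case.
 \item The resulting function $F(q,\omega)=3q-(4+\omega)q^2-C^2/A\big|_{\alpha=s}$ decreases in $\omega$. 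So $\omega=1$ is the worst case.
 \item $F(q,1)$ decreases in $q$, and $F(7/12,1)<0$ by an exact rational comparison.
\end{enumerate}

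\textbf{Your fallbacks.} They would not rescue a failure. The mean-zero part of the span of $z\nu_z$, $|\nu_z|$ and $1$ is already spanned by $f_1,f_2$, so there are no free centring constants to optimize. The first inequality of Proposition~\ref{s:height-stability} is just $Q(f_1,f_1)\geq0$. That is an upper bound on $m_4$, already implied by the determinant condition. Bounding $Q(f_1,f_1)$ from above instead needs the lower bound on $m_4$ that Jensen supplies.

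Step 3 is correct.
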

\begin{proof}
Let $\omega=\pi/(h^2P)$, $\alpha=\E|w|$, and $k=|w|-\alpha$, where expectations use the probability measure $\sig/P$. Both $f=zw-v/P$ and $k$ have zero mean. The Sobolev chain rule and the Jacobi equation give
\[
 Q(|w|,|w|)=0,\qquad Q(zw,|w|)=0.
\]
For the second identity, expand $Q(w,z|w|)=0$; this avoids any regularity assumption on $\{w=0\}$. The signed Gauss-map formula for genus two gives
\[
 \int_\Sigma K|w|\,\sig=-\frac18\int_{\Sph}|n_z|\,\dd\omega
 =-\frac\pi4,
\]
and hence $Q(1,|w|)=-h^2P\alpha-\pi/2$. Put
\[
 A=\frac{Q(k,k)}P=h^2\alpha^2+\frac\pi P\alpha(1-\alpha),
 \qquad C=\frac{Q(f,k)}P=h\alpha\mu+\frac{\pi v}{P^2}\left(\frac12-\alpha\right).
\]
Here $\alpha>0$: otherwise $w=0$ everywhere, inconsistent with $\int zw\,\sig=v>0$. Thus $A>0$. Positivity of the stability matrix and Jensen's inequality imply
\begin{equation}\label{s:q-matrix}
 3q-(4+\omega)q^2\ge\frac{C^2}{A}.
\end{equation}
Indeed the height diagonal entry is at most $3q-(4+\omega)q^2$, because $\pi v^2/P^3=\omega q^2$.

Discarding the right side and using $\omega\ge1$ first gives $q\le3/5$. Suppose $q\ge7/12$. Then
\[
 \frac7{12}\le q\le\frac35,\qquad
 1\le\omega\le\frac3q-4\le\frac87.
\]
Put $s=\sqrt\mu$. We have $0<\alpha\le s$, $3/5<s<13/20<2/3$, and, with $t=\mu/(\omega q)$,
\[
 \frac58\le t\le\frac57<1.
\]
The exact dimensionless entries are
\[
 C/h=\alpha\mu+\omega q(1/2-\alpha),\qquad
 A/h^2=\alpha^2+\omega\alpha(1-\alpha).
\]
The positive numerator $\omega q[1/2-(1-t)\alpha]$ decreases with $\alpha$, whereas $\omega[\alpha-(1-1/\omega)\alpha^2]$ increases: the latter derivative is at least $3\omega/4$. Consequently
\begin{equation}\label{s:q-R}
 \frac{C^2}{A}\ge R(q,\omega):=
 \frac{[\mu s-\omega q(s-1/2)]^2}{\mu+\omega(s-\mu)}.
\end{equation}
Write $N=\mu s-\omega q(s-1/2)$ and $D=\mu+\omega(s-\mu)$. The preceding inequalities give $N>0$, $D\ge s$, and $N/D\le\mu$. Therefore
\[
 -R_\omega=2q(s-1/2)\frac ND+(s-\mu)\left(\frac ND\right)^2
 \le\frac q6+\frac1{16}<q^2.
\]
Here $s-1/2<1/6$, $\mu<1/2$, and $s-\mu=s(1-s)\le1/4$; the last strict inequality follows immediately from $q\ge7/12$. It follows that $F(q,\omega):=3q-(4+\omega)q^2-R(q,\omega)$ decreases with $\omega$, so
\[
 F(q,\omega)\le F(q,1)=3q-5q^2-
 \frac{[q/2-(2q-1)s]^2}{s}.
\]
Let $n=q/2-(2q-1)s$. Then $n>0$, $n/s\le\mu\le1/2$, $s'=-1/(2s)$, and
\[
 n'=\frac12-2s+\frac{2q-1}{2s}>-\frac56,
 \qquad (n^2/s)'=2(n/s)n'+\frac{n^2}{2s^3}>-\frac56.
\]
Thus $F(q,1)'<3-10q+5/6\le-2$. At $q_*=7/12$,
\[
 \frac{[q_*/2-(2q_*-1)\sqrt{5/12}]^2}{\sqrt{5/12}}
 >\frac{(11/60)^2}{13/20}=\frac{121}{2340}
 >\frac7{144}=3q_*-5q_*^2.
\]
Hence $F(q_*,1)<0$, and $F(q,\omega)<0$ on the assumed interval, contradicting \eqref{s:q-matrix}. The final assertions follow by combining $q<7/12$ with $h^2P\le\pi$.
\end{proof}

\subsection{Finite cap-tail bounds}
\label{s:cap}

For a random variable $X\in[0,1]$ define
\[
 \alpha=\E X,\qquad \mu=\E X^2,\qquad a(t)=\E[X\mathbf1_{\{X>t\}}].
\]
In the geometric application $X=|\nu_i|$ and $c=\pi/(2h^2P)$. Relation~\eqref{g:cap} gives
\begin{equation}\label{s:cap-tail}
 a(t)\le U(\alpha,c;t):=
 \sqrt{c^2(1-t^2)^2+\alpha(\alpha+2c)(1-t^2)}-c(1-t^2).
\end{equation}
For completeness, the scalar integration behind this formula is as follows. If $\varrho$ is the density of $X$, the spherical divergence and signed Gauss-map identities give
\[
 \varrho\E(b_{11}/h\mid X=t)=\frac{a(t)}{1-t^2},\qquad
 \varrho\E((b_{11}^2+b_{12}^2)/h^2\mid X=t)=\frac{a(t)}{1-t^2}+c.
\]
Conditional Jensen's inequality gives $e_t:=\varrho(a/(1-t^2)+c)-(a/(1-t^2))^2\ge0$. Since $a'=-t\varrho$,
\[
 \left(\frac{a^2}{1-t^2}+2ca\right)'=-2te_t\le0.
\]
Integration from zero, followed by solving the nonnegative quadratic root, gives \eqref{s:cap-tail}. The geometric identities, including their distributional interpretation at critical levels, were established in Relation~\eqref{g:cap}.

\begin{lemma}[Finite cap polynomial]\label{s:cap-polynomial}
For $\alpha,c\ge0$ and $A=\alpha+c>0$, define
\[
 k_1=\frac{\alpha^2}{2A},\qquad
 k_j=\frac cA k_{j-1}+\frac1{2A}\sum_{i=1}^{j-1}k_i k_{j-i}\quad(j\ge2).
\]
For each positive integer $N$ and every $0\le t\le1$,
\[
 U(\alpha,c;t)\le U_N(\alpha,c;t):=\alpha-\sum_{j=1}^Nk_jt^{2j}.
\]
If \eqref{s:cap-tail} holds, then, for every $0<R\le1$,
\begin{equation}\label{s:finite-moment}
 \mu\ge 2\alpha R-R^2+
 \sum_{j=1}^N\frac{2k_jR^{2j+1}}{(2j-1)(2j+1)}.
\end{equation}
\end{lemma}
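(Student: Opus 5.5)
The plan is to rewrite the tail envelope as a fixed-point equation in $s=t^2$ whose coefficients are all nonnegative, and then to convert a lower bound on $\alpha-a(t)$ into the moment bound \eqref{s:finite-moment} through an exact Tonelli identity.

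\emph{Step 1: the envelope as a fixed point.} Put $z=1-s$ and $W(s)=\alpha-U(\alpha,c;t)$. By construction $U$ is the nonnegative root of
\[
 U^2+2czU=\alpha(\alpha+2c)z .
\]
Substituting $U=\alpha-W$ and cancelling the constant terms gives exactly
\[
 W=\Phi_s(W):=\frac{\alpha^2s+2csW+W^2}{2A},\qquad A=\alpha+c .
\]
Comparing coefficients of $s^j$ reproduces the stated recursion: the $\alpha^2 s$ term gives $k_1$, the $2csW$ term gives $(c/A)k_{j-1}$, and the $W^2$ term gives the convolution sum. Every $k_j$ is therefore nonnegative. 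Moreover $0\le W\le\alpha$, because $U$ decreases in $t$ from $U(0)=\alpha$ and stays nonnegative. When $c=0$ the root is simply $U=\alpha\sqrt z$, and the same computation applies.

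\emph{Step 2: polynomial lower bounds for $W$.} I would not try to sum the power series of $W$, since its convergence at $s=1$ is delicate because of the branch point of the square root. Instead I use monotone iteration. For $0\le s\le1$ the map $\Phi_s$ has nonnegative coefficients, so it is nondecreasing on $[0,\infty)$. Starting from $0\le W$, induction gives
\[
 \Phi_s^n(0)\le\Phi_s^n(W)=W .
\]
Each $\Phi_s^n(0)$ is a polynomial in $s$ with nonnegative coefficients that agrees with $\sum_j k_js^j$ through degree $n$. Hence
\[
 \sum_{j\le N}k_js^j\le\Phi_s^N(0)\le W ,
\]
which is $U\le U_N$. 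This step is the main obstacle, namely avoiding a radius-of-convergence argument. The fixed-point monotonicity resolves it cleanly, since it uses only the nonnegativity of the coefficients.

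\emph{Step 3: the moment identity.} For $0<R\le1$ I evaluate, by Tonelli, the quantity
\[
 \int_0^R\Bigl(\frac{R^2}{t^2}-1\Bigr)(\alpha-a(t))\,dt ,\qquad \alpha-a(t)=\E[X\mathbf1_{X\le t}] .
\]
For fixed $X<R$, the inner integral is
\[
 X\int_X^R\Bigl(\frac{R^2}{t^2}-1\Bigr)dt=(R-X)^2 ,
\]
and the contribution vanishes for $X\ge R$. So the displayed quantity equals $\E[(R-X)^2\mathbf1_{X<R}]$. This is at most $\E(X-R)^2=\mu-2\alpha R+R^2$.

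\emph{Step 4: the moment bound.} By \eqref{s:cap-tail} and Step 2,
\[
 \alpha-a(t)\ge W(t^2)\ge\sum_{j\le N}k_jt^{2j} ,
\]
and the kernel $R^2/t^2-1$ is nonnegative on $(0,R)$. Integrating term by term gives
\[
 \int_0^R(R^2-t^2)t^{2j-2}\,dt=\frac{2R^{2j+1}}{(2j-1)(2j+1)} .
\]
Combining this with Step 3 proves \eqref{s:finite-moment}. Integrability near $t=0$ is no issue: for $j\ge1$ the integrand is $O(1)$, and the left side of Step 3 is finite by the identity itself.
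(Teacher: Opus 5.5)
Your proof is correct and is essentially the paper's. Steps~1--2 are exactly its monotone iteration of $\Phi_s$ from $0$ for $V=\alpha-U$, and your single Tonelli identity with kernel $R^2t^{-2}-1$ gives the same inequality the paper gets by adding the mass identity $\Pr(X>0)=\alpha+\int_0^1(\alpha-a)t^{-2}\,dt\le1$ to $R^{-2}\mu=R^{-2}\int_0^1a\,dt$; indeed $R^2F_R=2\alpha R+\int_0^R(R^2t^{-2}-1)(\alpha-U)\,dt$, and the paper simply keeps the two constraints separate and records the intermediate $F_R$, which it reuses later. One harmless slip: with an atom of $X$ at $0$ the Tonelli value is $\E[(R-X)^2\mathbf 1_{\{0<X<R\}}]$ rather than $\E[(R-X)^2\mathbf 1_{\{X<R\}}]$, since $X\mathbf 1_{\{X\le t\}}$ vanishes identically there, but this only makes your comparison with $\E(X-R)^2$ easier.
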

\begin{proof}
Set $s=t^2$ and $V=\alpha-U\ge0$. The quadratic equation is equivalent to
\[
 V=\frac{\alpha^2}{2A}s+\frac cA sV+\frac1{2A}V^2.
\]
Start with $V_0=0$ and define $V_{m+1}$ by the right side with $V_m$ in place of $V$. Monotonicity gives $0\le V_m\le V$. All coefficients of $V_m$ are nonnegative, and induction shows that its first $m$ coefficients are $k_1,\ldots,k_m$. Thus $V\ge\sum_{j=1}^Nk_js^j$, including at $s=1$.

Tonelli's theorem gives, even in the presence of an atom at zero,
\[
 \mu=\int_0^1a(t)\,\dd t,
 \qquad \Pr(X>0)=\alpha+\int_0^1\frac{\alpha-a(t)}{t^2}\,\dd t\le1.
\]
Adding the second-moment identity divided by $R^2$, using $a\le U$ before $R$ and $a\ge0$ after $R$, gives
\[
 1+\frac\mu{R^2}\ge F_R(\alpha,c):=
 \frac\alpha R+\int_0^R\frac{\alpha-U}{t^2}\,\dd t+
 \frac1{R^2}\int_0^RU\,\dd t.
\]
The coefficient of $U$ in the last two integrals is $R^{-2}-t^{-2}\le0$. Substitution of $U\le U_N$ therefore gives \eqref{s:finite-moment} with the stated direction.
\end{proof}

\subsection{Low-curvature and genus-three exclusions}
\label{s:low-curvature}

\begin{proposition}\label{s:low-h}
A compressed genus-two minimizer with compact height, $v\ge1/\pi$, $P\le1$, and $h>0$ satisfies $h>H_0:=5881/5000$.
\end{proposition}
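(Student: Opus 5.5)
We argue by contradiction: suppose a compressed genus-two minimizer with compact height, $v\ge1/\pi$, $P\le1$ and $0<h\le H_0$ exists, and derive incompatible bounds on the normal moment $\mu=\E\nu_z^2$.

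\emph{Upper bound on $\mu$.} By \eqref{g:hole} (equivalently Proposition~\ref{s:height-stability}), $\mu=m_z=1-q$ with $q=hv/P$. Since $v\ge1/\pi$ and $P\le1$, we get $q\ge h/\pi$. This is useful only for large $h$, so we also keep the companion relations $M_x+M_y=hv$ and $N_z^2\le P(P-hv)$, together with the hole inequality $1-N_z\le h^2N_z^2/\pi$. For small $h$ the last inequality forces $N_z$ close to one: $N_z\ge N_*(h)$, where $N_*$ is the positive root of $h^2N^2/\pi+N-1=0$. Hence $P(P-hv)\ge N_*(h)^2$, i.e.\ $P^2(1-q)\ge N_*^2$.

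\emph{Lower bound on $\mu$ from the cap tail.} Take $X=|\nu_z|$ with $\alpha=N_z/P$ and $c=\pi/(2h^2P)$. Small $h$ makes $c$ large, so the tail envelope \eqref{s:cap-tail} is nearly $U\approx\alpha\sqrt{1-t^2}$, which is very restrictive. Lemma~\ref{s:cap-polynomial} with a moderate $N$ (say $N=4$) and a good choice of $R$ (optimizing $2\alpha R-R^2+\dots$, roughly $R\approx\alpha$) gives an explicit lower bound $\mu\ge G(\alpha,c)$. Note $\alpha\ge N_*/P\ge N_*$, and $c\ge\pi/(2h^2)$ because $P\le1$. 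We then need monotonicity of $G$ in $\alpha$ and $c$. Monotonicity in $\alpha$ follows from $U_\alpha>0$ and the structure of $k_j$; for $c$, increasing $c$ decreases the $k_j$ and so weakens the bound. We handle $c$ either through the envelope monotonicity $U_c\ge0$, working with the exact envelope at the extremal $c$, or by using Lemma~\ref{g:canonical}, whose largest mean at fixed $(\mu,c)$ is concave and monotone.

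\emph{Closing.} Combine $\mu=1-q$ with $P^3\ge16\pi v^2/9$ from Proposition~\ref{s:height-stability}, which yields $P\ge(16/(9\pi))^{1/3}$, and with $q\ge h/\pi$. The result should be an inequality of the form $1-h/\pi\ge\mu\ge G(N_*(h),\pi/(2h^2))$ that fails for $h\le H_0$. Because $G$ increases as $h$ decreases, we verify it only at $h=H_0$ with rational arithmetic and propagate by monotonicity. This is the \emph{main obstacle}: the numerical margin at $H_0=5881/5000$ is evidently thin, given the four-digit constant. The monotonicity in $h$ of the composite bound must be proved, not assumed, and choosing $R$ and $N$ so that the final rational check closes may require also using the Proposition~\ref{s:q-bound} matrix inequality \eqref{s:q-matrix}, with $\alpha=\E|w|$ bounded below via the cap tail. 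If the direct $\mu$ comparison does not close, I would turn to \eqref{s:q-matrix} with $\omega=\pi/(h^2P)\ge\pi/H_0^2$ large. Then $3q-(4+\omega)q^2\ge C^2/A$ with $C\approx\omega q/2$ contradicts itself once $\alpha$ is controlled.
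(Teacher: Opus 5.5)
Your architecture matches the paper's: the hole inequality gives $N_z\ge n(h)$, you use the finite cap-tail bound \eqref{s:finite-moment}, and you compare with $M_z=P-hv$. But the step that turns these into a scalar inequality fails as written.

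\textbf{The cap parameter.} The actual parameter is $c=\pi/(2h^2P)\ge c_0:=\pi/(2h^2)$. As you observe yourself, the bound weakens as $c$ grows, so $\mu\ge G(\alpha,c)\ge G(N_*,c_0)$ does not follow. Your remedy $P\ge(16/(9\pi))^{1/3}\approx0.83$ inflates $c$ by about $20\%$. Already $k_1=\alpha^2/(2(\alpha+c))$ drops from about $0.150$ to $0.133$, costing roughly $10^{-2}$. The final margin at $H_0$ is of order $10^{-5}$; the paper certifies $D(a_0)>8376/10^9$.

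The missing idea is that $\alpha=N_z/P$ and $c=c_0/P$ scale together, and $U$ and every $k_j$ are homogeneous of degree one in $(\alpha,c)$. Hence \eqref{s:finite-moment} reads
$$\mu+R^2\ge P^{-1}\Bigl[2N_zR+\sum_jw_jk_j(N_z,c_0)\Bigr],\qquad w_j=\frac{2R^{2j+1}}{(2j-1)(2j+1)}.$$
Then $P\le1$, $N_z\ge n(h)$ and $k_{j,\alpha}\ge0$ (Lemma~\ref{s:cap-concavity}) give $\mu\ge B(h):=2n(h)R-R^2+\sum_jw_jk_j(n(h),c_0)$. The paper argues the same way through $F_R$ and $P\mu=P-hv$. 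After this step your cruder bound $\mu\le1-h/\pi$ suffices.

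\textbf{The range of $h$.} Your intuition about large $c$ is reversed. $U=\alpha\sqrt{1-t^2}$ is the case $c=0$, while $c\to\infty$ gives $U\to\alpha$, the trivial tail bound. As $h\to0$ all $k_j(n(h),c_0)\to0$, so for any fixed $R<1$, $B(h)\to2R-R^2<1=\lim(1-h/\pi)$. The comparison $B(h)>1-h/\pi$ is therefore false near $h=0$.

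Also, both sides increase as $h$ decreases, so monotonicity of $G$ alone propagates nothing from $H_0$. What is needed is monotonicity of the difference. The paper proves this only for $3/4<n(h)<14/15$, i.e.\ $1/2\le h\le H_0$; the derivative of $h/\pi=\sqrt{1-a}/(\sqrt\pi a)$ in $a=n(h)$ is unbounded as $a\to1$.

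Small curvatures need a separate argument, and you already wrote its ingredient. From $n(h)^2\le N_z^2\le P(P-hv)\le1-h/\pi=:r^2$ and the decreasing function $(1-N)/N^2$, one gets $(1-r)/r^2\le h^2/\pi$. This is equivalent to $hr^2(1+r)\ge1$, which is impossible for $h\le1/2$.

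\textbf{Your numerical choices.} They would not close at $H_0$.
\begin{itemize}
\item At $R=\alpha\approx0.75$ the bound is about $0.611<1-H_0/\pi\approx0.626$. The optimizing $R$ is the mass-one cutoff, near $0.91$; the paper uses $R=73/80$.
\item At $R=73/80$ the terms $j=5,\dots,12$ contribute about $5\times10^{-4}$, so $N=4$ is far too short. The paper uses $N=12$.
\item The fallback \eqref{s:q-matrix} does not rescue this. At $P=1$, $h=H_0$, $\alpha\approx0.75$ one has $C^2/A\approx0.07$ against $3q-(4+\omega)q^2\approx0.24$.
\item Moreover $C$ is not $\approx\omega q/2$, because $\alpha>1/2$ makes $\omega q(1/2-\alpha)$ negative.
\end{itemize}
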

\begin{proof}
Relation~\eqref{g:hole} gives
\begin{equation}\label{s:hole}
 M_z=P-hv,\qquad N_z^2\le P(P-hv),\qquad
 1-N_z\le h^2N_z^2/\pi.
\end{equation}
The last inequality is the planar isoperimetric inequality for the actual hole, whose physical area and perimeter are $4(1-N_z)$ and $4hN_z$. Solving it gives
\[
 N_z\ge n(h):=\frac2{1+\sqrt{1+4h^2/\pi}}.
\]
For $0<h\le1/2$, put $r=\sqrt{1-h/\pi}<1$. Then $N_z\le r$. Since $(1-N)/N^2$ decreases, \eqref{s:hole} implies $1-r\le h^2r^2/\pi$. But $1-r=h/[\pi(1+r)]$, so $hr^2(1+r)\ge1$, contradicting $h\le1/2$ and $r<1$.

It remains to exclude $1/2\le h\le H_0$. First put $a=n(h)$ and $c_0=\pi/(2h^2)$. They satisfy
\[
 c_0=\frac{a^2}{2(1-a)},\qquad d=\frac a{2-a},
\]
and the coefficients from Lemma~\ref{s:cap-polynomial} at $(a,c_0)$ obey
\begin{equation}\label{s:low-k}
 k_1=\frac{a(1-a)}{2-a},\qquad
 k_j=dk_{j-1}+\frac{1-a}{a(2-a)}\sum_{i=1}^{j-1}k_i k_{j-i}.
\end{equation}
Fix $R=73/80$ and set
\[
 B_{12}(a)=2aR-R^2+\sum_{j=1}^{12}w_jk_j(a),\qquad
 w_j=\frac{2R^{2j+1}}{(2j-1)(2j+1)}.
\]
We shall prove $B_{12}(n(h))>1-h/\pi$ on this whole interval.

The range gives $3/4<a<14/15$ and $3/5<d<7/8$. For $j\ge2$ the coefficients have the integral representation
\begin{equation}\label{s:semicircle}
 k_j(a)=\frac{2d(1-d^2)}\pi
 \int_0^1\sqrt{y(1-y)}[d^2+(1-d^2)y]^{j-2}\,\dd y.
\end{equation}
To verify it, expand the identity
\[
 1-\tfrac12(1+d^2)s-\sqrt{(1-s)(1-d^2s)}
 =\frac{s^2}\pi\int_{d^2}^1
 \frac{\sqrt{(1-x)(x-d^2)}}{1-sx}\,\dd x
\]
through any finite order. The integral identity itself follows by setting $m=(1+d^2)/2$, $r=(1-d^2)/2$, $x=m+r\cos\theta$, $A=1-sm$, and $B=sr$, and using
\[
 \int_0^\pi\frac{\dd\theta}{A-B\cos\theta}
 =\frac\pi{\sqrt{A^2-B^2}},\quad
 \int_0^\pi\frac{\sin^2\theta\,\dd\theta}{A-B\cos\theta}
 =\frac\pi{B^2}(A-\sqrt{A^2-B^2}).
\]
The first equality follows from $u=\tan(\theta/2)$; polynomial division gives the second, and $s=0$ follows by continuity.

Since $d'=(1+d)^2/2$, the derivative of the power in \eqref{s:semicircle} is nonnegative. The prefactor derivative is negative, the power is at most one, and $\int_0^1\sqrt{y(1-y)}\,\dd y=\pi/8$. Consequently
\[
 k_j'(a)\ge\frac{(1+d)^2(1-3d^2)}8\ge-\frac{18675}{32768}
 \quad(j\ge2),\qquad
 k_1'(a)=1-\frac2{(2-a)^2}\ge-\frac{97}{128}.
\]
The telescoping sum is
\[
 \sum_{j\ge1}w_j=R-(1-R^2)\operatorname{atanh}R.
\]
Also $\operatorname{atanh}(73/80)>3/2$: it suffices to note $e<11/4$, hence $e^3<153/7$. The exponential series through $1/3!$, with remaining tail at most $(1/4!)/(1-1/5)=5/96$, proves the bound on $e$. Therefore
\[
 B_{12}'(a)\ge2R-\frac{97}{128}\frac{2R^3}3
 -\frac{18675}{32768}\left[R-\frac32(1-R^2)-\frac{2R^3}3\right]
 =\frac{34045177631}{25165824000}>\frac{27}{20}.
\]
On the other hand,
\[
 \left|\left(\frac{\sqrt{1-a}}{\sqrt\pi a}\right)'\right|
 =\frac{2-a}{2\sqrt\pi a^2\sqrt{1-a}}<\frac{27}{20}.
\]
For this last bound, the logarithmic derivative of $(2-a)/(a^2\sqrt{1-a})$ has the sign of $12a-3a^2-8$, and hence has a single minimum on the interval. Its endpoint maximum is $60\sqrt{15}/49$. Since $\pi>25/8$, the squared derivative is at most $4320/2401<729/400$. Thus
\[
 D(a):=B_{12}(a)-1+\frac{\sqrt{1-a}}{\sqrt\pi a}
 \quad\hbox{satisfies}\quad D'(a)>0.
\]

Choose $a_0=37569/50000$. The rational bounds
\begin{equation}\label{s:pi-bounds}
 \frac{314159}{100000}<\pi<\frac{3927}{1250}
\end{equation}
give
\[
 \frac{314159}{100000}(1-a_0)-H_0^2a_0^2
 =\frac{444658362479}{62500000000000000}>0,
 \qquad \frac{\sqrt{1-a_0}}{\sqrt\pi a_0}>\frac{374396}{10^6}.
\]
Hence $n(H_0)>a_0$. The bounds \eqref{s:pi-bounds} can be checked using Machin's formula $\pi=16\arctan(1/5)-4\arctan(1/239)$ and the alternating series through degrees eleven and three, respectively; the first omitted terms bound the errors.

The following integers provide $k_j(a_0)\ge L_j/10^6$ and $w_jL_j/10^6\ge T_j/10^9$:
\[
\begin{array}{c|r|r@{\qquad}c|r|r}
 j&L_j&T_j&j&L_j&T_j\\\hline
 1&149611&75782841&7&22192&57634\\
 2&95962&8094730&8&18322&30298\\
 3&65356&1967331&9&15440&16784\\
 4&46951&653777&10&13232&9695\\
 5&35300&260454&11&11499&5795\\
 6&27561&117223&12&10111&3564
\end{array}
\]
These entries are finite rational comparisons. For the first family, induction in \eqref{s:low-k} reduces every row $j\ge2$ precisely to
\[
 L_j\,37569\,62431\,10^6\le37569^2\,10^6L_{j-1}
 +621550000\sum_{i=1}^{j-1}L_iL_{j-i}.
\]
The second family is checked by multiplying the positive denominator $(2j-1)(2j+1)80^{2j+1}$. Since $2a_0R-R^2=2154449/4000000$, addition gives
\[
 B_{12}(a_0)\ge\frac{625612376}{10^9},\qquad
 D(a_0)>\frac{8376}{10^9}>0.
\]
Monotonicity of $D$ now proves the desired inequality, since $h/\pi=\sqrt{1-n(h)}/(\sqrt\pi n(h))$.

Finally return to the actual distribution $X=|\nu_z|$. We have
\[
 \alpha=N_z/P\ge n(h)/P,\qquad c=c_0/P,
 \qquad P\mu=P-hv\le P-h/\pi.
\]
For fixed $c$,
\[
 U_\alpha=\frac{(\alpha+c)\sqrt{1-t^2}}
 {\sqrt{\alpha^2+2\alpha c+c^2(1-t^2)}}\in[0,1].
\]
Thus $F_R$ in the proof of Lemma~\ref{s:cap-polynomial} increases strictly with $\alpha$, and both $U$ and $F_R$ are homogeneous of degree one in $(\alpha,c)$. It follows that
\[
 1+\mu/R^2\ge F_R(\alpha,c)
 \ge P^{-1}F_R(n(h),c_0)
 \ge\frac{B_{12}(n(h))+R^2}{PR^2}.
\]
Multiplication by $PR^2$ gives
\[
 P\mu\ge B_{12}(n(h))+R^2(1-P)
 >1-h/\pi+R^2(1-P)\ge P-h/\pi,
\]
which is the required contradiction.
\end{proof}

For genus three, a different consequence of the geometric reduction
gives a single finite polynomial comparison.
\label{s:genus-three}

\begin{proposition}\label{s:n3}
The necessary genus-three domain of Lemma~\ref{g:genus-three} is empty. Explicitly, no triple satisfies
\[
 0<u\le1/\sqrt3,\quad0\le k\le\sqrt\pi,\quad0\le\ell\le1-u^2,
\]
\[
 \ell\le1-u^2-k/\pi+(k/\pi)\ell^2,
 \qquad (\ell+ku)^2+\ell^2+\pi u\ge\pi.
\]
\end{proposition}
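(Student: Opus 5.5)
The plan is to eliminate $\ell$ using the quadratic upper constraint. The last inequality then becomes a strict inequality in the two variables $(u,k)$, which I would verify by monotonicity reductions followed by one exact one-variable polynomial comparison.

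\textbf{Eliminating $\ell$.} Put $\kappa=k/\pi\in[0,1/\sqrt\pi]$ and $s=1-u^2\in[2/3,1)$.
\begin{itemize}
\item For $\kappa=0$ the constraints give $\ell\leq s$.
\item For $\kappa>0$, the constraint $\kappa\ell^2-\ell+s-\kappa\geq0$, together with $0\leq\ell\leq s$, forces $\ell\leq\ell_*(u,k)$. Here $\ell_*$ is the smaller root,
\[
 \ell_*=\frac{1-\sqrt{1-4\kappa(s-\kappa)}}{2\kappa}=\frac{2(s-\kappa)}{1+\sqrt{1-4\kappa(s-\kappa)}}.
\]
The larger root exceeds $1\geq s$, so it is excluded.
\item If $s<\kappa$, no $\ell\geq0$ is admissible and there is nothing to prove.
\end{itemize}
The left side of the final inequality, $(\ell+ku)^2+\ell^2+\pi u$, is increasing in $\ell\geq0$. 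Hence it suffices to prove
\[
 G(u,k):=(\ell_*+ku)^2+\ell_*^2+\pi u-\pi<0
\]
on the whole rectangle $0<u\leq1/\sqrt3$, $0\leq k\leq\sqrt\pi$ with $s\geq\kappa$.

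\textbf{Sanity checks.} At $k=0$ one has $G=2(1-u^2)^2+\pi u-\pi$. This is negative at both ends ($2-\pi$ and roughly $-0.44$) and is handled by an elementary quartic estimate. A numerical scan indicates that the maximum of $G$ sits at the corner $u=1/\sqrt3$, $k=\sqrt\pi$. There $\ell_*\approx0.109$ and $G\approx-0.035$. That margin is small, so the argument has to be organized around this corner.

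\textbf{Step 1: reduce to $u=1/\sqrt3$.} I would show $\partial_uG>0$. The term $\pi u$ contributes $\pi$. The $ku$ term contributes $2k(\ell_*+ku)\geq0$. The only negative contribution comes from $\partial_u\ell_*$, which is obtained by implicit differentiation of $\kappa\ell^2-\ell+s-\kappa=0$:
\[
 \partial_u\ell_*=\frac{-2u}{1-2\kappa\ell_*}.
\]
Its effect is bounded by $2(2\ell_*+ku)\cdot 2u/(1-2\kappa\ell_*)$, and I would check that this stays below $\pi$ using the crude bounds $\ell_*\leq s$, $u\leq1/\sqrt3$ and $\kappa\ell_*\leq\ell_*/\sqrt\pi$. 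If this global bound is too weak for small $k$, where $\ell_*$ is close to $1$, I would split the $k$-range. On the small-$k$ piece I would instead compare directly with the $k=0$ quartic plus a Lipschitz term in $k$.

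\textbf{Step 2: monotonicity in $k$ at $u=1/\sqrt3$.} This is a one-variable problem. I would show $G(1/\sqrt3,\cdot)$ is increasing, or at least bounded by its endpoint value. The same implicit-differentiation formula gives
\[
 \partial_k\ell_*=-\frac{1-\ell_*^2}{\pi(1-2\kappa\ell_*)}.
\]
This loss should be dominated by the gain $2u(\ell_*+ku)$.

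\textbf{Step 3: exact endpoint inequality.} This is the main obstacle, because of the thin margin of about $0.035$. At $u=1/\sqrt3$, $k=\sqrt\pi$, I would avoid the square root by treating $\ell$ as a variable subject to the exact quadratic equality. The inequality then becomes $G<0$ for the root $\ell_*$ of an explicit quadratic whose coefficients involve $\sqrt\pi$ and $1/\sqrt3$. I would bracket $\ell_*$ between two rationals by evaluating the sign of the quadratic at those rationals. Using the rational bounds for $\pi$ in \eqref{s:pi-bounds} together with rational brackets for $\sqrt\pi$ and $\sqrt3$, the claim reduces to a finite rational comparison.

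The riskiest step is whether the crude monotonicity bound in $u$ holds uniformly. If it fails, the fallback is a two-dimensional grid: cover the rectangle by finitely many boxes, and on each box bound $G$ by its value at the worst corner plus a Lipschitz term. The Lipschitz constants come from the derivative formulas above, and the grid only needs to be refined near the tight corner.
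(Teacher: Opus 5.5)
Your elimination of $\ell$ is correct: for $u>0$ the larger root strictly exceeds $1$, and $s-\kappa\ge2/3-1/\sqrt\pi>0$, so $\ell\le\ell_*$ always. The gap is in Step~1. The inequality $\partial_uG>0$ is not just hard to prove with crude bounds; it is false on part of the rectangle.

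Using $1-2\kappa\ell_*=\sqrt{1-4\kappa s+4\kappa^2}$, your formula reads $\partial_uG=\pi+2k(\ell_*+ku)-4u(2\ell_*+ku)/\sqrt{1-4\kappa s+4\kappa^2}$. At $(u,k)=(1/\sqrt3,1/5)$ one finds $\ell_*\approx0.6281$, the square root $\approx0.9200$, and $\partial_uG\approx3.4390-3.4432\approx-0.004$. Directly, $G(0.57,1/5)\approx-0.38030$, which exceeds $G(1/\sqrt3,1/5)\approx-0.38033$. On the line $u=1/\sqrt3$ the derivative is negative for roughly $0.18<k<0.37$, and slightly beyond at interior $u$.

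The cause is that the margin at $k=0$ is only $\pi-16/(3\sqrt3)\approx0.062$. For small $k$ the loss term then grows faster than the gain $2k(\ell_*+ku)$. This happens where $\ell_*\approx0.6$, not where $\ell_*$ is close to $1$ as you guessed. The crude bounds you list cannot help. The quantity $1-2\ell_*/\sqrt\pi$ is negative once $\ell_*>\sqrt\pi/2$, and elsewhere decoupling $\kappa\le1/\sqrt\pi$ from $\ell_*\le s$ loses far more than the available margin.

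So your small-$k$ fallback is mandatory, and the split point must lie in a narrow window, roughly $0.4<k_0<0.47$. The fallback works if the Lipschitz term is the one-sided bound $\partial_kG\le2u(\ell_*+ku)\le2u(s+ku)$, which amounts to using only $\ell\le s$. The resulting function $(s+ku)^2+s^2+\pi u-\pi$ increases in $k$ and has $u$-derivative $\pi-8us+2k(1-3u^2)+2k^2u>0$. It equals about $-0.025$ at $(1/\sqrt3,9/20)$, so it settles $k\le9/20$; in fact it works for all $k<0.473$.

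For $k\ge9/20$, monotonicity in $u$ does hold numerically, but its minimum is only about $0.018$ (near $u\approx0.555$). It therefore needs a real certificate built on the exact square root, such as a fine grid with derivative control or a cleared-denominator polynomial. Steps~2 and~3 are sound: along $u=1/\sqrt3$ the $k$-derivative of $G$ stays above about $0.14$, and the corner value is about $-0.033$.

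By contrast, the paper uses no monotonicity at all. It replaces $\ell_*$ by the upper bound $L=(a-t)(1-ta)/(1-2ta+t^2)$, the second iterate of $\ell\mapsto(a-t)/(1-t\ell)$ started at $a=1-u^2$, with $t=7k/22$. It bounds $\pi$ by $22/7$ and $157/50$ and enlarges the domain to $[0,29/50]\times[0,4/7]$. It then certifies one polynomial $H(u,t)$ positive via Bernstein coefficients that are all at least $6/10000$. That single global certificate is unaffected by the non-monotone region that breaks your Step~1.
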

\begin{proof}
Put $p=22/7$, $r=157/50$, $a=1-u^2$, and $t=k/p$. Archimedes' bounds give $r<\pi<p$, while
\[
 u\le1/\sqrt3<29/50=:U,\qquad
 t<\sqrt{7/22}<4/7=:T.
\]
The quadratic constraint implies $\ell\le a-t+t\ell^2$. On the larger rational rectangle $0\le u\le U$, $0\le t\le T$, one has $a-t\ge1-U^2-T>0$. First use $\ell^2\le a\ell$, and then use $\ell^2\le L_1\ell$, to obtain
\[
 \ell\le L_1:=\frac{a-t}{1-ta}\le a,\qquad
 \ell\le L:=\frac{(a-t)(1-ta)}{1-2ta+t^2}.
\]
All denominators are positive. In particular $D:=1-2ta+t^2\ge(1-t)^2\ge9/49$. Set $A=a-t$ and $B=1-ta$, so $L=AB/D$. Monotonicity in $\ell\ge0$ gives
\[
 (\ell+ku)^2+\ell^2+\pi u\le 2L^2+2ptuL+p^2t^2u^2+pu.
\]
It is therefore enough to prove positivity of
\begin{equation}\label{s:n3-H}
 H(u,t)=(r-pu-p^2t^2u^2)D^2-2A^2B^2-2ptuABD.
\end{equation}
Indeed the preceding right side is $r-H/D^2<r<\pi$.

Here is an explicit finite certificate for this polynomial. Let $B_i^n(x)=\binom ni x^i(1-x)^{n-i}$. With $x=u/U$, $y=t/T$,
\begin{equation}\label{s:n3-B}
 H(Ux,Ty)=\sum_{i=0}^{8}\sum_{j=0}^{6}\beta_{ij}B_i^8(x)B_j^6(y).
\end{equation}
The coefficients are specified without any external data as follows:
\[
C=\begin{pmatrix}
2793&-11172&16758&-11172&2793&0&0\\
-7700&15400&15400&-61600&53900&-15400&0\\
9800&-8428&-26944&88372&-135400&96800&-24200\\
0&-15400&-30800&123200&-92400&15400&0\\
-4900&29400&-18228&-67400&188700&-96800&0\\
0&0&15400&-92400&46200&0&0\\
0&-9800&19600&-9800&-96800&0&0\\
0&0&0&30800&0&0&0\\
0&0&-4900&0&0&0&0
\end{pmatrix}.
\]
Expansion of \eqref{s:n3-H} gives $2450H(u,t)=\sum_{r=0}^8\sum_{s=0}^6C_{rs}u^rt^s$. The identities
\[
 x^r=\sum_{i=r}^8\frac{\binom ir}{\binom8r}B_i^8(x),\qquad
 y^s=\sum_{j=s}^6\frac{\binom js}{\binom6s}B_j^6(y)
\]
give the exact rational formula
\begin{equation}\label{s:n3-coeff}
 \beta_{ij}=\frac1{2450}\sum_{r=0}^i\sum_{s=0}^j
 C_{rs}\left(\frac{29}{50}\right)^r\left(\frac47\right)^s
 \frac{\binom ir\binom js}{\binom8r\binom6s}.
\end{equation}
The following integer matrix satisfies $10000\beta_{ij}\ge d_{ij}$:
\[
d=\begin{pmatrix}
11400&7057&4203&2412&1341&726&384\\
9121&5212&2891&1563&827&429&219\\
7323&3809&1953&1031&578&346&216\\
6006&2826&1337&741&509&394&306\\
5136&2228&993&628&555&523&472\\
4651&1968&876&642&660&697&711\\
4452&1977&938&733&768&845&938\\
4410&2159&1112&843&797&832&888\\
4364&2371&1289&869&608&394&6
\end{pmatrix}.
\]
Every comparison is an integer inequality after multiplication by the explicitly positive denominator in \eqref{s:n3-coeff}. For example, the smallest corner coefficient is
\[
 \beta_{8,6}=H(U,T)=\frac{1157220221572159}{1759277648925781250}>\frac6{10000}.
\]
Since every $d_{ij}\ge6$, nonnegativity and the partition-of-unity property of the Bernstein basis yield $H\ge6/10000=3/5000>0$ throughout the rectangle. This proves the proposition.
\end{proof}
\section{Global support bounds and parameter comparisons}\label{sec:parameter-comparisons}

For the remaining genus-two range, finite cap envelopes give global
support planes and restrictions on asymmetry. We then prove the perimeter
comparison and the concavity that reduce continuous ranges to endpoint
inequalities. These comparisons are algebraic consequences of the
necessary conditions and require no new minimizing surface.

\subsection{Globally valid rational support planes}
\label{s:support-section}

\begin{lemma}[Concavity of finite cap envelopes]\label{s:cap-concavity}
For $\alpha>0$, $c\ge0$, and $0\le t\le1$, the function $U_N(\alpha,c;t)$ is jointly concave in $(\alpha,c)$ and satisfies $0\le(U_N)_\alpha\le1$. It is positive for finite $N$ and decreases with $t$.
\end{lemma}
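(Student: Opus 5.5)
The plan is to work with $s=t^2$ and the power series $V=\alpha-U$ from the proof of Lemma~\ref{s:cap-polynomial}. In those variables $U_N=\alpha-\sum_{j\le N}k_j(\alpha,c)s^j$. The recursion for $k_j$ is homogeneous of degree one in $(\alpha,c)$: if $(\alpha,c)$ is scaled by $\lambda$, then $A$ scales by $\lambda$ and $k_1$ by $\lambda$, and inductively every $k_j$ by $\lambda$. So each $k_j$ is positively $1$-homogeneous. Joint concavity of $U_N$ is therefore the same as joint convexity of each $k_j$, since $s^j\ge0$ and $\alpha$ is linear.

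The main step is convexity of each $k_j$. I would prove it through a representation theorem rather than the recursion. Set $d=\alpha/(\alpha+c)$, so that $k_j=(\alpha+c)\,\kappa_j(d)$ by homogeneity. The semicircle formula~\eqref{s:semicircle}, proved there for the special family, should generalize: solving the quadratic for $V$ explicitly gives
\[
V=A(1-s\,c/A)-\sqrt{(A-sc)^2-\alpha^2 s}.
\]
Expanding the square root as a Stieltjes-type integral $\int\sqrt{\cdots}/(1-sx)$ over an interval $[x_-,x_+]$ expresses each $k_j$ as an integral of nonnegative kernels. An alternative, which I would try first because it is cleaner, uses the perspective structure. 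For fixed $s\in[0,1]$ the full function $V$ is jointly convex, because $U$ is jointly concave by the Hessian computation in Lemma~\ref{g:canonical}. However, truncation does not preserve convexity, so convexity has to be shown coefficientwise. A $1$-homogeneous function of two variables is convex iff $\kappa_j$ is convex in $d$ on $[0,1]$. Since $A^{-1}$ enters linearly, the recursion reads $\kappa_1=d^2/2$ and
\[
\kappa_j=(1-d)\kappa_{j-1}+\tfrac12\sum_{i<j}\kappa_i\kappa_{j-i}.
\]
I would then prove by induction that each $\kappa_j$ is a polynomial in $d$ with the required convexity, writing it in powers of $d$ and of $(1-d)$ and showing $\kappa_j''\ge0$ on $[0,1]$ from the generating-function identity. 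This convexity step is the obstacle. The product term is a sum of products of convex nonnegative functions, but those products need not be convex unless monotonicity is also tracked, so the induction hypothesis must be strengthened to ``$\kappa_j\ge0$, $\kappa_j'\ge0$, $\kappa_j''\ge0$'' where possible, or the integral representation must be used instead.

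The remaining assertions are easier.
\begin{itemize}
\item \emph{The bound $(U_N)_\alpha\le1$.} The partial derivatives $\partial_\alpha k_j$ are nonnegative. By homogeneity, $\partial_\alpha k_j=\kappa_j+(1-d)\kappa_j'$, which is nonnegative given $\kappa_j'\ge0$.
\item \emph{The bound $(U_N)_\alpha\ge0$.} This requires $\sum_j s^j\partial_\alpha k_j\le1$. I would bound it by the $N=\infty$ sum, using nonnegative terms, and the full sum equals $1-U_\alpha\le1$ by Lemma~\ref{g:canonical}.
\item \emph{Positivity and monotonicity in $t$.} Both follow because $U_N\ge U>0$ for $\alpha>0$ by Lemma~\ref{s:cap-polynomial}. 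Also $U_N$ decreases in $t$ since all $k_j\ge0$.
\end{itemize}
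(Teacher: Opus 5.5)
\textbf{There is a genuine gap.} Your reduction is sound up to the point where you conclude that convexity ``has to be shown coefficientwise''. Convexity of every $k_j$ is only a sufficient condition for concavity of $U_N$, and it is false, so no strengthened induction or integral representation can supply it.

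In your notation, $\kappa_2=(1-d)\kappa_1+\tfrac12\kappa_1^2=\tfrac{d^2}{2}-\tfrac{d^3}{2}+\tfrac{d^4}{8}$. Hence $\kappa_2''(d)=1-3d+\tfrac32 d^2$, which is negative for $d>1-1/\sqrt3$; for example $\kappa_2''(1)=-\tfrac12$. By homogeneity, $(k_2)_{\alpha\alpha}=A^{-1}(1-d)^2\kappa_2''(d)<0$, for instance at $(\alpha,c)=(1,1/10)$. In fact, for $0<c/A<1/\sqrt3$ every $k_j$ with $j\ge2$ has $(k_j)_{\alpha\alpha}<0$. So the step you flagged as the obstacle is not merely hard but wrong.

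The argument needs a statement about \emph{partial sums}, not individual coefficients. Since each $k_j$ is $1$-homogeneous, for $c>0$ one has $\Hess U_N=(U_N)_{\alpha\alpha}\bigl(\begin{smallmatrix}1&-\alpha/c\\-\alpha/c&\alpha^2/c^2\end{smallmatrix}\bigr)$. Joint concavity therefore reduces to the single sign condition $(U_N)_{\alpha\alpha}\le0$ for each $s\in[0,1]$.

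Put $e=c/A=1-d$. Then $-U_{\alpha\alpha}=A^{-1}e^2 s\,f(s)$ with $f(s)=\sqrt{1-s}\,(1-e^2s)^{-3/2}$, and $-(U_N)_{\alpha\alpha}$ is the truncation of this series at degree $N$ in $s$. Write $q(s)=\sqrt{(1-s)/(1-e^2s)}=1-\sum_{i\ge1}q_is^i$. Its $s$-derivative is minus a product of series with nonnegative coefficients, so $q_i>0$. Since $f=q/(1-e^2s)$, the coefficients $b_j$ of $f$ satisfy $b_j/e^{2j}=1-\sum_{i\le j}q_ie^{-2i}$, which is strictly decreasing. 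Thus the $b_j$ change sign at most once, from positive to negative. Every partial sum of $f$ is then either a sum of nonnegative terms, or at least $f(s)\ge0$ because all omitted terms are nonpositive. This single-sign-change argument is the idea your proposal is missing; the nonconvex $k_j$ are harmless only at the level of partial sums.

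The same $q_i$ also repair your first bullet. Since $U_\alpha=q(s)$, you get $\partial_\alpha k_j=q_j\ge0$ directly, without the unproved $\kappa_j'\ge0$ that you tied to the failing induction.

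Finally, $U(\alpha,c;1)=0$, so $U_N\ge U$ gives only $U_N\ge0$ at $t=1$. Strict positivity there needs $\sum_{j\ge1}k_j=\alpha$ at $s=1$, together with $k_j>0$ for every $j$, which follows by induction from the recursion.
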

\begin{proof}
Write $A=\alpha+c$, $d=c/A\in[0,1)$, and $s=t^2$. Direct differentiation gives
\[
 -U_{\alpha\alpha}=\frac{d^2}{A}s f(s),\qquad
 f(s)=\frac{\sqrt{1-s}}{(1-d^2s)^{3/2}}.
\]
The negative second derivative of $U_N$ is the truncation of this expression through degree $N$ in $s$. Write
\[
 q(s)=\sqrt{\frac{1-s}{1-d^2s}}=1-\sum_{j\ge1}q_js^j.
\]
Since
\[
 q'(s)=-\frac{1-d^2}{2\sqrt{1-s}(1-d^2s)^{3/2}},
\]
we have $q_j>0$ and $\sum_{j\ge1}q_j=1$. For $d>0$, the coefficients $b_j$ of $f=q/(1-d^2s)$ satisfy
\[
 b_j/d^{2j}=1-\sum_{i=1}^j q_i/d^{2i}.
\]
The right side strictly decreases. Hence the coefficients of $f$ change sign at most once, from positive to negative. Before the change, each partial sum is positive; after it, each partial sum is at least the full positive function $f(s)$, since all remaining terms are nonpositive. The endpoint $s=1$ follows by continuity. For $d=0$, $f=\sqrt{1-s}$ has positive constant term and negative remaining coefficients, so the same argument applies. We have proved $(U_N)_{\alpha\alpha}\le0$.

Homogeneity of degree one makes the radial direction $(\alpha,c)$ a null direction of the Hessian. For $c>0$,
\[
 \Hess U_N=(U_N)_{\alpha\alpha}
 \begin{pmatrix}1&-\alpha/c\\-\alpha/c&(\alpha/c)^2\end{pmatrix}\le0.
\]
Continuity gives the assertion at $c=0$. Finally,
\[
 U_\alpha=\frac{\sqrt{1-s}}{\sqrt{1-d^2s}},\qquad
 \partial_s U_\alpha=-\frac{1-d^2}{2\sqrt{1-s}(1-d^2s)^{3/2}}.
\]
Thus $k_{j,\alpha}\ge0$, and truncation gives $1\ge(U_N)_\alpha\ge U_\alpha\ge0$. Positivity of $U_N$ follows from the positive finite coefficient truncation of $\alpha-U$, whose complete sum at $t=1$ is $\alpha$, and monotonicity in $t$ follows from $k_j\ge0$.
\end{proof}

\begin{lemma}[Canonical tails and exact rational support planes]\label{s:support}
Consider probability laws on $[0,1]$ satisfying $a(t)\le U_N(\alpha,c;t)$, where $\alpha=\E X$. Let $A_N(\mu,c)$ be the maximum possible $\alpha$ at second moment $\mu$ and parameter $c$. For an even polynomial $\phi$ whose tail kernel
\[
 w(t)=\frac{t\phi'(t)-\phi(t)+\phi(0)}{t^2}
\]
is increasing, let $C_N(\mu,c)$ be the minimum of $\E\phi(X)$. At each canonical interior reference point described below, $A_N$ has a global affine upper support and $C_N$ has a global affine lower support. All their coefficients are rational when the prescribed ratio $r=c/\alpha$ and cutoff $R$ are rational.
\end{lemma}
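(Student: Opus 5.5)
The plan is to avoid constructing $A_N$ and $C_N$ globally. Instead I will write down, at each canonical reference point, one linear inequality valid for every admissible law and check that it is an equality for the canonical law. Two properties of $U_N$ carry the argument, both from Lemma~\ref{s:cap-concavity}: it is jointly concave, and it is homogeneous of degree one in $(\alpha,c)$. Homogeneity holds because the coefficients satisfy $k_j(\lambda\alpha,\lambda c)=\lambda k_j(\alpha,c)$ by the recursion in Lemma~\ref{s:cap-polynomial}. Fix a rational ratio $r=c/\alpha$ and a rational cutoff $0<R\le1$. The reference point is the one whose saturated tail $a_R=U_N\mathbf 1_{[0,R)}$ is a law of mass exactly one. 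This is the analogue of the truncated saturated law in Lemma~\ref{g:canonical}; positivity and $t$-monotonicity of $U_N$ make $a_R$ a genuine weighted tail.

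For a concave function homogeneous of degree one, Euler's relation turns the tangent plane at the reference point into a global majorant through the origin. Hence, for every $(\alpha,c)$ and $t$,
\[
 U_N(\alpha,c;t)\le \alpha\,\partial_\alpha U_N(1,r;t)+c\,\partial_cU_N(1,r;t),
\]
and both partial derivatives are even polynomials in $t$ with rational coefficients. The support for $A_N$ then comes from the argument of Lemma~\ref{s:cap-polynomial}. Add the mass identity $\Pr(X>0)\le1$ to the second-moment identity divided by $R^2$. Before $R$ the coefficient of $a$ is $R^{-2}-t^{-2}\le0$, so there I may replace $a$ by the tangent majorant. After $R$ the coefficient is nonnegative and I use $a\ge0$. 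This gives
\[
 1+\frac{\mu}{R^2}\ \ge\ \kappa_\alpha\,\alpha-\kappa_c\,c ,
\]
where
\begin{align*}
 \kappa_\alpha&=\frac1R+\int_0^R\frac{1-\partial_\alpha U_N(1,r;t)}{t^2}\,dt+\frac1{R^2}\int_0^R\partial_\alpha U_N(1,r;t)\,dt,\\
 \kappa_c&=\int_0^R\Bigl(\frac1{t^2}-\frac1{R^2}\Bigr)\partial_cU_N(1,r;t)\,dt .
\end{align*}
The first integrand has no singularity at $t=0$, since $1-\partial_\alpha U_N(1,r;t)=\sum_j\partial_\alpha k_j\,t^{2j}$. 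All integrands are polynomials, so $\kappa_\alpha,\kappa_c\in\mathbb Q$. Because $0\le\partial_\alpha U_N\le1$, we have $\kappa_\alpha\ge1/R>0$, and dividing by $\kappa_\alpha$ gives the affine upper bound $\alpha\le(1+\mu/R^2+\kappa_c c)/\kappa_\alpha$. Every inequality used is an equality for $a_R$ at the reference point: the tangent majorant is exact on the ray $c=r\alpha$, the mass is one, and $a_R$ vanishes after $R$. So the plane touches $A_N$ there.

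For $C_N$, evenness of $\phi$ gives $\phi'(0)=0$, and \eqref{g:cost} becomes $\E\phi(X)=\phi(0)+\int_0^1w\,a\,dt$. I write
\[
 \int_0^1 w\,a\,dt=w(R)\,\mu+\int_0^1\bigl(w-w(R)\bigr)a\,dt .
\]
Since $w$ is increasing, $w-w(R)\le0$ on $[0,R)$, and there $a\le U_N\le{}$tangent majorant gives a lower bound that is linear in $(\alpha,c)$. On $[R,1]$ the coefficient is nonnegative and I use $a\ge0$. The result is a bound $\E\phi\ge\phi(0)+w(R)\mu-\beta_\alpha\alpha-\beta_c c$, which still contains $\alpha$. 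To eliminate $\alpha$ I add $\beta_\alpha/\kappa_\alpha$ times the inequality $\kappa_\alpha\alpha\le1+\mu/R^2+\kappa_c c$ obtained above. This yields an affine lower bound in $(\mu,c)$ alone, with rational coefficients and equality at $a_R$.

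The step I expect to be the main obstacle is the sign of $\beta_\alpha$ in this elimination. Combining the two bounds requires $\beta_\alpha\ge0$, i.e. the cost must not increase with $\alpha$ at fixed moment. This matches Lemma~\ref{g:canonical}, where increasing kernels are minimized at the largest mean. I would verify it from
\[
 \beta_\alpha=\int_0^R\bigl(w(R)-w\bigr)\,\partial_\alpha U_N(1,r;t)\,dt ,
\]
which is nonnegative because $w(R)-w\ge0$ and $\partial_\alpha U_N\ge0$ on $[0,R)$. The remaining care is in the word ``interior''. The mass-one reference must satisfy $R<1$, or $R=1$ exactly at the critical moment, so that it is not the zero atom. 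For the logarithmic kernel, polynomial truncation of $w$ as in Lemma~\ref{g:canonical} keeps every coefficient finite and rational.
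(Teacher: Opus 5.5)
Your proof is correct, and it reaches the same planes by a genuinely different route.

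\textbf{The paper's route.} It proceeds in four steps:
\begin{enumerate}
\item It proves that the canonical tail is locally optimal at fixed $(\mu,c)$, using strict monotonicity of the finite-moment bound $B_R$ in $\alpha$.
\item It shows that the canonical tail minimizes increasing-kernel costs, by a single-crossing comparison with the actual tail.
\item It shows, by mixing feasible laws, that $A_N$ is jointly concave and $C_N$ jointly convex.
\item It computes the tangent plane by implicit differentiation along the mass-one family, $d\mu=B_\alpha\,d\alpha+B_c\,dc$, and then uses concavity of the value functions to make this local plane global.
\end{enumerate}

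\textbf{Your route.} You linearize the constraint itself. Joint concavity and degree-one homogeneity of $U_N$ give the pointwise majorant $U_N(\alpha,c;t)\le\alpha\,\partial_\alpha U_N(1,r;t)+c\,\partial_cU_N(1,r;t)$. A fixed combination of the mass identity and the second-moment identity then yields one linear inequality, valid for every feasible law and tight for the canonical law. The cost bound and the elimination of $\alpha$ follow the same pattern. So you need neither differentiability nor concavity of $A_N$, $C_N$, and touching at the reference point is automatic.

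\textbf{The coefficients agree.} One checks $R^2\kappa_\alpha=B_\alpha$ and $R^2\kappa_c=-B_c$, so your mean plane is exactly \eqref{s:mean-plane}, with intercept $1/\kappa_\alpha=R^2a_A$. Your cost plane has $\mu$-slope $w(R)-\beta_\alpha/B_\alpha=C^*_\alpha/B_\alpha$ and intercept $\phi(0)-R^2\beta_\alpha/B_\alpha$, which reproduces \eqref{s:cost-plane} and \eqref{s:cost-intercept}. The sign condition $\beta_\alpha\ge0$ that you flagged as the main obstacle is precisely the inequality $a_\phi\le w(R)$ that the paper records later, in Lemma~\ref{s:support-slopes}.

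\textbf{What each route buys.} The paper's argument gives extra structural information: the canonical law is the extremizer at all nearby $(\mu,c)$, and the value functions are concave and convex. Yours is a shorter, self-contained dual certificate.

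\textbf{Two unnecessary remarks.} Your closing caveats are not needed. For any rational $r>0$ and $0<R\le1$, the mass-one condition on the ray fixes $\alpha_0>0$ and gives $\Pr(X>0)=1$, so nothing about interiority has to be checked. Since $\phi$ is an even polynomial, $w$ is already a polynomial and no truncation of the kernel is required.
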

\begin{proof}
The canonical tail is $b(t)=U_N(\alpha,c;t)\mathbf1_{[0,R)}(t)$. Its density is $-(U_N)_t/t$, and it has an atom of mass $U_N(R)/R$ at $R$. Its total mass and second moment are
\[
 M_N=\frac\alpha R+\sum_{j=1}^N\frac{k_jR^{2j-1}}{2j-1},\qquad
 \mu_N=\alpha R-\sum_{j=1}^N\frac{k_jR^{2j+1}}{2j+1}.
\]
The equation $M_N=1$ defines the canonical probability family locally. Since $(M_N)_R=-U_N(R)/R^2<0$ and $(M_N)_\alpha>0$, its cutoff increases with $\alpha$. Along this family,
\[
 \frac{\dd\mu_N}{\dd\alpha}
 =2R+\sum_{j=1}^N\frac{2k_{j,\alpha}R^{2j+1}}{(2j-1)(2j+1)}>0.
\]
Suppose a canonical tail $b$ has the prescribed $(\mu,c)$, with first moment $\alpha_*$. For any feasible law, \eqref{s:finite-moment} holds also with $U_N$ in place of $U$. Its right side $B_R(\alpha,c)$ increases strictly with $\alpha$, and the canonical mass-one condition gives $B_R(\alpha_*,c)=\mu$. Hence $\alpha\le\alpha_*$. Because $(U_N)_\alpha\ge0$, the actual tail is at most $b$ before $R$ and at least $b=0$ after $R$. Their integrals agree, since both equal $\mu$. Thus for an increasing kernel,
\[
 \int_0^1w(t)[a(t)-b(t)]\,\dd t
 \ge w(R)\int_0^1[a(t)-b(t)]\,\dd t=0.
\]
The exact cost identity
\[
 \E\phi(X)=\phi(0)+\int_0^1w(t)a(t)\,\dd t
\]
follows by integrating $w=[(\phi-\phi(0))/t]'$ and applying Fubini. This proves local optimality of the canonical tail for both extremal problems.

Joint concavity of $U_N$ shows that mixtures of feasible probability laws remain feasible, with the mixed moments and parameter. Hence $A_N$ is jointly concave and $C_N$ is jointly convex. Their local tangent planes are consequently valid on the entire feasible domain.

For explicit coefficients, prescribe $r>0$, $0<R<1$, put $\kappa_j=k_j(1,r)$, and set
\[
 \alpha_0=\left[\frac1R+\sum_{j=1}^N\frac{\kappa_jR^{2j-1}}{2j-1}\right]^{-1},
 \quad c_0=r\alpha_0,
 \quad\mu_0=\alpha_0\left[R-\sum_{j=1}^N\frac{\kappa_jR^{2j+1}}{2j+1}\right].
\]
These automatically satisfy $M_N=1$. At the reference point, define
\begin{align*}
 w_j&=\frac{2R^{2j+1}}{(2j-1)(2j+1)},&
 B_\alpha&=2R+\sum_jw_jk_{j,\alpha},&
 B_c&=\sum_jw_jk_{j,c}.
\end{align*}
Eliminating the cutoff differential gives $\dd\mu=B_\alpha\dd\alpha+B_c\dd c$. Thus
\begin{equation}\label{s:mean-plane}
 A_N(\mu,c)\le a\mu+bc+d,
 \qquad a=B_\alpha^{-1},\quad b=-B_c/B_\alpha,
 \quad d=\alpha_0-a\mu_0-bc_0.
\end{equation}
For $C=\phi(0)+\int_0^RwU_N\,\dd t$, set
\[
 C_\alpha^*=\int_0^Rw(U_N)_\alpha\,\dd t+R^2w(R)(M_N)_\alpha,
 \quad C_c^*=\int_0^Rw(U_N)_c\,\dd t+R^2w(R)(M_N)_c.
\]
Then its exact support coefficients are
\begin{equation}\label{s:cost-plane}
 a_\phi=C_\alpha^*/B_\alpha,\quad
 b_\phi=C_c^*-a_\phi B_c,\quad
 d_\phi=C-a_\phi\mu_0-b_\phi c_0.
\end{equation}
Every integral here is an integral of a finite polynomial and is evaluated term by term. Homogeneity gives the useful exact formula
\begin{equation}\label{s:cost-intercept}
 d_\phi=\phi(0)+R^2[a_\phi-w(R)].
\end{equation}
To derive it, Euler's identity gives
\[
 \alpha_0 C_\alpha^*+c_0C_c^*=C-\phi(0)+R^2w(R),
 \qquad\alpha_0B_\alpha+c_0B_c=\mu_0+R^2;
\]
substitution into \eqref{s:cost-plane} gives \eqref{s:cost-intercept}.
\end{proof}

The three costs used below are
\[
 \ell_{16}(x)=\sum_{j=1}^{16}\frac{x^{2j}}{2j-1},\qquad
 v_{16}(x)=\sum_{j=1}^{16}\frac{\binom{2j}{j}}{4^j(2j-1)}x^{2j},\qquad
 \phi_8(x)=2\sum_{j=1}^{8}\frac{((1-x^2)/2)^j}{2j-1}.
\]
They satisfy $\ell_{16}(x)\le x\operatorname{atanh}x$ and $v_{16}(x)\le1-\sqrt{1-x^2}$. Their tail kernels are increasing. For the first two the kernels are respectively $\sum_{j=1}^{16}t^{2j-2}$ and $\sum_{j=1}^{16}\binom{2j}{j}t^{2j-2}/4^j$. For the last, let $F_N(s)=\sum_{n=1}^Ns^n/(2n-1)$. Direct differentiation gives
\[
 3F_N''-(1-2s)F_N'''=
 \sum_{k=0}^{N-3}\frac{(k+2)^2(k+1)}{2k+5}s^k+N(N-1)s^{N-2}>0.
\]
Since $\phi_8'''(t)=2t[3F_8''-(1-2s)F_8''']$ with $s=(1-t^2)/2$, it is nonnegative. Finally,
\[
 t^3w'(t)=t^2\phi''-2t\phi'+2[\phi-\phi(0)]
\]
vanishes at zero and has derivative $t^2\phi'''\ge0$, proving the assertion.

\subsection{Body bounds on asymmetry}
\label{s:body-section}

We now work at $v=1/\pi$ and use the geometric relations proved in the preceding reduction, in particular Lemmas~\ref{g:stress}, \ref{g:angular}, and \ref{g:raw-dual} together with \eqref{g:source} and \eqref{g:body-gates}. For reference, set
\[
 q=\frac h{\pi P},\quad\mu=1-q,\quad
 m_x=\frac{q-\delta}2,\quad m_y=\frac{q+\delta}2,\quad
 c=\frac\pi{2h^2P},\quad\delta\ge0.
\]
The stresses and sources satisfy
\[
 h^2P\le\pi,\quad
 J_i=hP(1-m_i)-h^2/\pi\ge F(R_i),\qquad
 F(r)=\frac{\arcsin r+r\sqrt{1-r^2}}2,
\]
\[
 R_x=h+\frac{2h^2P}{\pi}(L_y-1),\quad
 R_y=h+\frac{2h^2P}{\pi}(L_x-1),\quad
 \sigma_{x,y}^2=\frac\pi4-hP+\frac{3h^2}{2\pi}\mp\frac{hP\delta}2,
\]
where $L_i=\E[|\nu_i|\operatorname{atanh}|\nu_i|]$. In particular,
\[
 \delta\le\min\{q,2-3q\},\qquad P\ge\frac{3h}{2\pi},
 \qquad h^3\le\frac{2\pi^2}{3}<(15/8)^3.
\]
The stronger $q<7/12$ is available from Proposition~\ref{s:q-bound}.

\begin{proposition}\label{s:body-quarter}
Every actual genus-two candidate at $v=1/\pi$ with $P\le1$ and $h\ge H_0=5881/5000$ satisfies $\delta<1/4$.
\end{proposition}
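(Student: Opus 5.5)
We argue by contradiction: assume $\delta\geq1/4$ and combine the body gates of \eqref{g:body-gates} with the pressure bound of Lemma~\ref{g:pressure} applied to the wall with the smaller moment. The first step is to bound the wall extents from below. By \eqref{g:body-gates}, with the honest choice $N_z^U=\min\{1,\sqrt{P(P-h/\pi)}\}$ from \eqref{g:hole}, every actual candidate has $R_x,R_y\geq r_b(h,P)$. By Lemma~\ref{g:pressure} and the stress identity at $v=1/\pi$, this gives
\[
 hP(1-m_i)-\frac{h^2}{\pi}\geq F(r_b).
\]
With $m_y=(q+\delta)/2$ and $q=h/(\pi P)$, this becomes an upper bound for $\delta$:
\[
 \delta\leq 2-3q-\frac{2F(r_b)}{hP}.
\]
This alone is probably too weak near $P=1$. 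So the second step strengthens the lower bound for $R_x$ through the source formula \eqref{g:source}, $R_x=h+(2h^2P/\pi)(L_y-1)$. We bound $L_y$ from below by the angular inequality \eqref{g:angular-bounds}: $L_x+L_y\geq\E\phi_8(|\nu_z|)+D_0/6$, where $D_0\geq\delta^2\geq1/16$. For $L_x$ we use the complementary lower bound obtained from the cap-tail support planes of Lemma~\ref{s:support}, applied with the cost $\ell_{16}$ and the moment $m_x=(q-\delta)/2$. Since $x\operatorname{atanh}x$ is convex and grows superlinearly, a larger asymmetry $\delta$ moves logarithmic mass toward $y$. The result is a lower bound on $R_x$ that increases with $\delta$.

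The third step feeds this into the pressure inequality for the $x$ wall:
\[
 hP(1-m_x)-\frac{h^2}{\pi}\geq F(R_x).
\]
The left side increases only linearly in $\delta$, through $hP\delta/2$, while the right side increases through both $D_0/6$ and the $\ell_{16}$ support plane. We also use the $\sigma$ constraint: $\sigma_x^2=\pi/4-hP+3h^2/(2\pi)+hP\delta/2$ must be compatible with $J_x\leq\pi/4$, and together with $\sigma_y^2\geq0$ this gives $\delta\leq(\pi/2-2hP+3h^2/\pi)/(hP)$. This last bound already forces $\delta<1/4$ whenever $hP$ is moderately large.

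The remaining parameter region $(h,P)$, with $H_0\leq h\leq(2\pi^2/3)^{1/3}$, $P\in[\max(12h/(7\pi),3h/(2\pi)),1]$ and $h^2P\leq\pi$, is compact. On it we reduce to endpoint checks. The constraint residual is concave in $\delta$, by concavity of $F$ and convexity of the support planes, so it suffices to check $\delta=1/4$. After that, monotonicity in $P$ and in $h$ is handled by the same derivative-bound technique used in Proposition~\ref{s:low-h}, and finitely many rational evaluations close the argument. The main obstacle is this last covering. Near the corner $h\approx H_0$, $P\approx1$, the gates $e\geq0$ and $r_b$ are nearly tight, and the $D_0$ improvement is small. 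I would first try to handle that corner purely with the $\sigma_y^2\geq0$ and $r_b$ gates, and then subdivide $h$ into a few intervals with rational Lipschitz bounds.
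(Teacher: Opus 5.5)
Your first step is the skeleton of the paper's proof. Insert an honest $N_z^U$ into \eqref{g:body-gates}, use $R_y\geq r_b$ and pressure on the larger-moment wall to get $\delta\leq2-3q-2F(r_b)/(hP)$, and note that $\rho,e$ increase and $r_b$ decreases with $P$. The bound is therefore at most $2-3h/\pi-2F(r(1))/h$, where $r(1)$ is the gate value at $P=1$. The range $h>7\pi/12$ is disposed of by $2-3h/\pi<1/4$. On $[H_0,7\pi/12]$ one verifies $F(r(1))>h(7/8-3h/(2\pi))$ by rational polynomial comparisons.

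The gap is your choice $N_z^U=\sqrt{P(P-h/\pi)}$. Near $h=H_0$, $P=1$ the gate $e$ is almost zero, so $r_b$ is extremely sensitive to $N_z^U$. Cauchy--Schwarz gives $N_z^U\approx0.791$, hence $\rho_U\approx1.697$, $e\approx0.117$, $r_b\approx0.137$, and only $\delta\lesssim0.64$. The missing ingredient is the cap-tail mean support plane of Lemma~\ref{s:support} (three-term envelope, $r=17/16$, $R=5/6$), $\E|\nu_z|\leq\bar a\mu+\bar bc+\bar d$. It gives $N_z\leq\bar a(P-h/\pi)+\bar b\pi/(2h^2)+\bar dP<19/25$. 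At the same corner this yields $N_z^U\approx0.755$, $e\approx0.0105$, $r_b\approx0.409$ and $\delta\lesssim0.20$. With this single replacement your first step closes the argument, and nothing further is needed.

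The patches you propose cannot substitute for this. From $L_x+L_y\geq\E\phi_8(|\nu_z|)+D_0/6$, a \emph{lower} bound on $L_x$ gives no lower bound on $L_y$. You would need an upper bound on $L_x$, which the cost support planes never provide.

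Even with a direct logarithmic plane for $L_y$ at moment $m_y$, the $x$-wall test fails at $(h,P,\delta)=(H_0,1,1/4)$. There $J_x\approx0.663$, so $F(R_x)\leq J_x$ excludes only $R_x\gtrsim0.74$, i.e.\ $L_y\gtrsim0.50$. Jensen and the available cap-tail planes give only $L_y\approx0.35$.

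The radicand gate gives $\delta\leq\pi/(2hP)+3h/(\pi P)-2\geq\sqrt6-2>0.44$, so it never forces $\delta<1/4$ on this domain. The $r_b$ gate with your $N_z^U$ fails as computed above. So the corner you identify as the main obstacle cannot be handled by the tools you list.

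Separately, concavity in $\delta$ of a residual you must show positive reduces only to both endpoints $\delta=1/4$ and $\delta=\min\{q,2-3q\}$, not to $\delta=1/4$ alone.
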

\begin{proof}
Take $N=3$, $r=17/16$, and $R=5/6$ in \eqref{s:mean-plane}. The exact coefficients are
\[
 a=\frac{79084332549684}{143822139819265},\quad
 b=\frac{737452745800}{28764427963853},\quad
 d=\frac{10983935076345}{28764427963853}.
\]
Rounding each upward gives the valid bound
\[
 \alpha\le\bar a\mu+\bar bc+\bar d,
 \qquad (\bar a,\bar b,\bar d)=\frac1{10^6}(549876,25638,381859).
\]
Consequently
\[
 N_z\le\bar a(P-h/\pi)+\bar b\pi/(2h^2)+\bar dP.
\]
This upper bound is valid in the unclamped body formula: it increases with $P$ and decreases with $h$, so, using $U=3927/1250$,
\[
 N_z^U\le\bar a(1-H_0/U)+\bar bU/(2H_0^2)+\bar d<19/25<1.
\]
For the body bound of relations~\eqref{g:body-gates} we may therefore use
\begin{align*}
 \rho(P)&=1+\bar b+\frac{2h^2}{\pi}
 \left[(\bar a+\bar d)P-\frac{\bar ah}{\pi}\right],\\
 e(P)&=\rho(P)^2-1-4h^2/\pi,\\
 r(P)&=(1+h-\rho(P)-\sqrt{e(P)})_+.
\end{align*}
For an actual candidate $e(P)\ge0$. Increasing $P$ increases $\rho,e$ and decreases $r$. Since $F$ increases,
\begin{equation}\label{s:body-P}
 \delta\le2-\frac{3h/\pi+2F(r(P))/h}{P}
 \le2-\frac{3h}\pi-\frac{2F(r(1))}h.
\end{equation}
This is a comparison of explicit scalar expressions; it does not posit a surface at the virtual point $P=1$.

If $h>7\pi/12$, the pressure bound already gives $\delta<1/4$. It remains to consider $H_0\le h\le7\pi/12$. Set
\[
 K=1027/1000,\qquad x=h(7/8-3h/(2\pi)).
\]
It suffices to show $F(r(1))>x$. Squaring gives $\sqrt{1-s}\ge1-s/2-s^2/2$ for $0\le s\le1$, hence
\[
 F(r)\ge r-r^3/6-r^5/10.
\]
Use $L=314159/100000<\pi<U=3927/1250$ and define the following rational polynomials:
\begin{align*}
 x_U(h)&=h(7/8-3h/(2U)),\\
 \rho_L(h)&=1+\bar b+2(\bar a+\bar d)h^2/L-2\bar ah^3/L^2,\\
 u(h)&=1+h-Kx_U(h),\\
 D(h)&=u(h)^2+1+4h^2/U-2u(h)\rho_L(h).
\end{align*}
On $[H_0,7U/12]$, $x_U$ decreases and $Kx_U<2/5$. The finite rational endpoint comparison is
\[
 (K-1)-K^3x_U(H_0)^2/6-K^5x_U(H_0)^4/10>1/3000.
\]
Thus, on the actual interval, $F(Kx_U)>x_U\ge x$. Also $\rho(1)\le\rho_L$, since $\rho(1)$ decreases with $\pi$; its derivative has this sign because $2\bar ah<(\bar a+\bar d)L$ on the interval.

Here are finite coefficient comparisons proving the remaining polynomial signs. Expand $u-\rho_L=\sum_{j=0}^3q_j(h-H_0)^j$. Its coefficients satisfy
\[
 (q_0,q_1,q_2,q_3)>(13/100,32/100,29/100,11/100).
\]
Therefore $u-\rho_L>0$. On $[H_0,7/5]$, expand $D'(H_0+(7/5-H_0)t)=\sum_{j=0}^4\beta_j\binom4j t^j(1-t)^{4-j}$. Then
\[
 1000(\beta_0,\beta_1,\beta_2,\beta_3,\beta_4)
 <(-25,-34,-38,-33,-14),\qquad D(7/5)>1/2500.
\]
These comparisons are specified entirely by the displayed rational polynomials: if $p_j$ are the coefficients of $D'(H_0+(7/5-H_0)t)$ in powers of $t$, then $\beta_k=\sum_{j=0}^kp_j\binom kj/\binom4j$. Thus $D$ decreases and stays positive on this interval.

For $h\ge7/5$, write $D'''(h)=\gamma_0+\gamma_1(h-7/5)+\gamma_2(h-7/5)^2$. The rational coefficient bounds are
\[
 (\gamma_0,\gamma_1,\gamma_2)>(3,10,6),\qquad
 D''(7/5)>1/3,
\]
and, at $h_*=36/25$,
\[
 D(h_*)>1/5000,\qquad |D'(h_*)|<1/400.
\]
All comparisons require only multiplication of positive rational denominators. Since $D''\ge1/3$, Taylor's formula with integral remainder gives
\[
 D(h)\ge D(h_*)+D'(h_*)(h-h_*)+(h-h_*)^2/6
 >\frac1{5000}-\frac3{2\cdot400^2}>0.
\]
We have proved $u-\rho_L>0$ and $D>0$ everywhere needed. Consequently
\[
 [u-\rho(1)]^2-e(1)
 =u^2-2u\rho(1)+1+4h^2/\pi\ge D>0,
\]
so $r(1)>Kx_U$. Hence $F(r(1))>x$, and \eqref{s:body-P} yields $\delta<1/4$.
\end{proof}

\subsection{Perimeter propagation and asymmetry concavity}\label{s:concavity-section}
\label{s:perimeter-section}

We record a scalar monotonicity principle for the wall inequality. All affine supports in this subsection are the global supports from Lemma~\ref{s:support}; their affine expressions remain defined at virtual parameter points.

\begin{lemma}[Slopes of canonical supports]\label{s:support-slopes}
At a canonical reference point with second moment $\mu$ and cutoff $R$, write the mean support as $a_A\mu+b_Ac+d_A$. Then
\[
 0<a_A\le\frac1{2\sqrt\mu},\qquad d_A=R^2a_A\ge0.
\]
If $\mu\le5/12$, then $R<\sqrt3/2$ and
\[
 a_A>1/2,\qquad 1\le a_L<4,\qquad d_L\le0,
\]
where $a_L,d_L$ are the second-moment slope and intercept of the $\ell_{16}$ cost support. For the $\phi_8$ support, $a_T\le0$ and $d_T\le\phi_8(0)<5/4$.
\end{lemma}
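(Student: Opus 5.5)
The plan is to read everything off the explicit formulas \eqref{s:mean-plane}, \eqref{s:cost-plane} and \eqref{s:cost-intercept} of Lemma~\ref{s:support}, using the sign facts of Lemma~\ref{s:cap-concavity}. I am fairly confident of the unconditional part. The conditional estimates under $\mu\le5/12$ are where the plan is least certain.

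\emph{Step 1: the unconditional mean bounds.} Since $k_{j,\alpha}\ge0$ (proved in Lemma~\ref{s:cap-concavity}) and $w_j>0$, we get $B_\alpha\ge2R>0$, so $a_A=B_\alpha^{-1}>0$.

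For the intercept, I will repeat the Euler-identity computation used for \eqref{s:cost-intercept}. Substituting $\alpha_0B_\alpha+c_0B_c=\mu_0+R^2$ into $d_A=\alpha_0-a_A\mu_0+(B_c/B_\alpha)c_0$ gives
\[
 d_A=\alpha_0-\frac{\alpha_0B_\alpha-R^2}{B_\alpha}=R^2a_A\ge0 .
\]
For $a_A\le1/(2\sqrt\mu)$ it suffices to show $R\ge\sqrt\mu$. The mass-one condition gives $\alpha_0/R\le M_N=1$, so $\alpha_0\le R$. Also $\mu_0=\alpha_0R-\sum_jk_jR^{2j+1}/(2j+1)\le\alpha_0R$, because every $k_j\ge0$. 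Hence $\mu\le R^2$, and therefore $B_\alpha\ge2R\ge2\sqrt\mu$.

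\emph{Step 2: the cutoff and the lower bound on $a_A$ when $\mu\le5/12$.} I regard this as the main obstacle, because it needs a lower bound of $\mu$ in terms of $R$ that is uniform in $c$.

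To get it, I will use $\mu_0=\int_0^RU_N\,dt$ together with $U_N\ge U\ge\alpha_0\sqrt{1-t^2}$. I will combine this with the mass identity $1=\alpha_0/R+\sum_j\kappa_j\alpha_0R^{2j-1}/(2j-1)$ to bound $\alpha_0$ from below by a function of $R$ alone. For that I will use $\sum_jk_jt^{2j}\le\alpha_0$, which is positivity of $U_N$ at $t=1$. The resulting bound for $\mu_0$ should increase in $R$. Evaluating it at $R=\sqrt3/2$ should give more than $5/12$, which yields $R<\sqrt3/2$ by contraposition.

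For $a_A>1/2$ I need $B_\alpha<2$. Since $(U_N)_\alpha\ge0$, we have $\sum_jk_{j,\alpha}t^{2j}\le1$ on $[0,1]$. The weights are moments of a positive kernel, $w_j=2\int_0^R(R-t)t^{2j-1}\cdot(\text{explicit positive factor})\,dt$. This should give $\sum_jw_jk_{j,\alpha}\le\sum_jw_j\le R-(1-R^2)\operatorname{atanh}R$, the telescoping sum already used in Proposition~\ref{s:low-h}. Then $2R+R-(1-R^2)\operatorname{atanh}R<2$ for $R<\sqrt3/2$ is a one-variable check.

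\emph{Step 3: the $\ell_{16}$ support.} Here $w(t)=\sum_{j\le16}t^{2j-2}$ is increasing with $w(0)=1$. In $C^*_\alpha$, the weight $(U_N)_\alpha\in[0,1]$ is integrated against $w$. I will also use the mass identity: differentiating $M_N=1$ relates $R^2(M_N)_\alpha$ to the $\alpha$-derivative of the tail integrals. With these, $C_\alpha^*$ becomes an average of $w$ against the same positive measure whose total weight is $B_\alpha$. That measure consists of the density $(U_N)_\alpha$ on $[0,R]$ plus the atom term. Therefore $1=w(0)\le a_L\le w(R)$.

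Since $R<\sqrt3/2$, we have $w(R)<\sum_jR^{2j-2}<1/(1-3/4)=4$, which gives $1\le a_L<4$. By \eqref{s:cost-intercept}, $d_L=\phi(0)+R^2(a_L-w(R))=R^2(a_L-w(R))\le0$.

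\emph{Step 4: the $\phi_8$ support.} The kernel $w$ is increasing and negative, so the same averaging representation gives $w(0)\le a_T\le w(R)\le0$; in particular $a_T\le0$. Then \eqref{s:cost-intercept} gives $d_T=\phi_8(0)+R^2(a_T-w(R))\le\phi_8(0)$. Finally, $\phi_8(0)=2\sum_{j\le8}2^{-j}/(2j-1)<5/4$ is a finite rational sum.
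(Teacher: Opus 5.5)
Steps 1, 3 and 4 are sound and essentially follow the paper: $B_\alpha\ge2R$ and $\mu\le R^2$, the Euler identity $\alpha_0B_\alpha+c_0B_c=\mu_0+R^2$, and $a_\phi$ written as a $w$-average with nonnegative weights $(U_N)_\alpha\,dt$ on $[0,R]$ plus the atom $R^2M_\alpha$ at $R$.

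\textbf{The gap is in Step 2: both of its conclusions fail with the inequalities you propose.}
\begin{itemize}
\item \emph{Cutoff bound.} Positivity of $U_N$ only controls $\sum_jk_j\le\alpha_0$. From that, the mass identity yields at best $\sum_jk_jR^{2j-1}/(2j-1)\le R\alpha_0$, i.e.\ $\alpha_0\ge R/(1+R^2)$. Combined with $\mu_0\ge\alpha_0(\arcsin R+R\sqrt{1-R^2})/2$, this gives only about $0.366$ at $R=\sqrt3/2$. In fact the bound never exceeds $5/12$ on $[0,1]$ (it equals $\pi/8$ at $R=1$), so your contrapositive cannot be closed.
\item \emph{Slope bound $a_A>1/2$.} The estimate $\sum_jw_jk_{j,\alpha}\le\sum_jw_j$ gives $B_\alpha\le3R-(1-R^2)\operatorname{atanh}R$. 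This already exceeds $2$ at $R=4/5$, where it equals $2.4-0.18\ln9>2$, and it is about $2.27$ at $R=\sqrt3/2$. Even the sharper bound $\sum_jw_jk_{j,\alpha}\le w_1=2R^3/3$ gives about $2.17$ there. So the one-variable check you anticipate is false.
\end{itemize}

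\textbf{The missing idea} is to use the pointwise envelope comparisons rather than mere positivity: $U_N\ge U\ge\alpha\sqrt{1-t^2}$, and $(U_N)_\alpha\ge U_\alpha\ge\sqrt{1-t^2}$ (Lemma~\ref{s:cap-concavity} and the explicit formula for $U_\alpha$).
\begin{itemize}
\item \emph{Cutoff.} Write the mass as $1=\alpha_0/R+\int_0^R(\alpha_0-U_N)t^{-2}\,dt$ and insert $\alpha_0-U_N\le\alpha_0(1-\sqrt{1-t^2})$. Since $\int_0^R(1-\sqrt{1-t^2})t^{-2}\,dt=\arcsin R-(1-\sqrt{1-R^2})/R$, this gives $\alpha_0\ge[\arcsin R+\sqrt{1-R^2}/R]^{-1}$. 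Hence $\mu\ge(\arcsin R+R\sqrt{1-R^2})/(2[\arcsin R+\sqrt{1-R^2}/R])$. This increases in $R$ and exceeds $5/12$ at $R=\sqrt3/2$, a comparison equivalent to $\pi/3>1/(2\sqrt3)$.
\item \emph{Slope.} Note that $w_j=\int_0^R(R^2t^{-2}-1)t^{2j}\,dt$. Therefore $B_\alpha=2R+\int_0^R(R^2/t^2-1)[1-(U_N)_\alpha]\,dt$. Inserting $1-(U_N)_\alpha\le1-\sqrt{1-t^2}$ gives $B_\alpha\le(R^2+\tfrac12)\arcsin R+\tfrac32R\sqrt{1-R^2}$. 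This is increasing in $R$, so for $R<\sqrt3/2$ it is below $5\pi/12+3\sqrt3/8<2$.
\end{itemize}
With these two replacements, the rest of your plan goes through as written.
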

\begin{proof}
The canonical identities read
\[
 M=\frac\alpha R+\int_0^R\frac{\alpha-U_N(t)}{t^2}\,\dd t=1,
 \qquad\mu=\int_0^RU_N(t)\,\dd t.
\]
Eliminating the cutoff derivative with $M_R=-U_N(R)/R^2$ gives
\[
 B_\alpha=\int_0^R(U_N)_\alpha\,\dd t+R^2M_\alpha
 =2R+\int_0^R(R^2/t^2-1)[1-(U_N)_\alpha]\,\dd t\ge2R.
\]
Since a canonical probability law is supported in $[0,R]$, $\mu\le R^2$. Thus $a_A=1/B_\alpha\le1/(2\sqrt\mu)$. Euler's identity is $\alpha B_\alpha+cB_c=\mu+R^2$, which gives $d_A=R^2a_A$.

The exact cap envelope satisfies $U\ge\alpha\sqrt{1-t^2}$, and $U_N\ge U$. The mass and moment formulae therefore give
\[
 \alpha\ge\frac1{\arcsin R+\sqrt{1-R^2}/R},
\quad
 \mu\ge\frac{\arcsin R+R\sqrt{1-R^2}}
 {2[\arcsin R+\sqrt{1-R^2}/R]}.
\]
The last expression increases strictly with $R$. At $R=\sqrt3/2$ it exceeds $5/12$, a comparison equivalent to $\pi/3>1/(2\sqrt3)$. Hence $\mu\le5/12$ forces $R<\sqrt3/2$. Also $(U_N)_\alpha\ge U_\alpha\ge\sqrt{1-t^2}$, so
\[
 B_\alpha\le(R^2+1/2)\arcsin R+\tfrac32R\sqrt{1-R^2}
 <5\pi/12+3\sqrt3/8<2.
\]
This proves $a_A>1/2$.

For any cost kernel $w$, its moment slope is the weighted average
\[
 a_\phi=\frac{\int_0^Rw(U_N)_\alpha\,\dd t+w(R)R^2M_\alpha}
 {\int_0^R(U_N)_\alpha\,\dd t+R^2M_\alpha},
\]
whose weights are nonnegative. The kernel of $\ell_{16}$ lies between $1$ and $(1-t^2)^{-1}$; hence $1\le a_L<4$. The kernel of $\phi_8$ is nonpositive, so $a_T\le0$. Since each kernel increases, $a_\phi\le w(R)$. Formula~\eqref{s:cost-intercept} therefore gives $d_L\le0$ and $d_T\le\phi_8(0)$. Finally,
\[
 \phi_8(0)<1+\frac16+\frac1{20}+\frac1{56}+\frac1{72}
 =\frac{1573}{1260}<\frac54.
\]
\end{proof}

We make the residual precise. Fix $0<t_x,t_y<\pi/4$ and $0\le\eta\le1$, and put
\begin{align*}
 s_i&=\sin t_i,& A_i&=\pi/2+2t_i,\\
 d_i&=4\sqrt{\pi/16+3t_i/4+(1-\sin2t_i-\cos2t_i)/8},&s&=s_x+s_y,\\
 r_1&=s_y-s\eta/(2h),&r_2&=s_x-s\eta/(2h),\\
 T&=1-P+3h/(2\pi)-[\eta-F(\eta)]/h.
\end{align*}
Define $C_h(r)=h\sqrt{1-r^2}+\arcsin r-r$ for $0\le r<1$, and $C_h(r)=h$ for $r\le0$. The geometric dual inequality is
\begin{equation}\label{s:wall}
 C_h(r_1)+C_h(r_2)+sT+s_xR_x+s_yR_y+d_x\sigma_x+d_y\sigma_y
 \le A_x+A_y+h^2(N_x+N_y).
\end{equation}
Whenever $W\le s_xR_x+s_yR_y$ and $N^U\ge N_x+N_y$ are explicit estimates, define
\[
 \mathcal R=C_h(r_1)+C_h(r_2)+sT+W+d_x\sigma_x+d_y\sigma_y
 -A_x-A_y-h^2N^U.
\]
Thus every actual candidate satisfies $\mathcal R\le0$.

\begin{proposition}[Perimeter propagation]\label{s:perimeter}
Assume
\[
 H_0\le h\le15/8,\quad P\le1,\quad h^2P\le\pi,
 \quad0\le\delta<1/4,\quad\delta\le\min\{q,2-3q\}.
\]
Use transverse mean supports whose reference moments lie in $[36/625,5/12]$, and logarithmic cost supports whose reference moments lie in $(0,5/12]$. Set
\[
 N_A=P[A_x^{\rm pl}(m_x,c)+A_y^{\rm pl}(m_y,c)].
\]
Construct $W$ from the individual logarithmic source lower bounds and the total angular source lower bounds described below, using positive linear combinations and their maxima. If $N^U=N_A$, then, on any continuous necessary-domain interval,
\begin{equation}\label{s:perimeter-mean}
 \mathcal R(h,P_1,\delta)\ge\mathcal R(h,P_2,\delta)
 +\frac{2215}{6144}(P_2-P_1)\qquad(P_1<P_2).
\end{equation}
If instead either
\[
 N^U=\sqrt2P[1-V^{\rm pl}(\mu,c)-\Gamma]
 \quad\text{or}\quad
 N^U=\min\{N_A,\sqrt2P[1-V^{\rm pl}(\mu,c)-\Gamma]\},
\]
and each loss-cost support used satisfies $a_V+d_V<24/25$, then the coefficient in \eqref{s:perimeter-mean} may be replaced by $85/384$.
\end{proposition}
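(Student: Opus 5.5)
The plan is to prove \eqref{s:perimeter-mean} as a one-sided slope bound. Fix $h$, $\delta$, the angles $t_x,t_y$, $\eta$, and all rational support planes from Lemma~\ref{s:support}. I will show that every elementary affine or square-root piece of $\mathcal R$ has $P$-derivative at most $-\kappa$ on the interval, with $\kappa=2215/6144$ or $85/384$. Since $W$ is a maximum of positive combinations and $-h^2N^U$ is either a single term or a maximum of two such terms, the same bound then holds for the one-sided derivatives of $\mathcal R$. Integrating over the continuous necessary-domain interval gives \eqref{s:perimeter-mean}. This is a statement about explicit scalar expressions, so the planes may be evaluated at virtual points.

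\textbf{Terms with a definite sign.} I would go through the terms in order.
\begin{itemize}
\item $C_h(r_1)+C_h(r_2)$ does not depend on $P$.
\item $sT$ contributes exactly $-s$.
\item $\sigma_{x,y}^2=\pi/4-hP+3h^2/(2\pi)\mp hP\delta/2$, so $\partial_P\sigma_{x,y}=-h(1\pm\delta/2)/(2\sigma_{x,y})<0$. Hence these terms only help, and I would keep $d_x\sigma_x+d_y\sigma_y$ with the bound $\sigma\le\sqrt{\pi}/2$ to produce a definite negative contribution.
\item For $W$, insert the logarithmic support $L_y\ge a_Lm_y+b_Lc+d_L$ into \eqref{g:source}. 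Using $Pm_y=(h/\pi+P\delta)/2$ and $Pc=\pi/(2h^2)$, this gives
\[
 \partial_PR_x\ \text{bound}=\frac{2h^2}{\pi}\Bigl(\frac{a_L\delta}2+d_L-1\Bigr)\le\frac{2h^2}\pi\Bigl(\frac{\delta}{2}\cdot4-1\Bigr)<0 .
\]
This uses $a_L<4$, $d_L\le0$ and $\delta<1/4$ from Lemma~\ref{s:support-slopes}. The same holds for $R_y$, and for the angular source lower bounds via the $\phi_8$ support with $a_T\le0$ and $d_T<5/4$. Positive combinations and maxima preserve a uniform negative slope, here at least $(2h^2/\pi)\cdot\tfrac12\min s_i$ times a constant.
\end{itemize}

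\textbf{The main obstacle: the term $-h^2N^U$ in the mean case.} With $N_A$,
\[
 N_A=\tfrac12a_x(h/\pi-P\delta)+\tfrac12a_y(h/\pi+P\delta)+(b_x+b_y)\tfrac{\pi}{2h^2}+(d_x+d_y)P,
\]
so
\[
 \partial_P(-h^2N_A)=-h^2(d_x+d_y)-\tfrac12h^2\delta(a_y-a_x).
\]
The first part is favourable, since $d_A=R^2a_A\ge0$. The asymmetric part has no sign. I would bound it using Lemma~\ref{s:support-slopes}: the reference moments lie in $[36/625,5/12]$, which gives $1/2<a_A\le 1/(2\sqrt\mu)\le25/12$, together with $\delta<1/4$ and $h\le15/8$. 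This crude bound is large, so I would not estimate it alone. Instead I would couple it with the favourable $d$-terms, writing $d_A=R^2a_A$ and using $\mu\le R^2$, so that $d_A\ge\mu a_A$. The worst case should be the larger slope $a_y$ attached to the larger moment $m_y$. Its excess should be absorbed by $h^2d_y$, by $s$, and by the $W$ and $\sigma$ slopes. Optimizing the resulting rational expression over $h\in[H_0,15/8]$, $\delta<1/4$ and the slope ranges should produce $2215/6144$. This step is where I expect the real work, and I would first test the endpoint $h=15/8$, $\delta\to1/4$.

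\textbf{The loss-cost variant.} Insert the support $PV\ge a_V(P-h/\pi)+b_V\pi/(2h^2)+d_VP$. The $V$-part of $-h^2N^U$ then has slope at most $-\sqrt2h^2(1-a_V-d_V)<-\sqrt2h^2/25$, which is favourable. The term $+\sqrt2h^2P\Gamma$ can grow with $P$, because $\mu=1-h/(\pi P)$ and $Z=h^2P/2+\pi$ both vary. I would differentiate
\[
 P\Gamma=\frac{h^2P^2(4\mu-2+5D_0/2)_+^2}{9ZK_*}
\]
explicitly and bound it using $h^2P\le\pi$, $\mu\le1$ and $D_0\le$ its cap. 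I would accept the weaker overall constant $85/384$ to absorb it. For the minimum of $N_A$ and this bound, $-h^2\min$ is a maximum of two pieces, each with slope at most $-85/384$, which completes the argument.
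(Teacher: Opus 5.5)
Your scheme is the paper's: fix $h,\delta$, the witnesses and the planes; bound the $P$-slope of each smooth branch of $\mathcal R$; pass through maxima and minima with one-sided derivatives; integrate. However, the step that constitutes the proposition is never carried out, and your plan for it points the wrong way.

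\textbf{The mean case.} You defer the unsigned term $-\tfrac12h^2\delta(a_{A,y}-a_{A,x})$ to an unspecified optimization that ``should produce $2215/6144$'', after coupling it with $d_A\ge\mu a_A$. This has two problems.
\begin{itemize}
\item The harmful configuration is $a_{A,x}>a_{A,y}$, a large slope at the smaller moment $m_x$. This is the opposite of the case you single out.
\item $d_{A,x}=R^2a_{A,x}$ is not guaranteed to absorb $\tfrac\delta2a_{A,x}$. Reference moments as small as $36/625$ are allowed, while $\delta/2$ may approach $1/8$. This is why the paper obtains $N_A'>0$ only under the restricted range of Remark~\ref{s:stronger-mean-propagation}.
\end{itemize}
The missing observation is that no coupling is needed. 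Discard $d_A\ge0$, so that $N_A'\ge-\tfrac\delta2a_{A,x}>-\tfrac18\cdot\tfrac{25}{12}=-\tfrac{25}{96}$. Then let the pressure term alone dominate. With $\sigma_i^2<\tfrac{11}{14}+\tfrac{225}{128}<\tfrac{64}{25}$, $d_i\ge\sqrt\pi>\tfrac74$ and $h>\tfrac76$, one gets $(d_x\sigma_x+d_y\sigma_y)'\le-\tfrac{245}{192}$. Since $W$ and $sT$ are nonincreasing,
\[
 \mathcal R'\le-\tfrac{245}{192}+\tfrac{225}{64}\cdot\tfrac{25}{96}=-\tfrac{2215}{6144}.
\]
The constant is this crude difference, not an optimum. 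Your own bound $\sigma_i\le\sqrt\pi/2$ is valid here, because $\delta\le2-3q$ gives $J_y\ge0$ and $J_x\ge J_y$. Combined with $d_t\ge\sqrt\pi$, it would give a pressure slope of at most $-2h$, leaving a wide margin. So the repair uses only what you already wrote down, but as submitted the decisive inequality is never stated or checked.

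\textbf{The loss variant.} It has the same hole: ``accept the weaker constant to absorb it'' needs a numerical bound on $(P\Gamma)'$. The paper writes $(P\Gamma)'=\frac{U}{9K_*Z}\bigl[(2-\frac{U}{2Z})B_+^2+2B_+PB_+'\bigr]$. It then uses $U/Z\le\tfrac23$, $(a_0)_+<\tfrac{51}{100}$ (from $q\ge H_0/\pi$), $D_0\le\tfrac1{16}$, $B_+<\tfrac{23}{20}$, $PB_+'\le\tfrac{43}{15}$ and $K_*>\tfrac{29}{10}$ to get $(P\Gamma)'<\tfrac6{25}$. Hence the loss bound has slope greater than $\sqrt2(1-\tfrac{24}{25}-\tfrac6{25})>-\tfrac3{10}$, and $-\tfrac{245}{192}+\tfrac{225}{64}\cdot\tfrac3{10}=-\tfrac{85}{384}$.

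\textbf{The total angular source.} The same $D_0$ bookkeeping is missing here. Besides $a_T+d_T<\tfrac54$, you need $(PD_0)'\le\tfrac14$, because the $D_s$ branch depends on $P$ through $U$, $Z$ and $a_0$. Only then does $[P(T^{\rm pl}+D_0/6-2)]'<\tfrac54+\tfrac1{24}-2<0$ follow.
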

\begin{proof}
Hold $h,\delta$ fixed. Then $Pc=\pi/(2h^2)$ is constant, while $Pm_x=h/(2\pi)-P\delta/2$ and $Pm_y=h/(2\pi)+P\delta/2$. On each mean-support branch,
\[
 N_A'=d_{A,x}+d_{A,y}+\frac\delta2(a_{A,y}-a_{A,x})\ge-\frac{25}{96},
\]
because $d_A\ge0$ and $a_A\le25/12$. An individual logarithmic source is
\[
 R_x^{\rm pl}=h+(2h^2/\pi)P[L_y^{\rm pl}(m_y,c)-1],
\]
and analogously for $R_y^{\rm pl}$. Its bracket derivative is at most $-1+\delta a_L/2\le-1/2$; hence each individual source decreases with $P$.

For the total angular source use the notation of Lemma~\ref{g:angular}:
\[
 U=h^2P,\quad Z=U/2+\pi,\quad a_0=2-4q,\quad
 D_s=U(a_0)_+^2/(18Z),\quad D_0=\max\{\delta^2,D_s\},
\]
\[
 K_*=(32/5)(3/5)^{3/2},\quad B_+=(a_0+5D_0/2)_+,
 \qquad\Gamma=UB_+^2/(9ZK_*).
\]
The total source lower bound is
\[
 R_x+R_y\ge2h+(2h^2/\pi)P[T^{\rm pl}(1-q,c)+D_0/6-2].
\]
We now give the derivative bounds for its angular terms explicitly. The necessary domain gives $U/Z\le2/3$, $(a_0)_+<51/100$, $\delta^2<1/16$, hence $D_s<1/16$, $D_0\le1/16$, and $B_+<23/20$. On the branch $a_0>0$, $Pa_0'=4q=2-a_0$, and
\[
 (PD_s)'=\frac{U}{18Z}\left(4a_0-\frac{U}{2Z}a_0^2\right)<\frac2{25},
 \qquad PD_s'<\frac2{25}.
\]
For $a_0\le0$ this derivative is zero. On a constant $D_0=\delta^2$ branch, $(PD_0)'=\delta^2$ and $PD_0'=0$. Thus, on all smooth branches,
\[
 (PD_0)'\le1/4,\qquad PD_0'\le2/25,
 \qquad PB_+'\le8/3+1/5=43/15.
\]
As $K_*>29/10$, differentiation on a positive $B_+$ branch gives
\begin{align*}
 (P\Gamma)'&=\frac{U}{9K_*Z}
 \left[\left(2-\frac{U}{2Z}\right)B_+^2+2B_+PB_+'\right]\\
 &<\frac{2}{27(29/10)}
 \left[2(23/20)^2+2(23/20)(43/15)\right]
 =\frac{5543}{23490}<\frac6{25}.
\end{align*}
It is zero on a vanishing branch. These functions are locally Lipschitz, so the estimates persist across branch changes by integration of their almost-everywhere derivatives.

Now $[PT^{\rm pl}(1-q,c)]'=a_T+d_T<5/4$, and therefore
\[
 [P(T^{\rm pl}+D_0/6-2)]'<5/4+1/24-2<0.
\]
All total source bounds decrease with $P$. A body source lower bound also decreases: the mean-support bound for $N_z$ has derivative $a_A+d_A\ge0$, so its $\rho,e$ increase and its $r_b$ decreases. The estimates for $W$ are positive weighted combinations of these decreasing individual and total source bounds, followed by a maximum. They therefore decrease.

The pressure radicands are nonnegative on the necessary domain and satisfy
\[
 \sigma_i^2<11/14+225/128<64/25.
\]
Since $d_i\ge\sqrt\pi>7/4$ and $h>7/6$, on positive-root branches
\[
 (d_x\sigma_x+d_y\sigma_y)'\le-245/192.
\]
The height term has derivative $-s\le0$, while the calibration terms do not depend on $P$. Hence
\[
 \mathcal R'\le-245/192+(225/64)(25/96)=-2215/6144.
\]
Integration proves \eqref{s:perimeter-mean}; zero radicands are handled by continuity.

For the loss branch, the exact affine identity $[PV^{\rm pl}(\mu,c)]'=a_V+d_V$ and the preceding angular derivative estimate give
\[
 \{\sqrt2P[1-V^{\rm pl}-\Gamma]\}'
 >\sqrt2(1-24/25-6/25)>-3/10.
\]
The loss bound itself, as well as its minimum with $N_A$,
therefore has slope greater than $-3/10$. Replacing $25/96$ by $3/10$ in the residual estimate gives
\[
 \mathcal R'\le-245/192+(225/64)(3/10)=-85/384.
\]
\end{proof}

\begin{remark}\label{s:stronger-mean-propagation}
If the $y$-mean reference moments are restricted further to $[9/50,5/12]$, then $N_A'\ge29/2400>0$. Indeed $d_A=R^2a_A\ge\mu a_A$ and $a_A>1/2$. If $R_x^2-\delta/2<0$, the reference lower bounds and $a_{A,x}\le25/12$ give
\[
 N_A'\ge\frac{25}{12}\left(\frac{36}{625}-\frac\delta2\right)
 +\frac12\left(\frac9{50}+\frac\delta2\right)
 =\frac{21}{100}-\frac{19\delta}{24}\ge\frac{29}{2400}.
\]
If $R_x^2-\delta/2\ge0$, its summand is nonnegative and the $y$ summand is at least $9/100>29/2400$. In that case $\mathcal R'\le-245/192$. The actual $m_y\ge q/2>H_0/(2\pi)>9/50$. This inequality
concerns the actual moment; the stronger derivative bound still
requires the stated restriction on the support-plane reference moments.
\end{remark}

\begin{lemma}[A loss support with a short rational certificate]\label{s:loss-plane}
For $U_3$, ratio $r=3/2$, cutoff $R=9/10$, and cost $v_{16}$, the exact support coefficients satisfy $a_V+d_V<7613/8000<24/25$.
\end{lemma}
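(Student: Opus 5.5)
The plan is to compute the support coefficients explicitly from the recipe in Lemma~\ref{s:support}, specialised to $N=3$, $r=3/2$, $R=9/10$, and to the cost $\phi=v_{16}$. Then we bound $a_V+d_V$ by exact rational arithmetic.

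First evaluate the cap coefficients $\kappa_j=k_j(1,r)$ for $j=1,2,3$ from the recursion of Lemma~\ref{s:cap-polynomial} with $\alpha=1$, $c=3/2$, $A=5/2$. This gives $\kappa_1=1/5$, and $\kappa_2,\kappa_3$ follow as explicit rationals. Then:
\begin{itemize}
\item Form $\alpha_0$, $c_0=3\alpha_0/2$ and $\mu_0$ from the displayed formulas.
\item Obtain the derivatives $k_{j,\alpha}$ and $k_{j,c}$ at $(\alpha_0,c_0)$. Each $k_j$ is homogeneous of degree one and a rational function of $(\alpha,c)$, so these derivatives are rational, as are $B_\alpha$ and $B_c$.
\end{itemize}

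Since only $a_V+d_V$ is needed, use \eqref{s:cost-intercept}. It gives $d_V=v_{16}(0)+R^2[a_V-w(R)]=R^2a_V-R^2w(R)$, so
\[
 a_V+d_V=(1+R^2)a_V-R^2w(R),\qquad a_V=C_\alpha^*/B_\alpha .
\]
This removes any need for $b_V$ and for the $c$-derivatives entering the cost.

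The tail kernel is the polynomial $w(t)=\sum_{j=1}^{16}\binom{2j}{j}4^{-j}t^{2j-2}$. Therefore $R^2w(R)$ is a finite rational sum. Next, $C_\alpha^*=\int_0^Rw\,(U_3)_\alpha\,dt+R^2w(R)(M_3)_\alpha$. Here $(U_3)_\alpha=1-\sum_{j\le3}k_{j,\alpha}t^{2j}$ and $(M_3)_\alpha=1/R+\sum_j k_{j,\alpha}R^{2j-1}/(2j-1)$. Term-by-term integration of polynomials in $t$ up to $t=9/10$ then makes $C_\alpha^*$ an explicit rational.

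Finally, verify the integer inequality obtained by clearing the positive denominators in
\[
 (1+R^2)\frac{C_\alpha^*}{B_\alpha}-R^2w(R)<\frac{7613}{8000}.
\]
The second inequality, $7613/8000<24/25=7680/8000$, is immediate.

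One sanity check is available from Lemma~\ref{s:support-slopes}: $a_V$ is a weighted average of $w$ on $[0,R]$ and satisfies $a_V\le w(R)$. Consequently $a_V+d_V\le w(R)$. This inequality does not suffice by itself, since $w(9/10)>1$, so the weighted-average structure must genuinely be evaluated.

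The main obstacle is purely bookkeeping. The rationals in $\kappa_j$, the powers $(9/10)^{2j+1}$ up to $j=16$, and the central binomial coefficients produce large numerators. The certificate is short only if it is organised as a few displayed exact values, namely $\alpha_0$, $B_\alpha$, $C_\alpha^*$ and $R^2w(R)$, or as rational enclosures of them. I would first compute these exactly. If the exact fractions are unwieldy, I would replace each by one-sided rational bounds with the correct monotone direction: a lower bound for $B_\alpha$, and upper bounds for $C_\alpha^*$ and for the combination with $-R^2w(R)$. The total slack of roughly $7680-7613$ parts in $8000$ leaves ample room for such rounding.
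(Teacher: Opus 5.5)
Your proposal is correct, and its reduction is the same as the paper's. With $v_{16}(0)=0$, \eqref{s:cost-intercept} gives $a_V+d_V=(1+R^2)a_V-R^2w_{16}(R)$ with $a_V=C^*_\alpha/B_\alpha$. Here $a_V$ depends only on the degree-zero quantities $k_{j,\alpha}(1,3/2)=(8/25,104/625,296/3125)$, $M_\alpha$ and $B_\alpha$, so forming $\alpha_0$, $\mu_0$ and the $k_{j,c}$ is unnecessary.

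The two routes differ in how $C^*_\alpha$ is evaluated.

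\emph{Your route.} You integrate $w_{16}(U_3)_\alpha$ exactly, which turns the lemma into one rational inequality. Carried out, this gives $a_V+d_V\approx0.9095$, comfortably inside the target.

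\emph{The paper's route.} It decouples the two occurrences of $w_{16}$.
\begin{itemize}
\item Inside $a_V$ it replaces $w_{16}$ by the full kernel $w_\infty(t)=((1-t^2)^{-1/2}-1)/t^2\geq w_{16}$. This is legitimate because the moment-slope weights $(U_3)_\alpha$ and $R^2M_\alpha$ are nonnegative. All integrals then close up in $s=\sqrt{19}/10$ and $\arcsin(9/10)$, and two crude rational bounds give $a_V<49/40$.
\item The subtracted value $w_{16}(R)$ is bounded below separately by four grouped partial sums, giving $w_{16}(R)>25/16$.
\end{itemize}

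\emph{Trade-offs.} The paper's certificate is a few displayed fractions and does not depend on the truncation order. It pays for this with slack ($7613/8000\approx0.9516$ against the true value) and needs bounds on $\pi$, $\sqrt3$ and $\arcsin$. Your version uses no transcendental input and has more margin. However, its numerators are very large, with powers up to $(9/10)^{37}$ and terms like $\binom{32}{16}/4^{16}$.

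If you fall back on one-sided rounding, note that $w_{16}(R)$ enters $C^*_\alpha$ with a positive sign and the final expression with a negative sign. Bounding it above in the first place and below in the second is exactly the paper's decoupling. If you keep the two occurrences coupled, you must bound $w_{16}(R)$ from above, since its net coefficient $R^2[(1+R^2)M_\alpha/B_\alpha-1]\approx0.27$ is positive.
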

\begin{proof}
At $(\alpha,c)=(1,3/2)$,
\[
 (k_{1,\alpha},k_{2,\alpha},k_{3,\alpha})=(8/25,104/625,296/3125)=:(q_1,q_2,q_3),
\]
and the mass and moment formulae give
\[
 M_\alpha=\frac{2550115817}{1757812500},\qquad
 B_\alpha=\frac{134750671353}{68359375000}.
\]
Write $s=\sqrt{19}/10$ and $A=\arcsin(9/10)$. The full square-root loss has kernel
\[
 w_\infty(t)=\frac1{\sqrt{1-t^2}(1+\sqrt{1-t^2})}
 =\frac{(1-t^2)^{-1/2}-1}{t^2}.
\]
Since $w_{16}\le w_\infty$ and the moment-slope weights are nonnegative, $a_V\le a_\infty$. Termwise integration gives
\begin{align*}
 I(s,A):=\int_0^Rw_\infty(U_3)_\alpha\,\dd t
 &=\frac{1-s}{R}+q_1R
 +q_2\left(\frac{Rs}2+\frac{R^3}3\right)\\
 &\quad+q_3\left(\frac{Rs(3+2R^2)}8+\frac{R^5}5\right)
 -A\left(q_1+\frac{q_2}2+\frac{3q_3}8\right),\\
 a_\infty&=\frac{I(s,A)+R^2M_\alpha/[s(1+s)]}{B_\alpha}.
\end{align*}
The coefficient of $s$ in $I$ is $-13879693/14062500<0$, and its coefficient of $A$ is negative. The rational bounds
\[
 s>61/140,\qquad
 A\ge\pi/3+2(R-\sqrt3/2)
 >111/106+9/5-97/56=223/200+1/37100
\]
follow from $\pi>333/106$ and $\sqrt3<97/56$. The integral bound on $A$ uses $(\arcsin)'\ge2$ between $\sqrt3/2$ and $R$. With $s_0=61/140$ and $A_0=223/200$, direct rational substitution gives
\[
 \frac{49}{40}B_\alpha-
 \left[I(s_0,A_0)+\frac{R^2M_\alpha}{s_0(1+s_0)}\right]
 =\frac{478737700003051}{100578515625000000}>0.
\]
Hence $a_V<49/40$. Meanwhile
\[
 w_{16}(R)=\sum_{l=1}^{16}\frac{\binom{2l}{l}}{4^l}R^{2l-2}.
\]
The groups $l=1$--$4$, $5$--$8$, $9$--$12$, and $13$--$16$ have rational lower bounds $23/20$, $57/200$, $12/125$, and $7/200$, respectively. Addition gives $w_{16}(R)>783/500>25/16$. Formula~\eqref{s:cost-intercept} now yields
\[
 a_V+d_V=(1+R^2)a_V-R^2w_{16}(R)
 <\frac{181}{100}\frac{49}{40}-\frac{81}{100}\frac{25}{16}
 =\frac{7613}{8000}<\frac{24}{25}.
\]
\end{proof}

After the perimeter comparison, we fix the support planes and wall
witnesses and vary the transverse asymmetry. The resulting concavity is
the second reduction used in the continuous-range exclusions.

The following reduction applies after perimeter propagation to the virtual boundary $P=1$. It concerns one fixed wall witness and fixed support planes; a maximum over different witnesses need not be concave.

\begin{lemma}\label{s:concavity}
Let $H_0<h<33/20$, $q=h/\pi$, $\mu=1-q$, $c=\pi/(2h^2)$, and $m_{x,y}=(q\mp\delta)/2$. Fix wall parameters $t_x,t_y,\eta$ and all support planes. Choose fixed affine source estimates
\[
 R_x\ge r_x(h,\delta),\quad R_y\ge r_y(h,\delta),
 \quad R_x+R_y\ge r_T(h,\delta),
\]
and a normal-sum upper bound $N_x+N_y\le N_{\rm aff}(h,\delta)$ affine in $\delta$. The resulting residual lower bound is strictly concave in $\delta$ on $[0,1/4]$. Thus, on any subinterval $[0,d(h)]$, positivity at both endpoints excludes every actual candidate.
\end{lemma}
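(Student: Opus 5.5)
With $h$, $t_x,t_y,\eta$ and all support planes fixed, I will write the residual lower bound $\mathcal R(h,1,\delta)$ of \eqref{s:wall} at the virtual boundary $P=1$ as a sum of terms and check each for concavity in $\delta$. At $P=1$ we have $q=h/\pi$, $\mu=1-q$ and $c=\pi/(2h^2)$, none of which depends on $\delta$; $\delta$ enters only through $m_{x,y}=(q\mp\delta)/2$.

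Several terms are then constant in $\delta$. These are $C_h(r_1)+C_h(r_2)$, since $r_1,r_2$ depend only on $t_x,t_y,\eta,h$; the height term $sT$, with $T=1-P+3h/(2\pi)-[\eta-F(\eta)]/h$; and the calibration constants $A_x+A_y$. The source contribution $W$ is a fixed positive combination $s_xr_x+s_yr_y$, or $s\,r_T$ times a fixed weight, of the chosen source estimates. These are affine in $\delta$ by hypothesis, and since a single fixed witness is used, no maximum over witnesses appears. The term $-h^2N_{\rm aff}$ is affine by hypothesis. So the only possibly nonaffine part is $d_x\sigma_x+d_y\sigma_y$.

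For this term I use the pressure formula
\[
 \sigma_{x,y}^2=\frac\pi4-hP+\frac{3h^2}{2\pi}\mp\frac{hP\delta}2 .
\]
At $P=1$ each $\sigma_i$ is the square root of an affine function of $\delta$ with nonzero slope $\mp h/2$. Such a function is strictly concave wherever its radicand is positive, since
\[
 \frac{d^2}{d\delta^2}\sqrt{a\mp b\delta}=-\frac{b^2}{4}(a\mp b\delta)^{-3/2}<0 .
\]
Because $d_x,d_y>0$ are fixed, $d_x\sigma_x+d_y\sigma_y$ is strictly concave, and adding affine terms keeps $\mathcal R$ strictly concave. The conclusion then follows from the elementary fact that a strictly concave function on $[0,d(h)]$ attains its minimum at an endpoint. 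Positivity at both endpoints therefore gives $\mathcal R>0$ on the whole interval, contradicting $\mathcal R\le0$ for actual candidates. Combined with Proposition~\ref{s:perimeter}, which moves from actual $P\le1$ to $P=1$, this excludes every candidate.

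The main obstacle is making sure both radicands stay positive, or at least nonnegative, on $[0,1/4]$ for $H_0<h<33/20$, since concavity could fail where a radicand vanishes. With $P=1$ the $x$ radicand is the larger one, so the binding case is
\[
 \sigma_y^2=\frac\pi4-h+\frac{3h^2}{2\pi}-\frac{h\delta}{2}.
\]
I will bound this from below at $\delta=1/4$. The quadratic $\pi/4-h+3h^2/(2\pi)$ has its minimum at $h=\pi/3$, with value $\pi/4-\pi/6=\pi/12$. Hence
\[
 \sigma_y^2\ge\frac\pi{12}-\frac{33}{160}>0 ,
\]
and the radicand is positive throughout the range, which is what the restriction $h<33/20$ guarantees. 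Even at a zero radicand, $\sqrt{\cdot}$ of an affine function remains concave and continuous, so the endpoint principle persists in that case.
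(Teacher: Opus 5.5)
Your proposal is correct and is essentially the paper's argument. At $P=1$ everything except $d_x\sigma_x+d_y\sigma_y$ is affine in $\delta$; the roots $\sqrt{a\mp h\delta/2}$, with $a=\pi/4-h+3h^2/(2\pi)$, have strictly negative second derivative once the radicands are positive; and the endpoint chord bound finishes. The differences are cosmetic: the paper bounds the smaller radicand jointly by $\pi/4-9h/8+3h^2/(2\pi)\ge5\pi/128$ rather than your decoupled $\pi/12-33/160$, and by your own displayed formula the radicand carrying $-h\delta/2$ is $\sigma_x^2$, not $\sigma_y^2$ (a harmless relabeling, since you bounded the correct expression).
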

\begin{proof}
For clarity, legal fixed affine choices can be made as follows. Individual source bounds come directly from fixed logarithmic supports. A total source bound is
\[
 r_T=2h+\frac{2h^2}{\pi}
 \left[T^{\rm pl}(\mu,c)+\frac{2\delta_0\delta-\delta_0^2}{6}-2\right],
\]
where $\delta_0$ is fixed; this uses $D_0\ge\delta^2\ge2\delta_0\delta-\delta_0^2$. The weighted source bound is
\[
 W_{\rm aff}=
 \begin{cases}(s_x-s_y)r_x+s_yr_T,&s_x\ge s_y,\\
 s_xr_T+(s_y-s_x)r_y,&s_x\le s_y.
 \end{cases}
\]
For the normal sum, one may add fixed transverse mean supports. Alternatively use
\[
 N_{\rm aff}=\sqrt2[1-V^{\rm pl}(\mu,c)-\gamma_{\rm aff}(h,\delta)],
\]
where $\gamma_{\rm aff}$ is a fixed affine lower support to $\Gamma(h,\delta)$. Such supports exist because
\[
 \Gamma(h,\delta)=\frac{h^2}{9(h^2/2+\pi)K_*}
 \left(2-4q+\frac52\max\{\delta^2,D_s(h)\}\right)_+^2
\]
is convex in $\delta$: each composition is with a nondecreasing convex function. A constant lower bound from the $D_s$ branch is also sufficient. Negative values of an affine lower support are harmless.

All terms of the resulting residual are affine in $\delta$ except the two pressure roots. It has the form
\[
 \Phi_h(\delta)=K(h)+W_{\rm aff}(h,\delta)-h^2N_{\rm aff}(h,\delta)
 +d_x\sqrt{a-b\delta}+d_y\sqrt{a+b\delta},
\]
where $a=\pi/4-h+3h^2/(2\pi)$ and $b=h/2$. Both roots are positive, because
\[
 a-h\delta/2\ge\pi/4-9h/8+3h^2/(2\pi)\ge5\pi/128>0.
\]
Direct differentiation gives
\[
 \Phi_h''(\delta)=
 -\frac{d_xh^2}{16(a-h\delta/2)^{3/2}}
 -\frac{d_yh^2}{16(a+h\delta/2)^{3/2}}<0.
\]
A concave function is at least its endpoint chord, proving the stated reduction.
\end{proof}

\section{Exclusion of the remaining genus-two range}\label{sec:range-exclusion}

We now exclude the remaining genus-two parameters at $v=b$.
Each range below uses specified support planes and fixed wall witnesses.
The final coverage theorem assembles the ranges, including their common
boundaries and $P=1$, for use in both the profile and equality arguments.

\subsection{The middle range: \texorpdfstring{$7/5\le h\le33/20$}{7/5 <= h <= 33/20}}\label{t:middle}\label{t:cost-planes}

The following argument eliminates a continuous parameter region by three
rational cost planes. Throughout this subsection the geometric necessary
conditions established above are assumed, the volume is $1/\pi$, $P\le1$,
and the BODY bound gives $0\le\delta\le1/4$.

Put
\[
 \mu=1-\frac h{\pi P},\qquad
 m_{x,y}=\frac h{2\pi P}\mp\frac\delta2,
 \qquad c=\frac\pi{2h^2P}.
\]
The finite costs
\[
 L_{16}(X)=\sum_{j=1}^{16}\frac{X^{2j}}{2j-1},\qquad
 V_{16}(X)=\sum_{j=1}^{16}
 \frac{\binom{2j}{j}}{4^j(2j-1)}X^{2j}
\]
satisfy $L_{16}(X)\le X\operatorname{arctanh}X$ and
$V_{16}(X)\le1-\sqrt{1-X^2}$ on $[0,1]$. The rational support-plane
construction in Subsection~\ref{s:support-section} gives
\begin{equation}\label{t:M1}
\begin{array}{c|rrr|cc}
 &10^6a&10^6b&10^6d&r&R\\\hline
 L_x&1220525&-898&-28203&2&1/2\\
 L_y&1483073&-6570&-105003&3/2&13/20\\
 V_z&993934&-22441&-178367&11/10&17/20
\end{array}
\end{equation}
Each row means $\E\phi(X)\ge a\E X^2+bc+d$.

For completeness, all entries of this and the subsequent cost tables have
the following finite rational definition. Use the six-term envelope $U_6$;
compute $k_j$ and their first derivatives at $(\alpha,c)=(1,r)$, and set
\[
 \alpha_0=\left[R^{-1}+\sum_{j=1}^6
 \frac{k_jR^{2j-1}}{2j-1}\right]^{-1},\qquad c_0=r\alpha_0,
 \qquad \mu_0=\alpha_0\left[R-\sum_{j=1}^6
 \frac{k_jR^{2j+1}}{2j+1}\right].
\]
Write $w(t)=[t\phi'(t)-\phi(t)+\phi(0)]/t^2$ and
\[
\begin{aligned}
 M&=\alpha/R+\sum_{j=1}^6k_jR^{2j-1}/(2j-1),\\
 B_\alpha&=2R+\sum_{j=1}^6
 \frac{2k_{j,\alpha}R^{2j+1}}{(2j-1)(2j+1)},&
 B_c&=\sum_{j=1}^6\frac{2k_{j,c}R^{2j+1}}{(2j-1)(2j+1)},\\
 C&=\phi(0)+\int_0^RwU_6,&
 C_\alpha^*&=\int_0^Rw(U_6)_\alpha+R^2w(R)M_\alpha,\\
 &&C_c^*&=\int_0^Rw(U_6)_c+R^2w(R)M_c.
\end{aligned}
\]
Here $C$ is evaluated at $(\alpha_0,c_0,R)$. The $k_j$ evaluated at
$(1,r)$ are multiplied by $\alpha_0$, whereas their first derivatives
are unchanged, by homogeneity. The exact support coefficients are
\begin{equation}\label{t:cost-recipe}
 a_*={C_\alpha^*\over B_\alpha},\qquad
 b_*=C_c^*-a_*B_c,\qquad d_*=C-a_*\mu_0-b_*c_0.
\end{equation}
Every integral is a polynomial integral. The coefficients displayed in
\eqref{t:M1} are individually at most $(a_*,b_*,d_*)$; this is checked by
substitution in \eqref{t:cost-recipe} and cross multiplication.
Since the moment and $c$ are nonnegative, rounding each coefficient
downward preserves a global lower bound. This supplies a rational
certificate rather than a reference to numerical files.

Fix
\[
 t_x=5\pi/32,\qquad t_y=7\pi/32,\qquad\eta=3/4,
\]
and put $s_i=\sin t_i$, $s=s_x+s_y$, $\gamma=3s/8$. We denote the
wall constants by $D_x,D_y$, to distinguish them from the cost
intercepts. The $V_z$ plane yields
\[
 N_x+N_y\le N_V:=\sqrt2P(1-a_V\mu-b_Vc-d_V).
\]
The two sources are bounded below by
\[
 R_x\ge h+\frac{2h^2P}{\pi}(a_ym_y+b_yc+d_y-1),\qquad
 R_y\ge h+\frac{2h^2P}{\pi}(a_xm_x+b_xc+d_x-1).
\]
Insert these into the complete wall inequality. The resulting lower
bound for its residual, denoted by $\mathcal F(h,P,\delta)$, must be
nonpositive for an actual candidate. It increases when $P$ decreases.
Indeed,
\[
 \partial_P N_V=\sqrt2(1-a_V-d_V)>0,
\]
and the source derivatives are
\[
 {2h^2\over\pi}(d_y-1+a_y\delta/2)<0,\qquad
 {2h^2\over\pi}(d_x-1-a_x\delta/2)<0.
\]
The pressure squares
$\sigma_{x,y}^2=\pi/4-hP+3h^2/(2\pi)\mp hP\delta/2$
decrease with $P$, the height term has derivative $-s$, and the
calibration terms do not depend on $P$. Consequently
\begin{equation}\label{t:M2}
 \mathcal F(h,P,\delta)\ge\mathcal F(h,1,\delta).
\end{equation}
This is a comparison of explicit algebraic expressions; it does not
assert that the point $P=1$ is realized by another probability law or
surface.

At $P=1$ set $r_1=s_y-\gamma/h$, $r_2=s_x-\gamma/h$, and
\[
\begin{aligned}
 T(h)&={3h\over2\pi}-{3/4-F(3/4)\over h},\\
 R_x^L&=h+b_y+{a_yh^3\over\pi^2}
           +{a_y\delta h^2\over\pi}+{2(d_y-1)h^2\over\pi},\\
 R_y^L&=h+b_x+{a_xh^3\over\pi^2}
           -{a_x\delta h^2\over\pi}+{2(d_x-1)h^2\over\pi},\\
 Z(h)&=\sqrt2\left[h^2(1-a_V-d_V)+{a_Vh^3\over\pi}
                         -{b_V\pi\over2}\right].
\end{aligned}
\]
Then
\begin{equation}\label{t:M3}
\begin{split}
 \mathcal F(h,1,\delta)={}&C_h(r_1)+C_h(r_2)+sT(h)
 +s_xR_x^L+s_yR_y^L\\
 &+D_x\sigma_x+D_y\sigma_y-7\pi/4-Z(h).
\end{split}
\end{equation}
Only the two pressure square roots are nonlinear in $\delta$, and they
are concave. It therefore suffices to check $\delta=0,1/4$.

We prove, for either of these two endpoint values of $\delta$,
\begin{equation}\label{t:M4}
 \partial_h\mathcal F(h,1,\delta)>7/60
 \quad(7/5\le h\le33/20).
\end{equation}
We shall use
\begin{equation}\label{t:M5}
\begin{gathered}
 .47139<s_x<.47140,\quad .63439<s_y<.63440,\\
 D_x>2.8736,\quad D_y>3.3220,\quad
 p_-:=333/106<\pi<p_+:=355/113.
\end{gathered}
\end{equation}
All finite decimals in this paper denote exact rational numbers.
The trigonometric bounds follow from iterated half-angle identities
and squaring. To make the wall bounds explicit, use
$\cos t_x<.88193$, $\cos t_y<.77302$ and
\[
 D_i^2=\pi(1+3k_i/8)-4\sin t_i(\cos t_i-\sin t_i),
 \qquad (k_x,k_y)=(5,7).
\]
Replacing the right side by
$p_-(1+3k_i/8)-4s_i^+(c_i^+-s_i^-)$ and subtracting the claimed
squares gives respectively
$491703/3312500000$ and $828473/1656250000$.
The cosine upper bounds are certified by the positive differences
$(c_i^+)^2+(s_i^-)^2-1$, namely $9057/10^9$ and
$4237/(4\cdot10^8)$.

For $r=s_i-\gamma/h$, direct differentiation gives
\[
 {\dd\over\dd h}C_h(r)=\sqrt{1-r^2}
 +{\gamma\over h^2}\left[{1-hr\over\sqrt{1-r^2}}-1\right].
\]
Here $0<r_1<.39$, $0<r_2<.23$, $\gamma<.415$ and $hr_i<1$.
Thus
\begin{equation}\label{t:M6}
\begin{split}
 [C_h(r_1)+C_h(r_2)]'
 &\ge\sqrt{1-r_1^2}+\sqrt{1-r_2^2}
                       -{\gamma\over h}(r_1+r_2)\\
 &>.920+.973-{.415\over1.4}(.39+.23)>17/10.
\end{split}
\end{equation}
Also $(sT)'>1/2$, since $s>1.1$ and $3/4-F(3/4)>0$.

Let $W=s_xR_x^L+s_yR_y^L$, $A=s_xa_y+s_ya_x$,
$B=s_xa_y-s_ya_x$, and $E=s_x(d_y-1)+s_y(d_x-1)$. Then
\[
 W'=s+{3h^2A\over\pi^2}+{2\delta hB\over\pi}+{4hE\over\pi},
 \qquad W''={6hA\over\pi^2}+{2\delta B\over\pi}+{4E\over\pi}.
\]
The table gives $B<0$. Moreover
\[
 W''\le{6(33/20)(s_x^+a_y+s_y^+a_x)\over p_-^2}
 +{4[s_x^-(d_y-1)+s_y^-(d_x-1)]\over p_+}<-.015.
\]
Hence the smallest value of $W'$ occurs at $h=33/20$,
$\delta=1/4$. Using the rational endpoints in \eqref{t:M5}, with
the appropriate direction for each sign, gives $W'>-.160>-1/5$.

Put $S_\sigma=D_x\sigma_x+D_y\sigma_y$. For
$\sigma_i^2=3h^2/(2\pi)-(1\pm\delta/2)h+\pi/4$,
\[
 \sigma_i''={3/8-(1\pm\delta/2)^2/4\over\sigma_i^3}>0,
 \qquad
 \sigma_i'''=-{3[3/8-(1\pm\delta/2)^2/4]\sigma_i'
                       \over\sigma_i^4}<0.
\]
Thus $S_\sigma'$ is concave in $h$. Its endpoint bounds are
\begin{equation}\label{t:M7}
\begin{array}{cc|cc|c}
 h&\delta&\sigma_x^+&\sigma_y^+&S_\sigma'\text{ lower bound}\\\hline
 7/5&0&.567&.567&1.84\\
 7/5&1/4&.383&.705&1.88\\
 33/20&0&.661&.661&2.69\\
 33/20&1/4&.480&.802&2.79
\end{array}
\end{equation}
At these endpoints $\partial_\pi\sigma_i^2=1/4-3h^2/(2\pi^2)<0$;
each square-root bound follows by substituting $p_-$ and squaring.
The last column follows from
\[
 S_\sigma'>D_x^-{3h/p_+-(1+\delta/2)\over2\sigma_x^+}
            +D_y^-{3h/p_+-(1-\delta/2)\over2\sigma_y^+}.
\]
In particular, the lower bounds at the two $h$ endpoints are $9/5$
and $8/3$. On the other hand,
\[
 Z'=\sqrt2[2h(1-a_V-d_V)+3a_Vh^2/\pi]
\]
is a convex quadratic. Substitution of $\sqrt2<99/70$ and $\pi>p_-$
gives
\begin{equation}\label{t:M8}
 Z'(7/5)<17/5,\qquad Z'(33/20)<91/20.
\end{equation}
The difference $S_\sigma'-Z'$ is concave and is bounded below by
its endpoint chord. Therefore
\[
 \mathcal F'>\min\{2+9/5-17/5,
                       2+8/3-91/20\}=7/60,
\]
which proves \eqref{t:M4}.

At $h=7/5$ put $\gamma^\pm=3(s_x^\pm+s_y^\pm)/8$,
$r_1^-=s_y^--\gamma^+/h$, $r_1^+=s_y^+-\gamma^-/h$, and define
$r_2^\pm$ similarly. Squaring proves
\[
 \sqrt{1-(r_1^+)^2}>.941060,\qquad
 \sqrt{1-(r_2^+)^2}>.984530.
\]
The positive arcsine series gives
$C_h(r)\ge h\sqrt{1-r^2}+r^3/6$ and
\[
 \arcsin(3/4)\ge\sum_{n=0}^{9}
 {\binom{2n}{n}\over4^n(2n+1)}(3/4)^{2n+1}>.848,
 \qquad\sqrt7>2.6457.
\]
Thus $F(3/4)>[.848+3(2.6457)/16]/2$. The rational endpoint bounds
are
\begin{equation}\label{t:M9}
\begin{array}{c|rrrr|rr}
\delta&C_1+C_2&sT&W&S_\sigma&Z&7\pi/4\\\hline
0&>2.70316&>.67757&>.49017&>3.51104&<1.78894&<5.49779\\
1/4&>2.70316&>.67757&>.47844&>3.43859&<1.78894&<5.49779
\end{array}
\end{equation}
Here and below $C_i=C_h(r_i)$. For the first column use
$h(.941060+.984530)+[(r_1^-)^3+(r_2^-)^3]/6$.
For the height column use the displayed lower bound for $F(3/4)$
and $p_+$. For $Z$ use $99/70$, $p_-$ in its positive reciprocal
term, and $p_+$ in $-b_V\pi/2$. The pressure lower roots are
$(.5667,.5667)$ and $(.3823,.7044)$, certified by substituting
$p_+$ and squaring. Finally use
\[
\begin{aligned}
R_x^L&>h+b_y+{a_yh^3\over p_+^2}
 +{a_y\delta h^2\over p_+}+{2(d_y-1)h^2\over p_-},\\
R_y^L&>h+b_x+{a_xh^3\over p_+^2}
 -{a_x\delta h^2\over p_-}+{2(d_x-1)h^2\over p_-},
\end{aligned}
\]
and multiply these positive bounds by $s_x^-,s_y^-$. Thus every
entry is obtained by finite fraction comparisons and squaring.
The two rows give $\mathcal F(7/5,1,\delta)>1/100$.
Equation~\eqref{t:M4}, concavity in $\delta$, and
\eqref{t:M2} now exclude the entire stated interval.

\subsection{The lower middle range with small asymmetry}\label{t:lower-middle}
We next exclude
\[
 5/4\le h\le7/5,\qquad0\le\delta\le1/10,\qquad P\le1.
\]
Use the same logarithmic plane for both horizontal directions,
\[
(a_L,b_L,d_L)=(1220525,-898,-28203)/10^6,
\]
and the plane
\begin{equation}\label{t:L1}
 V\ge a_Vm+b_Vc+d_V,
 \qquad(a_V,b_V,d_V)=(1155867,-28788,-266220)/10^6.
\end{equation}
It is generated by \eqref{t:cost-recipe} at $(r,R)=(11/8,89/100)$.
Fix $t_x=t_y=5\pi/32=:t$, $\eta=13/20$, and $s_0=\sin t$.
The common wall constant satisfies $D>2.8736$.
Retain the angular lower bound
\begin{equation}\label{t:L2}
 \Gamma_b={h^2P[2-4h/(\pi P)]_+^2\over
           9(h^2P/2+\pi)K},\qquad
 K={32\over5}(3/5)^{3/2},\quad29/10<K<3.
\end{equation}
This follows by discarding the $D_0$ improvement in the full angular
estimate. In the wall residual $\mathcal H$ use
$N_x+N_y\le\sqrt2P(1-a_V\mu-b_Vc-d_V-\Gamma_b)$ and the two
logarithmic source planes.

Put $H=h^2$, $z=h/\pi$. On the positive branch,
\[
 P\Gamma_b={H(2P-4z)^2\over9K(HP/2+\pi)}.
\]
Discarding the negative derivative term gives
\begin{equation}\label{t:L3}
 (P\Gamma_b)'\le{4H(2P-4z)_+\over9K(HP/2+\pi)}
 \le{4(7/5)^2(2-5/p_+)\over9(29/10)p_-}<1/20.
\end{equation}
On the other branch the derivative is zero, and continuity handles
the join. Thus the normal upper bound has positive $P$ derivative,
since $1-a_V-d_V>1/20$. The source, pressure and height terms have
nonpositive $P$ derivatives. Hence
$\mathcal H(h,P,\delta)\ge\mathcal H(h,1,\delta)$.
At $P=1$ the two source asymmetry terms cancel; $\Gamma_b$ is
independent of $\delta$. The residual is concave in $\delta$.

Let $r=s_0(1-\eta/h)$ and let $\mathcal H_0$ denote the residual
without the angular improvement. Since $r<.253$, $s_0\eta<.30641$
and $hr<1$,
\begin{equation}\label{t:L4}
 [2C_h(r)]'>2[.9674-.30641(.253)/(5/4)]>9/5.
\end{equation}
The height derivative is greater than $2/5$. The weighted total
source is
\[
 W=2s_0[h+b_L+a_Lh^3/\pi^2+2(d_L-1)h^2/\pi].
\]
Its second derivative is negative, and evaluation of the first
derivative at $7/5$ gives $W'>-.100>-1/8$.
The pressure derivative is concave in $h$, and the rational
endpoint bounds are
\begin{equation}\label{t:L5}
\begin{array}{cc|cc|c}
h&\delta&\sigma_x^+&\sigma_y^+&S_\sigma'\text{ lower bound}\\\hline
5/4&0&.531&.531&1.04\\
5/4&1/10&.468&.587&1.03\\
7/5&0&.567&.567&1.70\\
7/5&1/10&.502&.626&1.70
\end{array}
\end{equation}
Use $\sigma_i^2\le p_+/4-(1\pm\delta/2)h+3h^2/(2p_-)$
to certify the roots; then use $D>2.8736$ and
$3h/\pi>3h/p_+$ to certify the last column. Therefore the
endpoint lower bounds can be taken as $1$ and $17/10$.
The normal cost $Z$ is the cubic defined above with the coefficients
\eqref{t:L1}. Its derivative is convex and
\[
 Z'(5/4)<29/10,\qquad Z'(7/5)<7/2.
\]
Consequently
\begin{equation}\label{t:L7}
 \mathcal H_0'>\min\{9/5+2/5-1/8+1-29/10,
                    9/5+2/5-1/8+17/10-7/2\}=7/40.
\end{equation}
The angular correction at $P=1$ is
\[
 G={\sqrt2\over9K}{h^4(2-4h/\pi)^2\over\pi+h^2/2}.
\]
Writing $A=2-4h/\pi$ and $Z_0=\pi+h^2/2$, one obtains
\begin{equation}\label{t:angular-derivative}
 G'={\sqrt2\over9K}{h^3A\over Z_0^2}
       [8\pi-24h+2h^2-8h^3/\pi].
\end{equation}
Here $0<h^3A<4/5$, $Z_0>39/10$, and the bracket exceeds $-12$.
For the first two bounds use the decreasing function $h^3A$ and
the left endpoint. The bracket decreases with $h$ and increases
with $\pi$, so its lower bound follows at $(h,\pi)=(7/5,p_-)$.
It follows that
\[
 G'>-{10/7\over9(29/10)}{(4/5)12\over(39/10)^2}>-1/20,
 \qquad\mathcal H'>1/8.
\]

At $h=5/4$, $r=(12/25)s_0$. Use
$\sqrt{1-r^2}>.97406$, $\arcsin r-r\ge r^3/6$, and
\[
 \arcsin(13/20)\ge\sum_{n=0}^{9}
 {\binom{2n}{n}\over4^n(2n+1)}(13/20)^{2n+1}>.70758,
 \quad\sqrt{1-(13/20)^2}>.75993.
\]
The endpoint bounds are
\begin{equation}\label{t:L9}
\begin{array}{c|rrrr|rrr}
\delta&2C_h(r)&2s_0T&W&S_\sigma&G&Z&13\pi/8\\\hline
0&>2.43901&>.52554&>.44106&>3.04831&>.00543&<1.32415&<5.10509\\
1/10&>2.43901&>.52554&>.44106&>3.02934&>.00543&<1.32415&<5.10509
\end{array}
\end{equation}
The pressure lower roots are $(.5304,.5304)$ and
$(.4678,.5864)$, obtained by squaring against
$p_-/4-(1\pm\delta/2)h+3h^2/(2p_+)$. For the angular column use
\[
 G(5/4)>{7071\over5000}
 { (5/4)^4(2-5/p_-)^2\over27[(5/4)^2/2+p_+]}.
\]
The remaining columns use the same sign rules as \eqref{t:M9}.
Each row gives $\mathcal H(5/4,1,\delta)>1/100$.
The positive $h$ derivative, concavity in $\delta$, and perimeter
comparison complete the exclusion.

\subsection{Sharper body bounds and large asymmetry in the lower middle range}\label{t:body-sharp}\label{t:high-asym-middle}

\begin{lemma}\label{t:body-sharp-lemma}
Every actual candidate with $P\le1$ satisfies
\[
\begin{array}{c|c}
\text{curvature interval}&\text{necessary asymmetry bound}\\\hline
13/10\le h\le7/5&\delta<6/25\\
13/10\le h\le27/20&\delta<23/100.
\end{array}
\]
\end{lemma}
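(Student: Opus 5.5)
We would rerun the argument of Proposition~\ref{s:body-quarter}, replacing its crude final inequalities with estimates adapted to the narrower curvature windows. The key input is the pressure bound \eqref{s:body-P}:
\[
 \delta\le2-\frac{3h}{\pi}-\frac{2F(r(1))}{h}.
\]
Here $r(1)$ is the body gate radius from \eqref{g:body-gates}, evaluated with the unclamped mean-support bound for $N_z$ at $P=1$. Perimeter monotonicity of $\rho,e,r$ was already established there, so it suffices to show, for a target bound $\delta_*\in\{6/25,23/100\}$, that
\[
 F(r(1))>x_*(h):=\frac h2\Bigl(2-\frac{3h}{\pi}-\delta_*\Bigr)
\]
on the respective curvature interval.

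For each window we would choose a fresh support plane \eqref{s:mean-plane}, computed by the finite recipe of Lemma~\ref{s:support} with rational $(N,r,R)$. The reference moment $\mu_0$ should lie near the actual $\mu=1-h/\pi\approx0.55$–$0.59$ and $c\approx\pi/(2h^2)\approx0.8$–$0.93$; for example we would try $N=3$ or $6$, $r\approx1$, $R\approx0.85$. Rounding the coefficients upward then gives $\bar a,\bar b,\bar d$. The upper bound $N_z^U$ is increasing in $P$ and decreasing in $h$, so its value at $h=13/10$, $P=1$ certifies $N_z^U<1$. This makes the monotone insertion of $\rho_U$ legitimate, since the bound must lie within the pressure domain where that insertion was proved.

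Next we would set $\rho_L(h)$ as in Proposition~\ref{s:body-quarter}, with $\pi$ replaced by rational bounds in the correct directions, and define
\[
 u(h)=1+h-Kx_U(h),\qquad D(h)=u^2+1+4h^2/U-2u\rho_L,
\]
where $K>1$ is a rational constant chosen so that $F(Kx)>x$ on the relevant range of $x$. The argument then has two parts. First, from $F(r)\ge r-r^3/6-r^5/10$ we check a single rational endpoint comparison, using that $x_U$ is monotone on the window. Second, we show $u-\rho_L>0$ and $D>0$ on $[13/10,7/5]$ and on $[13/10,27/20]$, by a Taylor expansion about $h=13/10$ or a low-degree Bernstein certificate as before. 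Together these give $r(1)>Kx_U\ge Kx_*$, hence $F(r(1))>x_*$, which is the asserted strict bound. The second row needs a smaller $\delta_*$ but a shorter interval, so the same plane may serve after re-checking the endpoints.

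The main obstacle is quantitative: the margin in $D$ is thin. The earlier value $D(36/25)>1/5000$ shows how tight this becomes. We must therefore find a support plane, and in particular a small enough $\bar d$ and $\bar b$, so that $\rho_L$ is small enough for $D>0$ to survive at $h=13/10$ with the tighter $\delta_*$. If the simple pressure route fails there, we would first try increasing $N$ to $6$ and adjusting $R$. Failing that, we would additionally use the height inequality \eqref{g:height} or the angular bound to lower $N_z^U$.
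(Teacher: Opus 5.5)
The architecture is the paper's: a rounded mean support plane for $\alpha_z$, the affine perimeter comparison to $P=1$, and then $r_b>t$ for an explicit $t$ with $F(t)>x:=h(1-\delta_*/2-3h/(2U))$. That last step goes through $u-\rho_L>0$ and $D=u^2+1+4h^2/U-2u\rho_L>0$, where $u=1+h-t$.

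The gap is quantitative but decisive. The two devices you carry over from Proposition~\ref{s:body-quarter} lose far more than this lemma can afford, and you have placed the difficulty at the wrong end. On both intervals $D$ is decreasing (the paper certifies $D'<0$), so the binding points are $h=7/5$ and $h=27/20$. Even with the paper's sharper choices, only $D(7/5)>9\cdot10^{-5}$ and $D(27/20)>10^{-5}$ survive there.

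\emph{The constant multiplier.} $K$ must be calibrated at the largest $x$, which occurs at $h=13/10$ ($x\approx0.337$, forcing $K\approx1.02$). It therefore overshoots most exactly at the right endpoints. At $h=7/5$, $x\approx0.2962$, so $Kx>0.3022$ against the paper's $t\approx0.3009$. Since $\partial D/\partial t=-2(u-\rho_L)\approx-0.43$ there, $D$ drops by more than $5\cdot10^{-4}$ and turns negative.

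\emph{The remainder.} Independently, $F(r)\ge r-r^3/6-r^5/10$ is too weak for the second row even if $t$ is chosen pointwise. At $h=27/20$, $x\approx0.32457$, and every $t$ certified by this bound exceeds $0.3310$. The paper's $t\approx0.33084$ already leaves only $D\approx10^{-5}$, and the extra $1.7\cdot10^{-4}$ in $t$ costs about $6\cdot10^{-5}$ in $D$.

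\emph{The fallbacks.} These do not reach the bottleneck. Inequality~\eqref{g:height} controls the lower-wall height and the angular bound controls $N_x+N_y$; neither improves $N_z^U$. A retuned plane would have to lower the bound on $\alpha_z$ by roughly $10^{-4}$ at $h=7/5$. This follows because $r_b$ responds to $\rho$ with factor $-1-\rho/\sqrt e\approx-10$, and $\rho$ responds to $N_z$ with factor $2h^2/\pi\approx1.25$. Your suggested $r\approx1$, $N=3$ moves the wrong way: the reference value $c_0=r\alpha_0\approx0.7$ lies outside the window $c\in[0.80,0.93]$, and three terms give a looser envelope than six.

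What closes the argument in the paper is an $h$-dependent target $t=x+\tfrac{11}{60}x^3$, which tracks the inverse of $F$ along the whole interval. It is paired with the sharper tail bound $F(r)\ge r-r^3/6-r^5/(40(1-r^2))$. On $x^2<1/8$ these give $(F(t)-x)/x^3\ge\tfrac{11}{60}-B^3/6-yB^5/(40(1-yB^2))>0$, where $y=x^2$ and $B=1+11y/60$.

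The paper then proceeds in three further steps:
\begin{itemize}
\item It uses separately tuned six-term planes for the two rows, at $(r,R)=(5/4,17/20)$ and $(13/10,7/8)$.
\item It shows $u-\rho_L>1/6$ at $13/10$, with $(u-\rho_L)'>2/15$.
\item It certifies $D'<0$ by degree-eleven Bernstein coefficients, and only then checks the two right-endpoint values.
\end{itemize}
With planes of the quality the paper uses, your outline fails in both rows with a constant $K$. In the second row it fails even with a pointwise choice of $t$, unless the remainder bound for $F$ is sharpened.
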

\begin{proof}
Use the six-term envelope and round the mean support planes upward:
\begin{equation}\label{t:sharp-mean-table}
\begin{array}{c|rrr|cc|c}
 &10^6a&10^6b&10^6d&r&R&d_*\\\hline
1&539489&22615&389781&5/4&17/20&6/25\\
2&521869&22936&399556&13/10&7/8&23/100.
\end{array}
\end{equation}
Each row gives $\alpha\le a\mu+bc+d$ by the exact rational
support-plane construction of Lemma~\ref{s:support}.
Its coefficients are verified by the displayed recurrence and
cross multiplication. The affine perimeter comparison reduces the
BODY upper bound to $P=1$.

For this and subsequent sharper estimates use the rational bounds
\begin{equation}\label{t:pi-fine}
 L:=314159/100000<\pi<U:=3927/1250.
\end{equation}
For either row define
\[
\begin{aligned}
\rho_L&=1+b+2(a+d)h^2/L-2ah^3/L^2,\\
x&=h(1-d_*/2-3h/(2U)),\qquad
t=x+11x^3/60,\\
u&=1+h-t,\qquad D=u^2+1+4h^2/U-2u\rho_L.
\end{aligned}
\]
On the respective intervals $0<x^2<1/8$. The positive binomial
series yields the useful bound
\begin{equation}\label{t:F-remainder}
 F(r)\ge r-{r^3\over6}-{r^5\over40(1-r^2)}
 \qquad(0\le r<1).
\end{equation}
Indeed, after the linear term the coefficients of
$\sqrt{1-s}$ have absolute value at most $1/8$; integration gives
an integrated remainder bounded by $r^5/[40(1-r^2)]$.
With $y=x^2$ and $B=1+11y/60$, one finds
\[
 {F(t)-x\over x^3}\ge {11\over60}-{B^3\over6}
 -{yB^5\over40(1-yB^2)}
 \ge {19501921050667\over21261785333760000}>0.
\]
For the last comparison substitute $y=1/8$, since both subtracted
terms are nondecreasing.

As in the BODY comparison, $\rho(1)\le\rho_L$. At $h_0=13/10$,
$u(h_0)-\rho_L(h_0)>1/6$. On either interval
$\rho_L'<6/5$ and $x'<-1/3$, whence
\[
 (u-\rho_L)'=1-x'(1+11x^2/20)-\rho_L'>2/15>0.
\]
It remains to prove $D>0$. This is a rational polynomial of
degree twelve. Write, on the respective interval $[h_0,h_1]$,
\[
 D'(h_0+(h_1-h_0)s)
   =\sum_{j=0}^{11}\beta_j\binom{11}{j}s^j(1-s)^{11-j}.
\]
Multiplying its Bernstein coefficients by $10^6$ gives numbers
strictly smaller than the following integers, in increasing index
order:
\begin{equation}\label{t:sharp-Bernstein}
\begin{array}{c|rrrrrr}
1&-57554&-57794&-57850&-57714&-57378&-56833\\
 &-56071&-55084&-53863&-52399&-50684&-48708\\\hline
2&-83759&-84390&-84980&-85530&-86039&-86505\\
 &-86927&-87305&-87637&-87923&-88162&-88351.
\end{array}
\end{equation}
These are coefficient bounds, not sampled function values. To
reconstruct every entry, expand the displayed polynomial as
$D'(h_0+(h_1-h_0)s)=\sum_{j=0}^{11}p_js^j$ and use
\begin{equation}\label{t:Bernstein-recipe}
 \beta_k=\sum_{j=0}^kp_j{\binom{k}{j}\over\binom{11}{j}}.
\end{equation}
Thus $D'<0$ on each whole interval. Direct endpoint fraction
comparisons give
\[
 D(7/5)>9/100000,\qquad D(27/20)>1/100000.
\]
Hence $D>0$. Since $u>\rho_L\ge\rho$, squaring the positive
quantity $u-\rho$ in the BODY expression shows $r_b>t$.
Therefore
\[
 F(r_b)>x\ge h(1-d_*/2-3h/(2\pi)).
\]
The BODY inequality now gives $\delta<d_*$, as required.
\end{proof}

The sharper body restrictions will now be combined with a fixed wall
inequality to exclude the large-asymmetry part of the lower middle range.

We exclude $5/4\le h\le7/5$, $1/10\le\delta<1/4$. Together
with Section~\ref{t:lower-middle}, this exhausts that curvature
interval. We first use the full perimeter comparison of
Proposition~\ref{s:perimeter}. The plane used below has
$a_V+d_V=.889647<24/25$, and the reference moment of its
logarithmic plane is less than $5/12$, so that comparison applies.

The mean support plane at $(r,R)=(11/8,89/100)$, rounded upward,
is
\[
 \alpha_z\le(512317\mu+22331c+405806)/10^6.
\]
It implies, throughout $5/4\le h\le7/5$,
\begin{equation}\label{t:body-line-mid}
 R_x,R_y\ge r_b\ge\ell(h):=53/40-37h/50.
\end{equation}
Here is a polynomial verification. Put
$(a,b,d)=(512317,22331,405806)/10^6$, and define
\[
\begin{gathered}
 \rho_L=1+b+2(a+d)h^2/L-2ah^3/L^2,\qquad
 f=1+h-\ell=-13/40+87h/50,\\
 Q=f-\rho_L,\qquad D=f^2+1+4h^2/U-2f\rho_L.
\end{gathered}
\]
The four cubic Bernstein coefficients of $Q$ on $[5/4,7/5]$
exceed $(117,155,192,227)/1000$, hence exceed $1/9$.
The three quadratic Bernstein coefficients of $D''$ exceed
$(370,240,200)/1000$; thus $D''>1/5$. They are obtained by
expanding the displayed cubic and quartic and applying the
degree-three or degree-two version of \eqref{t:Bernstein-recipe}.
At $h_*=259/200$,
\[
 D(h_*)>1/10000,\qquad |D'(h_*)|<1/2000.
\]
Taylor's integral remainder, followed by completing the square,
gives
\[
 D(h)>1/10000-5/(2\cdot2000^2)>0.
\]
Since $\rho\le\rho_L$, the inequalities $Q>0$ and $D>0$
justify elimination of the BODY square root and prove
\eqref{t:body-line-mid}.

Take
\[
 t_x=\pi/8,\qquad t_y=13\pi/64,\qquad\eta=27/40,
 \qquad s_i=\sin t_i,\quad s=s_x+s_y,\quad\gamma=s\eta/2.
\]
Half-angle formulas and squaring give
\begin{equation}\label{t:H-trig}
\begin{gathered}
.382683<s_x<.382684,\quad .595699<s_y<.595700,\\
D_x>2.65057,\quad D_y>3.2101,\\
7071/5000<\sqrt2<99/70,\quad297/100<K_*<3.
\end{gathered}
\end{equation}
The last bounds also follow immediately from $K_*^2=27648/3125$.
Use the rational cost planes
\begin{equation}\label{t:H1}
\begin{array}{c|rrr|cc}
 &10^6a&10^6b&10^6d&r&R\\\hline
L_y&1286839&-1877&-44423&7/4&11/20\\
V_z&1155867&-28788&-266220&11/8&89/100.
\end{array}
\end{equation}
They are lower support planes by \eqref{t:cost-recipe}.

The full angular loss is convex in $\delta$. At
$\delta_0=3/20$ it is on its active $\delta^2$ branch, since
\[
 D_s(h)={h^2(2-4h/\pi)^2\over18(h^2/2+\pi)}
 \le{(7/5)^2(2-5/U)^2\over18L}<9/400.
\]
Its supporting line is
\[
 \Gamma_{\rm aff}={h^2BC\over9K_*(h^2/2+\pi)},\qquad
 B=329/160-4h/\pi,\quad C=293/160+3\delta/2-4h/\pi.
\]
In particular the derivative of the squared active expression
contains the full factor $10\delta B$. Both $B$ and $C$ are
positive on this region.

With the first row of \eqref{t:H1} denoted by $(a_y,b_y,d_y)$
and the second by $(a_V,b_V,d_V)$, define
\begin{equation}\label{t:H-residual}
\begin{aligned}
\mathcal J(h,\delta)={}&C_h(r_1)+C_h(r_2)+sT
 +s_xR_x^L+s_y\ell+D_x\sigma_x+D_y\sigma_y-Z+G-53\pi/32,\\
r_1={}&s_y-\gamma/h,\quad r_2=s_x-\gamma/h,
\quad T=3h/(2\pi)-[\eta-F(\eta)]/h,\\
R_x^L={}&h+b_y+a_yh^3/\pi^2+a_y\delta h^2/\pi
                    +2(d_y-1)h^2/\pi,\\
\sigma_{x,y}^2={}&\pi/4-h+3h^2/(2\pi)\mp h\delta/2,\\
Z={}&\sqrt2[h^2(1-a_V-d_V)+a_Vh^3/\pi-b_V\pi/2],\\
G={}&{\sqrt2\over9K_*}{h^4BC\over h^2/2+\pi}.
\end{aligned}
\end{equation}
It is a lower bound for the actual necessary wall residual and is
concave in $\delta$: only the pressure roots are nonaffine.

For each fixed $\delta\in\{1/10,11/50,23/100,1/4\}$ we prove
\begin{equation}\label{t:H-derivative}
 \partial_h\mathcal J>1/100\quad(5/4\le h\le7/5).
\end{equation}
Here $0<r_1<.36$, $0<r_2<.15$, and $hr_i<1$.
Using the derivative of $C_h$ above and
$u(s-2u)\le s^2/8$ with $u=\gamma/h$ gives
\[
 [C_h(r_1)+C_h(r_2)]'>.9329+.9886-(49/50)^2/8>9/5.
\]
Since $F(\eta)\le\eta-\eta^3/6$,
\[
 (sT)'\ge s[3/(2\pi)+\eta^3/(6h^2)]>49/100.
\]
The logarithmic source has negative second derivative; its first
derivative is minimized at $h=7/5$, $\delta=1/10$ and exceeds
$.019$. Explicitly the two rational comparisons are
\[
 {6a_y(7/5)\over L^2}+{2a_y(1/4)\over L}
                         +{4(d_y-1)\over U}<-.029,
\]
\[
 1+{3a_y(7/5)^2\over U^2}+{2a_y(1/10)(7/5)\over U}
                         +{4(d_y-1)(7/5)\over L}>.019.
\]
It follows that
$[s_xR_x^L+s_y\ell]'>-(37/50)(.595700)>-9/20$.

The pressure derivative is concave, whereas $Z'$ is convex.
Their difference has the following endpoint bounds:
\begin{equation}\label{t:H5}
\begin{array}{cc|cc|r}
h&\delta&\sigma_x^+&\sigma_y^+
 &(D_x\sigma_x+D_y\sigma_y)'-Z'\text{ lower bound}\\\hline
5/4&1/10&.467910&.586464&-1.756\\
5/4&11/50&.379394&.647256&-1.785\\
5/4&23/100&.371066&.652066&-1.789\\
5/4&1/4&.353822&.661582&-1.7991\\
7/5&1/10&.501231&.625486&-1.746\\
7/5&11/50&.408940&.689371&-1.721\\
7/5&23/100&.400290&.694430&-1.718\\
7/5&1/4&.382403&.704438&-1.710.
\end{array}
\end{equation}
For the roots use $\pi/4+3h^2/(2\pi)\le U/4+3h^2/(2L)$
and square. For the final column substitute in
\[
D_x^-{3h/U-1-\delta/2\over2\sigma_x^+}
+D_y^-{3h/U-1+\delta/2\over2\sigma_y^+}
-{99\over70}[2h(1-a_V-d_V)+3a_Vh^2/L].
\]
Both pressure numerators are positive. Concavity therefore gives
the bound $-9/5$ on the whole interval.

Finally $G'>-3/100$ throughout the rectangle. Differentiation gives
\[
 G'={\sqrt2h^3\over9K_*(h^2/2+\pi)^2}
 \left[(h^2+4\pi)BC-{4h(h^2/2+\pi)\over\pi}(B+C)\right].
\]
The bracket is bounded below by the explicit polynomial
\[
\begin{split}
N_-={}&(h^2+4L)(329/160-4h/L)(293/160+3\delta/2-4h/L)\\
&-4h(1+h^2/(2L))(311/80+3\delta/2-8h/U).
\end{split}
\]
It is affine in $\delta$. At $\delta=1/10,1/4$, respectively,
the five degree-four Bernstein coefficients of $N_-+7/2$ on
$[5/4,7/5]$ exceed
\[
 (7242,5372,4856,5767,8191)/10^4,
 \qquad(7967,4088,1520,332,607)/10^4.
\]
Thus $N_->-7/2$ throughout. The prefactor with $\pi$ replaced
by $L$ increases with $h$, and
\[
 {99/70\over9(297/100)}
 { (7/5)^3\over[(7/5)^2/2+L]^2}{7\over2}<3/100.
\]
Combining the estimates yields
\[
 \mathcal J'>9/5+49/100-9/20-9/5-3/100=1/100.
\]

The necessary cutoffs are
\begin{equation}\label{t:H6}
\begin{array}{c|c}
5/4\le h\le13/10&\delta<11/50\\
13/10\le h\le27/20&\delta<23/100\\
27/20\le h\le7/5&\delta<1/4.
\end{array}
\end{equation}
The second row is Lemma~\ref{t:body-sharp-lemma} and the third is
the global BODY bound. To verify the first, use the mean plane
above, set $x=h(.89-3h/(2U))$, $t=x+x^3/5$,
$u=1+h-t$, and $D=u^2+1+4h^2/U-2u\rho_L$.
Here $x^2<7/50$. With $B_0=1+(7/50)/5$, the remainder estimate
\eqref{t:F-remainder} gives
\[
 {F(t)-x\over x^3}\ge {1\over5}-{B_0^3\over6}
 -{(7/50)B_0^5\over40[1-(7/50)B_0^2]}
 ={71001744773783\over4992481875000000}>0.
\]
At $5/4$, $u-\rho_L>1/8$. Since $x'<-3/10$ and
$\rho_L'<6/5$, this difference increases. The twelve Bernstein
coefficients of $D'$ on $[5/4,13/10]$, multiplied by $10^6$,
are at most
\[
\begin{gathered}
(-94945,-96249,-97525,-98772,-99989,-101176,\\
-102332,-103455,-104545,-105601,-106623,-107608).
\end{gathered}
\]
Consequently $D$ decreases and $D(13/10)>1/1000$. Eliminating
the positive BODY square root gives $r_b>t$, hence
$\delta<11/50$.

For the wall endpoint values use
\[
 \arcsin(27/40)\ge\sum_{n=0}^{11}
 {\binom{2n}{n}\over4^n(2n+1)}(27/40)^{2n+1},
 \qquad\sqrt{871}>29.512709.
\]
Squaring and the finite positive sum give $F(27/40)>.619495$.
Together with $C_h(r)\ge h\sqrt{1-r^2}+r^3/6$, this yields
\begin{equation}\label{t:H7}
\begin{array}{cc|rrrr}
h&\delta&C_1+C_2&sT&W&S_\sigma\\\hline
5/4&1/10&>2.426842&>.540483&>.440287&>3.122804\\
5/4&11/50&>2.426842&>.540483&>.469678&>3.083332\\
13/10&23/100&>2.517949&>.565511&>.453544&>3.135373\\
27/20&1/4&>2.608771&>.590415&>.440555&>3.191274
\end{array}
\end{equation}
and
\begin{equation}\label{t:H7b}
\begin{array}{cc|rrr}
h&\delta&G&Z&53\pi/32\\\hline
5/4&1/10&>.005903&<1.324124&<5.203276\\
5/4&11/50&>.008629&<1.324124&<5.203276\\
13/10&23/100&>.007840&<1.470923&<5.203276\\
27/20&1/4&>.007058&<1.628650&<5.203276.
\end{array}
\end{equation}
Here $W=s_xR_x^L+s_y\ell$. To reconstruct the first column,
put $\gamma^\pm=(s_x^\pm+s_y^\pm)\eta/2$,
$r_1^-=s_y^--\gamma^+/h$, $r_1^+=s_y^+-\gamma^-/h$,
and similarly for $r_2$. Use rational square-root lower bounds
for $\sqrt{1-(r_i^+)^2}$ and add
$[(r_1^-)^3+(r_2^-)^3]/6$. The other columns follow from
\[
\begin{aligned}
 R_x^L&>h+b_y+a_yh^3/U^2+a_y\delta h^2/U+2(d_y-1)h^2/L,\\
 \sigma_{x,y}^2&>L/4-h+3h^2/(2U)\mp h\delta/2,\\
 G&>{7071\over5000}
 {h^4(329/160-4h/L)(293/160+3\delta/2-4h/L)
   \over27(h^2/2+U)},\\
 Z&<{99\over70}[h^2(1-a_V-d_V)+a_Vh^3/L-b_VU/2],
 \qquad53\pi/32<53U/32.
\end{aligned}
\]
Use $s>s_x^-+s_y^-$, $\pi<U$, and the displayed $F$ bound
for the height column. All source and radicand lower bounds are
positive. Addition gives $\mathcal J>1/1000$ at each listed
endpoint.

The derivative estimate propagates the $\delta=1/10$ value
through the whole curvature interval, and propagates each upper
$\delta$ endpoint through its corresponding row of
\eqref{t:H6}. Concavity in $\delta$ fills each intervening
interval. Thus $\mathcal J>0$ for every actual candidate in
this region, contradicting the necessary wall inequality.

\subsection{The bottom range with small asymmetry}\label{t:bottom-low}
Put $H_0=5881/5000$. We first exclude
\[
 H_0\le h\le5/4,\qquad0\le\delta\le1/20,\qquad P\le1,
\]
using a lower bound for the total source. This avoids any BODY
source bound in this part of the proof. Fix
$t_x=t_y=5\pi/32$, $\eta=13/20$, and $s_0=\sin(5\pi/32)$.
At the common rational reference $(r,R)=(3/2,9/10)$ the
six-term envelope yields the following cost planes:
\begin{equation}\label{t:B1-low}
\begin{array}{c|rrr}
 &10^6a&10^6b&10^6d\\\hline
V&1208542&-29828&-297173\\
T&-1038203&-13029&1087687.
\end{array}
\end{equation}
Here
$T_8(x)=2\sum_{j=1}^8[(1-x^2)/2]^j/(2j-1)$, and the
correct tail kernel is
$[tT_8'(t)-T_8(t)+T_8(0)]/t^2$. This kernel is increasing by
Lemma~\ref{s:support}; hence the global convex cost construction
applies. The entries of \eqref{t:B1-low} are rounded downward
from \eqref{t:cost-recipe}.

Keep the simple angular term \eqref{t:L2}. For the symmetric
wall residual $\mathcal B$ use
\[
\begin{aligned}
N_x+N_y&\le\sqrt2P(1-a_V\mu-b_Vc-d_V-\Gamma_b),\\
R_x+R_y&\ge2h+{2h^2P\over\pi}(a_T\mu+b_Tc+d_T-2).
\end{aligned}
\]
These are two independent inequalities for the same actual
object; their extremal probability distributions need not coincide.

As in \eqref{t:L3},
\[
 (P\Gamma_b)'\le
 {4(5/4)^2[2-4H_0/p_+]\over9(29/10)p_-}<1/20.
\]
Since $1-a_V-d_V=.088631>1/20$, the normal upper bound
increases with $P$. The total-source derivative has the sign of
$a_T+d_T-2=-1.950516<0$; the pressure and height derivatives
are nonpositive. Consequently
$\mathcal B(h,P,\delta)\ge\mathcal B(h,1,\delta)$.

At $P=1$, let $r=s_0(1-\eta/h)$ and set
\begin{equation}\label{t:B2-low}
\begin{aligned}
R_T&=2h+b_T+2(a_T+d_T-2)h^2/\pi-2a_Th^3/\pi^2,\\
Z_V&=\sqrt2[h^2(1-a_V-d_V)+a_Vh^3/\pi-b_V\pi/2],\\
G&={\sqrt2\over9K}{h^4(2-4h/\pi)^2\over\pi+h^2/2},\\
\mathcal B(h,1,\delta)&=2C_h(r)+2s_0T+s_0R_T
 +D\sigma_x+D\sigma_y-13\pi/8-Z_V+G.
\end{aligned}
\end{equation}
Only the pressure roots depend on $\delta$. The residual is
concave, so it suffices to check $0$ and $1/20$.

On this interval $0<r<.227$, $s_0\eta<.30641$, and $hr<1$.
Thus
\[
 [2C_h(r)]'>2[.9738-.30641(.227)/H_0]>9/5,
 \qquad(2s_0T)'>2/5.
\]
For $W=s_0R_T$,
\[
 W'=s_0[2+4(a_T+d_T-2)h/\pi-6a_Th^2/\pi^2].
\]
Its derivative is negative, so its minimum is at $5/4$;
substitution of the rational endpoints gives $W'>-1/16$.
The pressure derivative is concave. Its endpoint estimates are
\begin{equation}\label{t:B3-low}
\begin{array}{cc|cc|c}
h&\delta&\sigma_x^+&\sigma_y^+&S_\sigma'\text{ lower bound}\\\hline
H_0&0&.520&.520&.680\\
H_0&1/20&.491&.547&.676\\
5/4&0&.531&.531&1.048\\
5/4&1/20&.501&.560&1.044.
\end{array}
\end{equation}
Use $\sigma_i^2\le p_+/4-(1\pm\delta/2)h+3h^2/(2p_-)$
and square for the root bounds, followed by $D>2.8736$ and
$3h/\pi>3h/p_+$ for the last column. The pressure derivative
is therefore greater than $2/3$ and $1$ at the two endpoints.
The normal derivative is convex and satisfies
$Z_V'(H_0)<13/5$, $Z_V'(5/4)<29/10$, by $\sqrt2<99/70$
and $\pi>p_-$. Consequently the residual derivative without
$G$ exceeds
\[
 \min\{9/5+2/5-1/16+2/3-13/5,
        9/5+2/5-1/16+1-29/10\}=49/240.
\]
Formula~\eqref{t:angular-derivative} still applies. Here
\[
0<h^3(2-4h/\pi)<1,\quad\pi+h^2/2>19/5,
\quad8\pi-24h+2h^2-8h^3/\pi>-7.
\]
Hence
\[
 G'>-{10/7\over9(29/10)}{7\over(19/5)^2}>-1/20,
 \qquad\partial_h\mathcal B>37/240>0.
\]

At $h=H_0$ put $r^\pm=s_0^\pm(1-\eta/h)$.
Squaring gives $\sqrt{1-(r^+)^2}>.977508$, and thus
\[
2C_h(r)>2[h(.977508)+(r^-)^3/6]>2.30261.
\]
The previous arcsine sum gives
$F(13/20)>[.70758+(13/20)(.75993)]/2$. The endpoint
columns are
\begin{equation}\label{t:B5-low}
\begin{array}{c|rrrr|rrr}
\delta&2C_h(r)&2s_0T&s_0R_T&S_\sigma&G&Z_V&13\pi/8\\\hline
0&>2.30261&>.48999&>.45431&>2.98475&>.00660&<1.12501&<5.10509\\
1/20&>2.30261&>.48999&>.45431&>2.98029&>.00660&<1.12501&<5.10509.
\end{array}
\end{equation}
The pressure lower roots are $(.51934,.51934)$ and
$(.49021,.54692)$; square against
$p_-/4-(1\pm\delta/2)h+3h^2/(2p_+)$.
For the total source use the positive bound
\[
 R_T>2h+b_T+2(a_T+d_T-2)h^2/p_- -2a_Th^3/p_+^2,
\]
then multiply by $s_0^-$. The angular lower bound is
\[
 G(H_0)>{7071\over5000}
 {H_0^4(2-4H_0/p_-)^2\over27(H_0^2/2+p_+)}.
\]
The other columns use the same sign choices as the preceding
endpoint tables. Adding each row gives
$\mathcal B(H_0,1,\delta)>1/300$. Positivity propagates by
the $h$ derivative, concavity in $\delta$, and perimeter
comparison, eliminating the stated region.

Keep exactly the same witness and costs. At the new left endpoint
$h=121/100$ the pressure upper roots are
$(.524,.524)$ for $\delta=0$ and $(.463,.579)$ for
$\delta=1/10$. Their derivative bounds exceed $.852$ and
$.837$, hence exceed $4/5$. Also $Z_V'<27/10$ there. At
$5/4$ the pressure derivative exceeds $1$ and $Z_V'<29/10$.
Concavity of the pressure derivative minus $Z_V'$, with the
unchanged other estimates, gives
\[
\begin{split}
\partial_h\mathcal B>&\min\{9/5+2/5-1/16+4/5-27/10,\\
 &\hspace{35mm}9/5+2/5-1/16+1-29/10\}-1/20=3/16.
\end{split}
\]
At $121/100$ one has $\sqrt{1-(r^+)^2}>.975911$, and the
same rational comparisons give
\begin{equation}\label{t:B6-low}
\begin{array}{c|rrrr|rrr}
\delta&2C_h(r)&2s_0T&s_0R_T&S_\sigma&G&Z_V&13\pi/8\\\hline
0&>2.36516&>.50631&>.45329&>3.01072&>.00611&<1.21366&<5.10509\\
1/10&>2.36516&>.50631&>.45329&>2.99213&>.00611&<1.21366&<5.10509.
\end{array}
\end{equation}
The pressure lower roots are $(.52386,.52386)$ and
$(.46252,.57873)$. Each row gives a residual greater than
$1/300$. Consequently the additional region
\[
121/100\le h\le5/4,\qquad0\le\delta\le1/10,\qquad P\le1
\]
is excluded. This provides the exact interface needed for the
large-asymmetry argument below.

\subsection{The bottom range with large asymmetry}\label{t:bottom-high}

We shall exclude the two regions
\begin{equation}\label{t:bottom-regions}
\begin{array}{c|c}
H_0\le h\le121/100&1/20\le\delta<19/100\\
121/100\le h\le5/4&1/10\le\delta<19/100.
\end{array}
\end{equation}
We also prove that every actual candidate in this entire curvature
interval satisfies $\delta<19/100$.

The six-term mean support plane at $(r,R)=(3/2,9/10)$ is
\[
 \alpha_z\le a\mu+bc+d,
 \qquad(a,b,d)=(507024,20769,410690)/10^6.
\]
Its coefficients are rounded upward. For every actual $P\le1$,
\[
N_z\le(a+d)P-ah/\pi+b\pi/(2h^2),\qquad
\rho\le1+b+2(a+d)h^2/\pi-2ah^3/\pi^2\le\rho_L,
\]
where $\rho_L=1+b+2(a+d)h^2/L-2ah^3/L^2$.
For the last inequality differentiate with respect to $\pi$:
\[
 {2h^2[2ah-(a+d)\pi]\over\pi^3}<0,
\]
since $2a(5/4)<(a+d)L$.

Define
\[
 x=h(181/200-3h/(2U)),\quad t=x+x^3/5,\quad
 u=1+h-t,\quad D=u^2+1+4h^2/U-2u\rho_L.
\]
On $[H_0,5/4]$, $x>0$, $x'<-1/5$, and $x^2<1/6$.
Put $y=x^2$, $B_0=1+y/5$. Equation~\eqref{t:F-remainder}
and monotonicity of its two subtracted terms give
\[
 {F(t)-x\over x^3}\ge {1\over5}-{B_0^3\over6}
 -{yB_0^5\over40(1-yB_0^2)}
 \ge{145739567\over14382360000}>0.
\]
Also $u(H_0)-\rho_L(H_0)>9/100$ and $\rho_L'<6/5$, so
\[
 (u-\rho_L)'=1-x'(1+3x^2/5)-\rho_L'>0.
\]
The twelve Bernstein coefficients of $D'$ on $[H_0,5/4]$,
multiplied by $10^6$, are strictly smaller than
\[
\begin{gathered}
(-98726,-101685,-104614,-107510,-110371,-113195,\\
-115978,-118718,-121414,-124061,-126657,-129200).
\end{gathered}
\]
They are reconstructed from the displayed polynomial by
\eqref{t:Bernstein-recipe}. Hence $D'<0$, while direct fraction
comparison gives $D(5/4)>3/4000$. Thus $D>0$. Eliminating
the BODY square root gives $r_b>t$. The actual BODY inequality
therefore yields
\[
\begin{split}
\delta&\le2-{3h\over\pi P}-{2F(r_b)\over hP}
 <2-{3h\over\pi}-{2x\over h}\\
 &=19/100+3h(1/U-1/\pi)<19/100.
\end{split}
\]
This cutoff is valid for every actual $P\le1$.

With the same mean plane, set
\[
\ell_0=201/100-13h/10,\qquad
f_0=1+h-\ell_0=-101/100+23h/10,
\]
\[
Q_0=f_0-\rho_L,\qquad
D_0=f_0^2+1+4h^2/U-2f_0\rho_L.
\]
On $[H_0,5/4]$ the four cubic Bernstein coefficients of $Q_0$
exceed $(33,66,99,132)/1000$. The three quadratic
coefficients of $D_0''$ exceed $(2902,2753,2635)/1000$,
so $D_0''>5/2$. At $h=6/5$,
\[
 D_0(6/5)>1/10000,\qquad |D_0'(6/5)|<3/1000.
\]
Taylor's integral remainder gives
$D_0(h)>1/10000-9/(5\cdot10^6)>0$. Eliminating the square
root therefore proves $r_b\ge\ell_0$.

For the second bound use
\[
 \alpha_z\le(512317\mu+22331c+405806)/10^6,
 \qquad\ell_1=53/40-37h/50.
\]
To extend the earlier bound to $[H_0,7/5]$, put
\[
\widetilde\rho_L=1+22331/10^6
 +{2(512317+405806)h^2\over10^6L}
 -{2(512317/10^6)h^3\over L^2},
\]
\[
 f_1=-13/40+87h/50,\quad Q_1=f_1-\widetilde\rho_L,
 \quad D_1=f_1^2+1+4h^2/U-2f_1\widetilde\rho_L.
\]
On that interval the cubic Bernstein coefficients of $Q_1$
exceed $(59,118,174,227)/1000$, and those of $D_1''$ exceed
$(528,258,206)/1000$. Hence $D_1''>1/5$. At $259/200$,
$D_1>1/10000$ and $|D_1'|<1/2000$, giving
$D_1>1/10000-5/(2\cdot2000^2)>0$ everywhere. Thus
\begin{equation}\label{t:bottom-lines}
 R_x,R_y\ge r_b\ge\max\{\ell_0,\ell_1\}.
\end{equation}
Both lines are valid for the actual perimeter, since $P\le1$
was already used in the upper bound for $\rho$.
They intersect at $h_c=137/112$, with $\ell_0\ge\ell_1$
when $h\le h_c$.

Apply the full angular perimeter comparison before taking a
support line in $\delta$. The cost plane below satisfies
$a_V+d_V=.911369<24/25$, and the logarithmic reference moment
is less than $5/12$. In that comparison replace $R_y$ by
the already valid line $\ell_j$ and $R_x$ by
\[
 R_x^L(P)=h+b_y+a_yh^3/\pi^2
              +a_y\delta h^2P/\pi+2(d_y-1)h^2P/\pi.
\]
The source derivative is negative by the displayed cost
coefficients, while $\partial_P\ell_j=0$. Thus the same
algebraic perimeter comparison remains valid. No source relation
is being transferred to a hypothetical surface at $P=1$.

Fix $t_x=\pi/8$, $t_y=3\pi/16$, and $\eta=13/20$.
Put $s_i=\sin t_i$, $s=s_x+s_y$, and $\gamma=s\eta/2$.
Half-angle identities give
\begin{equation}\label{t:bottom-trig}
\begin{gathered}
.382683<s_x<.382684,\quad .555570<s_y<.555571,\\
2.65057<D_x<2.65058,\quad3.09790<D_y<3.09792.
\end{gathered}
\end{equation}
One may check the wall bounds directly from
\[
 D_x^2=5\pi/2+2-2\sqrt2,\qquad
 D_y^2=13\pi/4+2-\sqrt{4+2\sqrt2}.
\]
Squaring gives $1.4142135<\sqrt2<1.4142136$; the half-angle
formulas then give $s_x<.3826835$,
$.923879<\cos(\pi/8)<.923880$, and
$s_y^2=(1-s_x)/2$. Finally use
$\sqrt{4+2\sqrt2}=2\sqrt2\cos(\pi/8)$ for $D_y$.
We also use $7071/5000<\sqrt2<99/70$ and
$297/100<K_*<3$.

The rational cost planes are
\begin{equation}\label{t:B1-high}
\begin{array}{c|rrr|cc}
 &10^6a&10^6b&10^6d&r&R\\\hline
L_y&1286355&-1588&-44570&2&11/20\\
V_z&1208542&-29828&-297173&3/2&9/10.
\end{array}
\end{equation}
At the support point $\theta=1/10$,
\[
 D_s(h)\le{(5/4)^2(2-4H_0/U)^2\over18L}<1/100,
\]
so the active angular branch has the legitimate support line
\[
 \Gamma_{\rm aff}={h^2BC\over9K_*(h^2/2+\pi)},
 \qquad B=81/40-4h/\pi,\quad C=77/40+\delta-4h/\pi.
\]
Both factors are positive on the present region. Define
\begin{equation}\label{t:bottom-J}
\begin{aligned}
\mathcal J_j={}&C_h(r_1)+C_h(r_2)+sT+s_xR_x^L+s_y\ell_j
 +D_x\sigma_x+D_y\sigma_y-Z+G-13\pi/8,\\
r_1={}&s_y-\gamma/h,\quad r_2=s_x-\gamma/h,
\quad T=3h/(2\pi)-[\eta-F(\eta)]/h,\\
R_x^L={}&h+b_y+a_yh^3/\pi^2+a_y\delta h^2/\pi
                         +2(d_y-1)h^2/\pi,\\
\sigma_{x,y}^2={}&\pi/4-h+3h^2/(2\pi)\mp h\delta/2,\\
Z={}&\sqrt2[h^2(1-a_V-d_V)+a_Vh^3/\pi-b_V\pi/2],\\
G={}&{\sqrt2h^4BC\over9K_*(h^2/2+\pi)}.
\end{aligned}
\end{equation}
These are lower bounds for the necessary residual. They are
concave in $\delta$. Set $\mathcal J=\mathcal J_0$ for
$h\le h_c$ and $\mathcal J=\mathcal J_1$ for $h\ge h_c$;
the two expressions agree at $h_c$.

For fixed $\delta\in\{1/20,1/10,19/100\}$ the first branch
decreases. For fixed $\delta\in\{1/10,19/100\}$ the second
branch increases. We establish uniform derivative bounds.

On the interval,
$.296<r_1<.312$, $.123<r_2<.14$, and $hr_i<1$.
The exact derivative of $C_h$ is the one used earlier.
Its bracket is greater than $-hr$ and, since
$hr>r^2/(1+\sqrt{1-r^2})$, is smaller than
$r^2/(1+\sqrt{1-r^2})-hr$. With $u=\gamma/h$ we have
\[
.243<u<.26,\quad s<.94,\quad u(s-2u)>.108,
\quad .102<r_1^2+r_2^2<.117,\quad\gamma/h^2<.221.
\]
Consequently
\begin{equation}\label{t:B2-high}
\begin{aligned}
[C_1+C_2]'&>.950+.990-s^2/8>1.82,\\
[C_1+C_2]'&<2-.102/2-.108+.221(.117)/1.95<1.86.
\end{aligned}
\end{equation}
The bounds
$F(\eta)\le\eta-\eta^3/6$ and
$F(\eta)\ge\eta-\eta^3/6-\eta^5/10$ imply
\begin{equation}\label{t:B3-high}
 .47<(sT)'<.49.
\end{equation}
For the latter $F$ estimate integrate
$\sqrt{1-t^2}\ge1-t^2/2-t^4/2$ on $[0,\eta]$;
the square-root estimate follows by squaring its nonnegative
right side.

The source plane gives
\[
 (R_x^L)''\le{6a_y(5/4)\over L^2}
      +{2a_y(19/100)\over L}+{4(d_y-1)\over U}<-.196.
\]
Its first derivative increases with $\delta$. In the first
branch $s_x(R_x^L)'<.063$, and in the second, for
$\delta\ge1/10$, $s_x(R_x^L)'>.018$. The corresponding
endpoint fraction comparisons are
\[
s_x^+\left[1+{3a_yH_0^2\over L^2}
 +{2a_y(19/100)H_0\over L}+{4(d_y-1)H_0\over U}\right]<.063,
\]
\[
s_x^-\left[1+{3a_y(5/4)^2\over U^2}
 +{2a_y(1/10)(5/4)\over U}+{4(d_y-1)(5/4)\over L}\right]>.018.
\]
The linear source derivatives satisfy
\begin{equation}\label{t:B4-high}
 s_y\ell_0'<-.72,\qquad s_y\ell_1'>-.412.
\end{equation}

Let $\mathcal T_\delta=D_x\sigma_x'+D_y\sigma_y'-Z'$.
Here $\sigma_i'>0$,
\[
 \sigma_i''={3/8-(1\pm\delta/2)^2/4\over\sigma_i^3}>0,
 \quad\sigma_i'''=-3\sigma_i''\sigma_i'/\sigma_i<0,
\]
and $Z'$ is convex. Therefore $\mathcal T_\delta$ is concave
in $h$. In this curvature interval the pressure square is
increasing in $\pi$, since
$1/4-3h^2/(2\pi^2)>0$ for $h\le5/4<L/\sqrt6$.
Accordingly replace $\pi$ by $L,U$ in the whole expression to
bound the roots, rather than separately bounding its two terms.
At $h=6/5$ this gives
\begin{equation}\label{t:B5-high}
\begin{array}{c|cc|cc}
\delta&\sigma_x\text{ interval}&\sigma_y\text{ interval}
 &\mathcal T_\delta\text{ upper}&\mathcal T_\delta'\text{ interval}\\\hline
1/20&(.49289,.49290)&(.55040,.55041)&-1.844&(.870,.872)\\
1/10&(.46146,.46147)&(.57701,.57702)&-1.849&(.921,.923)\\
19/100&(.39868,.39869)&(.62205,.62206)&-1.881&(1.170,1.172).
\end{array}
\end{equation}
A concave function lies below its tangent at $6/5$. These slopes
are positive and $h_c-6/5=13/560$; evaluation of the upper
tangent at the right endpoint gives
\begin{equation}\label{t:B6-high}
 \mathcal T_\delta<-1.82\quad(H_0\le h\le h_c)
\end{equation}
for all three displayed values of $\delta$.
For the second branch the endpoint bounds are
\begin{equation}\label{t:B7-high}
\begin{array}{cc|cc|r}
h&\delta&\sigma_x\text{ interval}&\sigma_y\text{ interval}
 &\mathcal T_\delta\text{ lower}\\\hline
137/112&1/10&(.46414,.46415)&(.58116,.58117)&-1.831\\
137/112&19/100&(.40048,.40049)&(.62673,.62674)&-1.857\\
5/4&1/10&(.46790,.46791)&(.58646,.58647)&-1.814\\
5/4&19/100&(.40334,.40335)&(.63260,.63261)&-1.833.
\end{array}
\end{equation}
Concavity gives $\mathcal T_\delta>-1.86$ on this whole
branch. To verify the last two tables, use the explicitly
displayed formulas
\[
\mathcal T_\delta={D_xn_x\over2\sigma_x}
 +{D_yn_y\over2\sigma_y}
 -\sqrt2[2h(1-a_V-d_V)+3a_Vh^2/\pi],
\quad n_{x,y}=3h/\pi-1\mp\delta/2,
\]
\[
\mathcal T_\delta'={D_x[3/8-(1+\delta/2)^2/4]\over\sigma_x^3}
 +{D_y[3/8-(1-\delta/2)^2/4]\over\sigma_y^3}
 -\sqrt2[2(1-a_V-d_V)+6a_Vh/\pi].
\]
Substitute the root intervals, $L<\pi<U$,
$1.4142<\sqrt2<1.41422$, and the wider bounds
$2.6505<D_x<2.6507$, $3.0978<D_y<3.0980$, with the
appropriate directions for each positive or negative term.
Only squaring and fraction comparisons are needed.

Finally, throughout $[H_0,5/4]\times[1/20,19/100]$,
\begin{equation}\label{t:B8-high}
 -.03<G'<0.
\end{equation}
For the upper bound use $B<.53$, $C<.62$ in
\[
 {G'\over G}={4\over h}-{h\over h^2/2+\pi}
 -{4\over\pi}(1/B+1/C)
 <{4\over H_0}-{4\over U}(1/.53+1/.62)<0.
\]
For the lower bound, the bracket in the exact derivative formula
is bounded below by
\[
\begin{split}
 N_-={}&(h^2+4L)(81/40-4h/L)(77/40+\delta-4h/L)\\
 &-4h(1+h^2/(2L))(79/20+\delta-8h/U).
\end{split}
\]
At $\delta=1/20,19/100$ the five Bernstein coefficients of
$N_-+4$ exceed, respectively,
\[
(1744,1572,1431,1323,1248)/1000,
\quad(1970,1738,1537,1368,1231)/1000.
\]
Thus $N_->-4$ on the rectangle. The prefactor with $\pi=L$
increases with $h$, because $h^2<6L$, and
\[
 {99/70\over9(297/100)}
 {(5/4)^3\over[(5/4)^2/2+L]^2}\,4<.03.
\]
This proves \eqref{t:B8-high}. All the Bernstein assertions
again follow directly by expansion and the rational basis
conversion formula.

Combining the bounds proves the desired opposite monotonicity:
\begin{equation}\label{t:B9-high}
\begin{aligned}
 \partial_h\mathcal J_0
 &<1.86+.49+.063-.72-1.82=-.127<0,\\
 \partial_h\mathcal J_1
 &>1.82+.47+.018-.412-1.86-.03=.006=3/500>0.
\end{aligned}
\end{equation}

Use the finite positive sum and square comparison
\[
 \arcsin(13/20)\ge\sum_{n=0}^{11}
 {\binom{2n}{n}\over4^n(2n+1)}(13/20)^{2n+1},
 \qquad\sqrt{231}>15.198684,
\]
which give $F(13/20)>.600770$. Retain two positive terms in
the other arcsine series:
$C_h(r)\ge h\sqrt{1-r^2}+r^3/6+3r^5/40$.
Writing $W=s_xR_x^L+s_y\ell_j$ and
$S_\sigma=D_x\sigma_x+D_y\sigma_y$, the endpoint bounds are
\begin{equation}\label{t:B10-high}
\begin{array}{cc|rrrr}
h&\delta&C_1+C_2&sT&W&S_\sigma\\\hline
121/100&1/20&>2.357756&>.503884&>.432466&>3.019861\\
137/112&1/10&>2.382097&>.510216&>.434697&>3.030628\\
137/112&19/100&>2.382097&>.510216&>.455798&>3.003056
\end{array}
\end{equation}
and
\begin{equation}\label{t:B10b-high}
\begin{array}{cc|rrr}
h&\delta&G&Z&13\pi/8\\\hline
121/100&1/20&>.006098&<1.213631&<5.105100\\
137/112&1/10&>.006590&<1.249585&<5.105100\\
137/112&19/100&>.007858&<1.249585&<5.105100.
\end{array}
\end{equation}
For explicit square-root certificates put
$\gamma^\pm=(s_x^\pm+s_y^\pm)\eta/2$,
$r_1^-=s_y^--\gamma^+/h$, $r_1^+=s_y^+-\gamma^-/h$,
and similarly for $r_2$. Lower bounds for
$\sqrt{1-(r_i^+)^2}$ and for
$\sqrt{L/4-h+3h^2/(2U)\mp h\delta/2}$ are
\[
\begin{array}{cc|rrrr}
h&\delta&\sqrt{1-(r_1^+)^2}&\sqrt{1-(r_2^+)^2}
 &\sigma_x^-&\sigma_y^-\\\hline
121/100&1/20&.952812&.991425&.494168&.551998\\
137/112&1/10&.951940&.991062&.464143&.581163\\
137/112&19/100&.951940&.991062&.400480&.626733.
\end{array}
\]
Each claim is verified simply by squaring. The other endpoint
columns use
\[
 R_x^L>h+b_y+a_yh^3/U^2+a_y\delta h^2/U+2(d_y-1)h^2/L,
\]
\[
 G>{7071\over5000}
 {h^4(81/40-4h/L)(77/40+\delta-4h/L)\over27(h^2/2+U)},
 \qquad
 Z<{99\over70}[h^2(1-a_V-d_V)+a_Vh^3/L-b_VU/2],
\]
with the displayed sine bounds and $13\pi/8<13U/8$.
All bounds are rational. Adding each pair of rows gives
$\mathcal J>1/1000$.

By \eqref{t:B9-high}, at $\delta=1/20$ the minimum on
$[H_0,121/100]$ is at $121/100$. At $\delta=19/100$ the
minimum on the entire bottom interval is at $h_c$. At
$\delta=1/10$ the minimum on $[121/100,5/4]$ is also at
$h_c$. The three endpoint comparisons therefore imply
positivity at every needed $\delta$ endpoint. Concavity fills
the intervals in \eqref{t:bottom-regions}. This contradicts the
necessary wall inequality. Together with the uniform BODY
cutoff and Section~\ref{t:bottom-low}, the whole curvature
interval $H_0\le h\le5/4$ is eliminated.

\subsection{The high-curvature range without cap bounds}
\label{t:high-section}

The final curvature range admits a considerably simpler wall witness. Its proof uses only Cauchy--Schwarz, Jensen's inequality, a chord for a concave square-root expression, and finite rational comparisons.

\begin{proposition}\label{t:high-curvature}
There is no actual genus-two candidate at $v=1/\pi$ satisfying the geometric stress, source, and wall relations and
\[
 \frac{33}{20}\le h\le\frac{15}{8},\qquad 0<P\le1,
 \qquad q=\frac h{\pi P},\qquad0\le\delta\le\min\{q,2-3q\}.
\]
\end{proposition}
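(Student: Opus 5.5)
The plan is to apply the global wall inequality \eqref{s:wall} with one fixed, symmetric wall witness. We do not use any cap-tail support plane. Instead, the normal sum and the sources are bounded by elementary means.

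\emph{Choice of witness.} Take $t_x=t_y=t$ for a fixed rational multiple of $\pi$, such as $t=5\pi/32$, and a fixed $\eta$. Then $s_x=s_y=s_0$ and $r_1=r_2=s_0(1-\eta/h)$.

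\emph{Bounding the normal sum.} Replace the cost-plane bound on $N_x+N_y$ by Cauchy--Schwarz:
\[
 N_x+N_y\le \sqrt{2P(M_x+M_y)}=\sqrt{2Phv}=\sqrt{2hP/\pi}.
\]

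\emph{Bounding the sources.} For the weighted source $s_0(R_x+R_y)$, use \eqref{g:source} together with Jensen's inequality. The function $x\operatorname{atanh}x$ is convex, and $\E|\nu_i|\ge\E\nu_i^2$, so
\[
 L_i\ge m_i\operatorname{atanh}\sqrt{m_i}\ge m_i,
\]
or more simply $L_i\ge \E\nu_i^2=m_i$ from the series. Hence
\[
 R_x+R_y\ge 2h+\frac{2h^2P}{\pi}(q-2)=2h+\frac{2h^3}{\pi^2}-\frac{4h^2P}{\pi}.
\]
This is independent of $\delta$. The only $\delta$-dependence left is through the two pressure roots $\sigma_{x,y}$, which are concave in $\delta$, as in Lemma~\ref{s:concavity}. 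Their sum $D(\sigma_x+\sigma_y)$ is therefore minimized at an endpoint. We use the necessary bound $\delta\le 2-3q$ and, more conveniently, the chord of the concave square-root expression: $\sigma_x+\sigma_y\ge$ the value at the largest admissible $\delta$. Near $h\ge 33/20$, Proposition~\ref{s:q-bound} gives $q<7/12$. The body bound (Proposition~\ref{s:body-quarter}) is not needed here, since the pressure bound $J_i\ge \pi R/4$ already forces $\delta<1/4$ when $h>7\pi/12$. We use whichever endpoint is smaller.

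\emph{Monotonicity in $P$.} Check that the resulting residual $\mathcal R(h,P,\delta)$ decreases in $P$. The $-h^2\sqrt{2hP/\pi}$ term is the one increasing in $P$. The pressure radicands decrease with $P$ at rate $h\ge 33/20$, and $D\sigma'$ is large because $\sigma_i$ is small here, so the decreasing pressure and source terms should dominate. If the square-root term spoils monotonicity, replace it by its tangent (chord) line in $P$ at $P=1$, which is valid since $\sqrt{P}\le (1+P)/2$. This makes everything affine in $P$ except the pressure roots, and reduces to $P=1$.

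\emph{Monotonicity in $h$ and endpoint check.} At $P=1$, differentiate in $h$ as in Sections~\ref{t:middle}--\ref{t:bottom-high}. The terms $2C_h(r)$, $2s_0T$, and the pressure terms grow, while $h^2\sqrt{2h/\pi}$ grows like $h^{5/2}$. We bound the derivative by endpoint evaluation using concavity of $S_\sigma'$ and convexity of the normal term. Then we verify positivity of the residual at $h=33/20$, or at both endpoints if monotonicity fails, via rational bounds $p_-<\pi<p_+$, half-angle sine bounds, and squaring for the roots.

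\emph{Main obstacle.} The crude Cauchy--Schwarz normal bound loses a lot: $h^2\sqrt{2h/\pi}\approx 2.8$ at $h=15/8$. Positivity may fail near $h=15/8$, where $\sigma$ nearly vanishes and pressure gives little. We would first split the interval and tune $t,\eta$ per subinterval. If that is insufficient, we would refine the normal bound with the angular loss $\E\ell\ge\Gamma$, giving $N_x+N_y\le\sqrt2P(1-\E\sqrt{1-\nu_z^2}\ldots)$, i.e.\ Jensen on the concave function $\sqrt{1-\nu_z^2}$ with $\E\nu_z^2=\mu$:
\[
 N_x+N_y\le\sqrt2\bigl(P\sqrt{1-\mu}-P\Gamma\bigr)=\sqrt2\bigl(\sqrt{hP/\pi}\,\sqrt P-P\Gamma\bigr).
\]
The decisive term at large $h$ is the upper bound $h^3<7\pi^2/12$ from Proposition~\ref{s:q-bound} ($h<1.79$). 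This may cut the range short of $15/8$, and we would exploit it first, excluding $h\ge(7\pi^2/12)^{1/3}$ outright.
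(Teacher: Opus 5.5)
Your plan has a genuine gap: it discards all asymmetry information except the pressure roots. You bound $N_x+N_y\le\sqrt{2hP/\pi}$ and $L_x+L_y\ge q$, and then evaluate $\sigma_x+\sigma_y$ at the largest admissible asymmetry $\delta=2-3h/\pi$. There $\sigma_x=\sqrt{\pi/4-2h+3h^2/\pi}$ and $\sigma_y=\sqrt\pi/2$.

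Your reason for needing nothing more is that pressure alone forces $\delta<1/4$. That holds only for $h>7\pi/12$. On $[33/20,7\pi/12]$ the admissible asymmetry reaches $2-3h/\pi\approx0.42$, and there your residual is negative. Take $h=33/20$, $P=1$, $\delta=2-3h/\pi$. Then $\sigma_x\approx0.292$, $h^2\sqrt{2h/\pi}\approx2.790$ and $R_x+R_y\ge2h+2h^3/\pi^2-4h^2/\pi\approx0.744$. The left-minus-right side of \eqref{s:wall} comes out as follows:
\begin{itemize}
\item about $-0.27$ for your witness $t=5\pi/32$, even with the best $\eta\approx0.79$;
\item about $-0.26$ for the limiting witness $t\to\pi/4$, $\eta=1$;
\item no better than about $-0.24$ for any symmetric witness.
\end{itemize}
Even the $\delta$-free part of the paper's residual, which keeps the $q^2/6+q^3/20$ source terms you drop, is about $-0.21$ at $h=33/20$ and turns positive only near $h=17/10$.

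So the obstacle sits at the bottom of the interval with large asymmetry, not near $15/8$. Near $15/8$ your residual is roughly $+0.3$ for $t\to\pi/4$, and $\sigma_x^2\approx0.39$ is not small. None of your fallbacks reaches the bad corner:
\begin{itemize}
\item the cutoff $h^3<7\pi^2/12$ only removes $h\gtrsim1.79$;
\item the angular term contributes about $0.01$ there, since $\Gamma\approx0.003$;
\item retuning $(t,\eta)$ does not close a deficit of this size.
\end{itemize}

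The missing idea is to keep the asymmetry in the normal bound by applying Cauchy--Schwarz separately in each direction. This gives
$N_x+N_y\le P(\sqrt{m_x}+\sqrt{m_y})=\sqrt{(zP-P^2\delta)/2}+\sqrt{(zP+P^2\delta)/2}$, with $z=h/\pi$.
\begin{itemize}
\item This bound increases in $P$ on the domain, because $P\delta/z<\sqrt3/2$.
\item At $P=1$ it equals $\sqrt{z+\sqrt{z^2-u}}$ with $u=\delta^2$. This is concave in $u$, so it lies below its tangent $\sqrt{2z}-u/(4\sqrt2z^{3/2})$.
\item At the critical corner this gains roughly $0.23$–$0.30$ after multiplication by $h^2$.
\end{itemize}
The paper also keeps the $\delta^2$ terms from Jensen on $X^4$ and $X^6$ in $L_x+L_y$. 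It then bounds the pressure sum, which is concave in $u$, by its chord. The residual becomes affine in $u$ and is checked at $u=0$ and $u=(2-3h/\pi)^2$. The latter endpoint at $h=33/20$ is positive only because of these gains, with a margin of about $0.06$.

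Alternatively, you could invoke Proposition~\ref{s:body-quarter} ($\delta<1/4$ for all $h\ge H_0$). A rough evaluation suggests your crude residual is then about $+0.1$ at $h=33/20$. However, that proposition rests on cap-tail support planes, which you set out to avoid.

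Two minor points:
\begin{itemize}
\item Every term of your residual already decreases in $P$, since the normal upper bound increases with $P$, so no replacement of $\sqrt P$ is needed.
\item The Jensen step should read $L_i\ge\sqrt{m_i}\operatorname{atanh}\sqrt{m_i}\ge m_i$, by convexity of $y\mapsto\sqrt y\operatorname{atanh}\sqrt y$.
\end{itemize}
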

\begin{proof}
Take $t_x,t_y\to\pi/4$ and $\eta=1$ in the wall relation \eqref{s:wall}. All terms are continuous, so this limiting witness is valid. It gives
\[
 s_x=s_y=1/\sqrt2,\quad A_x=A_y=\pi,\quad d_x=d_y=2\sqrt\pi,
 \quad r_1=r_2=(h-1)/(\sqrt2h).
\]
Write $N=N_x+N_y$, $R=R_x+R_y$, and $\Sigma_\sigma=\sigma_x+\sigma_y$. Every candidate must satisfy
\begin{equation}\label{t:high-wall}
 2C_h\!\left(\frac{h-1}{\sqrt2h}\right)
 +\sqrt2\left[1-P+\frac{3h}{2\pi}-\frac{1-\pi/4}{h}\right]
 +\frac R{\sqrt2}+2\sqrt\pi\Sigma_\sigma\le2\pi+h^2N.
\end{equation}
Since $r=(h-1)/(\sqrt2h)\ge0$,
\[
 C_h(r)\ge h\sqrt{1-r^2}
 =\sqrt{h^2/2+h-1/2}=:C(h).
\]

Put $z=h/\pi$. The second moments are $m_{x,y}=(q\mp\delta)/2$. Cauchy--Schwarz gives
\begin{equation}\label{t:high-normal}
 N\le g(P):=\sqrt{(zP-P^2\delta)/2}
 +\sqrt{(zP+P^2\delta)/2}.
\end{equation}
The positive-coefficient expansion of $x\operatorname{atanh}x$ and Jensen's inequality give
\begin{equation}\label{t:high-Jensen}
 L_x+L_y\ge q+\frac{q^2+\delta^2}{6}
 +\frac{q^3+3q\delta^2}{20}.
\end{equation}
Indeed $\E X^4\ge(\E X^2)^2$ and $\E X^6\ge(\E X^2)^3$. If the logarithmic cost is infinite, the source relation already rules out the candidate; thus the displayed estimates can be applied by monotone convergence.

We first propagate these elementary estimates to $P=1$, retaining $\delta$. Differentiating
\[
 g(P)^2=zP+P\sqrt{z^2-P^2\delta^2}
\]
shows $g'(P)>0$ whenever $P\delta/z<\sqrt3/2$. The necessary pressure bound implies
\[
 \frac{P\delta}{z}\le\frac\delta z\le\frac{2-3z}{z}
 \le\frac{2\pi}{33/20}-3<\frac{\sqrt3}{2}.
\]
The last inequality follows after substituting $\pi<355/113$ and squaring positive sides. Hence $N\le g(1)$ throughout the interval under consideration. Each pressure radicand
\[
 \sigma_{x,y}^2=\pi/4-hP+3h^2/(2\pi)\mp hP\delta/2
\]
decreases with $P$, so its value at $P=1$ is a lower bound.

By the exact total source relation and \eqref{t:high-Jensen}, the bracket multiplying $2h^2/\pi$ in its lower bound is
\[
 z+\frac{z^2}{6P}+\frac{z^3}{20P^2}
 +\frac{P\delta^2}{6}+\frac{3z\delta^2}{20}-2P.
\]
Its derivative is strictly less than $\delta^2/6-2<0$. It is therefore also minimized at $P=1$. The height contribution $1-P$ has the same property. Thus the left-minus-right side of \eqref{t:high-wall} is bounded from below by its following explicit expression at $P=1$.

Define
\begin{align*}
 T(h)&=3h/(2\pi)-(1-\pi/4)/h,\\
 S(h)&=2h-4h^2/\pi+2h^3/\pi^2+h^4/(3\pi^3)+h^5/(10\pi^4),\\
 a(h)&=\pi/4-h+3h^2/(2\pi),\\
 A(h)&=\pi/4-2h+3h^2/\pi,\\
 d(h)&=2-3h/\pi.
\end{align*}
At $P=1$, the allowed asymmetry satisfies $0\le\delta\le d(h)$. The endpoint pressure roots at $\delta=d(h)$ are $\sqrt{A(h)}$ and $\sqrt\pi/2$. Here $A(h)>0$ for $h\ge33/20$: $A'>0$ on this range, and the rational lower bound at $33/20$ given below is positive.

Use $u=\delta^2\in[0,d(h)^2]$. The pressure sum is
\[
 H_h(u):=\sqrt{a-h\delta/2}+\sqrt{a+h\delta/2}
 =\sqrt{2a+2\sqrt{a^2-h^2u/4}}.
\]
This function is concave in $u$, being a composition of increasing concave square roots. It is therefore at least the chord joining its values at $0$ and $d(h)^2$. Likewise, concavity and the tangent at zero give
\begin{equation}\label{t:high-normal-tangent}
 g(1)=\sqrt{z+\sqrt{z^2-u}}
 \le\sqrt{2z}-\frac{u}{4\sqrt2z^{3/2}}.
\end{equation}
Consequently the resulting lower bound for the wall residual is affine in $u$ after these replacements. Its endpoint values are
\begin{align}
 B_0(h)&=B(h)+2\sqrt\pi[2\sqrt{a(h)}-\sqrt\pi/2-\sqrt{A(h)}],
 \label{t:high-B0}\\
 B_e(h)&=B(h)+d(h)^2J(h),\label{t:high-Be}\\
 B(h)&=2C(h)+\sqrt2T(h)+S(h)/\sqrt2-\pi
 +2\sqrt\pi\sqrt{A(h)}-h^2\sqrt{2h/\pi},\label{t:high-B}\\
 J(h)&=\frac{\pi^{3/2}\sqrt h}{4\sqrt2}
 +\frac{\sqrt2h^2}{6\pi}+\frac{3\sqrt2h^3}{20\pi^2}.\label{t:high-J}
\end{align}
The first term of $J$ is exactly the improvement from \eqref{t:high-normal-tangent}; the other two are exactly the $\delta^2$ source terms from \eqref{t:high-Jensen}. It suffices to prove $B_0,B_e>0$.

\medskip\noindent\emph{The interval $17/10\le h\le15/8$.}
Since the pressure sum decreases with $\delta$, $B_0\ge B$, and since $J>0$, $B_e\ge B$. We prove
\begin{equation}\label{t:high-B-derivative}
 B'(h)>199/2520>0
\end{equation}
on this interval. We use the rational bounds
\[
 333/106<\pi<355/113<22/7,\qquad
 7/5<\sqrt2<10/7,\qquad\sqrt\pi>7/4.
\]
Since $C(h)\le h$,
\[
 (2C)'=(h+1)/C\ge1+1/h\ge23/15.
\]
Also $(\sqrt2T)'>147/220$. For $z=h/\pi<3/5$,
\[
 S'=2-8z+6z^2+\tfrac43z^3+\tfrac12z^4
 \ge2-8z+6z^2\ge-16/25,
\]
and so $S'/\sqrt2>-16/35$. On this interval $A'>137/110$; $A$ increases with $h$ and decreases with $\pi$. Thus
\[
 A(h)\le\frac{333}{424}-\frac{15}{4}
 +\frac{3(15/8)^2}{333/106}<(63/100)^2.
\]
The positive difference in this last rational comparison is $167043/39220000$. Consequently
\[
 (2\sqrt\pi\sqrt A)'=\sqrt\pi A'/\sqrt A>685/198.
\]
Finally,
\[
 (h^2\sqrt{2h/\pi})'=\tfrac52\sqrt{2/\pi}h^{3/2}<41/8,
\]
because $(41/20)^2-[2/(333/106)](15/8)^3=701/118400>0$. Adding yields
\[
 B'>23/15+147/220-16/35+685/198-41/8=199/2520.
\]

Only one endpoint comparison remains. Let $h_*=17/10$, $p_-=333/106$, and $p_+=355/113$. Squaring positive quantities gives
\[
 C(h_*)>81317/50000,
\quad\sqrt{\pi^2/4-2h_*\pi+3h_*^2}>33763/50000,
\quad\sqrt{2h_*/\pi}<104033/100000.
\]
The corresponding rational margins are, respectively,
\[
 \frac{45511}{2500000000},\qquad
 \frac{303982039}{31922500000000},\qquad
 \frac{14074637}{3330000000000}.
\]
For the middle bound use $p_+$, since $p^2/4-2h_*p+3h_*^2$ decreases on $[p_-,p_+]$; for the last use $p_-$. Further,
\[
 T(h_*)>\frac{3h_*}{2p_+}-\frac1{h_*}+\frac{p_-}{4h_*}
 =\frac{2192419}{3198550}>3427/5000,
\]
and
\[
 S(h_*)>2h_*-\frac{4h_*^2}{p_-}+\frac{2h_*^3}{p_+^2}
 +\frac{h_*^4}{3p_+^3}+\frac{h_*^5}{10p_+^4}>8201/10000.
\]
The last positive margin is $463127466862541/5288806108125000000$. Using $\sqrt2>7071/5000$ and $1/\sqrt2>70/99$ in \eqref{t:high-B},
\begin{align*}
 B(h_*)&>2\frac{81317}{50000}
 +\frac{7071}{5000}\frac{3427}{5000}
 +\frac{70}{99}\frac{8201}{10000}-\frac{355}{113}
 +2\frac{33763}{50000}-\frac{289}{100}\frac{104033}{100000}\\
 &=\frac{2357518463}{559350000000}>1/250>0.
\end{align*}
This proves both endpoint residuals positive on the whole upper interval.

\medskip\noindent\emph{The interval $33/20\le h\le17/10$.}
Here $C,T,a$ increase with $h$, whereas $S$ decreases. For the latter, $13/25<z<11/20$ and
\[
 S'<2-8(13/25)+6(11/20)^2+\tfrac43(11/20)^3+\tfrac12(11/20)^4<0.
\]
At the lower endpoint, rational comparisons and squaring give
\[
 C(33/20)>7923/5000,\quad\sqrt{a(33/20)}>659/1000,
 \quad T(33/20)>657/1000.
\]
For the square-root comparison, $a$ decreases with $\pi$ on this short interval, so substitution of $p_+$ gives the required lower bound. With $\sqrt\pi>177/100$, the preceding upper-interval endpoint bounds give the uniform estimate
\begin{align*}
 B_0(h)&>2\frac{7923}{5000}
 +\frac{7071}{5000}\frac{657}{1000}
 +\frac{70}{99}\frac{8201}{10000}
 +4\frac{177}{100}\frac{659}{1000}
 -2\frac{355}{113}-\frac{289}{100}\frac{104033}{100000}\\
 &=\frac{6060953959}{111870000000}>1/20.
\end{align*}

For $B_e$, the derivative estimate can be strengthened on this shorter interval. We have
\[
 A'\ge253/220,\qquad\sqrt A<39/100,
 \qquad(h^2\sqrt{2h/\pi})'<9/2.
\]
The root upper bound follows from the increasing function $A$ at $h=17/10$, using $p_-$; the last bound follows by squaring at that same endpoint. Hence
\[
 B'>23/15+147/220-16/35
 +(7/4)(253/220)(100/39)-9/2=72211/30030.
\]
Also $J'>0$, $d<17/40$, and $J<7/4$. The last bound follows from
\[
 J(h)<\frac{355}{113}\frac{71}{40}\frac{131}{100}\frac5{28}
 +\frac{10}{7}\frac{(17/10)^2}{6(333/106)}
 +\frac{3(10/7)(17/10)^3}{20(333/106)^2}<7/4,
\]
where $\sqrt\pi<71/40$ and $\sqrt h<131/100$. Since $d'=-3/\pi$ and $\pi>3$,
\begin{equation}\label{t:high-Be-derivative}
 B_e'=B'-6dJ/\pi+d^2J'
 >72211/30030-119/80=220331/240240>9/10.
\end{equation}

It remains to check $h_\ell=33/20$. The rational endpoint bounds are
\[
 S(h_\ell)>209/250,\qquad
 \sqrt{\pi^2/4-2h_\ell\pi+3h_\ell^2}>5173/10000,
 \qquad\sqrt{2h_\ell/\pi}<25623/25000.
\]
As above, each follows by substituting $p_-$ or $p_+$ with the correct sign and squaring the positive roots. Consequently
\begin{align*}
 B(h_\ell)&>2\frac{7923}{5000}
 +\frac{7071}{5000}\frac{657}{1000}
 +\frac{70}{99}\frac{209}{250}-\frac{355}{113}
 +2\frac{5173}{10000}-(33/20)^2\frac{25623}{25000}\\
 &=-\frac{2114313601}{10170000000}>-21/100.
\end{align*}
Moreover $d(h_\ell)>21/50$ and $J(h_\ell)>31/20$. For the latter, a completely rational lower bound is
\[
 J(h_\ell)>
 \frac{333}{106}\frac{177}{100}\frac{32}{25}\frac{70}{396}
 +\frac{7071}{5000}\frac{(33/20)^2}{6(355/113)}
 +\frac{3(7071/5000)(33/20)^3}{20(355/113)^2}>31/20.
\]
Here $\sqrt{h_\ell}>32/25$ and $1/(4\sqrt2)>70/396$. Thus
\[
 B_e(h_\ell)>-21/100+(21/50)^2(31/20)=3171/50000>0.
\]
By \eqref{t:high-Be-derivative}, $B_e$ stays positive on the short interval, as does $B_0$ by its uniform bound. The affine chord lower bound is positive between the two asymmetry endpoints. This contradicts \eqref{t:high-wall} and completes the high-curvature exclusion.
\end{proof}

\begin{remark}\label{t:high-pi}
All rational bounds on $\pi$ used above can be established without numerical input. Machin's identity and the alternating arctangent series give
\[
 16\left[\frac15-\frac1{3\cdot5^3}+\frac1{5\cdot5^5}-\frac1{7\cdot5^7}\right]-\frac4{239}<\pi
\]
and
\[
 \pi<16\left[\frac15-\frac1{3\cdot5^3}+\frac1{5\cdot5^5}
 -\frac1{7\cdot5^7}+\frac1{9\cdot5^9}\right]
 -4\left[\frac1{239}-\frac1{3\cdot239^3}\right].
\]
The lower expression exceeds $333/106$, and the upper is less than $355/113$, by finite rational comparison.
\end{remark}

\subsection{Coverage of the complete parameter domain}\label{t:coverage}

\begin{theorem}[Positive-curvature high-genus exclusion]\label{t:coverage-theorem}
At volume $v=1/\pi$, no actual coordinate-compressed minimizing
set with $P\le1$ and positive multiplier $h$ can have a connected
reflected boundary of genus two or three.
\end{theorem}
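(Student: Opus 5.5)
The proof assembles the exclusions already established; no new estimate is needed, only a check that the pieces cover the whole necessary domain, including common boundaries and the endpoint $P=1$. Fix an actual coordinate-compressed minimizing downset at $v=1/\pi$ with $P\le1$, $h>0$, and connected reflected boundary of genus two or three. By Lemma~\ref{g:downset}, the genus equals the number of active coordinates. I split by genus.

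\emph{Genus three.} Lemma~\ref{g:genus-three} applies, since $v=1/\pi\le1/2$, $h>0$ and $P\le1$. It produces $(u,k,\ell)$ satisfying \eqref{g:genus-three-eq}, and Proposition~\ref{s:n3} shows that this domain is empty. This case is therefore immediate.

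\emph{Genus two.} There is an inactive direction $z$, so the compact height lift exists. Hence Propositions~\ref{s:height-stability} and~\ref{s:q-bound}, the stress identities \eqref{g:stress-eq}, the hole relations \eqref{g:hole}, the source relations \eqref{g:source}, and the wall inequality \eqref{s:wall} all hold for this actual surface.
\begin{enumerate}
\item Lemma~\ref{g:hersch} gives $h^2P\le\pi$.
\item Stress at $v=1/\pi$ gives $P\ge3h/(2\pi)$ and $\delta\le\min\{q,2-3q\}$. Together these yield $h^3\le2\pi^2/3$, so $h<15/8$.
\item Proposition~\ref{s:low-h} gives $h>H_0$.
\end{enumerate}
So the curvature lies in $[H_0,15/8]$. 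I order the asymmetry so that $\delta\ge0$.

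I then partition $[H_0,15/8]$ and match each piece to an earlier exclusion, checking that the asymmetry bounds used there are available.
\begin{enumerate}
\item On $[33/20,15/8]$, use Proposition~\ref{t:high-curvature}. It needs only $\delta\le\min\{q,2-3q\}$ and $0<P\le1$.
\item On $[7/5,33/20]$, Proposition~\ref{s:body-quarter} gives $\delta<1/4$. Then Subsection~\ref{t:middle} applies, together with \eqref{t:M2}, \eqref{t:M4}, and concavity in $\delta$.
\item On $[5/4,7/5]$, split by asymmetry. For $\delta\le1/10$, use Subsection~\ref{t:lower-middle}. For $1/10\le\delta<1/4$, use Subsection~\ref{t:high-asym-middle} with the cutoffs \eqref{t:H6}, which come from Lemma~\ref{t:body-sharp-lemma}, the global BODY bound, and the $11/50$ cutoff proved there.
\item On $[H_0,5/4]$, the uniform cutoff $\delta<19/100$ holds by Subsection~\ref{t:bottom-high}. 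Subsection~\ref{t:bottom-low} excludes $\delta\le1/20$ on $[H_0,5/4]$ and $\delta\le1/10$ on $[121/100,5/4]$. The regions in \eqref{t:bottom-regions} exclude the rest.
\end{enumerate}

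The main obstacle is bookkeeping rather than analysis. For each piece I must confirm three things.
\begin{enumerate}
\item The closed boundaries overlap, so no boundary value escapes. For example, $h=7/5$ and $h=33/20$ each lie in two ranges, and $\delta=1/10$ lies in both the small- and large-asymmetry arguments.
\item Every perimeter propagation is a comparison of explicit expressions whose residual bound increases as $P$ decreases, as in Proposition~\ref{s:perimeter}, \eqref{t:M2}, and their analogues. Consequently the case $P=1$ is covered by the same strict positivity, with no separate argument.
\item The hypotheses of each proposition are actually met. These are $h^2P\le\pi$, the reference moments lying in the ranges required by Proposition~\ref{s:perimeter}, and $a_V+d_V<24/25$ for the loss planes used.
\end{enumerate}

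Since every residual lower bound is strictly positive while \eqref{s:wall} forces $\mathcal R\le0$, each region yields a contradiction. Together with the genus-three case, this excludes all genus-two and genus-three alternatives.
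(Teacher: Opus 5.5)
Your proposal is correct and follows the paper's own proof. Genus three is excluded by Lemma~\ref{g:genus-three} and Proposition~\ref{s:n3}; genus two uses the Hersch/pressure bound $h<15/8$, Proposition~\ref{s:low-h}, and the same curvature--asymmetry partition, with overlapping closed boundaries and strictly positive residuals covering $P=1$.

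Two points the paper makes explicit that you leave implicit. First, $P\ge3h/(2\pi)$ and $\delta\le\min\{q,2-3q\}$ come from nonnegativity of the upper-wall pressures $J_i$ (Lemma~\ref{g:pressure}) combined with stress, not from the stress identities alone. Second, for $h\le33/20$ the propagation path to $P=1$ stays inside the scalar domain, because $h^2<\pi$, $q$ remains above $H_0/\pi>1/4>\delta$, and $2-3q$ increases.
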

\begin{proof}
Genus three is excluded by Proposition~\ref{s:n3}. For genus two,
the two nonnegative upper-wall pressures and the exact stress identities give
\[
 0\le J_x+J_y=2hP-3h^2/\pi,\qquad P\ge3h/(2\pi).
\]
The Hersch bound therefore gives
\[
 h^3\le2\pi^2/3<(15/8)^3,
\]
where the strict rational comparison follows from $\pi<22/7$.
For $h\ge H_0$ we consequently have $P>14/25$.
The two individual pressures give
$0\le\delta\le\min(q,2-3q)$, and joint height stability gives
$q<7/12$. The geometric BODY and wall bounds established above
therefore apply on precisely the scalar domain used in the exclusions.
Put $H_0=5881/5000$. The complete partition is
\[
\begin{array}{c|c|c}
\text{curvature interval}&\text{asymmetry branches}
   &\text{exclusion}\\\hline
0<h\le H_0&\text{all actual parameters}&\text{Proposition }\ref{s:low-h}\\
H_0\le h\le121/100
 &\delta\le1/20;\quad1/20\le\delta<19/100
 &\S\ref{t:bottom-low},\ \S\ref{t:bottom-high}\\
121/100\le h\le5/4
 &\delta\le1/10;\quad1/10\le\delta<19/100
 &\S\ref{t:bottom-low},\ \S\ref{t:bottom-high}\\
5/4\le h\le7/5
 &\delta\le1/10;\quad\delta\ge1/10
 &\S\ref{t:lower-middle},\ \S\ref{t:high-asym-middle}\\
7/5\le h\le33/20&\delta<1/4&\S\ref{t:middle}\\
33/20\le h<15/8&\text{all actual parameters}
 &\text{Proposition~\ref{t:high-curvature}}.
\end{array}
\]
Section~\ref{t:bottom-high} proves $\delta<19/100$ for all
actual candidates in its entire bottom interval. In the large
asymmetry part of Section~\ref{t:high-asym-middle}, the three
curvature subintervals $[5/4,13/10]$, $[13/10,27/20]$, and
$[27/20,7/5]$ have the respective BODY cutoffs
$\delta<11/50$, $\delta<23/100$, and $\delta<1/4$.
Lemma~\ref{t:body-sharp-lemma} supplies the middle cutoff, and
Proposition~\ref{s:body-quarter} supplies the global quarter bound.
Thus no actual parameter is omitted from the partition.

For the comparisons to $P=1$ with $h\le33/20$, the path
remains inside the required scalar inequalities: $h^2P\le\pi$
holds because $h^2<\pi$, $q$ decreases but stays above
$H_0/\pi>1/4>\delta$, and $2-3q$ increases. Root and BODY
conditions are preserved as proved in the corresponding propagation
arguments. The higher-curvature comparison uses its own elementary
residual in Proposition~\ref{t:high-curvature} and does not invoke
a Hersch-dependent estimate at an inadmissible virtual point.

Adjacent curvature intervals include their common endpoint.
The low and high asymmetry branches likewise include the
common asymmetry endpoint. Every exclusion produces a strictly
positive residual, so $P=1$ and every internal branch boundary
are included. Concavity is applied after the witness has been
fixed. Perimeter comparison is applied to a specified valid
algebraic lower residual; it never assumes that the extrapolated
point $P=1$ corresponds to a second actual surface. This proves
the theorem.
\end{proof}

The exclusions include the equality data $P=1$. This closed-endpoint
property will be used separately in the classification of minimizers at
$b$. All geometric hypotheses used here were proved in the preceding
sections. The passage from this endpoint exclusion to the full sharp
profile and to all uncompressed equality cases is given next.

\section{The sharp profile and all equality cases}\label{sec:profile-rigidity}

The preceding exclusion first establishes $I(b)=1$. A BV energy then
forces contact at $a$, and parallel-surface comparison fills all three
model phases. We finally recover the uncompressed minimizers at the
transitions.

\subsection{The two contacts and the full profile}\label{p:profile}

\begin{lemma}\label{p:second-contact}
The relative isoperimetric profile of the unit cube satisfies $I(b)=1$.
\end{lemma}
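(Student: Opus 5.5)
The plan is to argue by contradiction, assembling the reduction of Section~\ref{sec:reduction} with the endpoint exclusion of Theorem~\ref{t:coverage-theorem}. The upper bound $I(b)\leq1$ is immediate from the coordinate slab, so we assume $I(b)<1$. Since $b=1/\pi<1/2$ and $m(b)=1$, this assumption reads $I(b)<m(b)$. Lemma~\ref{g:downset} then applies at $v=b$. It supplies an actual minimizing set $E$ that is downward closed in all three coordinates, with $P=I(b)<1$. Its reflected boundary is a smooth connected surface in $T$ (Lemma~\ref{g:reflection} and the regularity discussion), it has positive multiplier $h$, and its genus equals the number of active coordinates, so it is at most three.

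Next we exclude the low genus cases. If the genus were zero or one, Lemma~\ref{g:low-genus} would identify $E$ with a ball, shortest-axis cylinder or slab model. We need $h\geq0$ here, which holds because concavity gives $h\geq I'_+(b)\geq(1-P)/(1/2-b)>0$ by Lemma~\ref{g:profile}. For these models the perimeter is at least $m(b)=1$: directly for the slab and cylinder, and for the ball because the eighth-ball of volume $b$ has radius below one and perimeter above the cylinder value at $b$. This contradicts $P<1$. We also check that connectedness is justified. A nonflat minimizing boundary is connected by the two-component stability test, and a flat boundary would be a slab pair with $P=1$. Hence the genus is two or three.

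Finally we apply Theorem~\ref{t:coverage-theorem}. Its hypotheses are all present: volume $1/\pi$, coordinate compression, $P\leq1$, $h>0$, and connected genus two or three. That theorem forbids such a set, so $I(b)<1$ is impossible and $I(b)=1$.

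The step I expect to need the most care is verifying that the selected downset meets every hypothesis used inside the coverage theorem, since the mathematical content is already in Theorem~\ref{t:coverage-theorem}. The selected representative must be the regular closed minimizer, and for genus two the inactive direction must give the compact lift used in Section~\ref{sec:stability-exclusions}, which Lemma~\ref{g:downset} provides. The Hersch bound of Lemma~\ref{g:hersch} is used in Lemma~\ref{g:genus-three}. I would make these transfers explicit and otherwise cite the theorem.
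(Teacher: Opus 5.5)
Your proposal is correct and matches the paper's proof: the slab gives $I(b)\le1$, and a failure $I(b)<1=m(b)$ yields a minimizing downset from Lemma~\ref{g:downset} with $h\ge I'_+(b)\ge(1-P)/(1/2-b)>0$. All low-genus models have perimeter at least $m(b)=1$, and Theorem~\ref{t:coverage-theorem} excludes the remaining genus-two or genus-three case (the genus count in Lemma~\ref{g:downset} presupposes genus at least two, which that lemma already guarantees when $I(v)<m(v)$, so your separate low-genus paragraph is a redundant but harmless re-check).
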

\begin{proof}
The coordinate slab gives $I(b)\leq1$. Suppose that $P=I(b)<1$,
and choose the actual minimizing downset provided by the geometric
selection lemma. Concavity and $I(1/2)=1$ imply that its multiplier satisfies
\[
 h\geq I'_+(b)\geq\frac{1-P}{1/2-b}>0.
\]
Its reflected boundary is connected and has genus two or three:
the low-genus models have perimeter at least $m(b)=1$.
Theorem~\ref{t:coverage-theorem}, whose hypotheses hold for this
actual minimizing downset, excludes both genus possibilities.
Thus the assumed minimizing downset cannot exist, proving $I(b)=1$.
\end{proof}

The next step is a BV argument. It is necessary because no differentiability
of the profile at a transition is assumed. Milman's one-contact
Corollary 1.4 does not apply at $\beta=1$. We use the actual-profile
ODI of \cite[Proposition 4.5]{Milman}, together with the Hersch
bound proved above, to recover the first contact.

\begin{lemma}\label{p:first-contact}
One has $I(a)=2\pi/9$.
\end{lemma}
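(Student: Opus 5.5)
The plan is to argue by contradiction with a one-sided energy monotonicity on the region where $I$ lies strictly below the model profile, using the contact $I(b)=1$ from Lemma~\ref{p:second-contact} as the right-hand anchor. The ball competitor gives $I(a)\le 2\pi/9$, so suppose $I(a)<2\pi/9=m(a)$. Since $I$ is continuous and concave (Lemma~\ref{g:profile}), the set $\{v\in(0,b]: I(v)<m(v)\}$ is relatively open and contains $a$. I will let $(v_1,v_2)$ be its component containing $a$; the point $v_2\le b$ is a genuine endpoint because $I(b)=m(b)=1$. Throughout the plan I use $I'_\pm$ for the one-sided derivatives, which exist everywhere by concavity.

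\textbf{Step 1: structure of minimizers inside $(v_1,v_2)$.} For every $v\in(v_1,v_2)$ the selection Lemma~\ref{g:downset} supplies a minimizing downset whose reflected boundary is connected, has genus two or three, and has multiplier $h\ge I'_+(v)>0$. Indeed the low-genus models (Lemma~\ref{g:low-genus}) have perimeter at least $m(v)>I(v)$, and concavity together with $I(1/2)=1$ gives positivity of $h$. Consequently $\chi(\Sigma)\le -2$, and the Hersch bound (Lemma~\ref{g:hersch}) gives $h^2P\le\pi$. Together with $I'_+(v)\le h\le I'_-(v)$ from \eqref{g:profile-ode}, this yields the pointwise bound
\[
 I^2(I'_+)^2\le h^2P^2\le \pi I .
\]

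\textbf{Step 2: monotonicity of the BV energy.} Next I will show that on $(v_1,v_2)$ the function $I^2(I'_+)^2$ is nonincreasing. The supporting function $J$ of \eqref{g:profile-ode} touches $I$ from above at $v$, so in the sense of measures $I''\le J''$ locally. Hence $I^2I''+I(I')^2\le \tfrac\pi2\chi\le-\pi<0$ as a measure. Since $(I^2I'^2)'=2I'\,(I^2I''+I I'^2)$ with $I'\ge0$, the derivative is a nonpositive measure. The delicate point is doing this rigorously for a merely BV derivative: $I'_+$ may jump where $I$ has a corner. To handle it, I would work with the quantity $\mathcal E=I^2(I'_+)^2+2\pi I$ named in the introduction, and check separately its jump part (jumps of $I'$ are downward, so $(I'_+)^2$ drops) and its absolutely continuous and Cantor parts, using the chain rule for BV functions of one variable. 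The term $2\pi I$ only makes the monotonicity statement stronger, since $I$ is increasing; I expect the contradiction to need only the weaker statement for $I^2(I'_+)^2$.

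\textbf{Step 3: the right endpoint and the contradiction.} At $v_2$ we have $I(v_2)=m(v_2)$ and $I\le m$ to the left of $v_2$. Therefore $I'_-(v_2)\ge m'_-(v_2)$. The right endpoint satisfies $v_2>a$, since $a$ lies in the open component, so $v_2$ is on the cylinder branch or equals $b$, where the left branch is also the cylinder. The cylinder satisfies $m\,m'=\pi/2$, so $I^2(I'_-)^2\ge\pi^2/4$ at $v_2$. By the monotonicity of Step 2, transported across $v_2$ from the left, we get $I^2(I'_+)^2\ge \pi^2/4$ on $(v_1,v_2)$, in particular near $a$. Step 1 gives $I^2(I'_+)^2\le\pi I$, and $I<2\pi/9<\pi/4$ near $a$, so $I^2(I'_+)^2<\pi^2/4$ there. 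These two bounds contradict each other. Hence $I(a)=2\pi/9$.

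I expect the main obstacle to be Step 2, specifically the rigorous one-sided BV calculus at the right endpoint $v_2$ and the passage from the pointwise support inequality to a measure inequality for $I''$, because no differentiability of $I$ is available. If this fails directly, I would fall back on comparing $I$ with explicit ODE solutions of $y^2y''+y y'^2=-\pi$ matched to $I$ at $v_2$, and use support-function comparison via the maximum principle instead of the energy.
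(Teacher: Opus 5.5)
Your proof is correct and follows the paper's route. You use the same BV monotonicity on the gap (the paper works with the first integral $\mathcal E=I^2(I'_+)^2+2\pi I$), the same cylinder-contact bound $I^2(I'_-)^2\geq\pi^2/4$ at the right end of the gap, and the same Hersch bound $I^2(I'_+)^2\leq\pi I$. The only difference is that you apply the Hersch bound at $a$ itself, where $\pi I(a)<2\pi^2/9<\pi^2/4$, whereas the paper applies it at the left end $s$ of the gap (ball contact, or $s=0$); as you anticipated, your version therefore needs only monotonicity of $I^2(I'_+)^2$ and skips the left-endpoint analysis.

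One correction to Step 2: a one-point upper support does not give ``$I''\leq J''$ locally as measures.'' What it gives is $I''(v)\leq J''(v)$ at points of second-order differentiability. That controls the absolutely continuous part of $D(I'_+)$, while the jump and Cantor parts are nonpositive by concavity, which is exactly the decomposition you propose.
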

\begin{proof}
Suppose $I(a)<2\pi/9$, and let $(s,t)$ be the connected component of
$\{v\in(0,1/2):I(v)<m(v)\}$ containing $a$.
Lemma~\ref{p:second-contact} and continuity give
$0\leq s<a<t\leq b$. At each volume in this interval a minimizing
downset has genus two or three. Consequently its Hersch estimate is
$h^2I\leq\pi$, and its smooth parallel-surface upper support satisfies
\[
 I^2I''+I(I')^2\leq-\pi
\]
at almost every point where the concave function $I$ has a second-order
expansion. Put $h=I'_+$. This is positive and nonincreasing on the gap;
it agrees almost everywhere with the multiplier of any chosen minimizer.
On every compact subinterval of $(s,t)$ define
\[
 \mathcal E=I^2h^2+2\pi I.
\]
Here $I$ is positive and Lipschitz, and $h$ is a BV function. The BV
product and chain rules, with $h^2$ treated directly at jumps, yield
\[
 D^{\mathrm{ac}}\mathcal E
 =2h\bigl(I^2h'+Ih^2+\pi\bigr)\,dv\leq0,
 \qquad
 D^{\mathrm{s}}\mathcal E=I^2D^{\mathrm{s}}(h^2)\leq0.
\]
The singular inequality follows because $h\geq0$ is nonincreasing.
The factor $I^2$ is continuous, so its product with the jump measure is
unambiguous. Exhaustion proves that $\mathcal E$ is nonincreasing on
the whole gap.

If $s>0$, contact with the ball branch and the limiting Hersch bound give
\[
 \mathcal E(s+)\leq3\pi I(s)
 \leq3\pi\frac{2\pi}{9}=\frac{24\pi^2}{36}.
\]
The Hersch bound passes to this right limit along differentiability
points inside the gap. If $s=0$, then $I(v)\to0$, and the same bound gives
$\mathcal E(0+)=0$. At the right contact, which lies on the cylinder branch
(including its left branch at $t=b$), left difference quotients of
$I-m\leq0$ show that
\[
 I'_-(t)\geq\frac{\pi}{2I(t)}.
\]
Since $I(t)=\sqrt{\pi t}\geq2\pi/9$, it follows that
\[
 \mathcal E(t-)\geq\frac{\pi^2}{4}+2\pi I(t)
 \geq\frac{25\pi^2}{36}.
\]
This contradicts the monotonicity of $\mathcal E$.
\end{proof}

\begin{proposition}\label{p:full-profile}
For every $0\leq v\leq1/2$, $I(v)=m(v)$. At every volume in an open
model phase, every minimizer is the corresponding ball, cylinder or slab.
\end{proposition}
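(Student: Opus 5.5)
The plan is to combine the two contacts (Lemmas~\ref{p:second-contact} and~\ref{p:first-contact}) with Milman's propagation \cite[Proposition 4.6]{Milman}. Proposition 4.6 fills a phase between two contact points using only the parallel-surface support \eqref{g:profile-ode} and concavity. Since $I\leq m$ always holds, it suffices to exclude a gap $\{I<m\}$ inside each of $(0,a)$, $(a,b)$ and $(b,1/2)$.

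On the slab phase $[b,1/2]$, concavity of $I$ with $I(b)=I(1/2)=1$ gives $I\geq1=m$. For $b<v<1/2$, any minimizer then has $I$ locally constant. So $h\leq I'_-(v)=0\leq I'_+(v)\leq h$, and $h=0$. By \eqref{g:profile-ode}, $J''(v)=-P^{-2}\int|B|^2\leq0$. Since $J\geq I$ touches the constant profile from above, $J''(v)\geq0$, so $B\equiv0$. The boundary is then flat, and Lemma~\ref{g:low-genus} (flat pairs, shortest normal) identifies the minimizer as a coordinate slab.

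For the cylinder phase $(a,b)$, let $(s,t)$ be a gap component, with endpoints in $[a,b]$ where $I=m$ by the two contacts. Inside the gap, minimizers are nonflat with $h\geq0$. They are not of low genus, since low-genus candidates have perimeter $\geq m$ by Lemma~\ref{g:low-genus}. Hence by Lemma~\ref{g:downset} and the connectedness argument they have genus two or three. Lemma~\ref{g:hersch} and \eqref{g:profile-ode} then give $I^2I''+I(I')^2\leq-\pi$ almost everywhere in the gap.

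I rerun the BV energy argument of Lemma~\ref{p:first-contact}, but with the cylinder energy $\mathcal E=I^2h^2+2\pi I$ compared at both ends. On the cylinder branch $m^2(m')^2+2\pi m$ equals $\pi^2/4+2\pi m$, an increasing function of $v$. At the left contact, right difference quotients give $\mathcal E(s+)\leq\pi^2/4+2\pi m(s)$. At the right contact, left quotients give $\mathcal E(t-)\geq\pi^2/4+2\pi m(t)$. Since $\mathcal E$ is nonincreasing and $m(t)>m(s)$, this is a contradiction.

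The ball phase $(0,a)$ is handled the same way. Along the ball branch, $m^2(m')^2=(4\pi/3)\,m$ from the table, so $\mathcal E$ on the model equals $(10\pi/3)m$, again strictly increasing. The endpoint $s=0$ gives $\mathcal E(0+)=0$. This comparison is exactly Milman's Proposition 4.6, which I would cite and whose normalization I would check against \eqref{g:profile-ode}.

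Having $I=m$ on an open phase, I then identify minimizers there. Take a minimizer at interior $v$ and its support $J\geq I=m$ touching at $v$. Then $J'(v)=m'(v)$ and $J''(v)\geq m''(v)$. By \eqref{g:profile-ode}, $J^2J''+J(J')^2=\frac\pi2\chi$, while on the model branch $m^2m''+m(m')^2$ equals $\pi$ (ball), $0$ (cylinder) or $0$ (slab). This forces $\chi\geq2$ in the ball phase, so the boundary is a sphere; and $\chi\geq0$ in the cylinder phase, so it is a sphere or torus. The sphere is excluded in the cylinder phase because its perimeter exceeds $m$ there.

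Lemma~\ref{g:low-genus} then yields the ball, the shortest-axis cylinder or the slab. The minimizer is reflected back to the cube by Lemma~\ref{g:reflection}, which is the classification the statement asserts. One caveat is that a minimizer may be disconnected. I would rule this out first by the disconnected-component stability test, which applies because the boundary is nonflat in the ball and cylinder phases.

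I expect the main obstacle to be the endpoint bookkeeping in the BV comparison. It must hold without assuming differentiability of $I$ at the contacts, and it must handle a gap touching $a$ or $b$ from either side, where the one-sided slope at the contact belongs to the other branch. I would treat these cases by using only one-sided difference quotients of $I-m\leq0$, exactly as in Lemma~\ref{p:first-contact}.
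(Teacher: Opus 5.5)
Your argument is correct, but it takes a different route from the paper, whose proof is essentially a citation. The paper applies Milman's Proposition~4.6 on $[0,a]$, $[a,b]$ and $[b,1-b]$, the last interval having contact at both ends by complement symmetry. From that proposition it takes both $I=m$ and the classification of all interior-volume minimizers as torus models. It then places these in the cube by reflection invariance: a ball centre, or the transverse coordinates of a cylinder axis, must be fixed by every coordinate mirror and hence be integral modulo two, and a reflection-invariant pair of unit-normal flat tori descends to a slab.

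You instead rebuild the propagation from the paper's own tools:
\begin{itemize}
\item on the slab phase, concavity alone gives $I=1$; then $h=0$, and $J''(v)=-P^{-2}\int_\Sigma|B|^2\,d\sigma$ together with $J''(v)\geq0$ forces flatness;
\item on the ball and cylinder phases, you use the BV energy $\mathcal E=I^2h^2+2\pi I$ of Lemma~\ref{p:first-contact}, compared with the model value $m^2(m')^2+2\pi m$ through one-sided slopes at the contacts;
\item for the classification, you compare $J^2J''+J(J')^2=\frac\pi2\chi$ with the model value to force $\chi$.
\end{itemize}
This makes the step self-contained. It also shows that inside a single phase the Hersch bound is not needed, only $\chi\leq-2$. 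Every minimizer in a gap satisfies this, since it is nonflat, hence connected, and not of low genus. Hersch is needed only for the gap straddling $a$ in Lemma~\ref{p:first-contact}. The paper's route buys brevity and the classification for free.

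A few repairs are needed.
\begin{itemize}
\item On the ball branch $m^2(m')^2=2\pi m$, not $(4\pi/3)m$: for the eighth-ball $m'=2/r$, so $m^2(m')^2=\pi^2r^2$. The model energy is therefore $4\pi m$, still strictly increasing, so nothing breaks.
\item The claim $\mathcal E(0+)=0$ needs a reason. For example, concavity and $I(0)=0$ give $0\leq h\leq I(v)/v$, hence $I^2h^2\leq m(v)^4/v^2\leq Cv^{2/3}$.
\item The slab-phase classification should include $v=1/2$, using $I\equiv1$ on $[b,1-b]$.
\item ``Reflected back to the cube'' must be replaced by the reflection-invariance positioning argument above. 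The torus classification identifies the models only up to torus translation.
\end{itemize}
Your worry about contact slopes belonging to the other branch does not arise. Once both contacts are known, no gap straddles $a$ or $b$, so each one-sided slope is taken on the branch of the gap's own side.
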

\begin{proof}
Lemmas~\ref{p:second-contact} and~\ref{p:first-contact} establish
the two transition contacts. On the normalized torus, apply
\cite[Proposition 4.6]{Milman} successively to $[0,a]$, $[a,b]$ and
$[b,1-b]$. The third interval has contact at both endpoints by
complement symmetry. Each interval lies in one candidate phase, so
the proposition gives $I=m$ and classifies every minimizer at the
strict interior volumes. In particular $1/2$ is an interior volume
of the third interval. This is the application underlying
\cite[Theorem 1.3(2)]{Milman}.

To identify the cubical positions, the reflected minimizer is
invariant under each coordinate reflection. A ball center, and the
two transverse coordinates of a cylinder axis, must consequently
be integral modulo two. A shortest closed torus geodesic is parallel
to a coordinate axis. The resulting cube regions are corner
eighth-balls and edge quarter-cylinders. In the planar phase,
normalized perimeter one forces one pair of tori with primitive
integer normal of length one. Their reflection-invariant arrangement
descends to a slab adjacent to a coordinate face. Thus the torus
classification gives exactly the stated cube models. The transition
volumes themselves are treated next.
\end{proof}

\subsection{Equality at the transitions and recovery before compression}\label{p:equality}

At $a$, the model derivatives imply that the multiplier of every
minimizer lies in $[9/4,3]$. Compress it successively in the three
coordinates. Each intermediate object is an actual minimizer, but its
multiplier need not equal the preceding multiplier. Every multiplier
nevertheless lies in the same interval by the profile slope inequalities.
If the final downset had genus at least two, its genus would be two or
three and the Hersch bound at $P=2\pi/9$ would give
\[
 h\leq3/\sqrt2<9/4.
\]
Thus the compressed set is a ball or a shortest-axis cylinder.

At $b$, all multipliers belong to $[0,\pi/2]$. A connected nonflat
minimal boundary has $K=-|B|^2/2$ and $\chi<0$. Its outward parallel
surfaces have perimeter less than one and volume greater than $b$ for
all sufficiently small positive parameters. This contradicts the slab
phase of Proposition~\ref{p:full-profile}. If a minimal boundary is
disconnected, stability forces every component to be flat, as proved in
the foundations. A flat boundary in the period-two torus consists of
parallel tori. A torus with primitive integer normal $k$ has physical
area $4|k|$. A separating boundary requires at least two components;
normalized perimeter one forces precisely one pair with $|k|=1$.
Hence a zero-multiplier minimizer is a coordinate slab.

Compress a positive-multiplier minimizer at $b$. If the final multiplier
is zero, the preceding argument applies to the new minimizer. Otherwise
the downset topology and Theorem~\ref{t:coverage-theorem}
apply with $P=1$, $v=b$ and $h>0$. A ball has perimeter
greater than one at $b$, and a slab has zero multiplier. The only
remaining model is therefore a shortest-axis cylinder.

It remains to recover the set before compression. This is an equality
argument for smooth actual minimizers, with all projected critical area
retained. The slicing lower bound in
\cite[Proposition 2.5]{Milman} likewise includes singular BV
variation; its sufficient equality conditions do not provide the
necessary inverse-compression statement used here.

\begin{lemma}\label{p:recovery}
Suppose one coordinate compression of an actual smooth cube minimizer
is one of the transition models $M$. Then, modulo null sets, the
original set is $M$ or its reflection in that coordinate.
\end{lemma}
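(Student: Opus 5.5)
We prove the lemma by tracing the equality case in the one-dimensional compression. Let $E$ be the smooth actual minimizer, $i$ the compressed coordinate, and $SE=M$ modulo null sets. For $x'\in(0,1)^2$, write $f(x')=|E_{x'}|$ for the length of the fibre over $x'$; by construction it is also the fibre length of $M$. Since $\Per(M;Q)=I(|M|)=\Per(E;Q)$, the chain $\Per(E;Q)\ge\Per(SE;Q)$ from Lemma~\ref{g:downset} is an equality. We want this equality at the level of fibres.

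\textbf{Step 1: a fibrewise perimeter comparison.} For a.e.\ $x'$ with $0<f(x')<1$, let $n(x')$ be the number of points of the free boundary $\partial E\cap Q$ on the fibre. We aim for the Steiner-type bound
\[
 \Per(E;Q)\ \ge\ \int_{(0,1)^2}\sqrt{n(x')^2+|\nabla f(x')|^2}\,dx' + \text{(area of $\partial E\cap Q$ where $\nu_i=0$)}.
\]
To get it, split $\partial E\cap Q$ into the set where $\nu_i\neq0$ and the critical set where $\nu_i=0$. On the first set, apply the area formula for the projection onto $e_i^\perp$. On each fibre, use Cauchy--Schwarz in the form $\sum_k\sqrt{1+|\nabla g_k|^2}\ge\sqrt{n^2+|\sum_k\pm\nabla g_k|^2}$, where the $g_k$ are the local boundary heights and $\nabla f=\sum_k\pm\nabla g_k$. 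We keep the full physical area of the critical set rather than discarding it, since this is where vertical walls would hide.

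For $M$ the analogous quantity is exactly $\int\sqrt{1+|\nabla f|^2}$, because $M$ is a graph in direction $i$ on $\{0<f<1\}$ and has no critical area there. All three models have this form; if $M$ is a quarter-cylinder along axis $i$, then $f\in\{0,1\}$ a.e., and $E=M$ a.e.\ follows directly from equality of fibre lengths.

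\textbf{Step 2: consequences of equality.} Equality, together with $n\ge1$ on $\{0<f<1\}$, forces three things: $n(x')=1$ a.e.\ on $\{0<f<1\}$, the Cauchy--Schwarz step is an equality, and the critical set of $\partial E$ over that region is null. A fibre with exactly one boundary point is an interval touching one face. So for a.e.\ such $x'$,
\[
 E_{x'}=[0,f(x')]\quad\text{or}\quad E_{x'}=[1-f(x'),1].
\]
Let $\Omega_0$ and $\Omega_1$ be the corresponding sets of $x'$.

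\textbf{Step 3: a single choice of side.} This is the main obstacle: we must show that one of $\Omega_0,\Omega_1$ is null in the open set $\Omega=\{0<f<1\}$. For the models, $\Omega$ is a connected planar region: a quarter disk, or the whole square for a slab normal to $e_i$ or a cylinder with axis transverse to $e_i$.

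First, $\partial E\cap Q$ is smooth and, by Step 2, a graph over $\Omega$ off a null set, so I will argue that $\Omega_0$ and $\Omega_1$ are relatively open. Take $x'\in\Omega_0$. The boundary point $(x',f(x'))$ is a smooth point with $\nu_i\ne0$, and nearby fibres still meet $\partial E$ exactly once. Hence the occupied side, below or above the graph, persists in a neighbourhood.

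Second, if both sets were nonempty, connectedness would give a common boundary point $y'\in\Omega$. Over $y'$, the boundary of $E$ would have to contain a vertical segment of length at least $\min(f,1-f)>0$ to join a lower graph to an upper graph. That contradicts either the null critical set or the single-point count on nearby fibres.

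If the openness step turns out to be delicate at points where $f$ is not smooth, I would instead use the Jacobi-type rigidity from Lemma~\ref{g:downset}: minimizers are smooth CMC surfaces, and the analytic continuation of the graph over $\Omega$ cannot switch sheets.

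\textbf{Conclusion.} With a single side, $E$ equals $M$ if the side is $\Omega_0$, and equals the reflection of $M$ in $x_i\mapsto1-x_i$ if the side is $\Omega_1$, modulo null sets.
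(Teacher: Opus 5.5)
Your Steps 1--2 are the paper's argument. Both use the area-formula decomposition over the fibre base with the vertical part kept, the triangle inequality for the vectors $(1,\pm\nabla t_j)$, and equality forcing one endpoint per fibre and $\mathcal H^2$-null vertical area, so fibres are $[0,f]$ or $[1-f,1]$.

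The routes diverge in selecting the side. The paper first upgrades the a.e.\ statement to pointwise containment of the free boundary over $\{0<\ell<1\}$ in the two graphs $x_i=\ell$ and $x_i=1-\ell$. It then uses that these graphs cross only on $\{\ell=1/2\}$, and transversally, because $\nabla\ell\neq0$ there when $r\neq1/2$ (checked on the radii $2/3$, $4/9$, $2/\pi$). A $C^1$ surface cannot turn such a corner.

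You track the occupied side instead. On the open full-measure set of regular values the side is locally constant. At a common accumulation point $y'$ of the two side sets, the fibres of $E$ on the two sides disagree at every height in $(0,\min(f,1-f))\cup(\max(f,1-f),1)$, so the topological boundary contains vertical segments over $y'$. This is more robust: it needs no transversality or radius check, and would work even where the two graphs are tangent. The correct justification for the segment is this mismatch of occupied sides, not the need to ``join'' the two graph points. Those points are only $|1-2f(y')|$ apart and coincide when $f(y')=1/2$.

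There are two things to tighten. First, a segment over a single point is $\mathcal H^2$-null, so it does not by itself contradict null critical area. The cleanest fix is the paper's upgrade:
\begin{itemize}
\item no open piece of the free boundary can project into the null set of critical values, since it would then be entirely critical and have positive area;
\item hence regular points are dense;
\item by closedness, the whole free boundary over $\Omega$ lies in the two graphs, which meet the fibre over $y'$ in at most two points.
\end{itemize}
Alternatively, argue locally at an interior point of the segment. Either the surface is vertical on an open patch (positive critical area), or it sweeps a positive-measure set of fibres at a height away from $f$ and $1-f$, giving those fibres a second crossing.

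Second, define $\Omega_0,\Omega_1$ on regular values, since otherwise they are only a.e.\ objects and ``relatively open'' has no meaning. Two smaller corrections: the transverse-cylinder base is the strip $\{x_j<r\}$, not the whole square; and the slab with normal orthogonal to $e_i$ belongs with your trivial case $f\in\{0,1\}$.
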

\begin{proof}
Write $\ell(x)$ for the fibre length on the two-dimensional fibre base.
Fibres with $\ell=0$ or $\ell=1$ are uniquely determined modulo null
sets. If $M$ is a slab or cylinder parallel to the compression direction,
every fibre has one of these lengths, and the conclusion follows at once.
In this case the positive-area vertical boundary is retained.

In the other cases the positive fibre lengths are strictly less than
one, the set $B_* =\{0<\ell<1\}$ is connected, and
\[
 \Per(M;Q)=\int_{B_*}\sqrt{1+|\nabla\ell|^2}\,dx.
\]
For almost every regular projected fibre the original boundary has a
finite number $k\geq1$ of interior endpoints $t_j$. They satisfy
\[
 \nabla\ell=\sum_{j=1}^k\epsilon_j\nabla t_j,
 \qquad
 \sum_{j=1}^k\sqrt{1+|\nabla t_j|^2}
 \geq\sqrt{k^2+|\nabla\ell|^2}
 \geq\sqrt{1+|\nabla\ell|^2}.
\]
The first inequality is the triangle inequality applied to the vectors
$(1,\epsilon_j\nabla t_j)$. Finiteness of regular intersections also
follows from analyticity on the compact reflected fibre: an
accumulation of intersections forces an entire contained fibre, which
is tangent and has no regular projected endpoint.

Let $F_{\rm vert}$ be the part of the relative free boundary where
projection onto the fibre base has deficient rank. On its complement
the projection Jacobian is $|\nu_i|>0$; the weighted area formula gives
the exact decomposition
\[
 \Per(E;Q)=
 \int_{B_*}\sum_{j=1}^{k(x)}\sqrt{1+|\nabla t_j|^2}\,dx
 +\mathcal H^2(F_{\rm vert}).
\]
Regular endpoints cannot occur over a zero- or full-length fibre.
Perimeter equality forces $k=1$ almost everywhere and
$\mathcal H^2(F_{\rm vert})=0$. A one-endpoint fibre must attach to
the lower or upper wall, so its endpoint is $\ell(x)$ or $1-\ell(x)$.
Smoothness upgrades the almost-everywhere graph containment to
pointwise containment over $B_*$.

For a slab these two candidate graphs are disjoint. For a ball or
transverse cylinder, their intersection satisfies $\ell=1/2$.
The fibre function is $\ell=\sqrt{r^2-|x-x_0|^2}$, with one transverse
variable for a cylinder. Its gradient can vanish on $\ell=1/2$ only
when $r=1/2$. The transition radii are $2/3$, $4/9$ and $2/\pi$,
so any intersection is transverse. A $C^1$ boundary cannot switch
between two transverse graphs. Away from their intersection the
graphs are separated. The graph choice is therefore locally constant
and, since $B_*$ is connected, constant. This proves the assertion.
\end{proof}

\begin{proof}[Completion of the proof of Theorem~\ref{thm:main}]
Proposition~\ref{p:full-profile} proves the sharp inequality and all
open-phase classifications. The transition argument identifies every
compressed transition minimizer. Reverse the three compressions with
Lemma~\ref{p:recovery}. The reflected sphere centre must lie at the
intersection of three coordinate mirrors; a shortest coordinate
cylinder axis must lie at the intersection of two such mirrors.
Their restrictions to the cube are a corner eighth-ball and an edge
quarter-cylinder. The flat model restricts to a coordinate slab.
The corresponding transition radii show that each listed model
attains equality. Complementation supplies all volumes above $1/2$.
At volumes zero and one the assertion is immediate.
\end{proof}

\subsection{The cuboid profile}\label{p:cuboid-profile}

We prove Corollary~\ref{cor:cuboid-profile}. Throughout this subsection,
$0<\beta\leq1$, $Q_\beta=(0,\beta)\times(0,1)^2$, and both volume
and relative perimeter in $Q_\beta$ are divided by $\beta$. Thus
\[
 I_\beta(v)=\inf\{\Per(E;Q_\beta)/\beta:|E|=\beta v\},\qquad
 m_\beta(v)=\min\{(9\pi/(2\beta))^{1/3}v^{2/3},\sqrt{\pi v},1\}
\]
for $0\leq v\leq1/2$. Write $a_\beta=4\pi\beta^2/81$ and retain
$b=1/\pi$. The argument transfers the second contact from the unit
cube and proves the first contact with the rectangular normalization.

\begin{lemma}[Rectangular reduction]\label{p:cuboid-reduction}
One has
\begin{equation}\label{p:cuboid-comparison}
 I_1(v)\leq I_\beta(v)\leq m_\beta(v),\qquad 0\leq v\leq1/2.
\end{equation}
In particular, $I_\beta(b)=I_\beta(1/2)=1$. If
$0<v<1/2$ and $P:=I_\beta(v)<m_\beta(v)$, there is a minimizing
downset whose reflected boundary in
$T_\beta=\R^3/(2\beta\mathbb Z\times2\mathbb Z\times2\mathbb Z)$
is connected, has genus two or three, and has positive CMC multiplier
$h$. With $d\sigma_\beta=dA/(8\beta)$, this surface satisfies
\begin{equation}\label{p:cuboid-support}
 \begin{gathered}
 h^2P\leq\frac\pi\beta,\qquad
 J(v)=P,\qquad J'(v)=h,\\
 J^2J''+J(J')^2=\frac{\pi}{2\beta}\chi(\Sigma)
                       \leq-\frac\pi\beta,
 \end{gathered}
\end{equation}
where $J$ is its smooth local upper support to $I_\beta$, and the
last identity is evaluated at $v$.
\end{lemma}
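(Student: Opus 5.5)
We prove the two inequalities in \eqref{p:cuboid-comparison} separately, then deduce the contacts, and finally adapt the geometric selection of Lemma~\ref{g:downset} and the support calculation of Lemma~\ref{g:profile} to the rectangular torus.

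\emph{Upper bound.} For $I_\beta\le m_\beta$ we use the three admissible models in $Q_\beta$. A corner eighth-ball of radius $r$ has normalized volume $\pi r^3/(6\beta)$ and normalized perimeter $\pi r^2/(2\beta)$. This gives $(9\pi/(2\beta))^{1/3}v^{2/3}$, with admissible radius $r\le\beta$ exactly when $v\le a_\beta$ up to the branch comparison. A quarter-cylinder along an edge of length $\beta$ has normalized quantities $\pi r^2/4$ and $\pi r/2$, with $r<1$ for $v\le1/2$. The slab $\{x_3<v\}$ has normalized perimeter $1$. On each branch where that model's value is the minimum in $m_\beta$, the radius is admissible. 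Off its branch a model may not fit, but it is then not needed, since the minimum is attained by another admissible model.

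\emph{Lower bound.} For $I_1\le I_\beta$ we use a stretching comparison. Given $E\subset Q_\beta$, let $\Phi(x_1,x_2,x_3)=(x_1/\beta,x_2,x_3)$ and consider $\Phi(E)\subset Q$. Its volume is $|E|/\beta=v$. Its relative perimeter is $\int_{\partial^*E\cap Q_\beta}|(\nu_1,\beta\nu_2,\beta\nu_3)|\,d\mathcal H^2/\beta$, by the area formula for the Lipschitz map, which rescales normals through the cofactor matrix. Since $\beta\le1$, this is at most $\Per(E;Q_\beta)/\beta$. Hence $I_1(v)\le I_\beta(v)$. Combining this with $I_1(b)=I_1(1/2)=1$ from Lemma~\ref{p:second-contact} and Lemma~\ref{g:profile}, and with $m_\beta(b)=m_\beta(1/2)=1$, gives both contacts $I_\beta(b)=I_\beta(1/2)=1$.

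\emph{Reduction and support identity.} We repeat the unit-cube arguments with the period lattice $2\beta\mathbb Z\times2\mathbb Z\times2\mathbb Z$ and measure $dA/(8\beta)$.
\begin{itemize}
\item Reflection (Lemma~\ref{g:reflection}) works coordinatewise unchanged, since the bisection argument never used equal side lengths.
\item Concavity of $I_\beta$ and $I_\beta(1/2)=1$ give $h\ge(1-P)/(1/2-v)>0$ whenever $P<m_\beta(v)\le1$.
\item Compression and the downset topology argument of Lemma~\ref{g:downset} are local and transfer verbatim, with fibres $[0,\beta]$ in the first coordinate. They give connectedness (via the stability test with two components), positive $\nu_i$, and genus equal to the number of active coordinates, hence at most three.
\item For low genus we rerun Lemma~\ref{g:low-genus}. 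Spheres in $T_\beta$ give the ball branch. Cylinders have $P^2=\pi|k|v$ with $|k|$ the normalized axis length, and the shortest axis is the $e_1$ direction of length $\beta$. This gives $\sqrt{\pi v}$ after normalization by $\beta$. Flat tori give perimeter at least $1$, the shortest normal being $e_1$ with area normalized to one. So low genus forces $P\ge m_\beta(v)$, a contradiction, and therefore the genus is two or three.
\item The Hersch argument (Lemma~\ref{g:hersch}) gives $AH^2\le2\pi$ with $A=8\beta P$, hence $h^2P\le\pi/\beta$.
\item The parallel-surface formulas \eqref{g:parallel}, rewritten with $d\sigma_\beta$, become $P(t)=P+hPt+\pi\chi t^2/(4\beta)$, and similarly for $v(t)$. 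Eliminating $t$ as in Lemma~\ref{g:profile} gives $J^2J''+J(J')^2=\pi\chi/(2\beta)$. Since $\chi\le-2$, this is at most $-\pi/\beta$.
\end{itemize}

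The main obstacle is verifying the stretching perimeter inequality rigorously for BV sets, in the correct direction. I would first check it on smooth sets by the cofactor formula: the normal of $\Phi(E)$ is proportional to $(\nu_1,\beta\nu_2,\beta\nu_3)$ and the area element is scaled by its length. I would then extend by density using strict BV approximation relative to the open box.
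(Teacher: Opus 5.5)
Your outline follows the paper's proof essentially step for step. It uses the stretching map for $I_1\le I_\beta$, the three models for $I_\beta\le m_\beta$, and the contacts inherited from the unit cube. It then transfers reflection, concavity, compression and downset topology, the lattice-adjusted low-genus alternatives, Hersch with $A=8\beta P$, and the parallel-surface formulas normalized by $8\beta$.

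Two of your explicit formulas are wrong, although neither affects the conclusion.

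\emph{The stretching formula.} For $\Phi=\operatorname{diag}(1/\beta,1,1)$ one has $\operatorname{cof}D\Phi=\operatorname{diag}(1,1/\beta,1/\beta)$. So the normal of $\Phi(E)$ is proportional to $(\beta\nu_1,\nu_2,\nu_3)$, not $(\nu_1,\beta\nu_2,\beta\nu_3)$, and
\[
 \Per(\Phi(E);Q)=\frac1\beta\int_{\partial^*E\cap Q_\beta}\sqrt{\beta^2\nu_1^2+\nu_2^2+\nu_3^2}\,d\mathcal H^2 .
\]
Your integrand $|(\nu_1,\beta\nu_2,\beta\nu_3)|/\beta$ gives $\beta$ instead of $1$ for the slab $\{x_3<v\}$, and $1/\beta$ instead of $1$ for $\{x_1<\beta v\}$. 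The inequality $\Per(\Phi(E);Q)\le\Per(E;Q_\beta)/\beta$ still holds, because the correct integrand is also at most one. Since $\Phi$ is a linear isomorphism, the BV change-of-variables formula gives this directly, as the paper uses it. Your proposed strict-approximation step is therefore unnecessary.

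\emph{The flat-torus case.} The minimal flat tori in $T_\beta$ are orthogonal to $e_2$ or $e_3$, not to $e_1$. Each has physical area $4\beta$, so a separating pair has normalized perimeter exactly $1$, which matches your own slab model $\{x_3<v\}$. A torus orthogonal to $e_1$ has area $4$, and a pair of them has normalized perimeter $1/\beta$. In any case flat boundaries have $h=0$, so they are already excluded once you have shown $h>0$.
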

\begin{proof}
For $E\subset Q_\beta$ let $\widetilde E$ be its image under
$(x_1,x_2,x_3)\mapsto(x_1/\beta,x_2,x_3)$. The BV area formula gives
\[
 |\widetilde E|=|E|/\beta,\qquad
 \Per(\widetilde E;Q)
 =\frac1\beta\int_{\partial^*E\cap Q_\beta}
       \sqrt{\beta^2\nu_1^2+\nu_2^2+\nu_3^2}\,d\mathcal H^2
 \leq\frac1\beta\Per(E;Q_\beta).
\]
Taking infima proves the first inequality in
\eqref{p:cuboid-comparison}. The three candidates prove the second.
Indeed the ball radius at $a_\beta$ is $2\beta/3<\beta$, the
quarter-cylinder parallel to the edge of length $\beta$ has radius
at most $2/\pi<1$ on its minimizing branch, and a slab cut in a
unit-length direction has normalized perimeter one. Theorem~\ref{thm:main}
therefore gives $I_\beta(b)=I_\beta(1/2)=1$.

The doubling proof of Lemma~\ref{g:reflection} applies to an interval
of any length: translate a cut into two semicylinders of that length,
bisect the competitor's volume, and retain the half with smaller
relative perimeter. Successive doubling in the three coordinates
identifies $I_\beta$ with the unrestricted normalized torus profile.
In particular, reflected minimizers are smooth global minimizers on
$T_\beta$, and the CMC, stability and concavity arguments of
Section~\ref{g:foundations} apply with normalization $1/(8\beta)$.
For a bad profile value the slope inequalities and concavity give
\[
 h\geq I'_{\beta,+}(v)
       \geq\frac{1-P}{1/2-v}>0.
\]
Stability under volume transfer between components then makes the
boundary connected.

The low-genus alternatives are those of
\cite[Proposition 4.4]{Milman}; the geometric proof in
Lemma~\ref{g:low-genus} is independent of the lattice. We record the
lattice factors to fix their normalization here. A genus-zero boundary
with $h>0$ encloses a round ball and
has perimeter $(9\pi/(2\beta))^{1/3}v^{2/3}$. For a genus-one boundary
with $h>0$, its Euclidean lift is a circular cylinder. Write its
primitive axial lattice vector as
$2(\beta k_1,k_2,k_3)$, with $k\in\mathbb Z^3$ primitive, and set
$\lambda=(\beta^2k_1^2+k_2^2+k_3^2)^{1/2}\geq\beta$. Its normalized
volume and perimeter satisfy
\[
 v=\frac{\pi r^2\lambda}{4\beta},\qquad
 P=\frac{\pi r\lambda}{2\beta},\qquad
 P^2=\frac{\pi\lambda}{\beta}v\geq\pi v.
\]
Positive $h$ selects the interior side of the ball or tube. For
completeness, a flat boundary consists of parallel flat tori and
has at least two components. A primitive lattice covector is
$\xi=(k_1/\beta,k_2,k_3)$; each corresponding flat torus has physical
area $4\beta|\xi|$, and hence such a boundary has normalized perimeter
at least $|\xi|\geq1$. None of these low-genus alternatives can occur
when $P<m_\beta(v)$.

The compression and topology proof of Lemma~\ref{g:downset} also
applies to the rectangular box. To specify the points at which the
side lengths enter, replace $[0,1]$ by $[0,L_i]$, where
$(L_1,L_2,L_3)=(\beta,1,1)$. The fibre-neighbourhood estimate becomes
$|C_r|\geq\min(L_i,|C|+r)$, so the same neighbourhood inclusion and
BV argument give a minimizing downset. On each strip $0<x_i<L_i$,
the nonnegative Jacobi field $\nu_i$ is strictly positive: otherwise
analytic continuation would give an everywhere tangent constant
vector field, contradicting genus at least two. Hopf's lemma gives
the same lower- and upper-wall curvature signs.

Each intermediate compressed set is still a minimizer at the same
bad profile value. The preceding slope and low-genus arguments thus
apply to it as well; no preservation of its CMC multiplier under
compression is required.

A free-boundary point on two upper walls would again be a graph
$x_k=f(x_i,x_j)$ with $f_i=f_j=0$ and $f_{ii},f_{jj}\geq0$, forcing
$h\leq0$. An occupied interval on their edge would have such an
endpoint, unless downward order filled the box; an endpoint fixed
by three mirrors is impossible for a smooth invariant unit normal.
Thus the region avoids intersections of two upper faces. Before
opposite faces are identified, reflect into
$[-\beta,\beta]\times[-1,1]^2$. Coordinate order and the strict
normal inequalities give radial transversality at every free exit.
The radial description from Lemma~\ref{g:downset} consequently makes
the unglued region a ball, and each active cut-face trace a disk.
Those disks are disjoint. Gluing the $n$ opposite disk pairs gives
genus $n\leq3$. The exclusion of low genus leaves precisely genus
two or three.

The Hersch calculation of Lemma~\ref{g:hersch} gives $h^2A\leq8\pi$
for either genus, where now $A=8\beta P$. This proves its part of
\eqref{p:cuboid-support}. Finally, Gauss--Bonnet in the parallel-surface
calculation gives
\[
 P(t)=P+hPt+\frac{\pi\chi(\Sigma)}{4\beta}t^2,\qquad
 v(t)=v+Pt+\frac{hP}{2}t^2
                  +\frac{\pi\chi(\Sigma)}{12\beta}t^3.
\]
Eliminating $t$ proves the support identity and completes the reduction.
\end{proof}

\begin{proof}[Proof of Corollary~\ref{cor:cuboid-profile}]
By \eqref{p:cuboid-comparison}, the second contact is
$I_\beta(b)=1$. Suppose the first contact failed, and let $(s,t)$
be the component of $\{v\in(0,1/2):I_\beta(v)<m_\beta(v)\}$
containing $a_\beta$. Continuity gives
$0\leq s<a_\beta<t\leq b$. On this gap put $h=I'_{\beta,+}$.
The profile is concave, so $h$ is positive, nonincreasing and locally
BV. At differentiability points it equals every minimizing multiplier.
At almost every point of second-order differentiability,
\eqref{p:cuboid-support} therefore yields
\[
 h^2I_\beta\leq\frac\pi\beta,\qquad
 I_\beta^2h'+I_\beta h^2\leq-\frac\pi\beta.
\]
Define
\[
 \mathcal E_\beta=I_\beta^2h^2+\frac{2\pi}{\beta}I_\beta.
\]
On compact subintervals of the gap, the BV product and chain rules give
\[
 D^{\mathrm{ac}}\mathcal E_\beta
 =2h\left(I_\beta^2h'+I_\beta h^2+\frac\pi\beta\right)dv\leq0,
 \qquad
 D^{\mathrm{s}}\mathcal E_\beta
 =I_\beta^2D^{\mathrm{s}}(h^2)\leq0.
\]
This is the same jump-inclusive calculation as in
Lemma~\ref{p:first-contact}; it proves that $\mathcal E_\beta$ is
nonincreasing on the whole gap.

If $s>0$, the limiting Hersch bound and contact with the ball branch give
\[
 \mathcal E_\beta(s+)
 \leq\frac{3\pi}{\beta}I_\beta(s)
 \leq\frac{3\pi}{\beta}\frac{2\pi\beta}{9}
 =\frac{24\pi^2}{36}.
\]
If $s=0$, the same estimate and $I_\beta(v)\to0$ give
$\mathcal E_\beta(0+)=0$. At the right contact, the branch on the
left of $t$ is cylindrical, also when $t=b$. Left difference
quotients of $I_\beta-m_\beta\leq0$ imply
\[
 I'_{\beta,-}(t)\geq\frac{\pi}{2I_\beta(t)},\qquad
 I_\beta(t)=\sqrt{\pi t}\geq\frac{2\pi\beta}{9}.
\]
Consequently
\[
 \mathcal E_\beta(t-)
 \geq\frac{\pi^2}{4}+\frac{2\pi}{\beta}I_\beta(t)
 \geq\frac{25\pi^2}{36},
\]
contradicting monotonicity. Thus
$I_\beta(a_\beta)=2\pi\beta/9$.

The two contacts allow \cite[Proposition 4.6]{Milman} to be applied
on $[0,a_\beta]$, $[a_\beta,b]$ and $[b,1-b]$, with the torus
normalization $1/(8\beta)$. Complement symmetry supplies contact
at $1-b$. The proposition gives $I_\beta=m_\beta$ for
$0\leq v\leq1/2$, and complementation gives the full profile.
The corner eighth-balls, quarter-cylinders parallel to a shortest
edge, and slabs cut in a unit-length coordinate direction attain
the three branches. Both adjacent models attain the common value
at each transition. This proves the asserted existence of model
minimizers at every volume. At volumes zero and one the empty set
and the full cuboid have relative perimeter zero.
\end{proof}

\section{Quantitative stability and competition at the transitions}\label{sec:stability}
We now derive quantitative stability from Theorem~\ref{thm:main}.
The local estimates below concern explicit CMC models and do not
assume their global minimality. The full equality classification is
used when passing from the local estimate to a global one. All
constants in this section are existence constants, except for the
displayed spectral gaps and Fourier coefficients.

\subsection{Model families and the fixed-volume estimate}
For $0<v\leq1/2$, write
\begin{equation}\label{stab:eq:branches}
 p_B(v)=(9\pi/2)^{1/3}v^{2/3},\qquad
 p_C(v)=\sqrt{\pi v},\qquad p_S(v)=1.
\end{equation}
Let $\mathcal F_B(v)$ contain all corner eighth-balls of radius
$r_B=(6v/\pi)^{1/3}$, $\mathcal F_C(v)$ all edge quarter-cylinders
of radius $r_C=2\sqrt{v/\pi}$, and $\mathcal F_S(v)$ all coordinate
slabs of width $v$. All corners, edges, coordinate directions and
opposite faces are included. Each family is finite, modulo equality
almost everywhere. The minimizing family $\mathcal M_v$ is given by
Theorem~\ref{thm:main}; it contains both adjacent families at $a$ and
$b$. For $v>1/2$, it consists of the complements of the models at
$1-v$.
At minimizing ball volumes, $r_B\leq2/3<1$; at minimizing cylinder
volumes, $4/9\leq r_C\leq2/\pi<1$. All reflected models are embedded.

\begin{theorem}\label{stab:thm:global}
For every fixed $0<v<1$, there is a constant $c_v>0$ such that
every finite-perimeter set $E\subset Q$ with $|E|=v$ satisfies
\begin{equation}\label{stab:eq:global}
 \delta(E):=\Per(E;Q)-I(v)\geq c_v\alpha_v(E)^2,
 \qquad \alpha_v(E)=\min_{M\in\mathcal M_v}|E\triangle M|.
\end{equation}
At a transition, the distance is to the union of both minimizing
families. The exponent two is optimal: it cannot be replaced by any
smaller exponent with a positive constant valid for all competitors
at that volume. Constants can be chosen uniformly for $v$ in a
compact subset of any one of the open phases
\[
 (0,a),\quad(a,b),\quad(b,1-b),\quad(1-b,1-a),\quad(1-a,1).
\]
\end{theorem}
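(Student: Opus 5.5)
We follow the route announced in the introduction: a local quadratic estimate near each model, derived from the second variation on the torus $T$, extended to arbitrary competitors through the selection principle of Cicalese--Leonardi~\cite{CL} and Acerbi--Fusco--Morini~\cite{AFM}. By Lemma~\ref{g:reflection} we work on $T$ with the reflected set $\widetilde E$, for which $\Per$ and volume are preserved. Complementation reduces to $v\leq1/2$.

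\emph{Step 1: spectral gap.} For each model $M\in\mathcal M_v$ (ball, shortest-axis cylinder or slab pair), compute the second variation $Q$ of \eqref{g:stability} on its reflected boundary. On the sphere the kernel of $Q$ on mean-zero functions consists of the degree-one spherical harmonics, i.e.\ the translation modes. On the cylinder, separating variables in the axial Fourier mode $k\pi$ and the angular mode $j$ gives eigenvalues proportional to $(k\pi)^2+(j^2-1)/r^2$. These are positive except for $k=0,\ j=1$ (translations), because $r\leq2/\pi<1$ keeps the $k=1,\ j=0$ mode positive: $\pi^2-1/r^2>0$. On the slab pair $|B|=0$, so the kernel is spanned by the two constants; one is removed by the volume constraint and the other is a translation. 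This yields $Q(f,f)\geq\lambda_v\|f\|_{H^1}^2$ for mean-zero $f$ orthogonal to the translation modes, with $\lambda_v$ explicit and continuous in $v$ within each phase.

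\emph{Step 2: local Fuglede estimate.} For normal graphs $\partial F=\{x+f(x)\nu\}$ over a translate of $M$ with small $C^{1}$ norm and the same volume, the spectral gap gives
\[
\Per(F)-\Per(M)\geq c\,\|f\|_{H^1}^2\geq c'|F\triangle(M+\tau)|^2,
\]
after choosing the translation $\tau$ that kills the projection of $f$ onto the translation modes. We still need distance to the finite cubical family rather than to the translation orbit. Here we use reflection invariance: $\widetilde E$ is even under each coordinate reflection, so the first Fourier (translation) coefficients of $\chi_{\widetilde E}-\chi_M$ in an odd mode vanish for the reflection-centred model $M$. Hence the optimal $\tau$ is $O(\|f\|^2)$, and $|\widetilde E\triangle M|\lesssim|\widetilde E\triangle(M+\tau)|+|\tau|$ is controlled by $\|f\|_{L^1}$.

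\emph{Step 3: globalization by contradiction.} Suppose $E_k$ have volume $v$ and $\delta(E_k)<\alpha_v(E_k)^2/k$. BV compactness gives a limit with $\delta=0$. By Theorem~\ref{thm:main} this limit lies in $\mathcal M_v$, and at a transition it lies in one of the two families; hence $\alpha_v(E_k)\to0$. The penalized selection problem
\[
\min\Bigl\{\Per(F)+\Lambda\bigl||F|-v\bigr|+\bigl(\alpha_v(F)-\alpha_v(E_k)\bigr)^2\Bigr\}
\]
on $T$, restricted to reflection-invariant $F$, produces quasiminimizers $F_k$ with the same failing inequality. Uniform $\Lambda$-minimality and regularity yield $C^{1,\beta}$ convergence of $\partial F_k$ to a model, so the $F_k$ are normal graphs, contradicting Step~2. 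The reflection-invariant restriction is used only for the distance bookkeeping; regularity comes from the quasiminimality inequality on $T$ itself.

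The step we expect to be the main obstacle is this selection step. The penalty must keep the volume fixed, reflection symmetry must be preserved while local regularity still applies, and at a transition the limit must be attached to one family even though both families are present.

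\emph{Step 4: optimality of the exponent and uniformity.} Optimality follows by testing a volume-preserving normal perturbation $f=\epsilon\phi$ with $\phi$ a positive-eigenvalue mode. This gives $\delta=O(\epsilon^2)$ and $\alpha_v\sim\epsilon$, so no smaller exponent can hold with a positive constant. Uniformity on compact subsets of an open phase follows by running Step~3 with $v_k\to v_\infty$ in the same phase, together with continuity of the spectral gap $\lambda_v$ there. Inside an open phase only one model family is present, so the argument needs no modification.
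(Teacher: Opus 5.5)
Your argument is viable, but it takes a different route from the paper's.

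\textbf{How the paper proceeds.} It first proves a purely local statement, Lemma~\ref{stab:lem:uniform-torus}:
\begin{itemize}
\item the selection there runs over \emph{all} finite-perimeter sets on $T$, with the translation-invariant penalty $|d_{\rm tr}(F,K_s)-\epsilon_j|$;
\item the comparability $d_j\sim\epsilon_j$ comes from a calibration by the extended normal field, so global minimality of the model is never invoked;
\item the volume is fixed exactly by a flow.
\end{itemize}
Only afterwards is torus distance converted into distance to the finite cubical family, by the sine-coefficient pinning of Lemma~\ref{stab:lem:pinning}. The global bound then follows from a separate compactness gap for $\alpha_v\geq\rho$.

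\textbf{How you proceed.} You use $\Per(F)\geq I(v)$ from Theorem~\ref{thm:main} in the comparison, which replaces the calibration. You also select among reflection-invariant sets, which removes the need for pinning: the graph function of a symmetric $F_k$ over the reflection-centred limit model is even, while each effective $\nu_i$ is odd. So the translation gauge holds with $\tau=0$ exactly, and your ``$\tau=O(\|f\|^2)$'' and Fourier remark can be dropped.

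\textbf{Trade-off.} The paper's route gives a local estimate independent of global minimality. That is exactly what Theorem~\ref{stab:thm:multiwell} needs on the nonoptimal branch near a transition. Your route is shorter for the present theorem alone.

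\textbf{The step you leave open.} You assert without proof that ``regularity comes from the quasiminimality inequality on $T$ itself.'' This is precisely what the paper avoids: a minimizer within the symmetric class is not automatically a quasiminimizer against arbitrary local competitors. Here it is true, by the doubling argument of Lemma~\ref{g:reflection}.
\begin{itemize}
\item Cut an arbitrary competitor $G$ along the mirrors into its pieces in the eight fundamental cubes $D_j$.
\item Let $G^{(j)}$ be the symmetric set generated by $G\cap D_j$, and compare each $G^{(j)}$ with $F_k$ using minimality in the symmetric class.
\item Sum over $j$. Since traces agree across mirrors, $\sum_jP_T(G^{(j)})\leq8P_T(G)$, while $\sum_j|G^{(j)}\triangle F_k|=8|G\triangle F_k|$.
\end{itemize}
Because all your penalties are $L^1$-Lipschitz, this yields $P_T(G)\geq P_T(F_k)-\Lambda'|G\triangle F_k|$ for every $G$.

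\textbf{The volume constraint is also harmless.} From $\Per(F)\geq I(|F|)$ and the local Lipschitz bound of the concave profile, you get $\bigl||F_k|-v\bigr|=O(\epsilon_k^2/k)$ once $\Lambda$ exceeds that Lipschitz constant. This error is absorbed in the graph expansion, so exact volume is not needed.

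\textbf{Two smaller corrections.}
\begin{itemize}
\item In Step~1, positivity of $\pi^2-1/r^2$ needs $r>1/\pi$. This comes from the lower bound $r\geq4/9$ on the minimizing cylinder range, not from $r\leq2/\pi$.
\item In Step~4, the test mode must be reflection-invariant so that it descends to $Q$, with an $O(\epsilon^2)$ constant correction fixing the volume exactly.
\end{itemize}
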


The proof uses local quantitative minimality, which is stronger than
a Taylor estimate for smooth graphs. Arbitrary $L^1$-close competitors
may be nonsmooth or have additional components. We give the compact
parameter version needed here so that uniformity in volume is also
justified.

\subsection{Reflection and the second-variation gaps}
In this section $P_T$ and $|\cdot|$ on $T$ denote physical,
unnormalized perimeter and volume. Even reflection of $E\subset Q$
gives a set $\widehat E\subset T$. The BV gluing formula gives
\begin{equation}\label{stab:eq:reflection}
 |\widehat E|=8|E|,\qquad P_T(\widehat E)=8\Per(E;Q),\qquad
 |\widehat E\triangle\widehat F|=8|E\triangle F|.
\end{equation}
The traces on both sides of each mirror agree, so no jump measure
is added. The identities hold for arbitrary finite-perimeter sets.

For a reflected model $K$, let
\begin{equation}\label{stab:eq:form}
 \mathcal Q_K(u)=\int_{\partial K}(|\nabla_\tau u|^2-q u^2),
 \qquad
 X_K=\left\{u\in H^1(\partial K):\int_{\partial K}u=0,
             \ \int_{\partial K}u\nu_i=0\ (i=1,2,3)\right\},
\end{equation}
where $q=|B|^2$ is constant for each model. The first condition fixes
volume to first order; the other conditions remove normal components
of translations. An identically zero normal component adds no
constraint.

For the sphere of radius $r$, the spherical-harmonic eigenvalues are
$[\ell(\ell+1)-2]/r^2$. The constant is the volume mode and the
degree-one modes are translations. The gap on $X_K$ is therefore
$4/r^2$. For a cylinder with axial period two, the Fourier modes
$e^{im\theta}e^{i\pi ks}$ give
\begin{equation}\label{stab:eq:cylinder-spectrum}
 \lambda_{m,k}=(m^2-1)/r^2+\pi^2k^2,
 \qquad \lambda_C=\min\{3/r^2,\pi^2-1/r^2\}.
\end{equation}
The $(0,0)$ mode changes volume; $(\pm1,0)$ are transverse
translations. Axial translation has zero normal component.
All remaining modes have positive eigenvalue precisely when
$r>1/\pi$. The minimizing range has $r\geq4/9>1/\pi$.

A reflected slab has two flat boundary components. Their constant
modes $a_+,a_-$ satisfy $a_++a_-=0$ by volume preservation and
$a_+-a_-=0$ by orthogonality to normal translation. Thus each
component has mean zero. The first remaining tangential Laplace
eigenvalue is $\pi^2$, because the tangential periods are two.
The disconnected boundary produces no additional zero mode.
At the transitions the gaps are
\begin{center}
\begin{tabular}{lll}
\toprule
Volume and model & Radius & Gap on $X_K$\\
\midrule
$a$, ball & $2/3$ & $9$\\
$a$, cylinder & $4/9$ & $\pi^2-81/16$\\
$b$, cylinder & $2/\pi$ & $3\pi^2/4$\\
$b$, slab & --- & $\pi^2$\\
\bottomrule
\end{tabular}
\end{center}
In particular both shapes at either transition remain strictly
stable modulo translations. Since $q$ is constant, an $L^2$ gap
$\lambda>0$ gives the $H^1$ bound
\begin{equation}\label{stab:eq:coercivity}
 \|u\|_{H^1}^2=\mathcal Q_K(u)+(1+q)\|u\|_2^2
 \leq\left(1+\frac{1+q}{\lambda}\right)\mathcal Q_K(u),
 \qquad u\in X_K.
\end{equation}

\subsection{Uniform local minimality for arbitrary competitors}
Acerbi--Fusco--Morini~\cite[Corollary 1.2]{AFM} prove quadratic
$L^1$ local minimality modulo translations from strict second
variation for smooth CMC regions on the unit flat torus.
The dilation $x\mapsto x/2$ transfers that pointwise result exactly
to $T$: physical perimeter scales by four and symmetric-difference
volume by eight. A constant $c$ on the unit torus becomes $c/16$
in the physical torus inequality. Uniformity along a family requires
an additional argument, supplied below.

We use the following regularity statement. Uniform perimeter
quasiminimizers with bounded perimeter that converge in $L^1$ to a
region with smooth boundary are, for large indices, normal graphs
converging in $C^1$ over that boundary; measure-theoretic representatives
are understood. Here uniform quasiminimality means
\[
 P_T(F_j)\leq P_T(G)+\Lambda|F_j\triangle G|
\]
for local competitors and one fixed $\Lambda$. The smooth-limit
statement is \cite[Theorem 4.2]{AFM}; its local regularity basis is
\cite[Theorems 21.14 and 26.6]{Maggi}. The fixed dilation also transfers
the statement to $T$. Density estimates and boundary convergence
exclude distant components, and a finite cover of the smooth compact
limit gives a global normal graph. No smoothness is imposed on the
initial competitors.

\begin{lemma}\label{stab:lem:uniform-torus}
Let $K_s$ be a compact smooth parameter family of reflected balls
with $0<r<1$, cylinders of axial period two with $1/\pi<r<1$, or
coordinate slabs of volume fraction in $(0,1)$, of a fixed orientation.
There exist $\varepsilon,c>0$, independent of $s$, such that
\begin{equation}\label{stab:eq:torus-local}
 P_T(A)-P_T(K_s)\geq c\,d_{\rm tr}(A,K_s)^2,\qquad
 d_{\rm tr}(A,K_s)=\min_{t\in T}|A\triangle(K_s+t)|,
\end{equation}
whenever $|A|=|K_s|$ and $d_{\rm tr}(A,K_s)<\varepsilon$.
The competitors are arbitrary finite-perimeter sets.
\end{lemma}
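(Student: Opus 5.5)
Argue by contradiction, combining a uniform Fuglede-type estimate for normal graphs with the Cicalese--Leonardi selection principle \cite{CL} and the regularity statement recorded above. Suppose the lemma fails. Then there are parameters $s_j$, constants $c_j\downarrow0$, and sets $A_j$ with $|A_j|=|K_{s_j}|$, $d_j:=d_{\rm tr}(A_j,K_{s_j})\to0$, and
\[
 P_T(A_j)-P_T(K_{s_j})<c_j d_j^2 .
\]
By compactness of the parameter family, pass to a subsequence with $s_j\to s_\infty$. After translating we may assume $|A_j\triangle K_{s_j}|=d_j$, and this distance is attained.

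The first step is to replace $A_j$ by a regular competitor. Let $F_j$ minimize the penalized functional
\[
 P_T(F)+\Lambda\bigl||F|-|K_{s_j}|\bigr|
 +\sqrt{\bigl(d_{\rm tr}(F,K_{s_j})-d_j\bigr)^2+\varepsilon_0 d_j^2}\,,
\]
with $\Lambda$ fixed and large, as in \cite{CL} and the $\Lambda$-penalization of \cite{AFM}. The standard argument then gives the following.
\begin{itemize}
\item[(i)] $F_j$ are uniform $\Lambda'$-quasiminimizers, since the penalty is Lipschitz in $L^1$.
\item[(ii)] $F_j\to K_{s_\infty}$ in $L^1$, so $d_{\rm tr}(F_j,K_{s_j})$ is comparable to $d_j$.
\item[(iii)] $F_j$ violates the inequality with a still-vanishing constant, after a volume correction. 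The correction is a small dilation or normal flow whose cost is $O(\Lambda$ times the volume error$)$, and the volume error is itself controlled by the penalty.
\end{itemize}
The quoted regularity theorem then makes $\partial F_j$ a normal graph $u_j$ over $\partial K_{s_\infty}$, hence over $\partial K_{s_j}$, with $\|u_j\|_{C^1}\to0$. This applies to a ball, a cylinder, or the slab's two parallel tori, and density estimates exclude stray components.

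The second step is a uniform Fuglede estimate for $C^{1}$-small graphs:
\[
 P_T(F_u)-P_T(K_s)\ \geq\ \tfrac12\mathcal Q_{K_s}(u)-\omega(\|u\|_{C^1})\|u\|_{H^1}^2
\]
for volume-preserving graphs, with $\omega$ independent of $s$. The Taylor coefficients of the area and volume functionals depend smoothly on the radius, or trivially for slabs. Using optimal translation and the volume constraint, decompose $u=u^\perp+\text{(projection onto the $\nu_i$ and constants)}$. The projections are quadratically small in $u$, by the first-order volume identity and the minimality of the translation. The spectral gaps of \eqref{stab:eq:cylinder-spectrum} and the sphere and slab computations are continuous in $r$ and positive on the compact parameter set, since $r$ stays in a compact subinterval of $(0,1)$ or $(1/\pi,1)$. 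Therefore \eqref{stab:eq:coercivity} gives $\mathcal Q_{K_s}(u^\perp)\ge\kappa\|u^\perp\|_{H^1}^2$ uniformly. Combined with $d_{\rm tr}(F_u,K_s)\le C\|u\|_{L^1}\le C'\|u\|_{H^1}$, this yields the quadratic bound for $F_j$ with a constant bounded below, contradicting $c_j\to0$.

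The main obstacle is the uniformity of the selection step. One must check that the constants in the penalization are independent of $s$: $\Lambda$, the volume-correction bound, and the convergence rate needed to invoke $C^1$ graph convergence. Most importantly, the reduction from the quasiminimizer $F_j$ back to a contradiction must use the \emph{translation-minimized} distance at a varying base $K_{s_j}$. I would handle this by always comparing with the fixed limit $K_{s_\infty}$ in the regularity step, and by transferring graphs between $\partial K_{s_j}$ and $\partial K_{s_\infty}$ through the smooth parameter dependence of the family. The pointwise version, a single fixed $K$, is exactly \cite[Corollary 1.2]{AFM} after the dilation described above. The only new point is that every constant in that proof depends continuously on the model through its radius, and so is uniform on a compact family.
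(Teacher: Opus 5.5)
\textbf{Gap: step (ii) of your selection argument is unsupported, and fails for part of this family.} You assert that the penalized minimizers $F_j$ converge to $K_{s_\infty}$ and that $d_{\rm tr}(F_j,K_{s_j})$ is comparable to $d_j$, but nothing in your setup gives this.

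In Cicalese--Leonardi this step comes from the ball being the global isoperimetric minimizer. That gives $P(F_j)\ge P(B)$ up to the volume penalty, and comparison with $A_j$ then pins the distance. Here the models are not global minimizers. A reflected ball with $r=0.99$ has physical volume about $4.06$ and perimeter about $12.3$. A coordinate slab of the same volume has perimeter $8$ and lies at translation distance at most about $2.4$ from it. With your penalty (coefficient one on the distance term), the slab has strictly smaller energy than $A_j$ and than every set near $K_{s_j}$. So the $F_j$ need not approach $K_{s_\infty}$ at all, and the $C^1$-graph regularity step has nothing to act on.

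The same missing lower bound undermines (iii). Without a lower bound on $P_T(F_j)-P_T(K_{s_j})$, your penalty controls neither the distance nor the volume error. At best you get a volume error of order $d_j$, whereas you need the correction cost to be $o(d_j^2)$.

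\textbf{What is missing, and how to repair it.} You need a lower bound valid for arbitrary finite-perimeter sets, with no volume constraint. The paper obtains it by calibration:
\begin{itemize}
\item Extend $\nu_s$ through the tubular neighbourhood and cut it off, giving $Z_s$ with $|Z_s|\le1$, $Z_s=\nu_s$ on $\partial K_s$, and $\|\Div Z_s\|_\infty\le D$ uniformly in $s$.
\item The BV divergence theorem, after an aligning translation, gives $P_T(F)-P_T(K_s)\ge -D\,d_{\rm tr}(F,K_s)$.
\item Penalize the distance with a coefficient $\Lambda_1>D$, for instance $\Lambda_1|d_{\rm tr}(F,K_{s_j})-d_j|$. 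Comparison with $A_j$ then gives two-sided bounds $d_{\rm tr}(F_j,K_{s_j})\asymp d_j$, and hence $L^1$ convergence after translation.
\item Exact volume comes from a volume-fixing flow along $Z_{s_j}$. Its volume derivative is at least $P_T(K_{s_j})-Dd_j>0$. Once $\Lambda_2>C(1+\Lambda_1)$, any nonzero volume error would strictly lower the penalized energy, so $|F_j|=|K_{s_j}|$ exactly rather than merely being controlled.
\end{itemize}
Your square-root penalty would also work once it is multiplied by such a $\Lambda_1$ and combined with this calibration bound.

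\textbf{A smaller error in the graph step.} The $L^1$-optimal translation does not make the projections $\int u\nu_i$ vanish or become quadratically small; its first-order condition involves $\operatorname{sign}u$, not $u$. Instead, fix the translation by the implicit function theorem so that $\int u\nu_i=0$, using the positive Gram matrix $\int\nu_i\nu_j$. Then observe that $d_{\rm tr}\le|K_{s,u}\triangle K_s|\le C\|u\|_{L^1}$ holds for any choice of gauge.
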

\begin{proof}
On this compact family the tubular radius is bounded below and the
$C^3$ parametrizations are bounded. Boundary area, the positive
eigenvalues of the effective translation Gram matrix $\int\nu_i\nu_j$,
and the coercivity in~\eqref{stab:eq:coercivity} have positive lower
bounds. All subsequent constants depend on the family and the fixed torus.

First consider a small $C^1$ normal graph $K_{s,u}$ of equal volume.
By a small translation we can impose $\int u\nu_i=0$ for the
effective directions: the derivative of these equations with
respect to translation is, up to sign, the positive Gram matrix.
The implicit function theorem applies uniformly. Since
$\int\nu_i=0$, subtracting the mean preserves these equations.
The tubular Jacobian and the graph-area Jacobian give
\begin{align}
 |K_{s,u}|-|K_s|&=\int_{\partial K_s}u+O(\|u\|_2^2),\label{stab:eq:graph-volume}\\
 P_T(K_{s,u})-P_T(K_s)-h_s(|K_{s,u}|-|K_s|)
 &=\tfrac12\mathcal Q_{K_s}(u)+R_s(u),\nonumber\\
 |R_s(u)|&\leq C\|u\|_{C^1}\|u\|_{H^1}^2.\label{stab:eq:graph-expansion}
\end{align}
Here $h_s$ is the sum of the outward principal curvatures. For
example the volume Jacobian in the normal coordinate $t$ is
$1+h_st+\kappa_1\kappa_2t^2$; the area expansion is
$1+h_su+\kappa_1\kappa_2u^2+|\nabla_\tau u|^2/2$ to second order.
Subtracting $h_s$ times the volume expansion yields
$(|\nabla_\tau u|^2-q_su^2)/2$, which proves the displayed quadratic
term and the uniform remainder bound.

Equal volume implies $|\bar u|\leq C\|u\|_2^2$.
With $u_0=u-\bar u\in X_{K_s}$ and constant $q_s$,
$\mathcal Q(u)=\mathcal Q(u_0)-q_s|\partial K_s|\bar u^2$.
Using~\eqref{stab:eq:coercivity} and absorbing the mean and remainder
therefore proves, for a uniform graph radius,
\begin{equation}\label{stab:eq:graph-lower}
 P_T(K_{s,u})-P_T(K_s)\geq c_0\|u\|_{H^1}^2
 \geq c_1d_{\rm tr}(K_{s,u},K_s)^2.
\end{equation}
The last inequality follows from
$d_{\rm tr}\leq |K_{s,u}\triangle K_s|\leq C\|u\|_1$ and
the uniform area bound.

We now pass to arbitrary finite-perimeter competitors. Extend the
unit normal of $K_s$ through its tubular neighborhood and multiply
by a cutoff to obtain $Z_s$ with $|Z_s|\leq1$,
$Z_s=\nu_s$ on $\partial K_s$, and $\|\Div Z_s\|_\infty\leq D$
uniformly. The BV divergence theorem gives, for any $F$,
\[
 P_T(F)-P_T(K_s)\geq
 \int_{\partial^*F}Z_s\cdot\nu_F-\int_{\partial K_s}Z_s\cdot\nu_s
 =\int_{T}(\chi_F-\chi_{K_s})\Div Z_s.
\]
Applying this after a best aligning translation yields
\begin{equation}\label{stab:eq:calibration}
 P_T(F)-P_T(K_s)\geq-Dd_{\rm tr}(F,K_s).
\end{equation}

If~\eqref{stab:eq:torus-local} failed uniformly, we could choose
$s_j\to s_*$ and equal-volume sets $A_j$ with
$\epsilon_j=d_{\rm tr}(A_j,K_{s_j})\in(0,1/j)$ and
$P_T(A_j)-P_T(K_{s_j})<\epsilon_j^2/j$.
Choose $\Lambda_1>D$ and minimize, over all finite-perimeter sets,
\begin{equation}\label{stab:eq:selection}
 P_T(F)+\Lambda_1|d_{\rm tr}(F,K_{s_j})-\epsilon_j|
       +\Lambda_2\bigl||F|-|K_{s_j}|\bigr|.
\end{equation}
The direct method applies: both distances are $L^1$ continuous and
perimeter is lower semicontinuous. If $F_j$ is a minimizer and
$d_j=d_{\rm tr}(F_j,K_{s_j})$, comparison with $A_j$ gives
\begin{equation}\label{stab:eq:selected-upper}
 P_T(F_j)-P_T(K_{s_j})+\Lambda_1|d_j-\epsilon_j|
 +\Lambda_2\bigl||F_j|-|K_{s_j}|\bigr|\leq\epsilon_j^2/j.
\end{equation}
By~\eqref{stab:eq:calibration},
\begin{equation}\label{stab:eq:comparable}
 \frac{\Lambda_1\epsilon_j-\epsilon_j^2/j}{\Lambda_1+D}
 \leq d_j\leq
 \frac{\Lambda_1\epsilon_j+\epsilon_j^2/j}{\Lambda_1-D}.
\end{equation}
For $d_j\leq\epsilon_j$ the lower bound follows directly; for
$d_j\geq\epsilon_j$ the upper bound follows directly, and the
other bound is automatic. After translation, $F_j\to K_{s_*}$ in
$L^1$, and their perimeters are bounded independently of $\Lambda_2$.
In fact lower semicontinuity and~\eqref{stab:eq:selected-upper} imply
$P_T(F_j)\to P_T(K_{s_*})$.

We can select a fixed sufficiently large $\Lambda_2$ so that
$|F_j|=|K_{s_j}|$ for all large $j$. To verify this before invoking
regularity, align $F_j$ with $K_{s_j}$ and use the flow
$\Phi_{j,t}$ of the uniformly smooth field $Z_{s_j}$ constructed above.
Its volume derivative satisfies
\[
 \left.\frac{d}{dt}|\Phi_{j,t}F_j|\right|_{t=0}
 =\int_{F_j}\Div Z_{s_j}
 \geq P_T(K_{s_j})-Dd_j\geq c_2>0
\]
for all large $j$. Uniform bounds on the flow Jacobians give a
bounded second derivative and a uniform inverse interval; these
bounds depend only on the compact family. Writing
$e_j=|K_{s_j}|-|F_j|\to0$, choose $t_j$ with
$|\Phi_{j,t_j}F_j|=|K_{s_j}|$ and $|t_j|\leq C|e_j|$.
BV change of variables and the flow Jacobians give
\[
 P_T(\Phi_{j,t_j}F_j)-P_T(F_j)\leq C|e_j|,\qquad
 |\Phi_{j,t_j}F_j\triangle F_j|\leq C|e_j|.
\]
For the second inequality one may integrate the distributional
transport equation for $\chi_{\Phi_{j,t}F_j}$; the bound is
$C|t_j|P_T(F_j)$. The first two terms of~\eqref{stab:eq:selection}
increase by at most $C(1+\Lambda_1)|e_j|$, while the last decreases
by $\Lambda_2|e_j|$. Taking $\Lambda_2>C(1+\Lambda_1)$ excludes
$e_j\ne0$. The bounds preceding this choice are independent of
$\Lambda_2$, so the choice is not circular.

Both penalty terms in~\eqref{stab:eq:selection} are Lipschitz in
$L^1$. Hence
\[
 P_T(F_j)\leq P_T(G)+(\Lambda_1+\Lambda_2)|F_j\triangle G|
\]
for every competitor $G$, without a volume constraint.
Improved convergence for these uniform quasiminimizers makes
$\partial F_j$ small $C^1$ normal graphs over $\partial K_{s_*}$.
Because $K_{s_j}\to K_{s_*}$ smoothly, they are also small graphs
over $\partial K_{s_j}$; the translation gauge and
\eqref{stab:eq:graph-lower} apply. Dropping the nonnegative penalties
in~\eqref{stab:eq:selected-upper} now gives
$c_1d_j^2\leq\epsilon_j^2/j$, contradicting
\eqref{stab:eq:comparable}. This proves the uniform assertion on the
period-two torus using only the stated GMT convergence theorem.
\end{proof}

\subsection{Removing translations and proving the global estimate}
The torus distance permits translations that need not preserve the
cube's reflection symmetry. The next calculation recovers the
finite cubical position family.

\begin{lemma}\label{stab:lem:pinning}
Let $K$ be a reflected model centered at the origin, and let
$A\subset T$ be invariant under all coordinate reflections.
Let $\mathcal O_{\rm ref}(K)$ be the finite family of its distinct
reflection-invariant translates. Then
\begin{equation}\label{stab:eq:pinning}
 \min_{K'\in\mathcal O_{\rm ref}(K)}|A\triangle K'|
 \leq C_K d_{\rm tr}(A,K),\qquad
 C_K=1+\frac{P_T(K)}2
       \left(\sum_{i\in\mathcal I_K}k_i^{-2}\right)^{1/2},
\end{equation}
where $\mathcal I_K$ is the set of effective translation directions
and $k_i=\int_K\cos(\pi x_i)\,dx>0$ for those directions.
The constants are bounded on compact model families in the ranges
of Lemma~\ref{stab:lem:uniform-torus}.
\end{lemma}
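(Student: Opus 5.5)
The plan is to read off the nearest reflection-invariant translate from the first Fourier coefficients in each effective direction, using only the symmetry of $A$ and of $K$. Let $t\in T$ attain $d:=d_{\rm tr}(A,K)=|A\triangle(K+t)|$. The direct method applies because the map $t\mapsto|A\triangle(K+t)|$ is continuous on the compact torus. Since $K$ is centred at the origin and invariant under all coordinate reflections, $K+t'$ is reflection invariant exactly when $2t'_i\in2\mathbb Z$, that is $t'_i\in\{0,1\}$ modulo two, for every effective direction $i\in\mathcal I_K$. The components of $t'$ in non-effective directions may be set to zero, since translations in those directions leave $K$ invariant.

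For each $i\in\mathcal I_K$ I test both sets against $\sin(\pi x_i)$, which is well defined on $T=\R^3/(2\mathbb Z)^3$. Reflection invariance of $A$ in $x_i$ gives $\int_A\sin(\pi x_i)\,dx=0$. Evenness of $K$ in $x_i$ gives
\[
 \int_{K+t}\sin(\pi x_i)\,dx=\int_K\sin(\pi(x_i+t_i))\,dx=\sin(\pi t_i)\,k_i,
\]
because the $\sin(\pi x_i)\cos(\pi t_i)$ term integrates to zero over $K$. Since $|\sin|\leq1$, subtracting the two integrals gives
\[
 k_i|\sin(\pi t_i)|\leq|A\triangle(K+t)|=d.
\]
Now choose $t'_i\in\{0,1\}$ nearest to $t_i$ modulo two, so that $|t_i-t'_i|\leq1/2$. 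Then $|\sin(\pi t_i)|=|\sin(\pi(t_i-t'_i))|\geq2|t_i-t'_i|$. Hence $|t-t'|\leq\tfrac12 d\,(\sum_{i\in\mathcal I_K}k_i^{-2})^{1/2}$.

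The standard translation estimate for finite-perimeter sets is $|K\triangle(K+\tau)|\leq P_T(K)|\tau|$. It follows by integrating the BV transport identity along the segment, or by Fubini on lines parallel to $\tau$. Applying it with $\tau=t-t'$ and using the triangle inequality,
\[
 |A\triangle(K+t')|\leq d+P_T(K)|t-t'|\leq\Bigl(1+\tfrac{P_T(K)}2\bigl(\textstyle\sum_{i\in\mathcal I_K}k_i^{-2}\bigr)^{1/2}\Bigr)d .
\]
This is exactly \eqref{stab:eq:pinning}, since $K+t'\in\mathcal O_{\rm ref}(K)$.

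The step I expect to need the most care is the positivity of $k_i$, together with its uniform lower bound and the uniform upper bound for $P_T(K)$ on compact model families. The integrand $\cos(\pi x_i)$ is not positive on all of $K$ once the model reaches $|x_i|>1/2$.

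For a slab of half-width $w<1$ in direction $i$, I will compute directly $k_i=4\cdot2\sin(\pi w)/\pi>0$, the factor $4$ being the transverse area.

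For a ball or cylinder I will slice perpendicular to $e_i$. This gives $k_i=\int_{-r}^{r}\cos(\pi s)\,g(s)\,ds$ with $g$ even and decreasing in $|s|$, and
\[
 g(s)=\pi(r^2-s^2)\ \text{for the ball},\qquad g(s)=4\sqrt{r^2-s^2}\ \text{for a transverse cylinder direction}.
\]
For $r<1$, I will pair $s$ with $1-s$ on $[0,r]$: $\cos$ is antisymmetric about $1/2$, and $g(s)\geq g(1-s)$ for $s\leq1/2$, which yields strict positivity. On the ranges $0<r<1$ and $1/\pi<r<1$, compactness of the family then gives uniform two-sided bounds on $k_i$ and on $P_T(K)$, so $C_K$ is bounded.
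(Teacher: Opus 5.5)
Your proposal is correct and follows the paper's proof essentially step by step. The shared steps are: the vanishing sine coefficient of $A$ compared with $k_i\sin(\pi t_i)$ for $K+t$, the bound $\sin(\pi d_i)\ge 2d_i$ after rounding $t_i$ to the nearest integer modulo two, the BV translation estimate $|K\triangle(K+\tau)|\le P_T(K)|\tau|$, and the triangle inequality. One small fix: normalize the ineffective components of $t$ as well, for example by taking $t'_j=t_j$ there, so that $|t-t'|$ involves only effective directions.

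The one real difference is how you prove $k_i>0$. The paper uses explicit formulas: $k_i=\frac{4}{\pi^2}(\sin z-z\cos z)$ for the ball, and $k_i=\frac{8}{\pi}\int_0^r s\sin(\pi s)(r^2-s^2)^{-1/2}\,ds$ for the cylinder after integration by parts. Your pairing $s\leftrightarrow 1-s$ against the even, radially decreasing slice profile is a valid alternative that covers every model with $r<1$ at once.
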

\begin{proof}
The effective directions are all three for a ball, the two transverse
directions for a cylinder, and the normal direction for a slab.
For a ball, with $z=\pi r<\pi$,
\[
 k_i=\frac4{\pi^2}(\sin z-z\cos z)>0,
\]
because the parenthesized expression has derivative $z\sin z>0$.
For a cylinder the transverse slice at $x_i=s$ has area
$4\sqrt{r^2-s^2}$. Integration by parts gives
\[
 k_i=\frac8\pi\int_0^r
          \frac{s\sin(\pi s)}{\sqrt{r^2-s^2}}\,ds>0,
 \qquad r<1.
\]
For a slab $|x_j|<v$, one has $k_j=(8/\pi)\sin(\pi v)>0$.

Set $\eta=|A\triangle(K+t)|$. Reflection invariance makes the sine
coefficient of $A$ zero; that of $K+t$ is $k_i\sin(\pi t_i)$.
Hence $k_i|\sin(\pi t_i)|\leq\eta$. Choose $q_i\in\mathbb Z$
nearest to $t_i$, modulo two. With $d_i=|t_i-q_i|\leq1/2$,
$\sin(\pi d_i)\geq2d_i$, so $d_i\leq\eta/(2k_i)$.
Ineffective translations leave $K$ unchanged. The BV translation
inequality yields
\[
 |(K+t)\triangle(K+q)|\leq P_T(K)|t-q|
 \leq\frac{P_T(K)}2
      \left(\sum_{i\in\mathcal I_K}k_i^{-2}\right)^{1/2}\eta.
\]
The translate $K+q$ is reflection invariant and descends to a cube
model. The triangle inequality and minimization over $t$ prove
\eqref{stab:eq:pinning}. These are all reflection-invariant translates:
for such a translate the same sine coefficients force the effective
coordinates to be integral. Positivity and continuity of the $k_i$
give the asserted compact-family bound.
\end{proof}

\begin{proposition}\label{stab:prop:local}
For corner balls with $0<r<1$, edge cylinders with $1/\pi<r<1$,
and coordinate slabs of volume in $(0,1)$, there are $\rho,\kappa>0$
such that every equal-volume finite-perimeter competitor whose
distance $d$ to the finite model family is less than $\rho$ satisfies
\begin{equation}\label{stab:eq:box-local}
 \Per(E;Q)-\Per(M;Q)\geq\kappa d^2.
\end{equation}
The constants are uniform on compact subfamilies of these ranges.
No global minimality of the reference model is assumed.
\end{proposition}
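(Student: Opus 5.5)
The plan is to pass to the torus, apply the uniform torus estimate of Lemma~\ref{stab:lem:uniform-torus}, and then use Lemma~\ref{stab:lem:pinning} to replace torus translations by cube positions. Let $E\subset Q$ be a finite-perimeter set with $|E|=|M|$, where $M$ ranges over a compact subfamily of the stated models. By~\eqref{stab:eq:reflection} its even reflection $A=\widehat E$ satisfies $|A|=8|E|=|\widehat M|$ and $P_T(A)=8\Per(E;Q)$. Moreover, $A$ is invariant under all coordinate reflections. Let $K$ be the reflected model centred at the origin, so that every model in the finite cubical family reflects to an element of $\mathcal O_{\rm ref}(K)$. Write $d$ for the $Q$-distance from $E$ to the finite model family at this volume. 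Then
\[
 \min_{K'\in\mathcal O_{\rm ref}(K)}|A\triangle K'|=8d\qquad\text{and}\qquad d_{\rm tr}(A,K)\le 8d.
\]

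The main step applies the torus inequality and then pins translations. Choose $\rho=\varepsilon/8$, with $\varepsilon$ from Lemma~\ref{stab:lem:uniform-torus}. If $d<\rho$, then $d_{\rm tr}(A,K)<\varepsilon$, and~\eqref{stab:eq:torus-local} gives
\[
 8\bigl(\Per(E;Q)-\Per(M;Q)\bigr)=P_T(A)-P_T(K)\ge c\,d_{\rm tr}(A,K)^2 .
\]
Here $P_T(K)=8\Per(M;Q)$ for every model $M$ at this volume. All positions share the same perimeter, since they are related by cube isometries. This is why the left side can be taken relative to any $M$ in the family. Next, Lemma~\ref{stab:lem:pinning} gives $8d\le C_K\,d_{\rm tr}(A,K)$. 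Combining the two estimates yields~\eqref{stab:eq:box-local} with
\[
 \kappa=\frac{8c}{C_K^2}.
\]

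For uniformity, note that $c$ and $\varepsilon$ from Lemma~\ref{stab:lem:uniform-torus} are uniform on compact parameter families within the admissible ranges: balls with $0<r<1$, cylinders with $1/\pi<r<1$ and axial period two, and slabs. Lemma~\ref{stab:lem:pinning} likewise bounds $C_K$ uniformly on such families. Orientation is handled by finiteness: each of the finitely many coordinate orientations (slab normals, cylinder axes) is treated separately, and we take the minimum of the resulting constants. Nothing in this argument uses global minimality. Only the explicit second-variation gaps enter, and they enter through the torus lemma.

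The main point to check is compatibility of the two distances. One must verify that $\mathcal O_{\rm ref}(K)$ corresponds exactly to the reflections of the cube models in the family, including corner, edge or face choices. This correspondence holds because reflection-invariant translates have integral effective coordinates modulo two, as noted at the end of the pinning proof, and these descend precisely to corner, edge and face positions. For slabs there is a further check: the two parallel flat tori at offset $\pm v$ from an integer plane must reproduce both the slab $\{x_i<v\}$ and the opposite slab $\{x_i>1-v\}$. I expect this bookkeeping, rather than any analysis, to be the only delicate step.
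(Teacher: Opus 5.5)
Your proposal is correct and follows the paper's proof essentially verbatim. You reflect, apply the uniform torus estimate of Lemma~\ref{stab:lem:uniform-torus}, pin translations with Lemma~\ref{stab:lem:pinning} to obtain $\kappa=8c/C_K^2$, and take the minimum over the finitely many orientations. One point to tighten: the relations $\min_{K'\in\mathcal O_{\rm ref}(K)}|A\triangle K'|=8d$ and $d_{\rm tr}(A,K)\le 8d$ hold only when $K$ has the orientation of the nearest cube model (equivalently, when $d$ is the distance to that orientation's subfamily), so fix that orientation first, as your finiteness remark intends.
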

\begin{proof}
For one orientation, take $A=\widehat E$ and $K=\widehat M$.
Equations~\eqref{stab:eq:reflection} and~\eqref{stab:eq:pinning} give
\[
 8d\leq C_Kd_{\rm tr}(A,K),\qquad d_{\rm tr}(A,K)\leq8d.
\]
If $8d<\varepsilon$ from Lemma~\ref{stab:lem:uniform-torus}, then
\[
 8\{\Per(E;Q)-\Per(M;Q)\}
 \geq c\,d_{\rm tr}(A,K)^2\geq\frac{64c}{C_K^2}d^2.
\]
Thus $\kappa=8c/C_K^2$ works for this orientation. There are only
finitely many orientations; taking their minimum constants proves
the proposition and its uniform version.
\end{proof}

\begin{proof}[Proof of Theorem~\ref{stab:thm:global}]
Complementation reduces the fixed-volume assertion to $v\leq1/2$.
The finite classification and Proposition~\ref{stab:prop:local}
give $\rho,\kappa>0$ such that \eqref{stab:eq:global} holds when
$\alpha_v(E)<\rho$. For the remaining competitors there is a positive
deficit gap. Otherwise equal-volume sets $E_j$ could satisfy
$\alpha_v(E_j)\geq\rho$ and $\delta(E_j)\to0$. BV compactness in
the bounded Lipschitz cube gives an $L^1$ subsequential limit
$E_\infty$ of volume $v$. Lower semicontinuity gives
\[
 I(v)\leq\Per(E_\infty;Q)
 \leq\liminf_j\Per(E_j;Q)=I(v).
\]
Theorem~\ref{thm:main} places $E_\infty$ in $\mathcal M_v$, contrary
to the distance bound. If $\eta>0$ is the distant-set deficit gap,
then $\alpha_v(E)\leq2\min(v,1-v)\leq1$ and
$c_v=\min(\kappa,\eta)$ proves \eqref{stab:eq:global}.
An empty distant class requires only $\kappa$. At a transition
the same argument uses both finite minimizing families.

For the exponent, fix one minimizing model $M$. On its reflected
boundary take a nonzero reflection-invariant mean-zero smooth mode
$\phi$ of positive second variation: a degree-two even spherical
harmonic, $\cos(\pi s)$ in the axial coordinate of a cylinder, or a
tangential cosine on the two slab components. Normal graphs
$u_t=t\phi+s(t)$ preserve reflection invariance. The volume derivative
in the constant direction is $|\partial\widehat M|>0$, so the
implicit function theorem gives $s(t)=O(t^2)$ fixing volume exactly.
After descent to $Q$, with $\Gamma$ the relative free boundary,
\[
 \delta(E_t)=\frac{t^2}{2}\mathcal Q_\Gamma(\phi)+o(t^2),\qquad
 |E_t\triangle M|=|t|\int_\Gamma|\phi|+o(|t|).
\]
The first formula follows from the fixed-volume graph expansion;
both torus integrals divide by eight. Every other distinct model in
$\mathcal M_v$ has positive $L^1$ distance from $M$, so $M$ remains
nearest for small $t$. Thus $\delta(E_t)/\alpha_v(E_t)^p\to0$ for
every $p<2$.

Finally let $J$ be a compact subset of one open phase. The model
families vary continuously in volume and satisfy the uniform
geometric hypotheses of Proposition~\ref{stab:prop:local}.
For the central slab phase this remains true through $v=1/2$;
slabs of width $v\in(0,1)$ form one smooth family. The local constants
are consequently uniform on $J$. If the distant deficit gap were
not uniform, a sequence $v_j\in J$ would have a limit $v_\infty\in J$
and a sequence $E_j$ with vanishing deficit but distance at least
$\rho$. Continuity of $I$, BV compactness and lower semicontinuity
make the limit a minimizer at $v_\infty$. The model at that position
extends continuously to volume $v_j$, contradicting the distance
bound. This proves uniformity; the outer complement phases follow
by complementation.
\end{proof}

\subsection{Uniform competition estimates near the transitions}
Fix $v_*\in\{a,b\}$ and a sufficiently small closed interval
$J$ around $v_*$, contained in $(0,1/2)$. Denote the adjacent model
families by $\mathcal F_1(v),\mathcal F_2(v)$ and their perimeters
by $p_1(v),p_2(v)$. Both branches extend smoothly to $J$ and remain
embedded and strictly stable. Define
\begin{equation}\label{stab:eq:phase-distances}
 d_i(E,v)=\min_{M\in\mathcal F_i(v)}|E\triangle M|,
 \qquad g_i(v)=p_i(v)-\min\{p_1(v),p_2(v)\}\geq0.
\end{equation}
Theorem~\ref{thm:main} gives $I(v)=\min(p_1(v),p_2(v))$ on $J$.
No new propagation or classification hypothesis is needed here.

\begin{theorem}\label{stab:thm:multiwell}
After reducing $J$ there is $c_J>0$, independent of $v\in J$,
such that every finite-perimeter $E\subset Q$ of volume $v$ satisfies
\begin{equation}\label{stab:eq:multiwell}
 \delta(E)\geq c_J\min_{i=1,2}\{g_i(v)+d_i(E,v)^2\}.
\end{equation}
\end{theorem}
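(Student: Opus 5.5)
The plan is a split into a local regime, handled by Proposition~\ref{stab:prop:local} applied to each branch separately, and a distant regime, handled by compactness together with Theorem~\ref{thm:main}. First I fix $J$ so small that both branches $\mathcal F_1(v),\mathcal F_2(v)$ lie, for $v\in J$, in compact subfamilies of the admissible ranges of Proposition~\ref{stab:prop:local}. At $a$ the ball radius stays near $2/3<1$ and the cylinder radius near $4/9>1/\pi$; at $b$ the cylinder radius is near $2/\pi$ and the slab volume is near $b$. The proposition then gives uniform $\rho,\kappa>0$ such that, for $i=1,2$ and every $E$ of volume $v\in J$ with $d_i(E,v)<\rho$,
\[
 \Per(E;Q)-p_i(v)\geq\kappa\,d_i(E,v)^2 .
\]
That proposition assumes no global minimality, so it applies to the nonoptimal branch as well. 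Writing $\Per(E;Q)-p_i=\delta(E)-g_i(v)$, this says $\delta(E)\geq g_i(v)+\kappa d_i^2\geq\min(1,\kappa)\,(g_i+d_i^2)$. So the estimate \eqref{stab:eq:multiwell} holds whenever $E$ is $\rho$-close to either family.

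Next I treat competitors with $d_1(E,v)\geq\rho$ and $d_2(E,v)\geq\rho$. Since $g_i(v)\leq|p_1(v)-p_2(v)|$ is bounded on $J$, and distances are at most $1$, the right side of \eqref{stab:eq:multiwell} is bounded by a constant. It therefore suffices to show a uniform deficit gap $\delta(E)\geq\eta>0$ on this distant class. I argue by contradiction. Suppose $v_j\in J$ and $E_j$ satisfy $\delta(E_j)\to0$ while both distances stay at least $\rho$. Pass to a subsequence with $v_j\to v_\infty\in J$ and $E_j\to E_\infty$ in $L^1$ by BV compactness. Continuity of $I$ (Lemma~\ref{g:profile}) and lower semicontinuity make $E_\infty$ a minimizer at volume $v_\infty$. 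Theorem~\ref{thm:main} then places $E_\infty$ in $\mathcal F_1(v_\infty)\cup\mathcal F_2(v_\infty)$; after shrinking $J$, only the adjacent families occur, and at $v_\infty\neq v_*$ only one of them. Since the models depend continuously in $L^1$ on volume, with fixed corner, edge or direction, the corresponding model at $v_j$ is $L^1$-close to $E_j$ for large $j$. This contradicts $d_i(E_j,v_j)\geq\rho$.

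Combining the two regimes, $c_J=\min\{1,\kappa,\eta/C\}$ works, where $C$ bounds $\min_i(g_i+d_i^2)$ on $J$. The main obstacle is the uniformity of the local step on the nonminimizing branch, but that is exactly what the compact-family versions of Lemma~\ref{stab:lem:uniform-torus} and Lemma~\ref{stab:lem:pinning} supply. One more check is that $J$ stays away from the other transition and from the small-volume degeneracy, so that only the two adjacent families appear in the classification at $v_\infty$. Finally, the sign convention $g_i\geq0$ with $\min_i g_i=0$ ensures that the optimal branch contributes the pure quadratic term.
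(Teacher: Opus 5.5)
Your proposal is correct and follows essentially the same route as the paper's proof. In the local regime you apply Proposition~\ref{stab:prop:local} uniformly to each branch, using that it needs no global minimality and that $\Per(E;Q)-p_i=\delta(E)-g_i$. In the distant regime you use a uniform deficit gap from BV compactness, continuity of $I$, and the classification in Theorem~\ref{thm:main}, and absorb it using boundedness of $g_i+d_i^2$ on $J$.
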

\begin{proof}
The two compact model families give uniform $\rho,\kappa>0$ in
Proposition~\ref{stab:prop:local}. If $d_i(E,v)<\rho$, then
\[
 \delta(E)=\Per(E;Q)-p_i(v)+g_i(v)
 \geq\kappa d_i(E,v)^2+g_i(v),
\]
which implies \eqref{stab:eq:multiwell} with constant
$\min(1,\kappa)$. If both distances are at least $\rho$, the deficit
has a uniform positive gap. Indeed a sequence of vanishing deficits
has a subsequence $v_j\to v_\infty\in J$ and an $L^1$ limit minimizing
at $v_\infty$. The classification places the limit in one of the two
families, which vary continuously with $v$. The corresponding
distance tends to zero, a contradiction. Since $g_i+d_i^2$ is bounded
on $J$, decreasing the constant covers this distant class.
\end{proof}

For distance to the actual minimizing family, define
\[
 C_*(v)=\inf_{|E|=v,\ \alpha_v(E)>0}
                \frac{\delta(E)}{\alpha_v(E)^2}.
\]
\begin{corollary}\label{stab:cor:degeneration}
For $v_*\in\{a,b\}$ there are $c,C,\varepsilon>0$ such that
\begin{equation}\label{stab:eq:degeneration}
 c|v-v_*|\leq C_*(v)\leq C|v-v_*|,
 \qquad 0<|v-v_*|<\varepsilon.
\end{equation}
At $v=v_*$, with distance to both minimizing families,
$C_*(v_*)>0$.
\end{corollary}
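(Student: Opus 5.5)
The lower bound follows from Theorem~\ref{stab:thm:multiwell}; the upper bound needs an explicit competitor, namely a minimizer of the losing branch; and the statement at $v_*$ is Theorem~\ref{stab:thm:global}.

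Fix $v$ close to $v_*$ with $v\neq v_*$, and let $i_0$ be the index of the strictly minimizing branch. The two branch perimeters cross transversally at $v_*$: their slopes at $a$ are $h$-values $2/(3r_B)\cdot$(normalization) versus $\pi/(2p_C)$, and these differ. Concretely $p_B'(a)=\tfrac23p_B(a)/a$ and $p_C'(a)=\tfrac12p_C(a)/a$, and at $b$ we have $p_S'=0<p_C'(b)$. Hence there are constants with $c_0|v-v_*|\le g_j(v)\le C_0|v-v_*|$ for the losing branch $j\neq i_0$, while $g_{i_0}=0$. Also $\mathcal M_v=\mathcal F_{i_0}(v)$, so $\alpha_v=d_{i_0}$.

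\emph{Lower bound.} Apply \eqref{stab:eq:multiwell}. For a competitor $E$, either the minimum is attained at $i_0$, giving $\delta\ge c_Jd_{i_0}^2=c_J\alpha_v^2$, or it is attained at $j$, giving $\delta\ge c_Jg_j\ge c_Jc_0|v-v_*|$. In the second case use the crude bound $\alpha_v\le1$ to get $\delta\ge c_Jc_0|v-v_*|\alpha_v^2$. In either case $\delta\ge c|v-v_*|\alpha_v^2$ once $|v-v_*|\le1$, so $C_*(v)\ge c|v-v_*|$.

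\emph{Upper bound.} Test with $E=M_j\in\mathcal F_j(v)$, a model of the losing branch. Then $\delta(E)=g_j(v)\le C_0|v-v_*|$. The models of the two families at $v_*$ are at positive $L^1$ distance from each other, and both families vary continuously in $v$, so $\alpha_v(M_j)\ge\rho_0>0$ uniformly for $v$ near $v_*$. Thus $C_*(v)\le C_0|v-v_*|/\rho_0^2$.

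\emph{At $v_*$.} Positivity of $C_*(v_*)$ is exactly Theorem~\ref{stab:thm:global} at the transition volume, with the distance taken to the union of both families.

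The main point to check carefully is the transversality of the crossing, i.e.\ $p_1'(v_*)\neq p_2'(v_*)$, which gives the linear two-sided bound on $g_j$. This is an explicit computation with the formulas in \eqref{stab:eq:branches}. The uniform separation $\rho_0$ follows from the finiteness of the families and continuity in $v$.
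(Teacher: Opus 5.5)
Your proposal is correct and follows essentially the same route as the paper. You get the upper bound by testing with a model of the nonoptimal branch, uniformly $L^1$-separated from the optimal family. You get the lower bound from Theorem~\ref{stab:thm:multiwell} together with $\alpha_v\le1$. The gap is linear in $|v-v_*|$ because $p_B'(a)=3\neq p_C'(a)=9/4$ and $p_C'(b)=\pi/2\neq p_S'(b)=0$. Positivity at $v_*$ is Theorem~\ref{stab:thm:global}.

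The one point you leave implicit is why the two endpoint families share no set modulo null sets. The paper settles it by noting that their free boundaries have different curvatures.
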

\begin{proof}
The endpoint families are finite and have no common set modulo
null sets; their distinct free-boundary curvatures exclude a
coincidence. Their minimum $L^1$ separation $s_*>0$ remains at least
$s_*/2$ after shrinking $J$. For $v\ne v_*$ put
$g(v)=|p_1(v)-p_2(v)|$. A model in the nonoptimal family has deficit
$g(v)$ and asymmetry at least $s_*/2$, giving
$C_*(v)\leq4g(v)/s_*^2$.

Conversely, writing $d_{\rm good}=\alpha_v(E)\leq1$,
Theorem~\ref{stab:thm:multiwell} gives
\[
 \delta(E)\geq c_J\min\{d_{\rm good}^2,g(v)+d_{\rm bad}^2\}
 \geq c_Jg(v)d_{\rm good}^2
\]
when $g(v)\leq1$. Thus $C_*(v)\geq c_Jg(v)$.
Differentiating \eqref{stab:eq:branches},
$p_B'(a)=3$, $p_C'(a)=9/4$, $p_C'(b)=\pi/2$ and $p_S'(b)=0$.
Consequently
\begin{align}
 |p_B(v)-p_C(v)|&=\frac34|v-a|+O(|v-a|^2),\label{stab:eq:gapa}\\
 |p_C(v)-1|&=\frac\pi2|v-b|+O(|v-b|^2).\label{stab:eq:gapb}
\end{align}
Combining these with the two bounds in terms of $g$ proves
\eqref{stab:eq:degeneration}. Positivity at the transition follows
from Theorem~\ref{stab:thm:global}.
\end{proof}

The degeneration in \eqref{stab:eq:degeneration} is caused by a
separated competing shape whose excess perimeter tends to zero.
Each local spectrum stays positive. The branch costs in
\eqref{stab:eq:multiwell} retain this distinction and give a uniform
estimate across a transition. The global constants depend on a
compactness gap and have not been made numerically effective.

\section*{Acknowledgments}
This work was supported by the National Key R\&D Program of China (2025YFA1017603).

\section*{Declaration of AI use}
During the preparation of this manuscript, the authors used ChatGPT (OpenAI) as an auxiliary tool for language editing, exploration of some of the proof strategies and calculations. All mathematical statements, proofs, references, and conclusions appearing in the manuscript were independently verified by the authors, who take full responsibility for the content.

\bigskip
\noindent
(Bin Deng) School of Mathematics and Statistics, Wuhan University, Wuhan,
Hubei 430072, P.R. China.\\
Email address: \href{mailto:dbmath@whu.edu.cn}{dbmath@whu.edu.cn}

\par\smallskip
\noindent
(Jiahuan Li) School of Mathematical Sciences, University of Science and
Technology of China, Hefei, Anhui 230026, P.R. China.\\
Email address: \href{mailto:jiahuan@mail.ustc.edu.cn}{jiahuan@mail.ustc.edu.cn}

\par\smallskip
\noindent
(Yilu Liu) School of Mathematical Sciences, University of Science and
Technology of China, Hefei, Anhui 230026, P.R. China.\\
Email address: \href{mailto:liuylgeoanaly@mail.ustc.edu.cn}{liuylgeoanaly@mail.ustc.edu.cn}


\begin{thebibliography}{99}
\bibitem{AFM} E. Acerbi, N. Fusco and M. Morini,
\emph{Minimality via second variation for a nonlocal isoperimetric problem},
Comm. Math. Phys. 322 (2013), 515--557,
doi: \href{https://doi.org/10.1007/s00220-013-1733-y}{10.1007/s00220-013-1733-y};
\href{https://cvgmt.sns.it/media/doc/paper/540/Acerbi-Fusco-Morini.FINAL.pdf}{author's final manuscript},
Corollary 1.2 and Theorem 4.2.
\bibitem{BCF} M. Barchiesi, F. Cagnetti and N. Fusco,
\emph{Stability of the Steiner symmetrization of convex sets},
J. Eur. Math. Soc. 15 (2013), no. 4, 1245--1278.
\href{https://doi.org/10.4171/JEMS/391}{doi:10.4171/JEMS/391}.
\bibitem{BM} F. Barthe and B. Maurey,
\emph{Some remarks on isoperimetry of Gaussian type},
Ann. Inst. H. Poincar\'e Probab. Statist. 36 (2000), no. 4, 419--434,
doi: \href{https://doi.org/10.1016/S0246-0203(00)00131-X}{10.1016/S0246-0203(00)00131-X}.
\bibitem{BR} V. Bayle and C. Rosales,
\emph{Some isoperimetric comparison theorems for convex bodies in
Riemannian manifolds}, Indiana Univ. Math. J. 54 (2005),
no. 5, 1371--1394,
doi: \href{https://doi.org/10.1512/iumj.2005.54.2575}{10.1512/iumj.2005.54.2575}.
\bibitem{BrezisOpen}
H. Brezis,
\emph{Some of my favorite open problems},
Rend. Lincei Mat. Appl. \textbf{34} (2023), no. 2, 307--335,
\href{https://doi.org/10.4171/RLM/1008}{doi:10.4171/RLM/1008}.
\bibitem{BrezisBruckstein}
H. Brezis and A. Bruckstein,
\emph{A sharp relative isoperimetric inequality for the square},
C. R. Math. Acad. Sci. Paris \textbf{359} (2021), no. 9, 1191--1199,
\href{https://doi.org/10.5802/crmath.243}{doi:10.5802/crmath.243}.
\bibitem{ChambersMouille}
G. R. Chambers and L. Mouill\'e,
\emph{On the relative isoperimetric problem for the cube},
arXiv:2302.04382v2 (2024),
\url{https://arxiv.org/abs/2302.04382}.
\bibitem{CL} M. Cicalese and G. P. Leonardi,
\emph{A selection principle for the sharp quantitative isoperimetric
inequality}, Arch. Ration. Mech. Anal. 206 (2012), no. 2,
617--643,
doi: \href{https://doi.org/10.1007/s00205-012-0544-1}{10.1007/s00205-012-0544-1}.
\bibitem{EvansGariepy}
L. C. Evans and R. F. Gariepy,
\emph{Measure Theory and Fine Properties of Functions}, revised edition,
CRC Press, 2015.
\bibitem{FarkasKra}
H. M. Farkas and I. Kra,
\emph{Riemann Surfaces}, second edition,
Graduate Texts in Mathematics 71, Springer, 1992.
\bibitem{FMP} N. Fusco, F. Maggi and A. Pratelli,
\emph{The sharp quantitative isoperimetric inequality},
Ann. of Math. (2) 168 (2008), no. 3, 941--980,
doi: \href{https://doi.org/10.4007/annals.2008.168.941}{10.4007/annals.2008.168.941}.
\bibitem{GilbargTrudinger}
D. Gilbarg and N. S. Trudinger,
\emph{Elliptic Partial Differential Equations of Second Order},
reprint of the 1998 edition, Springer, 2001.
\bibitem{Glaudo} F. Glaudo,
\emph{On the isoperimetric profile of the hypercube},
Math. Ann. 390 (2024), 477--492,
doi: \href{https://doi.org/10.1007/s00208-023-02765-w}{10.1007/s00208-023-02765-w}.
\bibitem{Gunning}
R. C. Gunning,
\emph{Some Topics in the Function Theory of Compact Riemann Surfaces},
preliminary author text, January 24, 2018,
\url{https://web.math.princeton.edu/~gunning/book.pdf}.
\bibitem{Hadwiger} H. Hadwiger,
\emph{Gitterperiodische Punktmengen und Isoperimetrie},
Monatsh. Math. 76 (1972), 410--418.
\bibitem{HPRR}
L. Hauswirth, J. P\'erez, P. Romon and A. Ros,
\emph{The periodic isoperimetric problem},
Trans. Amer. Math. Soc. \textbf{356} (2004), 2025--2047.
\bibitem{HHM} H. Howards, M. Hutchings and F. Morgan,
\emph{The isoperimetric problem on surfaces},
Amer. Math. Monthly 106 (1999), no. 5, 430--439,
doi: \href{https://doi.org/10.1080/00029890.1999.12005065}{\nolinkurl{10.1080/00029890.1999.12005065}}.
\bibitem{Ledoux}
M. Ledoux,
\emph{The Concentration of Measure Phenomenon},
Mathematical Surveys and Monographs 89,
American Mathematical Society, 2001.
\bibitem{LedouxNotes}
M. Ledoux,
\emph{Isoperimetry and Gaussian analysis}, in \emph{Lectures on Probability
Theory and Statistics}, Lecture Notes in Mathematics 1648, Springer,
1996, 165--294; author's lecture notes,
\url{https://www.math.univ-toulouse.fr/~ledoux/Flour.pdf}.
\bibitem{Maggi}
F. Maggi,
\emph{Sets of Finite Perimeter and Geometric Variational Problems},
Cambridge University Press, 2012.
\bibitem{Milman}
E. Milman,
\emph{Isoperimetric inequalities on slabs with applications to cubes and Gaussian slabs},
Comm. Pure Appl. Math. \textbf{79} (2026), no. 4, 1012--1072,
\href{https://doi.org/10.1002/cpa.70020}{doi:10.1002/cpa.70020};
arXiv:2403.06602v3,
\url{https://arxiv.org/abs/2403.06602}.
\bibitem{MJ} F. Morgan and D. L. Johnson,
\emph{Some sharp isoperimetric theorems for Riemannian manifolds},
Indiana Univ. Math. J. 49 (2000), no. 3, 1017--1041,
doi: \href{https://doi.org/10.1512/iumj.2000.49.1929}{10.1512/iumj.2000.49.1929}.
\bibitem{MorganProducts} F. Morgan,
\emph{Isoperimetric estimates in products},
Ann. Global Anal. Geom. 30 (2006), no. 1, 73--79,
doi: \href{https://doi.org/10.1007/s10455-006-9028-6}{10.1007/s10455-006-9028-6}.
\bibitem{MorganPoly} F. Morgan,
\emph{In polytopes, small balls about some vertex minimize perimeter},
J. Geom. Anal. 17 (2007), no. 1, 97--106,
\href{https://arxiv.org/abs/math/0404357}{arXiv:math/0404357}.
\bibitem{PR} R. H. L. Pedrosa and M. Ritor\'e,
\emph{Isoperimetric domains in the Riemannian product of a circle
with a simply connected space form and applications to free boundary
problems}, Indiana Univ. Math. J. 48 (1999), no. 4, 1357--1394,
doi: \href{https://doi.org/10.1512/iumj.1999.48.1614}{10.1512/iumj.1999.48.1614}.
\bibitem{RitoreRos}
M. Ritor\'e and A. Ros,
\emph{Stable constant mean curvature tori and the isoperimetric problem in three space forms},
Comment. Math. Helv. \textbf{67} (1992), 293--305,
\href{https://doi.org/10.1007/BF02566501}{doi:10.1007/BF02566501}.
\bibitem{Ros}
A. Ros,
\emph{The isoperimetric problem}, in \emph{Global Theory of Minimal Surfaces},
Clay Math. Proc. 2, American Mathematical Society, 2005, 175--209.
\bibitem{Schneider}
R. Schneider,
\emph{Convex Bodies: The Brunn--Minkowski Theory}, second edition,
Cambridge University Press, 2014.
\bibitem{SimonGMT}
L. Simon,
\emph{Introduction to Geometric Measure Theory}, author's 2014 draft,
\url{https://web.stanford.edu/class/math285/ts-gmt.pdf}.
\bibitem{SimonPDE}
L. Simon,
\emph{Lectures on PDE}, author's 2015 draft.
\bibitem{SZ} P. Sternberg and K. Zumbrun,
\emph{On the connectivity of boundaries of sets minimizing perimeter
subject to a volume constraint}, Comm. Anal. Geom. 7 (1999),
no. 1, 199--220,
doi: \href{https://doi.org/10.4310/CAG.1999.v7.n1.a7}{10.4310/CAG.1999.v7.n1.a7}.
\end{thebibliography}
\end{document}